\newcommand{\sh}{h} 
\newcommand{\wt}{k} 
\newcommand{\lt}{\left}
\newcommand{\rt}{\right}
\newcommand{\hf}{\frac 12}
\newcommand{\ol}{\overline}
\newtheorem{thm}{Theorem}[section]
\newtheorem{cor}[thm]{Corollary}
\newtheorem{lem}[thm]{Lemma}
\newtheorem{prop}[thm]{Proposition}
\newtheorem{theorem}{Theorem}[section]
\newtheorem{lemma}[theorem]{Lemma}
\numberwithin{equation}{section}
\theoremstyle{definition}
\newcommand\ben{\begin{enumerate}}
\newcommand\een{\end{enumerate}}
\newcommand\be{\begin{equation}}
\newcommand\ee{\end{equation}}
\newcommand\benn{\begin{equation*}}
\newcommand\eenn{\end{equation*}}
\newcommand\bea{\begin{eqnarray}}
\newcommand\eea{\end{eqnarray}}
\newcommand\beann{\begin{eqnarray*}}
\newcommand\eeann{\end{eqnarray*}}
\newcommand{\boldH}{\mathbb{H}}
\newcommand{\R}{\mathbb{R}}
\newcommand{\C}{\mathbb{C}}
\newcommand{\Z}{\mathbb{Z}}
\newcommand{\Q}{\mathbb{Q}}
\newcommand{\qtr}{\frac 14}
\newcommand{\cI}{\mathcal{I}}
\newcommand{\cM}{\mathcal{M}}
\newcommand{\Or}{\ensuremath{{\mathcal O}}}
\newcommand{\gd}{\delta}     
\newcommand{\G}{\Gamma}      
\newcommand{\Gi}{\Gamma_{\infty}}      
\newcommand{\g}{\gamma}      
\newcommand{\bH}{\mathbb{H}}
\newcommand{\thf}{\tfrac 12}
\newcommand{\cusp}{\mathfrak{a}} 
\newcommand{\beq}{\begin{equation}}
\newcommand{\eeq}{\end{equation}}
\newcommand{\sm}{\left(\begin{smallmatrix}}
\newcommand{\esm}{\end{smallmatrix}\right)}
\newcommand{\bpm}{\begin{pmatrix}}
\newcommand{\ebpm}{\end{pmatrix}}
\newcommand{\bsl}{\backslash}
\newcommand{\Res}{\operatorname{Res}} 
\newcommand{\vol}{\mathcal{V}} 
\renewcommand{\mod}{\operatorname{mod}}
\renewcommand{\hat}{\widehat} 
\renewcommand{\Re}{{\mathfrak{Re}}}
\renewcommand{\Im}{{\mathfrak{Im}}}
\newcommand{\<}{\left\langle}
\renewcommand{\>}{\right\rangle}
\newcommand{\bk}{\backslash}
\newcommand{\cP}{\mathcal{P}}
\newcommand{\cO}{\mathcal{O}}
\newcommand{\cuspb}{\mathfrak b}
\newcommand{\m}{{\mathfrak m}}
\newcommand{\tG}{\tilde{G}}
\newcommand{\fund}{\mathfrak{F}}
\subjclass{Primary 11M32; Secondary 11E45}
\begin{document}

\title{Second Moments and simultaneous non-vanishing of $GL(2)$ automorphic $L$-series}

\author{Jeff Hoffstein} \email{jhoff@math.brown.edu}
\address{Department of Mathematics,
Brown University, Providence, RI, $02912$}
 \date{\today}
 \author{Min Lee}
 \email{minlee@math.brown.edu}
 \address{Department of Mathematics,
Brown University, Providence, RI, $02912$}

\thanks{ Both authors are grateful for the support and working environment provided by ICERM. 
The first author is supported in part by NSF grant DMS-0652312 and 
the second author by a Simons Fellowship. }

\maketitle

\begin{abstract}
We obtain a second moment formula for  the $L$-series of holomorphic cusp forms,  averaged over twists by Dirichlet characters modulo a fixed conductor $Q$.   The estimate obtained has no restrictions on $Q$, with an error  term that has a close to optimal power savings in the exponent.    However, one of the contributions to the main term is a special value of a shifted double Dirichlet  series.    We show that this special value is small on average, and obtain a corresponding estimate for a mean value of the second moment over $Q$.  This mean value is non-zero even when applied to a product of two distinct $L$-series, leading to a simultaneous non-vanishing result.  Our approach uses the theory of shifted multiple Dirichlet series to obtain some refined estimates for double shifted sums.   These estimates are the key ingredient in the second moment estimate. 
\end{abstract}


\section{Introduction}
Let $f$ and $g$ be modular forms of even weight $k$, square-free level $N_0$, 
arbitrary nebentypus, and Fourier expansions
\be\label{def1}
	f(z) = \sum_{m \ge 1}A(m)m^{(k-1)/2}e^{2 \pi i m z}
\ee
and
\be\label{defg1}
	g(z) = \sum_{m \ge 1}B(m)m^{(k-1)/2}e^{2 \pi i m z}.
\ee
We assume that $A(1)=1$ and $B(1)=1$, 
and that $f$ and $g$ are eigenfunctions of all Hecke operators.   

If $Q$ is a positive integer, and $\chi$ is a Dirichlet character modulo $Q$, the ``twisted" $L$-series $L(s,f,\chi)$ is defined by 
$$
L(s,f,\chi) = \sum_{m\ge 1}\frac{A(m)\chi(m)}{m^s}.
$$
This converges absolutely when $\Re(s) >1$ and possesses a functional equation as $s \rightarrow 1-s$.

The behavior of $L(s,f,\chi) $ inside the critical strip has been a subject of great interest for many years.  The most basic question one can ask is about the size of the central value $L(1/2,f,\chi)$.    If one asks for what values of $\delta >0$ 
\be\label{delta}
L(1/2,f,\chi) \ll _\epsilon Q^{\delta+ \epsilon},
\ee
 it has long been known via the Phragmen-Lindel\"of principle that \eqref{delta} is true for $\delta = 1/2$.  The present optimal value for $\delta$ is $3/8$, as shown in \cite{HH}, \cite{BH} and \cite{BH10}.   The Lindel\"of Hypothesis, however, predicts that \eqref{delta} should be true with $\delta = 0$.  
 
 One way of making headway is to approach this question by studying moments of families of $L$-series.   The problem of estimating second moments of $GL(2)$ $L$-series is particularly intriguing as some examples are just within reach of current technology, and others remain stubbornly out of reach.   
  
In this work we investigate  the particular second moment
 \begin{equation}\label{e:SQorig}
  S_{f, g}(Q) := 
	 \frac{1}{\varphi(Q)} \sum_{\chi \mod Q}
	 L(1/2,f,\chi) \overline{L(1/2,g,\chi)},
  \end{equation}
where $f$ and $g$ are holomorphic cusp forms, not necessarily distinct.  The sum is taken over all characters mod $Q$, \emph{not} all primitive characters.  
As there are $\varphi(Q)$ terms in this sum, a mean value on the order of $ \log Q$ in the case of $f=g$ would demonstrate that the Lindel\"of Hypothesis is at least true on average.   
We are not aware of previous results on this problem in the case $f\ne g$, 
but the case $f=g$, particularly when $f$ is an Eisenstein series, 
has attracted a great deal of attention.   
In \cite{HB}, Heath-Brown found an asymptotic formula in the case where $f$ was an Eisenstein series, when Q was subject to certain constraints.   
In \cite{S}, Soundararajan found an asymptotic formula valid, in this case, without constraints on $Q$, 
and in \cite{Y}, Young gives such a formula with a power saving error term, also in the Eisenstein series case, but with $Q$ restricted to be prime.
The case where $f$ is a modular form was treated in \cite{St}, by Stefanicki, who found an asymptotic for this case when the factorization of $Q$ obeyed certain constraints, and in \cite{GKR}, Gao, Khan, and Ricotta significantly weakened these constraints.    
It should be noted that in these works the average was taken over primitive characters.   

In the following theorems, we state a specific formula with a close to optimal power saving error term, 
valid for any two newforms $f$ and $g$ of square-free level $N_0$ that are eigenfunctions of all Hecke operators, 
and of the same weight $k$ and nebentypus, without constraints on $Q$.  
As usual, we let $\theta$, with $0 \le \theta \le \hf$ be an approximation toward the Ramanujan-Petersson conjecture.  (Here $\theta = 7/64$ is the best known bound.  See  \cite{KS} for $\Q$.)
The case when $N_0=1$ and $Q$ is a prime, is particularly easy to state: 
\begin{multline}\label{e:SffQ_prime}
	S_{f, f}(Q)
	=
	2 \frac{L(1,f, \vee^2)}{\zeta(2)} \log Q 
	+
	C'(k) \frac{L(1,f, \vee^2)}{\zeta(2)}
	+
	2\left.\frac{d}{ds} \left(\frac{L(s, f, \vee^2)}{\zeta(2s)}\right)\right|_{s=1}
	\\
	+
	2\tilde Z_Q(1-k/2, 1/2; f, f) 
	+ \cO(Q^{\theta+\epsilon-\frac{1}{2}})
\end{multline}
	when $f=g$, where $C'(k)$ is a constant depending on $k$.
When $f\ne g$, 
\begin{multline}\label{e:SfgQ_prime}
	S_{f, g}(Q)
	\\	
	=
	2L(1, f\otimes g) 
	+
	\tilde Z_Q(1-k/2, 1/2; f, g) + \tilde Z_Q(1-k/2, 1/2; g, f)
	+
	\cO(Q^{\theta+\epsilon-\frac{1}{2}})
	.
\end{multline}
Here $L(1, f, \vee^2)$ is the symmetric square $L$-function of $f$ evaluated at $1$ 
and $L(1, f\otimes g)$ is the Rankin-Selberg convolution evaluated at $1$. 
The precise formula for general $Q$, square free $N_0$, with $(Q, N_0)=1$, has a more complex form depending on the divisors of $N_0$ and $Q$, and is given in Theorem \ref{thm:SfgQ}. 
It should be possible, with a little extra work, to eliminate $\theta$ from the exponent of the error term as in \cite{HH}. 

The key question here is what is meant by $\tilde Z_Q(1-k/2, 1/2; f, g)$. 
As we will explain, this represents a special value of a certain shifted double Dirichlet series at the center of its critical strip. 
In our opinion, this term encapsulates the difficulty and profundity of this second moment problem.  
We conjecture that 
$$
	\tilde Z_Q(1-k/2, 1/2; f, g) \ll Q^{\theta-1/2}, 
$$
which would imply that the special value $\tilde Z_Q(1-k/2, 1/2; f, g)$ is dominated by the error term in the preceding two formulas. 

Any result of the form 
$$
	\tilde Z_Q(1-k/2, 1/2; f, g) \ll Q^{\delta-1/2}
$$ 
with $\delta< 1/2$ would turn \eqref{e:SffQ_prime} and \eqref{e:SfgQ_prime} 
into mean value estimates with a power saving error term. 
We have been unable to prove such a bound for a fixed $Q$. 
However, we have been able to prove mean value estimates for $\tilde Z_q(1-k/2, 1/2; f, g)$, as $q$ varies over a very short interval around $Q$, that are consistent with this conjecture. 
The precise upper bound for the mean value for $\tilde Z_q(1-k/2, 1/2; f, g)$ 
is given in Proposition \ref{prop:Mest}. 
For $Q, y\geq 1$, 
this estimate says roughly that, 
$$
	 \sum_{|q-Q| \leq Q/y, \atop (q, N_0)=1} \tilde Z_q(1-k/2, 1/2; f, g) 
	= 
	\cO(y^{\epsilon} Q^{\epsilon})
	.
$$
Thus, for example, taking $y=Q\log Q$, implies the bound 
$$
	\tilde Z_Q(1-k/2, 1/2; f, g) 
	\ll 
	Q^\epsilon
	,
$$
for a single $Q\geq 1$, 
while taking $y=1$ leads to the estimate 
$$
	\sum_{|q-Q| \leq  Q, \atop (q, N_0)=1} \tilde Z_q(1-k/2, 1/2; f, g)
	\ll 
	Q^{\epsilon}
	,
$$
for any $\epsilon>0$. 
One does not expect more than square root cancellation to occur, and thus these estimates are consistent with the conjectured upper bound for an individual $\tilde Z_q(1-k/2, 1/2; f, g)$.	
	
The mean value estimates for the $\tilde Z_q(1-k/2, 1/2; f, g)$ allow us 
to prove corresponding mean value estimates for $S_{f, g}(q)$ in short intervals centered at $Q$.  
Before stating our main results, it will be convenient to make several definitions. 
	For any $L$-series $L(s)$ that is convergent for $\Re(s)>1$ and has an Euler product,
	we define $L^{(Q)}(s)$ to be the corresponding series with the Euler factors 
	at primes dividing $Q$ removed.

Let $N_0$ be a square-free positive integer.
Let $f$ and $g$ be holomorphic cusp forms of even weight $k$, 
which are newforms for $\Gamma_0(N_0)$, of arbitrary nebentypus, 
with normalized Fourier coefficients $A(m)$, $B(m)$, as in \eqref{def1}, so that $A(1) = B(1)=1$.
Assume that $f$ and $g$ are eigenfunctions for all Hecke operators.

When $f=g$, for each $q\geq 1$, with $(q, N_0)=1$, define
\begin{multline}\label{e:H2Qff}
	H_{f, f}^{(2)}(q)
	:=
	\frac{1}{2}
	\sum_{d\mid q} \frac{\mu(d) A(d)^2}{d} 
	\sum_{d_1, d_2\mid d} 
	\frac{\mu(d_1) \mu(d_2) }{(d_1, d_2)A((d_1, d_2))}
	\left(\prod_{p\mid \frac{d_1d_2}{(d_1,d_2)^2}}\frac{1}{p+1}\right)
	\\
	\times
	\left\{
	\sum_{p\mid(d_1, d_2), p^\alpha\|q, \atop \alpha\geq 0} 
	\log p \left(p^{-\alpha} +\frac{1}{2} \right)
	- \sum_{p\mid \frac{d_1d_2}{(d_1, d_2)}} \log p \frac{2p-1}{2(p-1)}
	+
	\sum_{p\mid \frac{d_1d_2}{(d_1, d_2)}, p^\alpha\|q, \atop \alpha\geq 0} 
	\log p \left(p^{-\alpha} \frac{3p-1}{p-1}\right)
	\right\}
	\\
	-
	\sum_{d\mid q} \frac{\mu(d)}{d} 
	\left(\log d -2\sum_{p\mid d} \frac{\log p }{p}\right)
	\left(\prod_{p\mid d} \frac{A(p)^2 + (1-A(p)^2) p^{-1}+p^{-2}}{1+p^{-1}} \right)
\end{multline}
and
\begin{multline}\label{e:Hqff}
	H_{f, f}(q)
	:=
	2 	\log q
	\left(\prod_{p\mid q, \atop \text{ prime }} (1-p^{-1}) \right)
	\frac{L^{(q)}(1, f, \vee^2)}{\zeta^{(q)}(2)} 
	\\ 
	+
	\left\{
	2C_2(k) 
	+ \sum_{p\mid N_0} \log p \left(\frac{4p-1}{4(p-1)}\right)
	+ \sum_{j=1}^{k-1} \frac{1}{j}
	- \log (8\pi)
	- 2\sum_{p^\alpha\|q, \alpha\geq 1} \log p \frac{1-p^{-\alpha}}{p-1}
	\right\}
	\\
	\times
	\left(\prod_{p\mid q, \atop \text{ prime }} (1-p^{-1}) \right)
	\frac{L^{(q)}(1, f, \vee^2)}{\zeta^{(q)}(2)} 
	\\
	+ 
	H_{f, f}^{(2)}(q)
	\frac{L(1, f, \vee^2)}{\zeta(2)} 
	+ 2\left. \frac{d}{ds} \left(\frac{L^{(q)}(s, f, \vee^2)}{\zeta^{(q)}(2s)} \prod_{p\mid q, \atop \text{ prime }} \left(1-p^{-s}\right)\right)\right|_{s=1}
\end{multline}
where $C_2(k)$ is a constant given in \eqref{C1def}, independent of $f$ (but depending on $k$). 
Here $p$ is a prime.

For any positive integer $m$, 
\be\label{e:numberPrimes}
	r(m)
	:= 
	\sharp\left\{p \; \bigg| \; p\text{ is a prime and } p\mid m \right\}
	.
\ee

With this notation, we prove the following:
\begin{thm}\label{thm:main}
For $Q\gg 1$ and $y\geq1$, when $f=g$, we have
\be\label{e:mainf=g}
	\frac{y}{Q}
	\sum_{q\geq 1, \atop (q, N_0)=1} S_{f, g}(q) e^{-\frac{y^2 \left(\log \frac{Q}{q}\right)^2}{4\pi}}
	=
	\frac{y}{Q}
	\sum_{q\geq 1, \atop (q, N_0)=1} H_{f, f}(q) e^{-\frac{y^2 \left(\log \frac{Q}{q}\right)^2}{4\pi}}
	+
	\cO(Q^{\theta+\epsilon-1/2})
	+ \cO (yQ^{-1/2})
	.
\ee
For $Q\gg 1$ and $y\geq 1$, when $f\neq g$, we have
\begin{multline}\label{e:mainfneqg}
	\frac{y}{Q}
	\sum_{q\geq 1, \atop (q, N_0)=1} S_{f, g}(q) e^{-\frac{y^2\left(\log \frac{Q}{q}\right)^2}{4\pi}}
	=
	\frac{L(1, f\otimes g)}{\zeta^{(N_0)}(2)}e^{\frac{\pi}{y^2}} \Res_{v=1}\zeta^{(N_0)}(v)E_{f, g}^{(N_0)}(1) 
	\\
	+
	\frac{e^{\frac{\pi}{y^2}}}{2} 
	L(1, f\otimes g) 
	\prod_{p\mid N_0} 
	\left(pA(p)\overline{B(p)}+1\right)\left(1-p^{-1}\right)
	\\
	\times
	\left\{
	\frac{1}{2^{r(N_0)}}
	\prod_{p\nmid N_0}
	\left\{1-p^{-2} \left( A(p)\overline{B(p)} - \frac{(A(p)+\overline{B(p)})^2}{1+p^{-1}} +1\right)
	\right\}
	\right.
	\\
	\left.
	+
	\prod_{p\nmid N_0}
	\left\{1-p^{-2} \left(A(p)\overline{B(p)} - \frac{A(p)^2+\overline{B(p)}^2}{p+1} 
	+ p^{-1}\right)\right\}
	\right\}
	\\
	+
	\cO(Q^{\theta+\epsilon-1/2})
	+ \cO(yQ^{-1/2})
	.
\end{multline}
Here $E^{(N_0)}_{f, g}(1)$ is a positive constant depending on $f$ and $g$, which is defined in \eqref{Edef}. 
\end{thm}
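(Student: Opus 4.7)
The plan is to apply the explicit formula for $S_{f,g}(q)$ from Theorem~\ref{thm:SfgQ} termwise inside the Gaussian-weighted sum, then handle each piece separately. That formula decomposes $S_{f,g}(q) = M_{f,g}(q) + \tilde Z_q(1-k/2, 1/2; f, g) + \tilde Z_q(1-k/2, 1/2; g, f) + \cO(q^{\theta+\epsilon-1/2})$, where $M_{f,g}(q) = H_{f,f}(q)$ when $f=g$ and, when $f \neq g$, is a multiplicative function of $q$ whose leading behavior is a scalar multiple of $L(1, f\otimes g)$ decorated by local factors at primes dividing $q$ and $N_0$.

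For the error term, the kernel satisfies $\sum_{q \geq 1} e^{-y^2(\log Q/q)^2/(4\pi)} \sim \frac{2\pi Q}{y}e^{\pi/y^2}$ by Euler--Maclaurin (or by Poisson summation), so the $\cO(q^{\theta+\epsilon-1/2})$ contribution, after multiplication by $y/Q$, is absorbed into $\cO(Q^{\theta+\epsilon-1/2})$. For the $\tilde Z_q$ terms, one invokes Proposition~\ref{prop:Mest}: the sharp-cutoff estimate $\sum_{|q-Q|\leq Q/y}\tilde Z_q = \cO((yQ)^\epsilon)$ combined with a dyadic decomposition of the Gaussian support and partial summation yields total contribution $\cO(y^{1+\epsilon}Q^{-1+\epsilon})$, which is swallowed by $\cO(yQ^{-1/2})$ on the right-hand sides.

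When $f=g$, the main term $H_{f,f}(q)$ already appears in \eqref{e:mainf=g}, so nothing further is needed. When $f \neq g$, we evaluate the Gaussian average of $M_{f,g}(q)$ by Mellin inversion: using the identity
\be\label{e:mellin}
	e^{-y^2(\log Q/q)^2/(4\pi)} = \frac{1}{2\pi i}\int_{(c)} \frac{2\pi}{y}\, e^{\pi s^2/y^2}\, (Q/q)^s\, ds,
\ee
the sum becomes $\frac{1}{2\pi i}\int_{(c)} e^{\pi s^2/y^2} Q^{s-1} \cD(s)\, ds$, where $\cD(s) := \sum_{(q, N_0)=1} M_{f,g}(q) q^{-s}$. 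Because $M_{f,g}$ is multiplicative and essentially of size $L(1,f\otimes g)$ on average, $\cD(s)$ has a simple pole at $s=1$ inherited from $\zeta^{(N_0)}(s)$. Shifting the contour to $\Re(s) = 1/2+\epsilon$, the residue at $s=1$ produces the right-hand side of \eqref{e:mainfneqg}, with the factor $e^{\pi/y^2}$ arising from the Mellin kernel at $s=1$; the residual contour integral is $\cO(yQ^{-1/2})$ by rapid decay of $e^{\pi s^2/y^2}$ on vertical lines.

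The main obstacle has already been resolved upstream in Proposition~\ref{prop:Mest}, which supplies the crucial cancellation among the $\tilde Z_q$ special values; given that input, everything else is bookkeeping. The principal remaining computation lies in the $f\neq g$ case: identifying $\Res_{s=1}\cD(s)$ explicitly with the two-piece Euler-product expression on the right-hand side of \eqref{e:mainfneqg}. This requires matching the local factors of $M_{f,g}(p^\alpha)$ against those of $\zeta^{(N_0)}(s)$ prime by prime, treating primes dividing $N_0$ separately from those coprime to $N_0$, and recognizing the resulting product as $\Res_{v=1}\zeta^{(N_0)}(v) E_{f,g}^{(N_0)}(1)$ times the Rankin--Selberg value, plus the secondary Euler product contribution.
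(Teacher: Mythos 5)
Your overall strategy matches the paper's: start from Theorem~\ref{thm:SfgQ}, Mellin-invert the Gaussian weight (and you have the correct constant $2\pi/y$ in the inversion formula, unlike the paper's \eqref{identity} which appears to drop a $2\pi$), and invoke Proposition~\ref{prop:Mest} for the $\tilde Z_q$ contribution, then compute the $f\ne g$ main term as a residue of a Dirichlet series. That much is right. But there is a genuine gap in the $\tilde Z_q$ estimate as you have set it up.

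You quote the introduction's rough statement that the \emph{sieved} sum $\sum_{|q-Q|\le Q/y}\tilde Z_q(1-k/2,1/2;f,g)$ is $\cO((yQ)^\epsilon)$, plan to combine this with a dyadic decomposition, and conclude $\cO(y^{1+\epsilon}Q^{-1+\epsilon})$. That rough statement, however, is not what Proposition~\ref{prop:Mest} proves. The proposition is stated for \emph{fixed} $\ell_1,\ell_2$ and, in its second bound~\eqref{e:Mest_2}, carries a factor $(\ell_1\ell_2)^{1/4+\epsilon}$. The M\"obius sieve in Theorem~\ref{thm:SfgQ} unfolds the average of $\tilde Z_q$ into a sum over square-free $\ell$ (with $\ell_1,\ell_2\mid\ell$) weighted by $\mu(\ell)/\ell^{1+v}$; inserting the bound $y^\epsilon(\ell_1\ell_2)^{1/4+\epsilon}$ and using $\ell_1\ell_2\le\ell^2$ produces $\sum_\ell\ell^{-1/2+O(\epsilon)}$, which diverges. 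So the route through \eqref{e:Mest_2} (or through the introduction's sharp-cutoff paraphrase of it) does not close. The paper instead uses \eqref{e:Mest_1}, which gives $(Q/\ell)^{1/2}(\ell_1\ell_2)^{\epsilon}$: the benign $(\ell_1\ell_2)^\epsilon$ and the decaying $(Q/\ell)^{1/2}$ make the $\ell$-sum converge, and after the $y/Q$ normalization this is exactly the stated $\cO(yQ^{-1/2})$. You should therefore replace your appeal to the sharp-cutoff/\eqref{e:Mest_2} estimate with an application of \eqref{e:Mest_1}; no dyadic decomposition is then needed, since the proposition's bounds are already Gaussian-weighted.

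Separately, for the $f\ne g$ main term, your plan (pass to the Dirichlet series $\cD(s)=\sum_q M_{f,g}(q)q^{-s}$, shift the contour past $s=1$, identify the residue) is correct in shape, and you acknowledge the Euler-product matching is the residual work. Be aware that this is not a small bookkeeping step: the residue splits into two pieces, one from $\sum_q L^{(q)}(1,f\otimes g)q^{-v}=\frac{L(1,f\otimes g)}{\zeta^{(N_0)}(2)}\zeta^{(N_0)}(v)E^{(N_0)}_{f,g}(v)$ (giving the first displayed term in \eqref{e:mainfneqg} with the residue of $\zeta^{(N_0)}$) and one from $\sum_q\bigl(H^{(1)}_{f,g}(q)+H^{(1)}_{g,f}(q)\bigr)q^{-v}$, whose residue at $v=1$ requires evaluating the sieved $H^{(1)}_{f,g}(q;\ell_1,\ell_2)$ local factors prime by prime, separately at $p\mid N_0$, $p\mid\ell_1\ell_2/(\ell_1,\ell_2)^2$, and $p\nmid N$, and recognizing the result as the two-piece Euler product in \eqref{e:mainfneqg}. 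The paper devotes roughly a page of algebra to this; in a complete proof it must be carried out.
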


When $f=g$, the main term of the average over $q$ is non-zero, of the form $c_1(f)\log Q +c_2(f)$, with $c_1(f),c_2(f)$ independent of $Q$.
The constants when $f \neq g$ are a little easier to give explicitly and hence are stated above.
For $p\mid N_0$, $A(p)=\pm \sqrt p^{-1}$ and $B(p)=\pm \sqrt{p}^{-1}$, 
so the second piece of the main term when $f \ne g$ is non-negative.  It could, however, be equal to zero if $pA(p)\overline{B(p)}+1=0$ for some  $p\mid N_0$.
The first piece of the main term, however, is positive, independent of $Q$, and dominates the error term.  

An immediate consequence is the following: for any $f,g$, there exists a character $\chi$ such that
$$
L(1/2,f,\chi) \overline{ L(1/2,g,\chi)} \ne 0.   
$$
That is, the $L$-series of $f$ and $g$, twisted by $\chi$,  are simultaneously nonzero at the center of the critical strip.  This fact can also be derived from \cite{Roh84, Roh88}, where 
it is shown that if one fixes a finite set of primes $S$, then there are only finitely many primitive Dirichlet characters unramified outside of $S$ such that $L(1/2,f,\chi)=0$.    Interestingly, this result depends upon the algebraicity of the central value (divided by a transcendental factor), whereas the methods employed in this work are purely analytic.    If $f, g$ are Maass forms, the algebraicity property is not present, but the methods of this paper should go through to establish a corresponding simultaneous non-vanishing result.    We restrict ourselves to the case of holomorphic cusp forms in this work because the analysis of the relevant special functions is considerably simpler. 

The dominance of the main term in \eqref{e:mainfneqg} allows us to show that given any $X \gg 1$ there will exist a conductor $q$ close to $X$ such that a character modulo $q$  has simultaneous non-vanishing twists.  Specifically, choosing $y= X^{1/2 - \epsilon}$ gives us the following

\begin{cor}\label{cor:main} 
Fix $X \gg 1$ and $\epsilon >0$.   There exists a conductor $q$  with $|X-q| \ll X^{1/2 + \epsilon}$, and a character $\chi$ modulo $q$, such that 
$$
L(1/2,f,\chi) \overline{ L(1/2,g,\chi)} \ne 0.   
$$
The implied constant depends at most on $\epsilon, k$ and $N_0$.
\end{cor}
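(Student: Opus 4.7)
The plan is to deduce Corollary~\ref{cor:main} directly from Theorem~\ref{thm:main} by choosing the parameter $y$ so that the Gaussian weight localizes the sum to a short window around $Q=X$, while the error terms remain negligible.

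First, I would set $Q=X$ and $y=X^{1/2-\epsilon}$. Since $\theta<1/2$, both error terms $\cO(X^{\theta+\epsilon-1/2})$ and $\cO(yX^{-1/2})=\cO(X^{-\epsilon})$ tend to $0$ as $X\to\infty$, and so become negligible compared to the main term, provided the latter is bounded below.

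Next, I would verify that the main term on the right side of \eqref{e:mainf=g} (when $f=g$) or \eqref{e:mainfneqg} (when $f\neq g$) is bounded away from zero. For $f=g$, this follows from the remark that the main average is of the form $c_1(f)\log Q+c_2(f)$ with $c_1(f)>0$; alternatively one sees directly that the dominant $2\log q$ contribution in $H_{f,f}(q)$ survives averaging against the Gaussian because $\log q=\log X+\cO(1/y)$ on its effective support, yielding a main term $\asymp \log X$. For $f\neq g$, the first piece of the main term, $\frac{L(1,f\otimes g)}{\zeta^{(N_0)}(2)}e^{\pi/y^2}\Res_{v=1}\zeta^{(N_0)}(v)E_{f,g}^{(N_0)}(1)$, is a strictly positive constant whose limit as $X\to\infty$ (using $e^{\pi/y^2}\to 1$) is a positive number depending only on $f,g,N_0$, and the second piece is non-negative by the sign analysis already indicated in the text.

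Third, I would truncate the sum using rapid decay of the Gaussian: for any $A>0$, the factor $\exp(-y^2(\log(X/q))^2/(4\pi))$ is $\cO(X^{-A})$ unless $|q-X|\ll X^{1/2+\epsilon}(\log X)^{1/2}$. Combined with a crude individual bound $S_{f,g}(q)\ll q^{1+\epsilon}$, obtained from the convexity bound $L(1/2,f,\chi)\ll q^{1/2+\epsilon}$ and Cauchy--Schwarz applied to \eqref{e:SQorig}, the contribution of $q$ outside this window is $\cO_A(X^{-A})$ and in particular $o(1)$. Combining the three steps, the truncated weighted sum over $|q-X|\ll X^{1/2+\epsilon}$ is still bounded below by a positive constant (respectively by a positive multiple of $\log X$ when $f=g$), so at least one integer $q$ with $(q,N_0)=1$ in this window must satisfy $S_{f,g}(q)\neq 0$. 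By the definition \eqref{e:SQorig}, some character $\chi$ mod $q$ then gives $L(1/2,f,\chi)\overline{L(1/2,g,\chi)}\neq 0$, and the dependence of the implied constant on $\epsilon,k,N_0$ is inherited from Theorem~\ref{thm:main}. The argument is essentially a routine consequence of the main theorem; the only mildly delicate point is verifying positivity of the main term in the $f\neq g$ case, which has already been addressed in the paper's commentary.
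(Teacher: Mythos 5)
Your proposal is correct and takes the same approach the paper intends: the paper's proof of Corollary~\ref{cor:main} is exactly the one-sentence observation that choosing $y=X^{1/2-\epsilon}$ in Theorem~\ref{thm:main} makes the main term dominate, and you have simply fleshed out that observation. The extra step you add — truncating the Gaussian-weighted sum using a crude convexity bound $S_{f,g}(q)\ll q^{1+\epsilon}$ to confirm that only $q$ with $|q-X|\ll X^{1/2+\epsilon}$ contribute — is a genuinely necessary detail that the paper leaves implicit, since the raw statement of Theorem~\ref{thm:main} involves a sum over \emph{all} admissible $q$. Two tiny remarks: (i) the Cauchy--Schwarz invocation is superfluous, since the termwise bound $|L(1/2,f,\chi)\overline{L(1/2,g,\chi)}|\ll q^{1+\epsilon}$ already gives $|S_{f,g}(q)|\ll q^{1+\epsilon}$ directly; and (ii) strictly speaking the Gaussian localizes to $|q-X|\ll X^{1/2+\epsilon}(\log X)^{1/2}$, but this is absorbed by adjusting $\epsilon$, as you note. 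You also correctly observe that the positivity discussion for the $f\neq g$ main term is already carried out in the text following Theorem~\ref{thm:main}, so there is nothing new to check there. In short, your argument fills in the details in the intended way.
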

A very intriguing question remains:  Will there always exist a \emph{quadratic} character $\chi$ such that  $L(1/2,f,\chi)$ and $ L(1/2,g,\chi)$ are simultaneously non-zero?
One way to answer this would be by finding an expression as main term plus error term for an analog of \eqref{e:SQorig}, where the sum is over quadratic characters with 
conductors in a fixed range.    Significant progress in this direction has been made by Soundararajan and Young, \cite{SY}, in the case $f=g$, but the case $f \ne g$ and a corresponding simultaneous non-vanishing result, appears to be very difficult.

\subsection{Shifted sums and an outline of the approach}
It is relatively easy to replace the problem of estimating the mean value by the problem of estimating a certain double shifted sum, for example, for $X \gg 1$,
\be
\sum_{m,h \ge 1 }
\frac{A(m + hQ)\overline{B(m)} e^{-2m/X-hQ/X}}{((m + hQ)m)^{1/2}}\,.
\ee
This estimate must be made quite precise, and contributes part of the main term. 
The problem of estimating this sum can be translated into the problem of determining the meromorphic continuation to $\C^2$ of a double shifted Dirichlet series:
$$
Z_Q(s,w)= \sum_{h,m \ge 1}
\frac{ a(m+hQ)\overline{b(m)}  }{m^{s+k-1}(hQ)^{w+(k-1)/2}}.
$$
Here $a(m), b(m)$ are unnormalized Fourier coefficients of $f$ and $g$, that is, $a(m) = A(m) m^{(k-1)/2}$ and similarly for $b(m)$.
In fact, the problem can be further reduced to finding a precise estimate for the special value $Z_Q(1-k/2,1/2)$.   Interestingly, the point $(1-k/2,1/2)$ is at the center of a critical strip for $Z_Q(s,w)$.   This is a critical strip in the usual sense for an automorphic $L$-function.   The function can be computed by a convergent series on either side of the critical strip, but it is very difficult to obtain estimates at points inside the strip better than can be obtained by convexity.

\subsection{A roadmap}  The paper is organized as follows.
	In Section \ref{s:review}, we collect the relevant background information about single and double shifted Dirichlet series.   
	In Section \ref{sect:Laverage}, we relate the second moment question to the problem of estimating shifted sums.   
	The shifted sums that arise naturally are summed over indices relatively prime to $Q$, 
	and in Section \ref{s:sieving}, we show how to build up such sieved shifted sums from standard shifted sums.   
	In Section \ref{s:shiftedsum_conv}, we relate the shifted sums to inverse Mellin transforms of multiple Dirichlet series and demonstrate how shifting lines of integration leads to contributions to the main terms from residues at simple and (in the case $f=g$), double poles.  
	One such contribution is a special value of $Z_Q(s,w)$ at the point $(1-k/2,1/2)$.   
	Near $(1-k/2,1/2)$, $Z_Q(s,w)$  separates into a polar piece and an analytic piece.   
The remainder of the paper is devoted to analysis of the analytic piece 
$\tilde Z_Q(s, w)$ as $Q$ varies. 
To perform this analysis, we introduce the triple shifted Dirichlet series, 
$$
	\cM(s, w, v)
	=
	\sum_{q\geq 1, \atop (q, N_0)=1} 
	\frac{Z_q(s, w)}{q^v}
	.
$$
In Section \ref{s:triple}, we obtain the meromorphic continuation of $\cM(s, w, v)$ to $\C^3$.
In Section \ref{s:svest}, we use the analytic properties of $\cM(s, w, v)$ to prove Theorem \ref{thm:main}.

\paragraph{\bf Acknowledgement} The authors would like to thank Valentin Blomer, David Hansen, Roman Holowinsky, Thomas Hulse, Mehmet Kiral, Chan Ieong Kuan, Michael Rubinstein and Matt Young for many very helpful and enlightening conversations.

\section{A review of  some properties of shifted multiple Dirichlet series}\label{s:review}

In this section, we collect background materials from \cite{HH}. 

Let $f$, $g$ be cusp forms of even weight $k$ for $\Gamma_0(N_0)$ with $N_0$ square-free,
with Fourier expansions as above.  Fix $\ell_1,\ell_2 \ge 1$  to be square-free with $(N_0, \ell_1\ell_2)=1$, set
$N=N_0\ell_1\ell_2/(\ell_1,\ell_2)$ and let 
 $V_{\ell_1, \ell_2}$ be the $\Gamma:=\Gamma_0(N)$-invariant function
\begin{equation}\label{e:Vell1ell2}
	V(z) 
	= V_{\ell_1, \ell_2}(z)
	:= y^k\overline{f(\ell_1z)}g(\ell_2z)
	,
\end{equation}
which is rapidly decreasing at the cusps of $\Gamma$.
Recall that in \cite{HH}, for $h\geq 1$ the shifted series
\begin{equation}\label{e:Dsh}
	D(s; h)
	:=
	\sum_{m_2,h \ge 1 \atop \ell_1m_1= \ell_2m_2 + \sh}\frac{a(m_1)\overline{b(m_2)}}{(\ell_2m_2)^{s+k-1}}
	,
\end{equation}
absolutely convergent for $\Re(s) >1$, was defined.
It was convenient to view it as the limit as $\delta \rightarrow 0$ of another series:
$$
	D(s;\sh,\delta)
	= 
	\sum_{m_2,h \ge 1 \atop \ell_1m_1= \ell_2m_2 + \sh} 
	\frac{a(m_1) \overline{b(m_2)}} {(\ell_2m_2)^{ s+\wt-1}}
	\left(1 + \frac{\delta \sh}{ \ell_2 m_2} \right)^{-(s+k-1)}
 	.
 $$

The meromorphic continuation and spectral expansion of $D(s; h, \delta)$ was obtained, and to give this we need a certain amount of notation.
For any $u, v\in L\left(\Gamma\bsl \bH\right)$, we define the 
Petersson inner product
$$
	\left<u, v\right>
	:= 
	\frac{1}{\vol}\iint_{\Gamma\bsl \bH}
	u(z)\overline{v(z)}\; \frac{dx\; dy}{y^2}
$$
where $\vol= {\rm Vol}(\Gamma\bsl \bH)$.

Let $\{u_j\}_{j\geq 0}$
be an orthonormal basis (with respect to the Petersson inner product) 
for the discrete part of the spectrum of the Laplace operator on $L^2\left(\Gamma\bsl \bH\right)$.
Let $s_j(1-s_j)$ for $s_j\in \C$ be an eigenvalue of the Laplace operator for $u_j$ for each $j\geq 0$.   Then we have the Fourier expansion
\begin{equation}\label{e:normalized_maassform}
	u_j(z) 
	= 
	\sum_{n\neq 0} \rho_j(n)\sqrt{y}K_{it_j}(2\pi|n|y)e^{2\pi inx}\,.
\end{equation}
Here $s_j=\frac{1}{2}+it_j$ for $t_j\in \R$ or $\frac{1}{2}< s_j< 1$. 
Also $K_{\tau}(y)$, for $\tau\in \C$ and $y>0$, is the $K$-Bessel function.

The cusps of $\Gamma_0(N)$ are uniquely represented by 
the rationals $1/a$ for $a \mid N$, 
and the Eisenstein series for $\Gamma$ are indexed by the cusps.
For each cusp $\cusp\in \Q$, let $\sigma_\cusp\in SL_2(\R)$, 
with $\sigma_\cusp \infty = \cusp$, 
be a scaling matrix for the cusp $\cusp$, i.e., 
$\sigma_\cusp$ is the unique matrix (up to right translations)
such that 
$\sigma_\cusp \infty=\cusp$
and 
$$
	\sigma_\cusp^{-1}\Gamma_\cusp \sigma_\cusp 
	=
	\Gamma_\infty = \left\{\left.
	\pm \bpm 1 & b\\ 0 & 1\ebpm \;\right|\;
	b\in \Z\right\}
$$
where
$$
	\Gamma_\cusp 
	= 
	\left\{\gamma\in \Gamma\;|\; \gamma \cusp = \cusp\right\}
	.
$$

For a cusp $\cusp$, define the Eisenstein series at the cusp $\cusp$ to be
$$
	E_\cusp(z, s)
	:=
	\sum_{\gamma\in \Gamma_\cusp\bsl \Gamma}
	\Im(\sigma_\cusp^{-1}\gamma z)^s.
$$
For $\cusp = 1/a$, with $a \mid N$, 
we have the following Fourier expansion which we quote from \cite{DI83}:
\begin{multline}\label{m:EisensteinSeries_fex}
	E_\cusp \left(z, \frac{1}{2}+\tau\right) 
	=
	\delta_{\cusp, \infty} y^{\frac{1}{2}+\tau}
	+
	\tau_\cusp\left(\frac{1}{2}+\tau, 0\right) y^{\frac{1}{2}-\tau}\\
	+
	\sum_{n\neq 0}
	\tau_\cusp\left(\frac{1}{2}+\tau, n\right)
	\sqrt y K_{\tau}(2\pi |n|y)e^{2\pi inx},
\end{multline}
where
$$
	\tau_\cusp\left(\frac{1}{2}+\tau, 0\right)
	=
	\frac{\sqrt\pi \Gamma(\tau)}
	{\Gamma\left(\frac{1}{2}+\tau\right)}
	\rho_\cusp\left(\frac{1}{2}+\tau, 0\right)
$$
and
$$
	\tau_\cusp\left(\frac{1}{2}+\tau, n\right)
	=
	\frac{2\pi^{\frac{1}{2}+\tau}}
	{\Gamma\left(\frac{1}{2}+\tau\right)}
	\rho_\cusp\left(\frac{1}{2}+\tau, n\right)
	|n|^{\tau},
$$
with 
\be\label{e:rhocusp}
	\rho_\cusp \left(\frac{1}{2}+\tau, n\right)
	=
	\left(\frac{1}{aN}\right)^{\frac{1}{2}+\tau}
	\sum_{\gamma\geq 1, \atop \left(\gamma, \frac{N}{a}\right)=1}
	\gamma^{-1-2\tau}
	\sum_{\delta\pmod{\gamma a}, (\delta, \gamma a)=1}
	e^{-2\pi i n\frac{\delta}{\gamma a}}
	\,.
\ee
Here we fix
\be\label{e:sigma1/a}
	\sigma_{1/a} 
	= \bpm  1 & 0 \\a & 1\ebpm\bpm \sqrt{\m_\cusp} & 0 \\ 0 & \sqrt{\m_\cusp}^{-1}\ebpm
	,
\ee
where $\m_\cusp = N/a$.

 In \cite{B}, it is shown that for $t\in \R$ the $\rho_\cusp\left(\frac{1}{2}+it, n\right)$ satisfy
$$
	\left|\zeta(1+it)\right|^2
	\sum_\cusp 
	\left|\rho_\cusp\left(\frac{1}{2}+it, n\right)\right|^2
	\ll_\epsilon ((1+|t| )N)^{\epsilon}
$$
for any $\epsilon>0$.

With this notation, it was demonstrated that the spectral expansion of $D(s;h,\delta)$ had the following form.  For $\Re(s)>\frac{1}{2}$, 
$$
	D(s; h, \delta) 
	= 
	D_{\rm cusp}(s; h, \delta) +D_{\rm cont}(s; h, \delta)
	,
$$
where 
\be\label{cuspdelta}
	D_{\rm cusp}(s; h, \delta)
	:= 
	\frac{(4\pi)^k 2^{s-\frac{1}{2}}}
	{2\sqrt{\pi}\Gamma(s+k-1) h^{s-\frac{1}{2}}}
	\sum_j \overline{\rho_j(-h)} M(s, t_j, \delta) \overline{\left<V, u_j\right>}
\ee
and
\begin{multline}\label{contdelta}
	D_{\rm cont}(s; h, \delta)
	:= 
	\frac{(4\pi)^k 2^{s-\frac{1}{2}} \vol }
	{2\sqrt{\pi}\Gamma(s+k-1) h^{s-\frac{1}{2}}}
	\\
	\times \sum_{\cusp}
	\frac{1}{2\pi i} 
	\int_{(0)} 
	\frac{\tau_\cusp\left(\frac{1}{2}-\tau, -h\right)}
	{\zeta^*(1+2\tau)}
	M\left(s, \tau/i, \delta\right) 
	\overline{\left< V, E_\cusp^*\left(*, \frac{1}{2}+\tau\right)\right>} \; d\tau.
\end{multline}
Here $\zeta^*(s)$ is the completed Riemann zeta function.
The sums and integrals are absolutely convergent 
for any $\delta>0$, and for any fixed $A>0$ have a meromorphic 
continuation to 
$\Re(s) >\frac{1}{2}-A$ that is absolutely convergent away from the poles
of $M(s,t,\delta)$.

For convenience, we repeat here a description of 
the relevant properties of $M(s,t,\delta)$.   
The function $M(s,t,\delta)$ is originally defined, for $1/2<\Re(s) $ and $t \in \R$, 
by
$$
	M(s, t, \delta)
	=
	\int_0^\infty y^{s-\frac{1}{2}} e^{y(1-\delta)} K_{it}(y)\; \frac{dy}{y}
	.
$$
Its meromorphic continuation is given in the following proposition. 
This is a slight strengthening of the original proposition of \cite{HH}, 
which can be found in \cite{Hulse}. 

\begin{prop}\label{prop:Mbounds} 
Fix  $\epsilon >0, \delta >0$ and  $A \gg 1$, not  an integer. 
The function $M(s,z/i,\delta)$ has a meromorphic continuation to all $s$ and $z$ in $\C$.  
In this region $M(s,z/i,\delta)$ is analytic except for simple poles at the points $s = \hf \pm z - \ell$ for $0 \le \ell < A$.   
When $z \neq 0$, the residues at   $s = \hf  \pm  z - \ell$ are given by 
\begin{multline}\label{Mres}
	\Res_{s =\hf \pm z - \ell} M(s,z/i,\delta)
	=
	\frac{(-1)^\ell \sqrt{\pi}2^{\ell \mp z} \Gamma(\hf \mp z + \ell) \Gamma (\pm 2 z - \ell) }{\ell ! \Gamma (\hf + z)\Gamma(\hf - z)}
	\\
	+ 
	\mathcal{O}_\ell((1+|\Im(z)|)^{\ell}e^{-\frac{\pi}{2}|\Im(z)|}\delta^{\qtr-\epsilon})
	.
\end{multline}
When $z =0$ there are double poles at $s=\hf-\ell$ for $\ell \geq 0$, and within an $\epsilon>0$ radius of these points, $M(s,z/i,\delta)$ has the form
\be\label{laurent}
	M(s,0,\delta) 
	= 
	\frac{c_2(\ell)+\Or_\ell(\delta)}{(s-\thf+\ell)^{2}}+\frac{c_1(\ell)+\Or_\ell(\delta^{\qtr})}{(s-\thf+\ell)}+\Or_\ell(1)+\Or_\ell(\delta^{\qtr-\epsilon})
\ee
where $c_1(\ell)$ and $c_2(\ell)$ are computable functions of $\ell$, given by
\be\label{c1}
	c_1(\ell) 
	=  
	\frac{(2\ell)!}{2^\ell (\ell!)^3}\lt( 2\Gamma'(1)-\frac{\Gamma'(\hf+\ell)}{\Gamma(\hf+\ell)}-\log(2) \rt)
\ee
and
\be\label{c2}
	c_2(\ell) =  \frac{(2\ell)!}{2^\ell(\ell!)^3}
	.
\ee

Considering the poles in terms of $z$, when $0<\left|\Re(s) -\hf +\ell \right|<\epsilon$ for small enough $\epsilon>0$ and $\ell \geq 0$ we have that
\begin{multline}\label{ires2}
	\Res_{z=\pm(s+\ell-\hf)} M(s,z/i,\delta)
	=
	\mp \frac{(-1)^\ell \sqrt{\pi}2^{\hf-s}\Gamma(1-s)\Gamma(2s+\ell-1)}{\ell!\Gamma(s+\ell)\Gamma(1-s-\ell)} 
	\\
 	+
	\cO_{A}\lt(\frac{\Gamma(2s+\ell-1)}{\Gamma(s+\ell)}(1+|s|)^{1-2\Re(s)} \delta^{\frac{1}{4}-\epsilon}\rt) 
\end{multline}
where these residues have a meromorphic continuation
to $\hf -A < \Re (s) < \hf -\ell +\epsilon$. 

When $s$ and $z$ are at least $\epsilon$ away from the poles of $M(s,z/i,\delta)$ we have the upper bound
\be\label{upper1}
	M(s,z/i,\delta) 
	\ll_{A,\epsilon} 
	(1 + |\Im(z)|)^{2 \Re(s) -2 - 2A}(1+|s|)^{4A- 3 \Re(s) +3}\delta^{-A }e^{-\frac\pi 2 |\Im(s)|}.
\ee

Also, for $\Re (s+z) \leq \hf+\max(0,2\Re(z))$, at least a distance $\epsilon$ away from poles, and $\delta (1 +|\Im(z)|)^{2} \le 1$,
\begin{multline}\label{deltasmall}
	M(s,z/i,\delta) 
	=    
	\frac{\sqrt{\pi} 2^{\hf-s}\G(s-\hf -z) \G(s-\hf + z) \G(1-s)}{\G(\hf - z)\G(\hf + z)}	
	\\  
	+
	\cO_{A,\epsilon} \left( (1 + |\Im(z)|)^{2\Re (s)-2 +  2\epsilon}(1 + |s|)^{ 1 -\Re( s) }e^{-\frac{\pi}{2}|\Im (s)|} \delta^\epsilon \right)
\end{multline}
while for $\delta (1 +|\Im(z)|)^{2} > 1$
\be\label{deltabig}
	M(s,z/i,\delta) 
	\ll_{A,\epsilon}  
	(1 + |\Im(z)|)^{2\Re(s) -2}(1 + |s|)^{ 4A-3\Re(s) +3 }e^{-\frac{\pi}{2}|\Im(s)|}.
\ee

When $\Re (z) =0$ and $|z|,|\Im (s)| \gg 1$, $|s\pm z -\hf -m| =\epsilon>0$,  for $\epsilon$ small, we have
\be\label{nearp1}
	M(s,z/i,\delta) 
	\ll_m 
	\epsilon^{-1}(1+|s|)^{1-\Re(s)}e^{-\frac{\pi}{2}|\Im(s)|}.
\ee
\end{prop}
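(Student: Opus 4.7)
The plan is to base the entire proposition on an explicit closed form for $M(s,z/i,\delta)$ as a Gauss hypergeometric function, then read off each assertion by specializing classical hypergeometric identities. Inserting $K_z(y)=\int_0^\infty e^{-y\cosh u}\cosh(zu)\,du$ into the defining integral and swapping the order of integration yields the companion representation
\[
	M(s,z/i,\delta)=\Gamma(s-\thf)\int_{0}^{\infty}\cosh(zu)\,(\cosh u-1+\delta)^{\thf-s}\,du,
\]
while the standard evaluation of $\int_0^\infty e^{-\alpha y}y^{\mu-1}K_\nu(\beta y)\,dy$ (Gradshteyn--Ryzhik 6.621.3) gives, initially in the region $\Re(s)>\thf+|\Re(z)|$,
\[
	M(s,z/i,\delta)=\frac{\sqrt{\pi}\,2^{z}\,\Gamma(s-\thf+z)\,\Gamma(s-\thf-z)}{\Gamma(s)\,\delta^{\,s-\thf+z}}\;{}_2F_1\!\bigl(s-\thf+z,\tfrac12+z;\,s;\,1-\tfrac{2}{\delta}\bigr).
\]
Since ${}_2F_1$ is entire in its parameters and analytic in its argument on $\C\setminus[1,\infty)$, this displays the meromorphic continuation of $M$ to $\C^{2}$, with poles exactly at $s=\thf\pm z-\ell$ for $\ell\geq 0$ coming from the $\Gamma(s-\thf\pm z)$ factors.

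To prove \eqref{Mres}, take the residue at $s=\thf+z-\ell$ using $\Res\Gamma(s-\thf-z)|_{s=\thf+z-\ell}=(-1)^{\ell}/\ell!$ and then, since $1-2/\delta\to-\infty$ as $\delta\to 0$, apply Kummer's large-argument connection formula
\[
	{}_2F_1(a,b;c;w)=\tfrac{\Gamma(c)\Gamma(b-a)}{\Gamma(b)\Gamma(c-a)}(-w)^{-a}{}_2F_1(a,1-c+a;1-b+a;1/w)+\tfrac{\Gamma(c)\Gamma(a-b)}{\Gamma(a)\Gamma(c-b)}(-w)^{-b}{}_2F_1(b,1-c+b;1-a+b;1/w)
\]
to isolate the $\delta$-independent leading branch. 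Collapsing the prefactors by the Legendre duplication formula recovers exactly the main term of \eqref{Mres}; the secondary branch, of size $(\delta/2)^{1/2+z}$ times ${}_2F_1(\cdots;1/w)=1+O(\delta)$, gives $O(\delta^{1-\Re s})$, which is interpolated against the trivial $O(1)$ bound to produce the stated $O(\delta^{1/4-\epsilon})$ error uniformly in $z$. The double-pole case $z=0$ in \eqref{laurent} then follows by taking $\lim_{z\to 0}$ of the sum of the two coalescing simple-pole expansions, with $c_1(\ell)$ and $c_2(\ell)$ emerging from the logarithmic derivative of the $\Gamma$-prefactors at $z=0$. The mirror formula \eqref{ires2} comes from the same closed form: treating $z$ as the variable at fixed $s$, the pole at $z=\pm(s+\ell-\thf)$ arises from the identical $\Gamma(s-\thf\mp z)$ factor, and its residue is $\mp$ the Frobenius coefficient of $y^{n\mp z}$ in $e^{y(1-\delta)}K_z(y)$ evaluated at the shifted $z$.

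For the three analytic bounds, I would switch between representations according to the regime $\delta(1+|\Im z|)^{2}\lessgtr 1$. When $\delta(1+|\Im z|)^{2}\leq 1$, Kummer's expansion is effective and its $(-w)^{-a}$ branch supplies the explicit leading term of \eqref{deltasmall}, with the $(-w)^{-b}$ branch yielding the $O(\delta^{\epsilon})$ error after Stirling. When $\delta(1+|\Im z|)^{2}>1$, the $u$-integral is dominated by the range $u\asymp\sqrt\delta$; combining $|\cosh(zu)|\leq e^{|\Im z|u}$ with $(\cosh u-1+\delta)^{\thf-s}\lesssim\delta^{\thf-\Re s}$ on that range gives \eqref{deltabig}. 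For the rough bound \eqref{upper1}, repeated integration by parts in $u$ exploits the oscillation of $\cos((\Im z)u)$: each pair of integrations gains $(1+|\Im z|)^{-2}$ while losing a bounded power of $\delta^{-1}$ from differentiating $(\cosh u-1+\delta)^{\thf-s}$ on $u\lesssim\sqrt\delta$; the $e^{-\pi|\Im s|/2}$ factor arises from Stirling applied to $\Gamma(s-\thf)$. Finally \eqref{nearp1} is immediate from the Laurent expansion of $M$ at the relevant simple pole together with the residue asymptotics just established.

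The principal technical obstacle will be keeping all of these estimates uniform in the three complex parameters simultaneously, and in particular securing the genuine $\delta^{1/4-\epsilon}$ savings (rather than merely $\delta^{\epsilon}$) in \eqref{Mres}: both Kummer branches and the residual ${}_2F_1(\cdots;1/w)$ tails must be controlled across the full ranges of $s$ and $z$, and the crossover regime $\delta(1+|\Im z|)^{2}\asymp 1$ must be interpolated. A secondary but delicate point is the coalescence at $z=0$: one must verify that the error terms in \eqref{laurent} remain finite as $z\to 0$, which reduces, via the explicit closed form, to checking that the main-term constants in \eqref{Mres} extend analytically through $z=0$ by the $\pm z$ symmetry of $M$ in the second variable.
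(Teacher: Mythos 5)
There is no internal proof to compare against: the paper quotes this proposition verbatim as ``a slight strengthening of the original proposition of \cite{HH}, which can be found in \cite{Hulse}.'' Your starting point is in fact the same engine used there: evaluating $\int_0^\infty y^{s-3/2}e^{-(\delta-1)y}K_z(y)\,dy$ by Gradshteyn--Ryzhik 6.621.3 to get the ${}_2F_1$ closed form with argument $1-2/\delta$, then applying connection formulas and Stirling. Your main-term bookkeeping is consistent: the first Kummer branch at $s=\hf\pm z-\ell$ reproduces the main term of \eqref{Mres} (no duplication formula is needed, only the cancellation of $\Gamma(\hf+z-\ell)$ against $\Gamma(c)=\Gamma(s)$), the residues taken in $z$ give the main term of \eqref{ires2}, and the leading $z=0$ coefficient does come out to $c_2(\ell)=(2\ell)!/(2^\ell(\ell!)^3)$.

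The gaps are exactly where the content of the proposition lies, namely the uniform error terms. (i) The $\delta^{1/4-\epsilon}$ in \eqref{Mres} is not produced by ``interpolating against the trivial $O(1)$ bound'': by your own computation the second Kummer branch contributes about $\delta^{\,1/2-\Re z+\ell}$ at $s=\hf+z-\ell$, which is $O(\delta^{1/4})$ only under the restriction $|\Re z|\le 1/4$ (the Selberg range for $z=it_j$, which is how the exponent $1/4$ arises); for unrestricted $\Re z$ the estimate as you would prove it is false, so this hypothesis must be made explicit. (ii) The large-argument expansion of ${}_2F_1(a,b;c;1-2/\delta)$ is not uniform once $|z|$ or $|s|$ is comparable to $\delta^{-1/2}$, i.e.\ precisely in the crossover regime $\delta(1+|\Im z|)^2\asymp 1$ that separates \eqref{deltasmall} from \eqref{deltabig} and that also governs the factor $(1+|\Im z|)^{\ell}e^{-\frac{\pi}{2}|\Im z|}$ in \eqref{Mres}; you name this as an obstacle but give no mechanism. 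In \cite{Hulse} this is handled via a Mellin--Barnes representation of the hypergeometric factor and roughly $A$ contour shifts, which is where the specific shape $(1+|\Im z|)^{2\Re(s)-2-2A}(1+|s|)^{4A-3\Re(s)+3}\delta^{-A}$ of \eqref{upper1} comes from; your integration-by-parts sketch gives the right qualitative trade-off but cannot be expected to deliver those exponents without such a uniform device. (iii) The $z=0$ statement \eqref{laurent} cannot be obtained by taking $\lim_{z\to 0}$ of the two coalescing simple-pole expansions: the two main terms of \eqref{Mres} individually blow up through $\Gamma(\pm 2z-\ell)$ and cancel, so the error must be controlled uniformly through the coalescence; the clean route is to expand your closed form directly at $z=0$. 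Finally, a small but real slip: ${}_2F_1(a,b;c;w)$ is not entire in $c$ (it has poles at $c\in\Z_{\le 0}$); what is entire is ${}_2F_1/\Gamma(c)$, which is what rescues your continuation argument since $c=s$ sits over the explicit $1/\Gamma(s)$ in the prefactor.
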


Finally, it is convenient to summarize part of the proposition by setting
$$
	M(s, t)
	=
	\lim_{\delta \rightarrow 0} M(s, t,\delta)= \frac{\sqrt{\pi} 2^{\hf-s}\G(s-\hf -it) \G(s-\hf + it) \G(1-s)}{\G(\hf - it)\G(\hf + it)}.
$$

The use of $\delta >0$ is convenient because the spectral expansions \eqref{cuspdelta}, \eqref{contdelta} converge for $\Re (s)>1/2$ for all $\delta >0$ and have absolutely convergent meromorphic continuations back to $\Re(s) >1/2-A$.    However, the limits as $\delta$ approaches 0 of these series:
$$
	D_{\rm cusp}(s; h)
	= 
	\lim_{\delta \rightarrow 0}D_{\rm cusp}(s; h, \delta)
$$
and
$$
	D_{\rm cont}(s; h)
	= 
	\lim_{\delta \rightarrow 0}D_{\rm cont}(s; h, \delta)
$$
are only convergent in certain regions: 
$\Re (s) <1/2-k/2$ for $D_{\rm cusp}(s; h)$, 
and $\Re(s) <1-k/2$ for $D_{\rm cont}(s; h)$.

The meromorphic continuation of 
$D_{\rm cusp}(s; h, \delta)$ to $\Re(s) >1/2-A$ 
is given by the same expression \eqref{cuspdelta}.  
The meromorphic continuation of $D_{\rm cont}(s; h, \delta)$
to $\Re(s)>1/2-A$, however, is different.    
This is because as $s$ is continued to 
$\Re(s) >1/2-A$, an extra residual term is added every time 
$s$ passes the vertical line $\Re(s) = 1/2 -r$, with $r \ge 0$ 
an integer.  
Specifically, for $\Re(s)=:\sigma >1/2-A$,
\be\label{Dcontextend}
	D_{\rm cont}(s; h, \delta) 
	= 
	D_{\rm int}(s; h, \delta)+\Omega(s; h, \delta),
\ee
where
\begin{multline} \label{Dspec} 
	D_{\rm int}(s; h, \delta)
	:=   
	\frac{(4 \pi)^{k}2^{s-\hf} \vol }
	{2\sqrt{\pi}\Gamma(s+k-1) h^{s-\hf}}
	\\ 
	\times
	\sum_\cusp   
	\frac{1}{4\pi i} 
	\int_{-\ol{C_\sigma}} 
	 \frac{ \tau_\cusp\left(\frac{1}{2}-\tau, -h\right) }
	{\zeta^*(1+2\tau)}
	M(s,\tau/i,\delta)
	\ol{\langle V,E^*_\cusp(*,\thf+\tau) \rangle}  \; d\tau
 \end{multline}
and
\begin{multline}\label{Omega}
	\Omega(s; h, \delta)
	:=   
	\frac{(4 \pi)^{k}2^{s-\hf} \vol }
	{2\sqrt{\pi}\Gamma(s+k-1) h^{s-\hf}}
	\sum_\cusp 
	\lt[
	\sum_{\ell=0}^{\lfloor\hf-\sigma\rfloor} 
	\lt(
	\frac{ \tau_\cusp\left(1-s-\ell, -h\right) } { \zeta^*(2s+2\ell) }   
	\overline{\langle V, E_\cusp^*\left(*, s+\ell \right)\rangle } 
	\rt.
	\rt.
	\\
	+ 
	\lt. 
	(1-\pmb{\delta}_{\sigma, \ell} )
	\frac{\tau_\cusp\left(s+\ell, -h\right)}
	{\zeta^*(2-2s-2\ell )} 
	\ol{\langle V,E^*_\cusp(*,1-s-\ell ) \rangle}
	\rt)
	\\
	\lt. 
	\times
	\frac{(-1)^{\ell} \sqrt \pi 2^{\hf-s} \Gamma(1-s) \Gamma(2s+\ell-1)}
	{\ell! \Gamma(s+\ell) \Gamma(1-s-\ell)}
	\lt(1+\Or_A\lt((1+|s|)^{\frac94-2\sigma}\delta^{\qtr-\epsilon}\rt)\rt)
	\rt]
	\,.
\end{multline}
Here
\be\label{csigma}
	 C_\sigma 
	 = 
	 \lt\{
	 \begin{array}{ll}
	 (0) & \mbox{ when } \thf-\sigma \notin \Z_{\geq 0} \\
	 C & \mbox{ when } \thf -\sigma \in \Z_{\geq 0}
	 \end{array}
	 \rt.
\ee
and
\be\label{deltasigma}
	\pmb{\delta}_{\sigma,\ell} 
	= 
	\lt\{
	\begin{array}{ll}
	1 & \mbox{ when } \ell \neq  \thf-\sigma \\
	0 & \mbox{ when } \ell = \thf -\sigma. 
	\end{array}
	\rt. 
\ee
		
Finally, 
the function $D(s;h)$ has simple poles at  $s = \hf \pm it_j - r$ 
for each $t_j$ and each $0 \le r < A \mp \Im ( t_j)$ 
and
\be\label{Rdef}
	\Res_{s = \hf + it_j - r}D(s;h)
	= 
	c_{r,j} h^{r-it_j}
	\overline{\rho_j(-h)},
\ee
where now the index $j$ identifies 
our choice of $+it_j$ or $-it_j$ 
and
\be\label{crjdef1}
	c_{r,j} 
	=
	\frac{
	(-1)^r (4\pi)^k\overline{\<V,u_j\>}
	\Gamma(\hf - it_j +r)\Gamma(2it_j -r)
	}
	{
	2r!\Gamma(\hf + it_j)\Gamma(\hf-i t_j) \Gamma(k-\hf + it_j -r)
	}
	.
\ee
For $T \gg1$, 
the $c_{r,j} $ satisfy the average upper bounds
\be\label{crj2}
	\sum_{|t_j|\sim T}
	|c_{r,j}|^2 e^{\pi |t_j|} 
	\ll
	\log( T) (\ell_1 \ell_2)^{ -k} T^{2r + 1}.
\ee

For $Q\in \Z$, $Q\ge 1$ with $(Q, N_0)=1$,  a double Dirichlet series is defined by
\be\label{ZDef}
	Z_Q(s,w) 
	=
	Z_Q(s, w; \ell_1, \ell_2)
	:=   
	(\ell_1\ell_2)^{\frac{k-1}{2}}\sum_{h \ge 1}\frac{ D(s;hQ)}{(hQ)^{w+(k-1)/2}} 
	.
\ee
This is absolutely convergent for $\Re(s), \Re(w) >1$.     
For any $\delta >0$, 
we also define the corresponding double series
\be\label{ZdeltaDef}
	Z_Q(s,w;\delta) 
	:=  
	(\ell_1\ell_2)^{\frac{k-1}{2}} 
	\sum_{h \ge 1}\frac{ D(s;hQ; \delta)}{(hQ)^{w+(k-1)/2}}
	.
\ee
This can be rewritten as
\begin{multline}\label{ZDef}
	Z_Q(s,w) 
	=  
	\sum_{h,m \ge 1\atop \ell_1m_1= \ell_2m_2 +hQ}
	\frac{
	A(m_1) \overline{B(m_2)}
	\left( 1+ \frac{hQ}{ \ell_2m_2 }\right)^{(k-1)/2}  
	}
	{
	(\ell_2m_2)^s(hQ)^{w+(k-1)/2}
	}
	\\ 
	= 
	(\ell_1\ell_2)^{(k-1)/2}  
	\sum_{h,m \ge 1\atop \ell_1m_1= \ell_2m_2 +hQ}
	\frac{ 
	a(m_1)\overline{b(m_2) } 
	}
	{(\ell_2m_2)^{s+k-1}(hQ)^{w+(k-1)/2}}
	,
\end{multline}
and
$$
	Z_Q(s,w) 
	= 
	\lim_{\delta \rightarrow 0}Z_Q(s,w;\delta)
	.
$$
The functions $Z_Q(s,w)$ and $Z_Q(s,w;\delta)$ 
have spectral expansions and meromorphic continuations 
to $\Re(s) >1/2 -A$ given by 
\be\label{Zinit}
	Z_Q(s,w) 
	=
	(\ell_1\ell_2)^{(k-1)/2} 
	\left( S_{\text{cusp}}(s,w) + S_{\text{cont}}(s,w) \right)
\ee
and 
\be\label{Zdeltainit}
	Z_Q(s,w;\delta) 
	= 
	(\ell_1\ell_2)^{(k-1)/2} \left(S_{\text{cusp}}(s,w;\delta) 
	+ S_{\text{cont}}(s,w;\delta)\right)
	.
\ee
Here for $\Re (s)>1/2-A$, $\Re (s') >1$ and $\Re(w)>1$, 
\be\label{Scuspdeltadef}
	S_{\rm cusp}(s, w; \delta)
	:=
	\frac{(4\pi)^k 2^{s-\frac{1}{2}}}
	{2\sqrt{\pi}\Gamma(s+k-1) }
	\sum_j L_Q\left(s', \overline{u_j }\right) 
	M(s, t_j, \delta) 	
	\overline{ \left<V, u_j\right>}
\ee
and
\be\label{Scontdeltadef}
	S_{\rm cont}(s, w; \delta)
	:= 
	S_{\rm cont, int}(s, w; \delta) +\Psi(s,w;\delta) , 
\ee
with
\begin{multline}\label{ScontIntdeltadef}
	S_{\rm cont, int}(s, w; \delta)
	:=
	\frac{(4\pi)^k 2^{s-\frac{1}{2}} \vol} {2\sqrt{\pi}\Gamma(s+k-1)}
	\sum_\cusp \frac{1}{2\pi i} \int_{(0)} 
	\frac{Q^{-s'}\zeta_{\cusp, Q}(s', \tau)}
	{\zeta^*(1+2\tau)\zeta^*(1-2\tau) }
	\\
	\times 
	M(s, \tau/i, \delta)
	\overline{\left<V, E^*_\cusp\left(*, \frac{1}{2}+\tau\right)\right>} 
	\; d\tau 
\end{multline}
and
\begin{equation}\label{Psi}
	\Psi(s,w;\delta)
	:= 
	\sum_{h_0\geq 1} \frac{\Omega(s; h_0Q, \delta)}{(h_0Q)^{w+(k-1)/2}}
	,
\end{equation}
assuming $\Re(s)\notin \frac{1}{2}\Z$.
Here 
$s' = s+w +\frac{k}{2}-1$, 
\be\label{LQDef}
	L_Q(s',\overline{u_j}) 
	= 
	\sum_{h \ge 1}\frac{\overline{\rho_j(hQ)}}{(hQ)^{s'}},
\ee
and
\be\label{zetaQDef2}
	\zeta_{\cusp, Q}(s', \tau)
	:=
	\frac{\zeta^*(1-2\tau)}{2}
	\sum_{h\geq 1} 
	\frac{\tau_\cusp\left(\frac{1}{2}-\tau, -hQ\right)} {h^{s'}}
	,
\ee
then
$$
	\frac{\zeta_{\cusp, Q}(s', \tau)}
	{\zeta(1-2\tau)}
	=
	\frac{1}{Q^{\tau}} \sum_{h\geq 1}
	\frac{\rho_\cusp\left(\frac{1}{2}-\tau, -hQ\right)}
	{h^{s'+\tau}}
	.
$$

For each $\cusp=1/a$ with $a\mid N$, following \cite{HM06}, for $n\neq 0$, we have
\begin{multline}\label{e:rho_an}
	\rho_\cusp(s, n)
	=
	\left(\frac{N}{a}\right)^{-s}
	\frac{\sum_{d\mid n, \atop (d, N)=1} d^{1-2s}}
	{\zeta(2s)}
	\\
	\times
	\prod_{p\mid N} (1-p^{-2s})^{-1}
	\prod_{p\mid a, p^\alpha\|n, \atop \alpha\geq 0}
	\frac{p^{-2s}} {1-p^{-2s+1}}
	\left(p-p^{\alpha (-2s+1)+1} -1 + p^{(\alpha+1)(-2s+1)}\right)
\end{multline}
and after a simple computation, 
\begin{multline}\label{e:zeta_aQ}
	\zeta_{\cusp, Q}(s', \tau)
	=
	\zeta(s'+\tau) \zeta(s'-\tau)
	\left(\frac{N}{a}\right)^{-\frac{1}{2}+\tau}
	Q^{-\tau}
	\prod_{p\mid N} \left(1-p^{-1+2\tau}\right)^{-1}
	\prod_{p\mid\frac{N}{a}} (1-p^{-(s'-\tau)})
	\\
	\times
	\prod_{p\mid a, p^\alpha\|Q, \atop \alpha\geq 0}
	\frac{p^{-1+2\tau}}{1-p^{2\tau}}
	\left((p-1)(1-p^{-(s'-\tau)})
	+ p^{\alpha 2\tau}(p^{2\tau}-p)(1-p^{-(s'+\tau)}) 
	\right)
	\\
	\times
	\prod_{p\mid Q, p\nmid N, \atop p^\alpha\|Q}
	\left(1-p^{2\tau}\right)^{-1}
	\left((1-p^{-(s'-\tau)}) - p^{(\alpha+1)2\tau}(1-p^{-(s'+\tau)}) \right)
	\,.
\end{multline}

The corresponding expansions hold for 
$S_{\rm cusp}(s, w)$ and $S_{\rm cont}(s, w)$,
with  $M(s,t;\delta)$ replaced by $M(s,t)$. 
These expansions converge absolutely, however, only
in the range $\Re(s)  <1/2-k/2$ and $\Re (w) >1$.
Also we have 
$\Psi(s, w) = \lim_{\delta\to 0} \Psi(s, w; \delta)$
and the meromorphic continuation of $\Psi(s, w)$ to $\Re(s')> \frac{1}{2}-\epsilon$ is given by 
\begin{multline}\label{e:Psi0}
	\Psi(s,w)
	:=	
	\frac{(4\pi)^{k}\Gamma(1-s)}{2\Gamma(s+k-1)}
	\sum_{a} \bigg[ 
	\sum_{\ell=0}^{\lfloor\hf-\sigma\rfloor} 
	\lt(
	\frac{\vol Q^{-s'}\zeta_{\cusp, Q} \left(s', 1-s-\ell\right) } { \zeta^* (2-2s-2\ell) \zeta^*(2s+2\ell) }   
	\overline{\langle V, E_\cusp^*\left(*, s+\ell \right)\rangle } 
	\rt.
	\\
	+ 
	\lt. 
	(1-\pmb{\delta}_{\sigma, \ell} )
	\frac{\vol Q^{-s'} \zeta_{\cusp, Q} \left(s', s+\ell\right)}
	{\zeta^*\left(2s+2\ell \right)\zeta^*(2-2s-2\ell )} 
	\ol{\langle V,E^*_\cusp(*,1-s-\ell ) \rangle}
	\rt)
	\frac{(-1)^{\ell} \Gamma(2s+\ell-1)}
	{\ell! \Gamma(s+\ell) \Gamma(1-s-\ell)}
	\\
	+
	\pmb{\delta}'_{\sigma'}
 	\frac{\G(s + \frac12 -s')\G(s- \frac32 +s')}
	{Q^{s'}\pi^{\hf -s'}\zeta^*(3-2s')\G( s'-\frac12)^2\G( \frac32  -s')}  
	\\
	\times 
	\lt(K_{\cusp, Q}^+(s')
	\overline{\<V,E_\cusp^*(*,\tfrac 32 -s')\>}
	+(1-\pmb{\delta}'_{2\sigma'})K_{\cusp, Q}^-(s')
	\overline{\<V,E_\cusp^*(*,s' -\tfrac 12)\>}\rt) \bigg]
	,
\end{multline}
where $K_{\cusp, Q}^\pm(s')$ is a ratio of Dirichlet polynomials defined by 
$$
	\Res_{\tau=\pm(1-s')} \zeta_{\cusp, Q}(s', \tau) 
	= 
	K^\pm_{\cusp, Q}(s') \zeta(-1+2s')
	.
$$

Referring to line (7.9) of \cite{HH}, and the following line, 
we have the bound for $\Re \, {s'} \ge\hf - \epsilon$, 
$\Re \, w > 1$, and $\ell_1,\ell_2 \ll Q$
\be\label{Z2}
	Z_Q(s,w) - \Psi(s, w)
	\ll  
	Q^{\theta - {s'}+\epsilon}(1+|s|)^{1-k}(1 + |{s'}|)^{1+\epsilon}
	.
\ee
The $\theta$ can be removed if $\ell_1,\ell_2 \gg Q^{1/4} \log Q$.
The same bound applies to the individual pieces	
$$ 
	(\ell_1\ell_2)^{(k-1)/2} S_{\text{cusp}}(s,w;\delta) 
$$ 
and  
$$
	(\ell_1\ell_2)^{(k-1)/2}  S_{\text{cont,int}}(s,w;\delta)
.
	$$

Finally, $Z_Q(s,w)$ has poles at $s = 1/2-r+it_j$ and, when $t_j \ne 0$,
\be\label{Zpoles}
	\Res_{s = \hf -r \pm it_j} Z_Q(s,w)
	= 
	(\ell_1\ell_2)^{(k-1)/2} c_{r,j} L_Q(w+(k-1)/2-r+it_j,\overline{u_j})
	.
\ee

\section{Relating mean values of $L$-series to shifted sums}\label{sect:Laverage}

Let $f,g$ be modular forms of even weight $k$, square-free level $N_0$ and normalized Fourier coefficients $A(m),B(m)$, as in \eqref{def1}.
For a positive integer $Q\geq 1$ with $(Q, N_0)=1$, let $\chi$ be a character modulo $Q$. 

Multiplying the twisted $L$-series by $\G(s)$ and applying an inverse Mellin transform, one obtains, for $X \gg 1$,
$$
	I 
	= 
	\int_{(2)} L(s+1/2,f, \chi) X^s \G(s)ds = \sum_{m \ge 1} \frac{A(m) \chi(m)}{m^{1/2}}e^{-m/X}.
$$
Fix any $\epsilon >0$. Moving the line of integration to 
$\Re (s) = -1+\epsilon$,  and using the functional equation to bound the $L$-series, one obtains
$$
	I 
	= 
	L(1/2, f, \chi) + \mathcal{O}(X^{-1+\epsilon}Q^{1 - 2\epsilon})
	.
$$
 Note that the character $\chi$ does \emph{not} need to be primitive for this to be true.

Doing the same thing for $g$, multiplying, and applying the convexity bound for the $L$-series at $1/2$ gives us
\begin{multline}
	L(1/2, f, \chi)\overline{ L(1/2, g, \chi)} 
	\\
	= 
	\sum_{m_1,m_2 \ge 1} \frac{A(m_1)\overline{B(m_2)} \chi(m_1)\bar \chi(m_2)}{(m_1m_2)^{1/2}}e^{-m_1/X-m_2/X}
	+ 
	\cO (X^{-1+\epsilon}Q^{3/2 - \epsilon})
	.
\end{multline}
Averaging over all characters $\chi$ modulo $Q$ gives us
\begin{multline}\label{SQdef}
	S(Q)
	= 
	S_{f, g}(Q)
	=
	\sum_{m_1\equiv m_2 \pmod Q \atop (m_2,Q)=1}\frac{A(m_1)\overline{B(m_2)}}{(m_1m_2)^{1/2}}e^{-\frac{m_1}{X} - \frac{m_2}{X}}
	+ 
	\mathcal{O}(X^{-1+\epsilon}Q^{3/2 - \epsilon})
	\\
	= 
	\sum_{m \ge 1\atop (m,Q)=1} \frac{A(m)\overline{B(m)} }{m}e^{-\frac{2m}{X}} 
	+
	\sum_{m_2,h_0 \ge 1\atop (m_2,Q)=1}\frac{A(m_2 + h_0Q)\overline{B(m_2)} }{((m_2 + h_0Q)m_2)^{1/2}} e^{\frac{-2m_2}{X} -\frac{h_0 Q}{X}}
	\\
	+ 
	\sum_{m_2,h_0 \ge 1\atop (m_2,Q)=1}\frac{\overline{B(m_2 + h_0Q)} A(m_2) }{((m_2 + h_0Q)m_2)^{1/2}} e^{\frac{-2m_2}{X} -\frac{h_0 Q}{X}} 
	+ 
	\mathcal{O}(X^{-1+\epsilon}Q^{3/2 - \epsilon})
	,
\end{multline}
where $S_{f, g}(Q)$ is defined in \eqref{e:SQorig}. 

By a similar inverse Mellin transform to that used above,
$$
	\sum_{m \ge 1 \atop (m,Q)=1} \frac{A(m)\overline{B(m)}e^{-\frac{2m}{X}}  }{m}
	= 
	\frac{1}{2 \pi i}\int_{(2)} L^{(Q)}(s+1, f \otimes g) \G(s) (X/2)^s ds
	.
$$
Moving the line of integration to $\Re(s) = -1 + \epsilon$, 
if $f \ne g$ this becomes
\be\label{sumpiece}
	\sum_{m \ge 1 \atop (m,Q)=1} \frac{A(m)\overline{B(m)}e^{-\frac{2m}{X}}  }{m} 
	= 
	L^{(Q)}(1,f \otimes g) + \mathcal{O} (X^{-1 + \epsilon})
	,
\ee
while if $f=g$ then
\be\label{cf}
	\sum_{m \ge 1 \atop (m,Q)=1}\frac{A(m) \overline{A(m)}e^{-\frac{2m}{X}}  }{m} 
 	=
 	\left(\prod_{p\mid Q, \text{ prime}} (1-p^{-1})\right) 
 	\frac{L^{(Q)}(1,f,\vee^2)}{\zeta^{(Q)}(2)} \log\left(\frac{X}{2}\right) 
	+ c_f(Q) + \mathcal{O} (X^{-1 + \epsilon})
	.
\ee
Here, for $\Re \, s >1$,
$$
	L^{(Q)}(s,f \otimes g) 
	= 
	\sum_{m\ge 1, (m,Q)=1}\frac{A(m)\overline{B(m)}}{m^s}
$$
and
$$
	\frac{\zeta^{(Q)}(s)}{\zeta^{(Q)}(2s)} L^{(Q)}(s,f,\vee^2)
	= 
	\sum_{m\ge 1, (m,Q)=1}\frac{A(m)\bar A(m)}{m^s}.
$$
The implied constants depend, as usual, upon $f$ and $g$. 
The constant $c_f(Q)$ is given by 
\be\label{e:cfq}
	c_f(Q)
	= 
	\left.\frac{d}{d s} \left(  
	\frac{L^{(Q)}(s, f, \vee^2) }{\zeta^{(Q)}(2s)} \prod_{p\mid Q, \text{ prime}} (1-p^{-s})
	\right)\right|_{s=1} 
	.
\ee

We have shown so far that 
\be\label{e:Stotal}
	S(Q)= S_1(X)+S_2(X)+S_3(X)+S_4(X),
\ee
where
\be\label{S1}
S_1(X)= \left\{\begin{array}{ll}
	L^{(Q)}(1,f \otimes g), & \text{ if } f \ne g \\
 	\left(\prod_{p\mid Q,\atop \text{ prime}} (1-p^{-1})\right) 
	\frac{L^{(Q)}(1,f,\vee^2)}{\zeta^{(Q)}(2)}
	\log\left(\frac{X}{2}\right)  + c_f(Q), & 
	\text{ if } f = g,
 \end{array}\right.
\ee
\be\label{S2}
	S_2(X)
	= 
	\sum_{m_2,h_0 \ge 1 \atop (m_2,Q)=1}
	\frac{A(m_2 + h_0Q)\overline{B(m_2) }
	e^{-\frac{2m_2}{X} - \frac{h_0 Q}{X}}}
	{((m_2 + h_0Q)m_2)^{1/2}},
\ee
\be\label{S3}
	S_3(X)
	= 
	\sum_{m_2,h_0 \ge 1 \atop (m_2,Q)=1}
	\frac{A(m_2)\overline{ B(m_2+h_0Q)} 
	e^{-\frac{2m_2}{X} - \frac{h_0 Q}{X}}}
	{((m_2 + h_0Q)m_2)^{1/2}}
	,
\ee
and 
\be\label{S4}
	S_4(X) 
	\ll 
	X^{-1+\epsilon}Q^{3/2 - \epsilon}
	.
\ee

Note that although the left hand side is independent of $X$, the four contributions on the right hand side all depend on $X$.
Interestingly, in the case $f=g$, as $ X\rightarrow \infty$ the diagonal term,  $S_1(X)$, tends to infinity.  
In other words, 
at least  in the case $f=g$, there must be a corresponding main term contribution from $S_2(X)$ and $S_3(X)$ canceling the multiple of $\log X$ in $S_1(X)$.  
In fact, we will see that $S_2(X)$ and $S_3(X)$ contribute to the main term even when $f \ne g$.

\section{Sieving to obtain complete shifted sums}\label{s:sieving}
Let 
\begin{equation}\label{e:G}
	G\left(1/X\right) 
	:= 
	\frac{1}{2\pi i} \int_{(2)} \tG(s) X^s \; ds
\end{equation}
where $\tG(s)$ is analytic for $\Re(s)> -2$, with the exception of simple poles at $s=-1,0$ with residues $-1$ and $1$ respectively, 
and greater than polynomial decay in vertical strips. 
For example, in the preceding section, we had $\tG(s) = \Gamma(s)$ 
and $G(X) = e^{-X}$. 
Throughout the remainder of this paper, 
we will work with a general pair of functions $\tG(s)$ and $G(X)$ having these properties.  
	
Let $N_0$ and $Q\geq 1$ and assume that $N_0$ is square-free 
and $(Q, N_0)=1$. 
Let $f$ and $g$ be holomorphic cusp forms of even weight $k$, 
for $\Gamma_0(N_0)$, with normalized Fourier coefficients $A(m)$, $B(m)$, as in \eqref{def1}, so that $A(1) = B(1)=1$.
Assume that $f$ and $g$ are eigenfunctions for Hecke operators $T_p$ 
with primes $p\nmid N_0$.
 
For any $d\mid Q$, $d\geq 1$, define
$$
	S_{d}\left(X/d;Q/d\right)
	:=
	\sum_{h, m\geq 1} 
	\frac{A(d \left(m+h\frac{Q}{d} \right))\overline{B(dm)}}{(m( m+h\frac{Q}{d}))^{\frac{1}{2}}} 
	G\left(\frac{ m+h\frac{Q}{d}}{\frac{X}{d}}\right) G\left(\frac{m}{\frac{X}{d}}\right)
	.
$$
For $\ell_1, \ell_2\geq 1$, we define a ``building block" shifted sum as follows:
\be\label{e:Sbulidingblock}
	S(X,Q,\ell_1,\ell_2)
	:=
	\sum_{m_1, m_2 \geq 1\atop \ell_1m_1 -\ell_2m_2 \equiv 0\mod Q} 
	\frac{A(m_1)\overline{B(m_2)}}
	{(\ell_1m_1)^{\frac{1}{2}}(\ell_2m_2)^{\frac{1}{2}}}
	G\left(\frac{\ell_1 m_1}{X}\right) G\left(\frac{\ell_2 m_2}{X}\right)
	.
\ee

The objective of this section is to prove
\begin{prop}\label{prop:Sblockdef}

We have
\begin{multline*}
	S_2(X)
	=\sum_{d\mid Q} \frac{\mu(d)}{d} S_d\left(X/d; Q/d\right)
	\\
	=
	\sum_{d\mid Q} 
	\frac{\mu(d)}{d} 
	\sum_{d_1, d_2\mid d}
	\mu(d_1)\mu(d_2) 
	A(d/d_1)\overline{B(d/d_2)}
	S\left(\frac{X}{d}, \frac{Q}{d}, d_1, d_2\right)
	\end{multline*}
	where $S_2(X)$ is defined in (\ref{S2}). 
\end{prop}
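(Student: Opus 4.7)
The plan is to establish the two equalities in turn: the first by M\"obius inversion on the coprimality condition $(m_2,Q)=1$, and the second by Hecke multiplicativity applied to $A(dn_1)\overline{B(dn_2)}$.

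For the first equality, I expand $\mathbf{1}_{(m_2,Q)=1}=\sum_{d\mid (m_2,Q)}\mu(d)$ inside the definition of $S_2(X)$, interchange sums, and substitute $m_2=dm$ so that $m$ ranges over all positive integers. Using $d\mid Q$, the identity $dm(dm+h_0Q)=d^2\cdot m(m+h_0Q/d)$ lets me rescale the denominator, and the weight function transforms as $G(dm/X)=G(m/(X/d))$ and $G((dm+h_0Q)/X)=G((m+h_0Q/d)/(X/d))$. Inspecting the resulting inner sum against the definition of $S_d(X/d;Q/d)$ shows it equals $(1/d)\,S_d(X/d;Q/d)$, proving the first equality after summing over $d\mid Q$.

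For the second equality, I relabel in $S_d(X/d;Q/d)$ by setting $n_1:=m+hQ/d$ and $n_2:=m$, so that $A,B$ are evaluated at $dn_1$ and $dn_2$ under the constraints $n_1,n_2\ge 1$, $n_1>n_2$, and $n_1\equiv n_2\pmod{Q/d}$. Since $\mu(d)\ne 0$ forces $d$ to be squarefree, and $(d,N_0)=1$ by $(Q,N_0)=1$, the Hecke relation for newforms yields
$$A(dn_1)=\sum_{d_1\mid (d,n_1)}\mu(d_1)\,A(d/d_1)\,A(n_1/d_1),$$
and symmetrically for $\overline{B(dn_2)}$. Multiplying these two expansions, pulling the sums over $d_1,d_2\mid d$ to the outside, and substituting $n_1=d_1m_1$, $n_2=d_2m_2$ converts the congruence $n_1\equiv n_2\pmod{Q/d}$ into $d_1m_1\equiv d_2m_2\pmod{Q/d}$, and identifies the inner sum with the building block $S(X/d,Q/d,d_1,d_2)$.

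The main obstacle is bookkeeping: the strict inequality $n_1>n_2$ inherited from $h\ge 1$ in $S_d$ does not literally appear in the definition of $S(X/d,Q/d,d_1,d_2)$, so either $S$ is understood in this identity with the implicit direction restriction $d_1m_1>d_2m_2$, or the ``wrong-direction'' and diagonal pieces are accounted for together with $S_3$ and $S_1$ in the broader argument. A secondary concern is that when the nebentypus $\chi$ is nontrivial, the Hecke relation produces factors $\chi(d_1)\overline{\chi(d_2)}$ which must be tracked and absorbed into the statement.
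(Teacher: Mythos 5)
Your proof is correct and takes essentially the same route as the paper: the first equality is the same M\"obius sieve with the rescaling $m_2=dm$, and your coefficient identity $A(dn_1)=\sum_{d_1\mid (d,n_1)}\mu(d_1)A(d/d_1)A(n_1/d_1)$ is precisely the coefficient form of the paper's lemma $f_d(z)=\sum_{d_0\mid d}\mu(d_0)A(d/d_0)d_0^{(k-1)/2}f(d_0z)$, which the paper proves by induction on the primes dividing $d$. The two caveats you flag are ambiguities of the paper itself rather than gaps in your argument: $S(X,Q,\ell_1,\ell_2)$ is later rewritten as the sum over $\ell_1m_1-\ell_2m_2=hQ$ with $h\ge 1$, so your reading with the direction restriction is the intended one, and the Hecke-operator formula used in the paper's lemma likewise tacitly assumes trivial nebentypus.
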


\subsection{Proof of Proposition \ref{prop:Sblockdef}}
By applying the usual sieving method, we get 
$$
	S_2(X)
	=
	\sum_{d\mid Q} \frac{\mu(d)}{d} S_d \left(\frac{X}{d}; \frac{Q}{d}\right)
	.
$$
Our expression for $S_2(X)$ will follow from a reduction of 
$S_{d}(X;Q)$ to the $S(X;Q,\ell_1,\ell_2)$.

Recall  that for any prime $p \nmid N_0$, $f$ is an eigenfunction 
of the Hecke operator $T_p$, i.e., 
$$
	(T_pf)(z)
	:=
	\frac{1}{p}\sum_{b\mod p} f\left(\frac{z+b}{p}\right)
	+
	p^{k-1} f(pz)
	=
	A(p)p^{\frac{k-1}{2}} \cdot f(z)
	.
$$
For any square free $d\geq 1$, with $(d, N_0)=1$, define
$$
	f_d(z)
	:= 
	\sum_{n\geq 1} A(dn)n^{\frac{k-1}{2}} e^{2\pi inz}\,.
$$
\begin{lem}
	Let $d\geq 1$ be a square-free integer, with $(d, N_0)=1$.
	Then we have
	\begin{equation}\label{e:fd_formula}
	f_d(z)
	=
	\sum_{d_0\mid d} 
	\mu(d_0) 
	A(d/d_0) d_0^{\frac{k-1}{2}} f(d_0 z)
	\,.
	\end{equation}
\end{lem}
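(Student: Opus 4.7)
My plan is a proof by induction on $r=\omega(d)$, the number of distinct prime divisors of the square-free integer $d$. The base case $r=0$ is immediate: $d=1$ gives $f_1=f$, which agrees with the right-hand side, as that sum reduces to $\mu(1)A(1)f(z)=f(z)$.

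For the inductive step I would write $d=d'p$ with $p$ a prime not dividing $d'$ and $(d',N_0)=1$. The first main step is to establish the one-prime recursion
\[
f_{d'p}(z) \;=\; A(p)\,f_{d'}(z)\;-\;p^{(k-1)/2}\,f_{d'}(pz),
\]
which I would derive coefficient-by-coefficient from the Hecke relation $A(p)A(m)=A(pm)+A(m/p)$ (with $A(m/p)=0$ when $p\nmid m$), valid because $p\nmid N_0$. Setting $m=d'n$ and using $(p,d')=1$ to replace $p\mid d'n$ by $p\mid n$, the $A(p)A(d'n)$ contribution reassembles into $A(p)f_{d'}(z)$, while the correction becomes $p^{(k-1)/2}f_{d'}(pz)$ after the reindexing $n=pm$.

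Substituting the inductive formula for $f_{d'}$ into the recursion produces two sums that I need to combine. In the $A(p)f_{d'}$ piece, strict multiplicativity $A(p)A(d'/d_0)=A(d'p/d_0)=A(d/d_0)$ holds because $(p,d'/d_0)=1$; this piece accounts exactly for divisors $d_0\mid d$ with $p\nmid d_0$. In the $-p^{(k-1)/2}f_{d'}(pz)$ piece, I would reindex by $d_0\mapsto p d_0$ and use $\mu(pd_0)=-\mu(d_0)$ together with $(pd_0)^{(k-1)/2}=p^{(k-1)/2}d_0^{(k-1)/2}$; this absorbs both the leading minus sign and the extra $p^{(k-1)/2}$, and produces precisely the divisors of $d$ divisible by $p$. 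Together the two pieces traverse every divisor of $d=d'p$ exactly once, delivering the claimed identity.

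The argument is essentially bookkeeping rather than deep, and the main place to be careful is the nebentypus. With a nontrivial character $\chi_0$ the Hecke relation carries an extra $\chi_0(p)$ on $A(m/p)$, which would propagate to a $\chi_0(d_0)$ factor in the final formula. Since the lemma omits such a factor, either the convention in use absorbs it into the Fourier coefficients or the lemma is effectively applied with trivial $\chi_0$; in either case the structural skeleton of the induction is unaffected, and the only subtlety lies in confirming that the character factors cancel (or are absent) in the application that follows.
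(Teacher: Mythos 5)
Your induction is exactly the paper's: both prove the one-prime recursion $f_{d'p}(z)=A(p)f_{d'}(z)-p^{(k-1)/2}f_{d'}(pz)$ (you via the coefficient-level Hecke relation, the paper via the operator identity $T_pf=A(p)p^{(k-1)/2}f$ --- the same fact in two guises) and then split the divisor sum for $d=d'p$ into $p\nmid d_0$ and $p\mid d_0$ pieces via the reindexing $d_0\mapsto pd_0$. Your nebentypus caveat is apt but moot here, since the paper's stated $T_p$ formula in Section~4 likewise omits a $\chi_0(p)$ factor on $f(pz)$, so the convention in force already suppresses the character.
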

\begin{proof}
For any prime $p\nmid N_0$, we have
\begin{multline*}
	f_p(z)
	=
	p^{-\frac{k-1}{2}} \sum_{n\geq 1} A(n) n^{\frac{k-1}{2}} e^{2\pi i\frac{nz}{p}}
	\left( \frac{1}{p} \sum_{b\mod p} e^{2\pi i\frac{nb}{p}}\right)
	\\
	=
	p^{-\frac{k-1}{2}}
	\left\{ (T_p f)(z) - p^{k-1} f(pz) \right\}
	=
	A(p)\cdot f(z) -p^{\frac{k-1}{2}} f(pz)
	\,.
\end{multline*}
So, \eqref{e:fd_formula} is true for $d=p$. 

Assume that (\ref{e:fd_formula}) is true for a square-free integer $d\geq 1$, with $(d, N_0)=1$. 
For any prime $p\nmid d N_0$, we have
\begin{multline*}
	f_{dp}(z)
	= 
	A(p) f_d(z) - p^{\frac{k-1}{2}} f_d(pz)
	\\
	= 
	A(p) \sum_{d_0\mid d} \mu(d_0) A\left(d/d_0\right) d_0^{\frac{k-1}{2}} f(d_0 z)
	- \sum_{d_0\mid d} \mu(d_0) A\left(d/d_0\right) (d_0p)^{\frac{k-1}{2}} f(d_0pz)
	\\
	=
	\sum_{d_0\mid dp}\mu(d_0) A(dp/d_0) d_0^{\frac{k-1}{2}} f(d_0z)
	.
\end{multline*}
\end{proof}

%
By \eqref{e:fd_formula}, for any square-free $d\mid Q$, we have
\begin{multline*}
	A(d(m+h Q/d))\overline{B(dm)}
	\\
	=
	\sum_{d_1\mid \left(d, m+h Q/d\right)} \sum_{d_2\mid (d, m)}
	\mu(d_1) \mu(d_2) A(d/d_1) \overline{B(d/d_2)} 
	A\left((m+hQ/d)/d_1\right) \overline{B(m/d_2)}
	\,.
\end{multline*}
So, 
\begin{multline*}
	S_d(X/d; Q/d)
	=
	\sum_{h, m\geq 1} \sum_{d_1\mid (d, m+hQ/d)} \sum_{d_2\mid (d, m)}
	\mu(d_1)\mu(d_2) A(d/d_1) \overline{B(d/d_2)} 
	\\
	\times
	\frac{A\left((m+hQ/d)/d_1\right) \overline{B\left(m/d_2\right)}} {(m(m+hQ/d))^{\frac{1}{2}}}
	G\left(\frac{m+hQ/d}{X/d}\right) G\left(\frac{m}{X/d}\right)
	\\
	=
	\sum_{d_1, d_2\mid d} \mu(d_1) \mu(d_2) A(d/d_1) \overline{B(d/d_2)}
	\\
	\times
	\sum_{m_1, m_2 \geq 1, \atop d_1m_1-d_2m_2 \equiv 0\mod Q/d}
	\frac{A(m_1) \overline{B(m_2)}}{(d_1m_1)^{\frac{1}{2}} (d_2 m_2)^{\frac{1}{2}}}
	G\left(\frac{d_1m_1}{X/d}\right) G\left(\frac{d_2 m_2}{X/d}\right)
	.
\end{multline*}		
So we prove Proposition \ref{prop:Sblockdef}.

\section{Shifted sums and shifted convolutions}\label{s:shiftedsum_conv}

Let $N_0$ and $Q\geq 1$ and assume that $N_0$ is square-free 
and $(Q, N_0)=1$. 
Let $f$ and $g$ be holomorphic cusp forms of even weight $k$, 
which are newforms for $\Gamma_0(N_0)$, 
with normalized Fourier coefficients $A(m)$, $B(m)$, as in \eqref{def1}, so that $A(1) = B(1)=1$.
Assume that $f$ and $g$ are eigenfunctions for all Hecke operators.

Recall our building block shifted sum \eqref{e:Sbulidingblock} is
$$
	S(X, Q,\ell_1,\ell_2)
	=
	\sum_{h, m\geq 1\atop \ell_1m_1 -\ell_2m_2 = hQ} 
	\frac{A(m_1)\overline{B(m_2)}}{(\ell_1m_1)^{\frac{1}{2}}(\ell_2m_2)^{\frac{1}{2}}}
	G\left(\frac{\ell_1m_1}{X}\right) G\left(\frac{\ell_2m_2}{X}\right)
	.
$$
We assume that $\ell_1$ and $\ell_2$ are square-free. 
Then $N=N_0\frac{\ell_1\ell_2}{(\ell_1,\ell_2)}$ is also square-free. 
Define
$$
	(\ell_1\ell_2)^{\frac{k-1}{2}}
	S\left(1-\frac{k}{2}, \frac{1}{2}; \delta\right)
	:=
	(\ell_1\ell_2)^{\frac{k-1}{2}} \left(S_{\rm cusp}\left(1-\frac{k}{2}, 1/2; \delta\right)
+ S_{\rm cont,int}\left(1-\frac{k}{2}, 1/2; \delta\right)\right)
	,
$$
where $S_{\rm cusp}$ and $S_{\rm cont, int}$ are given in \eqref{Scuspdeltadef} 
and \eqref{ScontIntdeltadef}, respectively. 

The objective of this section and the following two is to prove
\begin{thm}\label{theorem:SQ}

When $f\neq g$, for any $Q\geq 1$ with $(N_0, Q)=1$,  we have
\begin{multline*}
	S(X, Q, \ell_1, \ell_2)
	=
	\frac{1}{4}
	\frac{(\ell_1, \ell_2)}{\ell_1\ell_2}
	L(1, f\otimes g)
	\sum_{a\mid N} 
	\left\{
	\left(\frac{1}{2^{r(N)}} + \frac{1}{Q} \prod_{p\mid a, p^\alpha\|Q , \atop \alpha\geq 0} (p^\alpha-1) \prod_{p^\alpha\|Q, p\nmid N, \atop \alpha\geq 1} p^\alpha\right)
	\right.
	\\
	\times
	\left(\frac{N_0}{(a, N_0)}\right) A\left(\frac{N_0}{(a, N_0)}\right) 
	\overline{B\left(\frac{N_0}{(a, N_0)}\right)}
	\\
	\left.
	\times
	\left(\prod_{p\mid \frac{\ell_1 (a, \ell_2)}{(\ell_1, \ell_2) (a, \ell_1)}}
	\frac{ A(p)-B(p)p^{-1}}{1-p^{-2}}\right)
	\left( \prod_{p\mid \frac{\ell_2 (a, \ell_1)}{(\ell_1, \ell_2) (a, \ell_2)}}
	\frac{ B(p)-A(p)p^{-1}}{1-p^{-2}} \right)
	\right\}
	\\
	+
	\lim_{\delta\to 0}
	(\ell_1\ell_2)^{(k-1)/2} \left(S_{\rm cusp}\left(1-\frac{k}{2}, \frac{1}{2}; \delta\right)
+ S_{\rm cont,int}\left(1-\frac{k}{2}, \frac{1}{2} ; \delta\right)\right)	
	\\
	+ \mathcal{O}\left((\ell_1\ell_2)^\epsilon Q^{\theta +\epsilon-\frac{1}{2}}\right) + \mathcal{O}\left(X^{-\epsilon}\right)
	.
\end{multline*}

When $f=g$, there exists a constant $C_2(k)$, given in \eqref{C1def},
	independent of $f$ (but depending on $k$), such that, 
	for any $Q\geq 1$ with $(N_0, Q)=1$, we have 
\begin{multline*}
	S(X, Q, \ell_1, \ell_2)
	\\
	=
	- 
	\frac{1}{2}
	\Res_{s=1} L(s,f \otimes f; \ell_1, \ell_2) 
	\log X
	+
	\Res_{s=1} L(s, f\otimes f; \ell_1, \ell_2) 
	\log Q
	\\
	+
	\left\{ C_2(k) 
	- \frac{1}{4}\sum_{p\mid N} \log p \frac{1}{2(p-1)}
	+ \frac{1}{2}\frac{\Gamma'(k)}{\Gamma(k)} - \frac{1}{2} \log (4\pi)
	\right\} 
	\Res_{s=1} L(s, f \otimes f;\ell_1, \ell_2)
	\\
	+
	\left\{
	\frac{1}{2}
	\log (N, (\ell_1, \ell_2))
	+\frac{1}{4}\sum_{p\mid(\ell_1, \ell_2), p^\alpha\|Q, \atop \alpha\geq 0} 
	\log p \left(p^{-\alpha} -\frac{3}{2}\right)
	\right\}
	\Res_{s=1} L(s, f \otimes f;\ell_1, \ell_2)
	\\
	+
	\left\{ 
	\frac{1}{4}\sum_{p\mid N, p^\alpha\|Q, \atop \alpha\geq 0} 
	\log p \left(p^{-\alpha} \frac{3p-1}{p-1}-1\right)
	- \sum_{p^\alpha\|Q, \alpha\geq 1} \log p \frac{1-p^{-\alpha}}{p-1}
	\right\}
	\Res_{s=1} L(s, f \otimes f;\ell_1, \ell_2)
	\\
	+ 
	\frac{1}{2}
	\left.\frac{d}{ds} \left( (s-1) L(s, f\otimes f; \ell_1, \ell_2)\right)\right|_{s=1}	
	\\
	+
	\lim_{\delta\to 0}
	(\ell_1\ell_2)^{(k-1)/2} \left(S_{\rm cusp}\left(1-\frac{k}{2}, \frac{1}{2}; \delta\right)
+ S_{\rm cont,int}\left(1-\frac{k}{2}, \frac{1}{2} ; \delta\right)\right)
	\\
	+ 
	\mathcal{O}\left((\ell_1\ell_2)^\epsilon Q^{\theta +\epsilon-\frac{1}{2}}\right) 
	+ 
	\mathcal{O}\left(X^{-\epsilon}\right)
	.
\end{multline*}	
Here $p$ is a prime. 
\end{thm}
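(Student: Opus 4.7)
The plan is to express $S(X, Q, \ell_1, \ell_2)$ as a contour integral in the double Dirichlet series $Z_Q(s, w)$ of \eqref{ZDef}, shift contours toward the center $(s, w) = (1 - k/2, 1/2)$ of its critical strip, extract explicit main terms from residues, and bound the critical-line integral via \eqref{Z2}. The point $(1 - k/2, 1/2)$ is precisely where the argument $s' = s + w + k/2 - 1$ of the spectral expansion \eqref{Scuspdeltadef} equals $1/2$, the center of symmetry for $L_Q(s', \overline{u_j})$, and this is the source of the spectral term $(\ell_1\ell_2)^{(k-1)/2}(S_{\rm cusp} + S_{\rm cont,int})(1-k/2,1/2;\delta)$ in the theorem.

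First, I would apply Mellin inversion to each of $G(\ell_1 m_1/X)$ and $G(\ell_2 m_2/X)$, producing a double contour integral of $\tilde G(s)\tilde G(w) X^{s+w}$ times an inner Dirichlet series. Using $\ell_1 m_1 = \ell_2 m_2 + hQ$ together with a Mellin--Barnes representation of $(1 + hQ/(\ell_2 m_2))^{\alpha}$, one converts the inner series into $Z_Q(s, w)$ form, yielding
$$
S(X, Q, \ell_1, \ell_2) = \frac{1}{(2\pi i)^{2}} \int \int K(s, w) \, X^{s+w} \, Z_Q(s, w) \, ds \, dw,
$$
for an explicit gamma-function kernel $K(s, w)$, the contours initially taken far to the right where $Z_Q$ is given by its absolutely convergent series.

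I would then shift both contours toward $(1 - k/2, 1/2)$ and collect residues. The dominant contribution comes from the Eisenstein polar part $\Psi(s, w)$ of \eqref{e:Psi0} at $s' = 1$, originating in the pole of $\zeta(s' + \tau)\zeta(s' - \tau)$ in the local factor $\zeta_{\cusp, Q}(s', \tau)$ of \eqref{e:zeta_aQ}; paired with $\overline{\langle V, E^*_\cusp(*, 1)\rangle}$, which Rankin--Selberg-unfolds to $L(1, f \otimes g)$, and combined with the explicit Euler factorizations in \eqref{e:rho_an} and \eqref{e:zeta_aQ}, this residue produces the first main term together with its sum over cusps $a \mid N$. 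In the case $f = g$, the $L$-function $L(s, f \otimes f)$ itself has a simple pole at $s = 1$ which compounds with the Eisenstein pole into a double pole, and its Laurent expansion furnishes the $\log X$ and $\log Q$ contributions, the derivative $\frac{d}{ds}[(s - 1)L(s, f \otimes f; \ell_1, \ell_2)]|_{s=1}$, the constant $C_2(k)$, and the prime-sum corrections appearing in the statement. Residues at the poles of $\tilde G$ at $s = 0$ or $w = 0$ give additional constant shifts, while those at $s, w = -1$ are of size $X^{-1}$ and absorbed into $O(X^{-\epsilon})$.

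Once the shift is complete, the residual integral near $(1 - k/2, 1/2)$, after subtracting the already-consumed $\Psi$-piece via \eqref{Zinit}, evaluates to $(\ell_1 \ell_2)^{(k-1)/2}(S_{\rm cusp} + S_{\rm cont, int})(1 - k/2, 1/2; \delta)$ in the $\delta \to 0$ limit, as stated. The convexity bound \eqref{Z2}, $Z_Q - \Psi \ll Q^{\theta - s' + \epsilon}$ on the critical line, together with the rapid decay of $\tilde G$ in vertical strips, supplies the error $O((\ell_1\ell_2)^\epsilon Q^{\theta + \epsilon - 1/2})$; the pole of $\tilde G$ at $-1$ and the contour tails give the $O(X^{-\epsilon})$ error. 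The chief obstacle will be the explicit residue calculation at $s' = 1$: the polar piece \eqref{e:Psi0} decomposes into four nested terms indexed by cusps $a \mid N$, each pairing a local arithmetic factor from \eqref{e:zeta_aQ} with an Eisenstein inner product $\overline{\langle V, E^*_\cusp(*, \cdot)\rangle}$, and organizing all of these into the precise prime-product formulas of the theorem---including the delicate factors $(A(p) - B(p)p^{-1})/(1 - p^{-2})$ at primes $p \mid \ell_1/(\ell_1, \ell_2)$ that encode the oldform arithmetic---requires a careful cusp-by-cusp Euler-factor bookkeeping. In the $f = g$ case the double-pole analysis compounds this, demanding the full Laurent expansion of $L(s, f, \vee^2)/\zeta(2s)$ at $s = 1$ multiplied into the gamma-kernel expansion in order to isolate all log and constant contributions with their exact prime-sum corrections over $p \mid (\ell_1, \ell_2)$, $p \mid N$, and $p \mid Q$.
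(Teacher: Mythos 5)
Your proposal follows essentially the same route as the paper: the integral representation of $S(X,Q,\ell_1,\ell_2)$ in terms of $Z_Q$ (Proposition \ref{p:SQandMDS}, with the $\delta$-regularization), the split into cuspidal and continuous spectral pieces, contour shifts toward $(1-k/2,1/2)$ with the main terms arising from the Eisenstein/polar residues at $s'=1\pm\tau$ (computed cusp-by-cusp via the Asai-type inner product formula of Lemma \ref{lem:innerprod_cusp} and Lemma \ref{l:dirichletpoly}, with the double-pole Laurent analysis when $f=g$), the non-polar special value $(\ell_1\ell_2)^{(k-1)/2}(S_{\rm cusp}+S_{\rm cont,int})(1-k/2,1/2;\delta)$ left over, and the error controlled by \eqref{Z2}. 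Apart from presentational simplifications (the representation is genuinely a triple integral, and the residue bookkeeping is spread over several shifted lines rather than a single pole at $s'=1$), this is the paper's argument.
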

\begin{proof}
In Proposition \ref{p:SQandMDS}, we define $S_Q(X; \delta)$ and show that 
$S(X, Q, \ell_1,\ell_2) = \lim_{\delta\to 0} S_Q(X; \delta)$. 
As discussed in Section \ref{ss:separation_cusp_cont}, 
$S_Q(X; \delta)$ can be separated into two pieces:
$$
	S_Q(X;\delta)
	=
	(\ell_1\ell_2)^{\frac{k-1}{2}}
	\left(S_Q^{({\rm cusp})} (X; \delta)
	+ S_Q^{({\rm cont})} (X; \delta)
	\right)
	.
$$
By Proposition \ref{prop:cusp1} and Proposition \ref{prop:cusp2}, we get the contributions coming from the cuspidal spectrum $S^{({\rm cusp})}_Q(X; \delta)$.
By Proposition \ref{prop:cont}, we get the contributions coming from the continuous spectrum $S_Q^{({\rm cont})}(X; \delta)$. 
\end{proof}

Note the presence in Theorem \ref{theorem:SQ} of a mysterious object:
$$
	\lim_{\delta \rightarrow 0} (\ell_1\ell_2)^{\frac{(k-1)}{2}} 
	\left(S_{\rm cusp}\left(1-\frac{k}{2}, \frac{1}{2} ; \delta\right) 
	+ S_{\rm cont,int}\left(1-\frac{k}{2}, \frac{1}{2} ; \delta\right)\right)
	.
$$
This is the non-singular part of the special value of $Z_Q(s,w)$ at $(s,w) = (1-k/2,1/2)$.  
We ultimately show that the special value is small on average. 
However, we are at this moment unable to say very much about this for any particular fixed $Q$.

\subsection{The relation to a shifted multiple Dirichlet series}
In this section we will prove

\begin{prop}\label{p:SQandMDS}
Define $S_Q(X; \delta)$ by suppressing the $\ell_1,\ell_2$ and writing
\begin{multline*}
	S_Q(X; \delta)
	:=
	\left(\frac{1}{2\pi i}\right)^3
	\int_{(\hf +2\epsilon)} \int_{(2)} \int _{(1+\epsilon)}
	Z_Q\left(s, u; \delta\right)
	\tG \left(s-w-1+u+\frac{k-1}{2}\right)
	\tG (w) 
	\\
	\times
	X^{s-\frac{3}{2}+u+\frac{k}{2}} 
	\frac{
	\Gamma\left(w-u+\frac{1}{2}\right) 
	\Gamma\left(u+\frac{k-1}{2}\right)}
	{\Gamma\left(\frac{k}{2}+w\right)} 
	\; du\; ds\; dw
	.
\end{multline*}
Then 
$$
	S(X,Q,\ell_1,\ell_2)= \lim_{\delta \rightarrow 0} S_Q(X; \delta).
$$
\end{prop}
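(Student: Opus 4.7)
The plan is to derive the triple-integral representation by starting from the defining double series of $S(X,Q,\ell_1,\ell_2)$ and successively inserting inverse Mellin transforms for the two smoothing factors $G$, together with a single Mellin--Barnes identity for the binomial factor that arises from the shift relation $\ell_1 m_1 = \ell_2 m_2 + hQ$. Concretely, I will first write
$$
G(\ell_1 m_1/X) = \frac{1}{2\pi i}\int_{(2)}\tG(w)\Big(\tfrac{X}{\ell_1m_1}\Big)^w dw,
\quad
G(\ell_2 m_2/X) = \frac{1}{2\pi i}\int_{(2)}\tG(s_1)\Big(\tfrac{X}{\ell_2m_2}\Big)^{s_1} ds_1,
$$
with both contours placed in the region of absolute convergence. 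Using the shift, $(\ell_1 m_1)^{-w-1/2} = (\ell_2 m_2)^{-w-1/2}(1+hQ/(\ell_2 m_2))^{-w-1/2}$, and to expose the factor $(1+hQ/(\ell_2 m_2))^{(k-1)/2}$ appearing in the series \eqref{ZDef} for $Z_Q(s,u)$, I split
$$
(1+hQ/(\ell_2 m_2))^{-w-1/2}
= (1+hQ/(\ell_2 m_2))^{(k-1)/2}\cdot(1+hQ/(\ell_2 m_2))^{-w-k/2}.
$$

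The second factor is then expanded via the Mellin--Barnes identity
$$
(1+x)^{-w-k/2} = \frac{1}{2\pi i}\int \frac{\Gamma(z)\,\Gamma(w+k/2-z)}{\Gamma(w+k/2)}\,x^{-z}\,dz,
$$
applied with $x = hQ/(\ell_2 m_2)$ and the $z$-contour in the strip $0<\Re z<\Re(w+k/2)$. The substitution $z=u+(k-1)/2$ then converts the $\Gamma$-ratio into exactly $\Gamma(u+(k-1)/2)\Gamma(w-u+1/2)/\Gamma(w+k/2)$, and $x^{-z}$ into $(hQ)^{-u-(k-1)/2}(\ell_2 m_2)^{u+(k-1)/2}$. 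After interchanging summation and integration, the inner sum over $h,m_1,m_2$ with $\ell_1 m_1 - \ell_2 m_2 = hQ$ is, by \eqref{ZDef}, exactly $Z_Q(s,u)$ with
$s:=s_1+w+1-u-(k-1)/2$.
Under the change of variables $s_1\mapsto s$, $\tG(s_1)$ becomes $\tG(s-w-1+u+(k-1)/2)$, and a short check shows $s_1+w = s-3/2+u+k/2$, so the power of $X$ matches the formula in the statement.

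To produce the $\delta$-regularized version $Z_Q(s,u;\delta)$ and reach the stated contour positions, I will carry out the whole manipulation with the damping factor $(1+\delta hQ/(\ell_2 m_2))^{-(s+k-1)}$ inserted from the outset; this factor tends to $1$ pointwise as $\delta\to0$ and, more importantly, makes the double sum over $h,m_2$ absolutely convergent in a much wider $(s,u)$-region, thereby legitimizing the interchange of summation and the triple integration. The contour positions $(\Re s,\Re u,\Re w)=(\tfrac12+2\epsilon,1+\epsilon,2)$ in the statement are then reached by shifting the $s$-contour from $\Re s$ large down to $\tfrac12+2\epsilon$; by \eqref{Zpoles} and \eqref{e:Psi0} the poles of $Z_Q(s,u;\delta)$ all lie in $\Re s\le \tfrac12$, so no residues are encountered, and the rapid vertical decay of $\tG$ combined with the polynomial bound \eqref{Z2} and Stirling for the $\Gamma$-factors make the shift legitimate.

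The main technical obstacle is the convergence bookkeeping. The Mellin--Barnes integral converges only conditionally, and once $\Re s = \tfrac12+2\epsilon$ the inner sum defining $Z_Q(s,u)$ is not absolutely convergent, so one cannot interchange sum and integral directly at the stated contours. This is precisely why the statement is formulated with $\delta>0$ and a limit on the outside: the $\delta$-damping legitimizes the interchange in a larger region, and then passing $\delta\to 0$ under the integral, using the uniform bounds of Proposition \ref{prop:Mbounds} together with \eqref{Z2} to ensure dominated convergence, completes the proof.
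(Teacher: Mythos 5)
Your overall route coincides with the paper's: inverse Mellin transforms for the two $G$-factors, a Mellin--Barnes expansion for the binomial factor $(1+hQ/(\ell_2 m_2))^{-\gb}$, recognition of the resulting sum as $Z_Q$, and a change of variables followed by a contour shift. Your variant of keeping $A(m_1)\overline{B(m_2)}$ and peeling off the factor $(1+hQ/(\ell_2 m_2))^{(k-1)/2}$ instead of first converting to unnormalized coefficients $a,b$ as the paper does is cosmetically different but mathematically equivalent, and the substitution $z=u+(k-1)/2$ produces the same $\Gamma$-factors.

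However, there is a concrete bookkeeping error that in turn propagates into an incorrect claim about convergence. You read the final contour positions as $(\Re s,\Re u,\Re w)=(\tfrac12+2\epsilon,\,1+\epsilon,\,2)$; since the integrals appear in the order $\int_{(\frac12+2\epsilon)}\int_{(2)}\int_{(1+\epsilon)}\cdots du\,ds\,dw$, the innermost contour $(1+\epsilon)$ is the $u$-contour, the middle $(2)$ is the $s$-contour, and the outermost $(\frac12+2\epsilon)$ is the $w$-contour, so the correct positions are $(\Re s,\Re u,\Re w)=(2,\,1+\epsilon,\,\tfrac12+2\epsilon)$. It is $w$ that is shifted down, not $s$. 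At the actual stated contours both arguments of $Z_Q(s,u)$ have real part strictly greater than $1$, so $Z_Q(s,u)$ remains inside its region of absolute convergence, the $\Gamma$- and $\tG$-factors have their arguments to the right of all poles, and the shift crosses nothing. Consequently your assertion that ``once $\Re s=\tfrac12+2\epsilon$ the inner sum defining $Z_Q(s,u)$ is not absolutely convergent'' and that one must invoke $\delta$-regularization together with the bounds of Proposition~\ref{prop:Mbounds} and \eqref{Z2} to justify the interchange is not what is happening here. At these contours $Z_Q(s,u)=\lim_{\delta\to0}Z_Q(s,u;\delta)$ is a trivial consequence of absolute convergence of the defining Dirichlet series, and the limit passes under the integral by dominated convergence with no spectral input. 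The $\delta$ is introduced in this Proposition purely to package the integrand in the form used in the later contour shifts (where poles of $Z_Q$ genuinely are crossed); it is not needed to legitimize the manipulations proving the Proposition itself. Finally, your proposal to insert the $\delta$-damping ``from the outset'' into the shifted sum $S(X,Q,\ell_1,\ell_2)$ is imprecise as stated, since the exponent $-(s+k-1)$ involves the variable $s$ which does not exist until after the Mellin inversion; the paper sidesteps this by deriving the undamped triple integral first and only then replacing $Z_Q$ by $\lim_{\delta\to0}Z_Q(\cdot;\delta)$ inside the integrand.
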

\begin{proof}
A few algebraic manipulations give us
\begin{multline*}
	S(X,Q,\ell_1,\ell_2)
	=
	\sum_{h, m_2\geq 1\atop \ell_1m_1 -\ell_2m_2 = hQ} \frac{A(m_1)\overline{B(m_2)}}{(\ell_1m_1)^{\frac{1}{2}}(\ell_2m_2)^{\frac{1}{2}}}
	G\left(\frac{\ell_1m_1}{X}\right) G\left(\frac{\ell_2m_2}{X}\right)
	\\
	=\left(\ell_1\ell_2\right)^{\frac{(k-1)}{2}}
	\left(\frac{1}{2\pi i}\right)^2 \int_{(2)} \int _{(2)}\sum_{h, m_2\geq 1, \atop \ell_1m_1 - \ell_2m_2 =hQ} 
	\frac{a(m_1) b(m_2) \tG (s) \tG (w) X^{s+w}}
	{(\ell_1m_1)^{\frac{k}{2}+w}(\ell_2m_2)^{\frac{k}{2}+s}}
	\; ds\; dw
	\\
	=\left(\ell_1\ell_2\right)^{\frac{(k-1)}{2}}
	\left(\frac{1}{2\pi i}\right)^2 \int_{(k/2)} \int_{(2)} \sum_{h, m_2\geq 1, \atop \ell_1m_1 - \ell_2m_2 =hQ} 
	\frac{a(m_1) b(m_2) \tG(s) \tG(w) X^{s+w}}
	{(\ell_2m_2)^{s+w+k}\left(1+\frac{hQ}{\ell_2m_2}\right)^{\frac{k}{2}+w}}\; ds\; dw.
\end{multline*}
In \cite{GR}, 6.422(3), the following identity can be found, after changing the signs of $\gamma$ and $u$.  For $\Re(\beta) > \gamma>0$  and $\text{arg}(t) < \pi$:
$$
	\frac{1}{2 \pi i} \int_{(\gamma)}\G(u)\G(\beta -u) t^{-u}du 
	= 
	\G(\beta)(1+t)^{-\beta}.
$$
Applying this to the above, with $\beta = k/2+w$, $\gamma = (k+1)/2 + \epsilon$, and  $t = hQ/m$, we have
\begin{multline*}
	S(X,Q,\ell_1,\ell_2)
	\\
	=
	\left(\ell_1\ell_2\right)^{\frac{(k-1)}{2}}
	\left(\frac{1}{2\pi i}\right)^3 
	\int_{(k/2)} \int_{(2)} \int_{(\gamma)} \sum_{h, m_2\geq 1, \atop \ell_1m_1 - \ell_2m_2 =hQ} 
	\frac{a(m_1) b(m_2) }{(\ell_2m_2)^{s+w+k-u}(hQ)^u}
	\\
	\times
	\frac{\Gamma\left(\frac{k}{2}+w-u\right)\Gamma(u) \tG(s) \tG(w) X^{s+w}}
	{\Gamma\left(\frac{k}{2}+w\right)} 
	\; du\; ds\; dw
	\\
	=
	\left(\frac{1}{2\pi i}\right)^3 
	\int_{(k/2)} \int_{(2)} \int_{(\gamma)} 
	Z_Q \left(s+w+1-u, u+\frac{1-k}{2} ; \ell_1, \ell_2 \right)
	\\
	\times
	\frac{\Gamma\left(\frac{k}{2}+w-u\right)\Gamma(u) \tG(s) \tG(w) X^{s+w}}
	{\Gamma\left(\frac{k}{2}+w\right)} 
	\; du\; ds\; dw 
	.
\end{multline*}
Both arguments of $Z_Q$ are greater than $1$ so the sum defining $Z_Q$ converges absolutely and 
\begin{multline*}
	S(X,Q,\ell_1,\ell_2) 
	=  
	\lim_{\delta \rightarrow 0}
	\left(\frac{1}{2\pi i}\right)^3 
	\int_{(k/2)} \int_{(2)} \int_{(\gamma)} Z_Q \left(s+w+1-u, u+\frac{1-k}{2};\delta \right)
	\\
	\times
	\frac{\Gamma\left(\frac{k}{2}+w-u\right)\Gamma(u) \tG (s) \tG (w) X^{s+w}}
	{\Gamma\left(\frac{k}{2}+w\right)} 
	\; du\; ds\; dw
	.
\end{multline*}

Changing variables via $s' = s+w+1-u$,  $u'=u+(1-k)/2$
and dropping the primes brings the integrand to the desired form as stated in the Proposition, after which the lines of integration can be shifted, without passing over poles, to
$$
	\Re(u)= 1+\epsilon, \; \Re(s)=2, \; \Re(w) = \frac{1}{2}+2\epsilon
	.
$$
Note that we now have  $1< \Re(u) < \Re(w)+\frac{1}{2}$, $\Re(s)>1$ 
and $\Re\left(s-w-1+u+(k-1)/2\right)>0$. 
Thus the lines of integration of the $g$ functions are to the right of their poles and $Z_Q$ remains in the range of absolute convergence.
\end{proof}

\subsection{The separation into cuspidal and continuous parts}\label{ss:separation_cusp_cont}
	
Recalling the spectral expansions \eqref{Zdeltainit}--\eqref{Psi}, write
$$
	S_Q(X; \delta)
	= 
	(\ell_1\ell_2)^{\frac{(k-1)}{2}}\left(S_Q^{(\text{cusp})}(X; \delta)
	+
	S_Q^{(\text{cont})}(X; \delta)\right)
	,
$$
with
\begin{multline}\label{e:SQcuspXdelta}
	S_Q^{({\rm cusp})}(X; \delta)
	:=
	\left(\frac{1}{2\pi i}\right)^3
	\int_{(\hf +2\epsilon)} \int_{(2)} \int _{(1+\epsilon)}
	S_{\rm cusp}(s, u;\delta)
	\\
	\times
	\tG \left(s-w-1+u+\frac{k-1}{2}\right)
	\tG (w) 
	X^{s-\frac{3}{2}+u+\frac{k}{2}} 
	\frac{
	\Gamma\left(w-u+\frac{1}{2}\right) 
	\Gamma\left(u+\frac{k-1}{2}\right)}
	{\Gamma\left(\frac{k}{2}+w\right)} 
	\; du\; ds\; dw
\end{multline}
and
\begin{multline}\label{e:SQcontXdelta}
	S_Q^{({\rm cont})}(X; \delta)
	:=
	\left(\frac{1}{2\pi i}\right)^3
	\int_{(1/2 +2\epsilon)} \int_{(2)} \int _{(1+\epsilon)}
	S_{\rm cont}(s, u;\delta)
	\\
	\times
	\tG \left(s-w-1+u+\frac{k-1}{2}\right) 
	\tG (w) 
	X^{s-\frac{3}{2}+u+\frac{k}{2}} 
	\frac{
	\Gamma\left(w-u+\frac{1}{2}\right) 
	\Gamma\left(u+\frac{k-1}{2}\right)}
	{\Gamma\left(\frac{k}{2}+w\right)} 
	\; du\; ds\; dw
	.
\end{multline}	
	
We will treat first $S_Q^{(\text{cusp})}(X; \delta)$, 
	and then $S_Q^{(\text{cont})}(X; \delta)$.

\subsection{The case of $S_Q^{(\text{cusp})}(X; \delta)$}

We begin by moving the $s$ line of integration in \eqref{e:SQcuspXdelta} 
to $\Re(s) = (1-k)/2-2\epsilon$.  
The moved integral is now $\mathcal{O}(X^{-\epsilon})$.
In the process we pass over poles of  $S_{\rm cusp}(s, w;\delta)$ at
\begin{enumerate}
	\item $s-w-3/2+u+k/2=0, -1, \ldots$ from $\tG(s-w-3/2+u+k/2)$, 
	
	\item $s=1/2-r+it_j$, for $0\leq r\leq \frac{k}{2}$ 
	from $S_{\rm cusp}(s, w;\delta)$. 
\end{enumerate}

In fact, $s-w-3/2+u+k/2=0$ corresponds to $\Re(s) = 1+ \epsilon -k/2$, and $s-w-3/2+u+k/2=-1$ to $\Re(s) =  \epsilon -k/2 < 1/2 - k/2$,
so only the pole at 0 will be passed over.
Considering this first,  we have $s=w-u+3/2-k/2$. 
The contribution to $S_Q^{({\rm cusp})}(X; \delta)$ from this pole is
\begin{multline*}
	S_{Q,(1)}^{(\text{cusp})}(X; \delta)
	= 
	\left(\frac{1}{2\pi i}\right)^2 
	\int_{\left(\frac{1}{2}+2\epsilon\right)} \int_{(1+\epsilon)}
	S_{\rm cusp}\left(w-u+\frac{3}{2}-\frac{k}{2}, u; \delta\right)
	\\
	\times
	\tG (w) X^w 
	\frac{\Gamma\left(w-u+\frac{1}{2}\right)\Gamma\left(u+\frac{k-1}{2}\right)}
	{\Gamma\left(\frac{k}{2}+w\right)}
	\; du\; dw
	.
\end{multline*}
In this region
	$\Re\left(w-u+3/2-k/2\right)
	=1-k/2+\epsilon$, 
	$\Re\left(w-u+1/2 \right) = \epsilon$
	and 
	$\Re\left(u+(k-1)/2\right) = (1+k)/2+\epsilon$.

\subsubsection{Analysis of $S_{Q,(1)}^{(\text{cusp})}(X; \delta)$}

We will show 
\begin{prop}\label{prop:cusp1}
$$
	(\ell_1\ell_2)^{\frac{k-1}{2}}
	S_{Q,(1)}^{({\rm cusp})}(X; \delta)
	=
	(\ell_1\ell_2)^{\frac{(k-1)}{2}} 
	S_{\rm cusp} \left(1-\frac{k}{2}, \frac{1}{2} ; \delta\right)
	+ 
	\mathcal{O}(X^{-\epsilon}) 
	+\mathcal{O}\left((\ell_1\ell_2)^\epsilon Q^{\theta+\epsilon-\frac{1}{2}} \right).
$$
\end{prop}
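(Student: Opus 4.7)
The plan is to prove the proposition by two successive contour shifts in the double integral that defines $S_{Q,(1)}^{({\rm cusp})}(X;\delta)$, extracting the main term as a ``diagonal residue'' and bounding the shifted integrals. The integrand is
$$
S_{\rm cusp}\!\left(w-u+\tfrac{3}{2}-\tfrac{k}{2},\,u;\delta\right) \tG(w)\, X^w\, \frac{\Gamma(w-u+\tfrac{1}{2})\,\Gamma(u+\tfrac{k-1}{2})}{\Gamma(\tfrac{k}{2}+w)},
$$
integrated over $\Re(u)=1+\epsilon$ and $\Re(w)=\tfrac{1}{2}+2\epsilon$, and the specialization $(w,u)=(0,\tfrac{1}{2})$ sends the arguments of $S_{\rm cusp}$ precisely to $(1-k/2,\,1/2)$, matching the target main term.

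First I would shift the $w$-line to the left across $w=0$, to an auxiliary line $\Re(w)=-\epsilon$. The simple pole of $\tG(w)$ at $w=0$ (residue $1$) produces a single $u$-integral over $\Re(u)=1+\epsilon$ in which $X^w$ collapses to $1$, while the remaining $w$-integral carries the factor $X^{-\epsilon}$. Using the bound \eqref{Z2} for $(\ell_1\ell_2)^{(k-1)/2}S_{\rm cusp}$ (applicable since $\Re(u)=1+\epsilon>1$ and $\Re(s')=\Re(w)+\tfrac{1}{2}\ge\tfrac{1}{2}-\epsilon$) together with the exponential decay of $\tG(w)$ and the gamma ratio in $\Im(w)$, the shifted $w$-integral is $\mathcal{O}(X^{-\epsilon})$.

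Next, in the $u$-integral produced by the residue at $w=0$, I would shift $u$ to the left across $u=\tfrac{1}{2}$, crossing the simple pole of $\Gamma(\tfrac{1}{2}-u)$. The gamma identity $\Gamma(u+\tfrac{k-1}{2})|_{u=1/2}=\Gamma(\tfrac{k}{2})$ cancels the denominator $\Gamma(\tfrac{k}{2})$ exactly, so the residue simplifies to a constant multiple of $S_{\rm cusp}(1-k/2,\,1/2;\delta)$, the claimed main term. On the shifted line $\Re(u)=\tfrac{1}{2}-\epsilon$, the parameter $s'=u$ has $\Re(s')=\tfrac{1}{2}-\epsilon$, and \eqref{Z2} gives $(\ell_1\ell_2)^{(k-1)/2}S_{\rm cusp}\ll Q^{\theta+\epsilon-\tfrac{1}{2}}$ modulo tame polynomial factors; combined with the rapid decay of $\Gamma(\tfrac{1}{2}-u)\Gamma(u+\tfrac{k-1}{2})$ in $|\Im u|$, this shifted integral is absolutely convergent and bounded by $\mathcal{O}((\ell_1\ell_2)^\epsilon Q^{\theta+\epsilon-\tfrac{1}{2}})$.

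The main obstacle will be the careful bookkeeping of intermediate pole crossings encountered during the $w$-shift: besides $w=0$, one also crosses $\Gamma(w-u+\tfrac{1}{2})$ at $w=u-\tfrac{1}{2}$ (sitting on $\Re(w)=\tfrac{1}{2}+\epsilon$) and, for $r=k/2$, the $M$-poles of Proposition \ref{prop:Mbounds} on $\Re(w)=\epsilon$. Each resulting residue is itself a $u$-integral with its own simple pole at $u=\tfrac{1}{2}$ (arising either from $\tG(u-\tfrac{1}{2})$ or from the analytic continuation of $L_Q(u,\overline{u_j})$ through the critical line), and one must check that after a further shift in $u$ the associated residues either recombine with the $(w,u)=(0,\tfrac{1}{2})$ pickup to yield exactly $S_{\rm cusp}(1-k/2,1/2;\delta)$ or are absorbed into the advertised error terms, invoking the vertical estimates \eqref{upper1}--\eqref{nearp1} on $M(s,t,\delta)$ and the averaged bound \eqref{crj2} on the spectral coefficients $c_{r,j}$.
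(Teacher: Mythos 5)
Your contour‐shift strategy is the right general idea, but you have misattributed the source of the main term, and following your recipe literally produces the wrong sign.

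After the pole of $\tG(w)$ at $w=0$ is crossed, the resulting $u$-integrand contains the factor $\Gamma(\tfrac12-u)$, and $\Res_{u=1/2}\Gamma(\tfrac12-u)=-1$ (since $\Gamma(\tfrac12-u)=-\frac{1}{u-\frac12}+\cO(1)$). Thus the $(w,u)=(0,\tfrac12)$ residue is $-S_{\rm cusp}(1-\tfrac{k}{2},\tfrac12;\delta)$, not $+S_{\rm cusp}$. The paper's main term in fact comes from the \emph{other} pole you mention and relegate to the last paragraph: the pole of $\Gamma(w-u+\tfrac12)$ at $w=u-\tfrac12$. Taking that residue first, the gamma ratio collapses to $1$ and one is left with $\frac{1}{2\pi i}\int_{(1+\epsilon)}S_{\rm cusp}(1-\tfrac{k}{2},u;\delta)\,\tG(u-\tfrac12)\,X^{u-1/2}\,du$; this integral must be shifted in $u$ because of the growing factor $X^{u-1/2}\asymp X^{1/2+\epsilon}$, and the residue at $u=\tfrac12$ of $\tG(u-\tfrac12)$ (residue $+1$, and $X^{u-1/2}|_{u=1/2}=1$) gives $+S_{\rm cusp}(1-\tfrac{k}{2},\tfrac12;\delta)$, the correct main term.

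Your plan to bound the shifted $u$-integral in the $w=0$ residue at $\Re(u)=\tfrac12-\epsilon$ is also problematic. The bound \eqref{Z2} is stated for $\Re\,w>1$; in the $w=0$ residue the second argument of $S_{\rm cusp}$ is $u$, and on the line $\Re(u)=\tfrac12-\epsilon$ this condition fails, so \eqref{Z2} cannot be invoked there as written. (You also wrote $s'=u$ for this piece, but with $s=-u+\tfrac32-\tfrac{k}{2}$ and second argument $u$ one has $s'=\tfrac12$, fixed.) Moreover, shifting $u$ there crosses poles of $S_{\rm cusp}(-u+\tfrac32-\tfrac{k}{2},u;\delta)$ at $u=1+r-\tfrac{k}{2}\mp it_j$ with $r=\tfrac{k}{2}$, which you do not account for; if you do carry everything through, the $u=\tfrac12$ contributions from the $w=0$ and $w=u-\tfrac12$ residues cancel, and the main term ends up hiding in the remaining shifted integral and $S_{\rm cusp}$-residues rather than being cleanly isolated. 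The paper avoids all of this by not shifting $u$ in the $w=0$ residue at all: it simply bounds that $u$-integral at $\Re(u)=1+\epsilon$ using \eqref{Z2} (which is valid there, since $\Re(u)=1+\epsilon>1$ and $\Re(s')=\tfrac12$), obtaining $\cO((\ell_1\ell_2)^\epsilon Q^{-1/2})$ directly. Your final paragraph correctly flags that the recombination is unresolved; resolving it is exactly the missing content of the proof.
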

\begin{proof}
Move $w$ to $\Re(w)=-\epsilon$.      
Poles of $S_{\rm cusp}\left(w-u+3/2-k/2, u; \delta\right)$, of $\tG(w)$,
and of $\Gamma\left(w-u+1/2\right)$ are passed over in this process.  
The shifted integral is $\cO(X^{-\epsilon})$.  
The residues at the poles are computed as follows.  
Recalling \eqref{Scuspdeltadef}, we have
\begin{multline*}
	S_{\rm cusp}\left(w-u+\frac{3}{2}-\frac{k}{2}, u; \delta\right)
	\\
	=
	\frac{(4\pi)^k 2^{w+1-u-\frac{k}{2}}}
	{2\sqrt{\pi}\Gamma\left(w+\frac{1}{2} -u+\frac{k}{2}\right) }
	\sum_j L_Q\left(w+\frac{1}{2}, \overline{u_j }\right) 
	M(w+\frac{3}{2}-u-\frac{k}{2}, t_j, \delta) 	
	\overline{ \left<V, u_j\right>}
	.
\end{multline*}

The poles of $S_{\rm cusp}\left(w-u+3/2-k/2, u; \delta\right)$ 
occur when $w-u+3/2-k/2 = 1/2-r +it_j$.
As $\Re \left(w-u+3/2-k/2 \right) = \Re(w) +1/2-k/2 -\epsilon$, 
exactly one pole is passed, when $w= u-1+it_j$, i.e when $\Re(w) = \epsilon$.  
Referring to Proposition \ref{prop:Mbounds},
$$
	\Res_{w-u+\frac{3}{2}-\frac{k}{2} = \frac{1}{2} -r +it_j} 
	S_{\rm cusp} \left(w-u+\frac{3}{2}-\frac{k}{2}, u; \delta\right) 
	= c_{r,j}(\delta)L_Q\left(u-\frac{1}{2}+it_j, \overline{u_j }\right), 
$$
and the contribution from this term is
\begin{multline*}
	\frac{1}{2\pi i} \int_{(1+\epsilon)}
	c_{r,j}(\delta)L_Q\left(u-\frac{1}{2} +it_j, \overline{u_j }\right)
	\tG(u-1+it_j) X^{u-1+it_j} 
	\\
	\times
	\frac{\Gamma\left(-\frac{1}{2}+it_j\right)
	\Gamma\left(u+\frac{k-1}{2}\right)}
	{\Gamma\left(\frac{k}{2}+u-1+it_j\right)}
	\; du,
\end{multline*}
with $\lim_{\delta \rightarrow 0}  c_{r,j}(\delta)= c_{r,j} $ and
\be\label{crjdef}
	c_{r,j} 
	=
	\frac{(-1)^r (4\pi)^k\overline{\<V,u_j\>}\Gamma(\hf - it_j +r)\Gamma(2it_j -r)}{2r!\Gamma(\hf + it_j)\Gamma(\hf-i t_j) \Gamma(k-\hf + it_j -r)}
	.
\ee

Continuing the calculation, move $u$ to $\Re(u)= 1-\epsilon$.  
The shifted integral is now $\cO (X^{-\epsilon})$ and a pole of $g(u-1+it_j)$ is passed at $u = 1-it_j$,  with residue 
$$
	c_{r,j}(\delta) L_Q\left(\frac{1}{2}, \overline{u_j }\right)
 	\frac{\Gamma\left(-\frac{1}{2}+it_j\right)\Gamma\left(\frac{k+1}{2}-it_j\right)}
	{\Gamma\left(\frac{k}{2}\right)}.
$$
By (7.16) of \cite{HH}
\be\label{targetend}
	\sum_{|t_j| \sim T}L_Q({s'},\overline{u_j})\overline{\<V,u_j\>} 
	\ll 
	Q^{-s'}(\ell_1\ell_2)^{(1-k)/2+\epsilon} (1 + |\gamma'|+ |T|)^{1+k+\epsilon}
	.
\ee
Here $s'=1/2$ and $\gamma' =0$, and it follows that $(\ell_1\ell_2)^{(k-1)/2}$ times
the sum of these residues converges absolutely and is 
$\cO\left((\ell_1\ell_2)^{\epsilon} Q^{-1/2}\right)$.
	
The function $\tG (w)$ has a pole at $w=0$, contributing the residue
\be\label{temp43}
	\frac{1}{2\pi i} \int_{(1+\epsilon)}
	S_{\rm cusp}\left(-u+\frac{3}{2}-\frac{k}{2}, u; \delta\right)
	\frac{\Gamma\left(-u+\frac{1}{2}\right)\Gamma\left(u+\frac{k-1}{2}\right)}{\Gamma\left(\frac{k}{2}\right)}du
\ee
When $\Re(u)= 1+\epsilon$, $\Re \left(-u+3/2-k/2\right) = 1/2 -\epsilon -k/2$.   
In this region the spectral expansion for 
$S_{\rm cusp}\left(-u+3/2-k/2, u; \delta\right)$ 
converges absolutely as $\delta \rightarrow 0$ and
\begin{multline*}
	S_{\rm cusp}\left(-u+\frac{3}{2}-\frac{k}{2}, u; \delta\right)\\
	=
	\frac{(4\pi)^k 2^{-u+1-\frac{k}{2}}}
	{2\sqrt{\pi}\Gamma(-u+\frac{1}{2}+\frac{k}{2}) }
	\sum_j L_Q\left(1/2, \overline{u_j }\right) 
	M\left(-u+\frac{3}{2}-\frac{k}{2}, t_j, \delta \right) 	
	\overline{ \langle V, u_j \rangle}
	.
\end{multline*}
The bound  \eqref{Z2} implies that 
$$
	(\ell_1\ell_2)^{(1-k)/2}S_{\rm cusp}\left(-u+\frac{3}{2}-\frac{k}{2}, u; \delta\right) 
	\ll 
	(1 + |u|)^{1+k+\epsilon}\left(\frac{(\ell_1\ell_2)^{\epsilon}}{\sqrt Q}\right),
$$
and $(\ell_1\ell_2)^{(1-k)/2}$ times the integral \eqref{temp43} is consequently $\mathcal{O}\left((\ell_1\ell_2)^{\epsilon} Q^{-1/2}\right)$.

The pole of $\Gamma\left(w-u+1/2 \right)$  at $w = u-1/2$ contributes
$$
	\frac{1}{2\pi i} \int_{(1+\epsilon)}
	S_{\rm cusp}\left(1-\frac{k}{2}, u; \delta\right)
	\tG(u-1/2)x^{u-1/2}du
	.
$$
Moving $u$ to $\Re(u) =1/2-\epsilon$, the moved integral contributes $\cO (X^{-\epsilon})$, and a simple pole of $\tG(u-1/2)$ at $u=1/2$ contributes the residue
$$
	S_{\rm cusp}\left(1-\frac{k}{2}, \frac{1}{2} ; \delta\right)
	.
$$
\end{proof}

\subsubsection{Analysis of $S_{Q,(2)}^{(\text{cusp})}(X; \delta)$}
Now write the contribution from the poles at $s = 1/2-r+it_j$ as
\begin{multline*}
	S_{Q,(2)}^{({\rm cusp})}(X; \delta) 
	= \sum_{j, \atop 0 \le r \le k/2}
	\left(\frac{1}{2\pi i}\right)^2 
	\int_{\left(\frac{1}{2}+2\epsilon\right)} \int_{(1+\epsilon)}
	c_{r, j}(\delta)L_Q\left(u+\frac{k-1}{2}-r +it_j, \overline{u_j}\right)
	\\
	\times 
	\tG \left(-r+it_j -w-1+u+\frac{k}{2}\right) \tG (w)  
	X^{-1-r+it_j +u+\frac{k}{2}} 
	\frac{\Gamma\left(w-u+\frac{1}{2}\right) \Gamma\left(u+\frac{k-1}{2}\right)}
	{\Gamma\left(\frac{k}{2}+w\right)}\; du \; dw
	.
\end{multline*}
We will show 
\begin{prop}\label{prop:cusp2}
$$
	(\ell_1\ell_2)^{\frac{(k-1)}{2}} S_{Q,(2)}^{({\rm cusp})}(X; \delta)
	= 
	\mathcal{O}(X^{-\epsilon}) 
	+\mathcal{O}\left((\ell_1\ell_2)^\epsilon Q^{-\frac{1}{2}+\theta+\epsilon}\right)
	.
$$
\end{prop}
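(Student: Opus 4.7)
Since the statement claims no main term, the strategy is to shift both contours into regions where the integrand is small and simply bound all residues crossed along the way. For each fixed pair $(j,r)$ appearing in the spectral sum, I would first move the $u$-contour to the left, from $\Re(u)=1+\epsilon$ to $\Re(u)=1+r-k/2-\epsilon$. Since $r\le k/2$ this is genuinely a leftward shift, and along the new line the factor $X^{u+k/2-1-r+it_j}$ has modulus $X^{-\epsilon}$, producing the required $X^{-\epsilon}$ decay for the remainder integral, provided the resulting spectral sum is absolutely convergent.

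During this $u$-shift, poles of $\tG\!\left(u-w-1-r+it_j+k/2\right)$ are crossed at $u=w+1+r-it_j-k/2-n$ for the finitely many small nonnegative integers $n$ lying between the old and new lines. At each such residue the $X$-exponent collapses to $X^{w-n}$, so I would then shift the remaining $w$-integral leftward to $\Re(w)=-\epsilon$, crossing the simple pole of $\tG(w)$ at $w=0$ and possibly some poles of $\Gamma(w-u+1/2)$. Each resulting contribution is a (double or single) contour integral whose integrand carries the factor $c_{r,j}(\delta)\,L_Q(s',\overline{u_j})$ evaluated at $\Re(s')\ge 1/2-\epsilon$ after all shifts.

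To control these contributions, I would multiply by $(\ell_1\ell_2)^{(k-1)/2}$ and partition the spectral sum dyadically by $|t_j|\sim T$. Applying Cauchy--Schwarz together with \eqref{crj2} for $\sum_{|t_j|\sim T}|c_{r,j}|^2 e^{\pi|t_j|}$ and \eqref{targetend} for $\sum_{|t_j|\sim T} L_Q(s',\overline{u_j})\,\overline{\langle V,u_j\rangle}$, each dyadic block is bounded by $Q^{\theta-1/2+\epsilon}(\ell_1\ell_2)^{\epsilon}$, where the Ramanujan-type exponent $\theta$ enters because $s'$ has been pushed slightly to the left of the critical line $\Re(s')=1/2$ and is controlled there by convexity. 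Summing over dyadic $T$ and over the finitely many $r\in[0,k/2]$ then yields the claimed bound $\mathcal{O}((\ell_1\ell_2)^{\epsilon}Q^{\theta+\epsilon-1/2})$, while the integral along the shifted $u$-line contributes the $\mathcal{O}(X^{-\epsilon})$ term.

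The principal obstacle is bookkeeping: tracking exactly which $\tG$- and $\Gamma$-poles get crossed during each of the two successive shifts, and checking that in each intermediate piece the sum over $j$ converges absolutely. The essential analytic input is the exponential decay $e^{-\pi|t_j|/2}$ of $\overline{\langle V,u_j\rangle}$, and hence of $c_{r,j}$ via \eqref{crjdef1}, which is what balances the polynomial growth in $|t_j|$ of the surviving gamma-function ratios and of the averaged $L_Q$-values. Apart from this bookkeeping the argument is structurally a near-repetition of the one used for Proposition~\ref{prop:cusp1}, with the same pair of average bounds \eqref{crj2} and \eqref{targetend} doing the essential work; the only difference is that here every residue produces an error term rather than a main term.
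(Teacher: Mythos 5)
Your overall route is the paper's: shift the $u$-line left so that the $X$-power decays, pick up the residue of $\tG$ at the point where its argument vanishes, then shift $w$ to $\Re(w)=-\epsilon$ and pick up the residue of $\tG(w)$ at $w=0$, and finally bound the resulting sums of $c_{r,j}(\delta)L_Q\left(\tfrac12,\overline{u_j}\right)$ by the averaged spectral bound. However, one step fails as written: your shift is not adaptive in $t_j$. On the line $\Re(u)=1+r-k/2-\epsilon$ the modulus of $X^{u+k/2-1-r+it_j}$ is $X^{\Re(it_j)-\epsilon}$, not $X^{-\epsilon}$; for exceptional eigenvalues ($t_j$ purely imaginary, with $|\Re(it_j)|$ possibly as large as $\theta$) this grows with $X$, and since $X$ is a free large parameter the claimed $\mathcal{O}(X^{-\epsilon})$ for the shifted integral is not justified. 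The paper instead moves $u$ to where $\Re\left(-1-r+it_j+u+\tfrac{k}{2}\right)=-\epsilon$, i.e. the contour depends on $\Re(it_j)$; this adaptive choice is also what makes the auxiliary checks work, namely $\Re\left(u+\tfrac{k-1}{2}\right)\ge \tfrac14-\epsilon$ via Selberg's bound, and in the case $r=k/2$ (which the paper treats separately, assuming $\Re(it_j)\ge 0$) the positivity of $\Re\left(w-u+\tfrac12\right)$, so that no unintended poles of $\Gamma\left(u+\tfrac{k-1}{2}\right)$, $\Gamma\left(w-u+\tfrac12\right)$ or $\tG$ are crossed. Note also that for $r=k/2$ the $\tG$-pole is never crossed at all (its argument already has negative real part on the initial contours), so that case produces no residue term; your uniform description obscures this.

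The second issue is the estimation of the residue sums. Invoking ``Cauchy--Schwarz with \eqref{crj2} and \eqref{targetend}'' does not parse: \eqref{targetend} bounds the first moment $\sum_{|t_j|\sim T}L_Q(s',\overline{u_j})\overline{\langle V,u_j\rangle}$, which already contains the inner product sitting inside $c_{r,j}$ (see \eqref{crjdef1}); pairing \eqref{crj2} with it by Cauchy--Schwarz would instead require a mean-square bound for $L_Q\left(\tfrac12,\overline{u_j}\right)$ alone, which is not available in the paper. The correct and simpler move, which is what the paper does (as in Proposition \ref{prop:cusp1}), is to observe that after both shifts the residue terms are, up to explicit gamma-factor ratios that Stirling shows are harmless, exactly the summands of \eqref{targetend} at $s'=\tfrac12$, $\gamma'=0$, so \eqref{targetend} applied directly gives $\mathcal{O}\left((\ell_1\ell_2)^{\epsilon}Q^{-1/2}\right)$ --- with no $\theta$ at all; the $\theta$ in the statement is slack, and your explanation of its provenance (convexity after pushing $s'$ left of the critical line) is not where it would come from. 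With the adaptive contour and this direct application of \eqref{targetend}, your argument becomes the paper's proof.
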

\begin{proof}	
For $r$ such that $0\le r \le k/2-1$, move the $u$ line to 
$\Re \left( -1-r+it_j +u+k/2\right) = -\epsilon$.   
A pole of $g$ is passed over at $-r+it_j -w-1+u+k/2=0$, with residue
$$
	\frac{1}{2\pi i} 
	\int_{\left(\frac{1}{2}+2\epsilon\right)}
	c_{r, j}(\delta)L_Q\left(w+1/2, \overline{u_j}\right)
	 \tG (w)  
	X^{w} 
	\frac{\Gamma\left(\frac{k-1}{2}-r+it_j\right) \Gamma\left(w+\frac{1}{2}-it_j+r\right)}
	{\Gamma\left(\frac{k}{2}+w\right)} \; dw
	.
$$
Note that the argument of $\G(u + (k-1)/2)$ always has positive real part  as
$\Re (u + (k-1)/2) = 1/2 -\epsilon +r -\Re(it_j) \ge 1/2 -\epsilon  -\Re(it_j) \ge 1/4-\epsilon$. 
The last inequality follows from the original Selberg  bound $|\Re(it_j)| <1/4$.

Moving the $w$ line, to $\Re (w) = -\epsilon$, 
the shifted integral is $\cO(X^{-\epsilon})$ 
and there is one residue contributed by the pole of $\tG (w)$ at $w=0$.  
This is equal to $c_{r, j}(\delta)L_Q\left(1/2, \overline{u_j}\right)$ 
and $(\ell_1\ell_2)^{(k-1)/2}$ times the sum of these over the $j$, 
and $0\le r \le k/2-1$ is, as remarked above, 
$\cO \left((\ell_1\ell_2)^\epsilon Q^{-1/2} \right)$.

In the case $r = k/2$, the original argument of $g$ has real part 
$\break \Re \left( -k/2 +it_j -(1/2 + 2 \epsilon)-1+(1 + \epsilon)+k/2 \right) 
= \Re (it_j) -1/2 -\epsilon$, 
and $\G(w-u+1/2)$ has argument with real part equal to $\epsilon$.     
Assuming $\Re(it_j) \ge 0$, decrease $u$ to $\Re (u + it_j) = 1-\epsilon$.  
The new argument of $g$ has real part $\Re (it_j -w-1+u) = -1/2 -3\epsilon$ 
and the new argument of $\G(w-u+1/2)$ has real part greater than 0.  
Thus no additional poles have been passed over and the remaining integral is 
$\cO (X^{-\epsilon})$.
\end{proof}

This completes the analysis of $S_Q^{(\text{cusp})}(X; \delta)$.

\subsection{The case of $S_Q^{(\text{cont})}(X; \delta)$}

The main term arising in the off-diagonal piece originates in the continuous part of the spectrum. 
The Fourier coefficients of Eisenstein series at different cusps are an important part of this contribution. 
In the following section, we will review some basic properties of the Eisenstein series at various cusps. 

\subsubsection{Properties of Eisenstein series}

We use the same notation as in Section \ref{s:review}. 
Recall we're assuming that the level $N$ is square-free. 
By \cite{Iwa02}, for any cusps $\cusp, \cuspb\in \Q$, 
we have the following Fourier expansion at cusp $\cuspb$ 
of the Eisenstein series for cusp $\cusp$:
\begin{multline*}
	E_\cusp(\sigma_\cuspb z, s)
	=
	\delta_{\cusp, \cuspb} y^s
	+
	\frac{\pi^{-s+\frac{1}{2}}\Gamma\left(s-\frac{1}{2}\right)}
	{\pi^{-s}\Gamma(s)}
	\rho_{\cusp\cuspb}(s) y^{1-s}
	\\
	+
	\frac{2}{\pi^{-s}\Gamma(s)}
	\sum_{n\neq 0} \rho_{\cusp\cuspb}(s, n)|n|^{s-\frac{1}{2}}
	\sqrt{y} K_{s-\frac{1}{2}}(2\pi |n|y)
	e^{2\pi inx}
\end{multline*}
where
$\delta_{\cusp, \cuspb} = \left\{\begin{array}{lll}
	1, & \text{ if } \cusp, \cuspb \text{ are }\Gamma\text{-equivalent; }\\
	0, & \text{ otherwise.}
\end{array}\right.$

In particular, when $\cusp=\frac{1}{a}$ for $a\mid N$ and $\cuspb=\infty$, we have
\be\label{e:const_eis}
	\rho_\cusp\left(s\right)
	:=
	\rho_{\cusp\infty}(s)
	=
	\frac{\zeta(2s-1)}{\zeta(2s)}
	\varphi(a)
	\left(\frac{1}{aN}\right)^{s}
	\prod_{p\mid N} \left(1-p^{-2s}\right)^{-1}
	\prod_{p\mid\frac{N}{a}}\left(1-p^{1-2s}\right)\,,
\ee
by \cite{DI83}, 
and $\rho_{\cusp\infty}(s, n) = \rho_\cusp\left(s, n\right) $, for $n\neq 0$, 
where $\rho_\cusp(s, n)$ is defined in (\ref{e:rho_an}). 
Here $\sigma_\cusp$ is fixed as in \eqref{e:sigma1/a}.

Let 
$$
	\Phi(s)
	:=
	\frac{\pi^{-s+\frac{1}{2}} \Gamma\left(s-\frac{1}{2}\right)}
	{ \pi^{-s} \Gamma(s) } 
	\left(\rho_{\cusp \cuspb}(s)\right)_{\cusp, \cuspb}
$$
be the scattering matrix. 
Then the Eisenstein series satisfies the following functional equation:
$$
	E_\cusp(z, 1-s)
	=
	\frac{\pi^{s-\frac{1}{2}} \Gamma\left(-s+\frac{1}{2}\right)}
	{\pi^{-1+s} \Gamma(1-s) }
	\sum_{\cuspb} \rho_{\cusp \cuspb}(1-s) E_\cuspb(z, s)
	.
$$	
By (11.12) in \cite{Iwa02}, we have
$$
	\Phi(s) 
	= 
	\frac{\zeta^*(2s-1)}{\zeta^*(2s)} 
	\bigotimes_{p\mid N} 
	\bpm \frac{p-1}{p^{2s}-1} & \frac{p^s(1-p^{1-2s})}{p^{2s}-1} 
	\\ \frac{p^s(1-p^{1-2s})}{p^{2s}-1} &\frac{ p-1}{p^{2s}-1}\ebpm
	=:
	\frac{\zeta^*(2s-1)}{\zeta^*(2s)} 
	\left(\rho_{\cusp\cuspb, {\rm finite}}(s)\right)_{\cusp, \cuspb}
	.
$$
Let
$$
	E_\cusp^*(z, s)
	:=
	\zeta^*(2s) E_\cusp(z, s)
	.
$$
Then
$$
	E_\cusp^*(z, 1-s)
	=
	\zeta^*(2-2s) E_\cusp(z, 1-s)
	=
	\sum_\cuspb 
	\rho_{\cusp\cuspb, {\rm finite}}(1-s)E_\cuspb^*(z, s)
	\\
	=:
	\zeta^*(2s) \tilde E_\cusp(z, s)
	.
$$

For each fixed $\cusp$, we have
$$
	\sum_{\cuspb} \rho_{\cusp\cuspb, {\rm finite}}(1-s)
	=
	\prod_{p\mid N} 
	\frac {p-1+p^{1-s} - p^{s}}
	{p^{2(1-s)}-1}
	=
	\prod_{p\mid N}
	\frac{p^s+1}
	{p^{1-s}+1}
	,
$$
and we set
$$ 
	\mathfrak{e}(N)
	:=
	\left.\left( \sum_{\cuspb} \rho_{\cusp\cuspb, {\rm finite}}(1-s)\right)
	\right|_{s=1}
	= \prod_{p\mid N} \frac{p+1}{2}
	.
$$
For each cusp $\cusp=1/a$ with $a\mid N$, by \cite{DI83}, we have
$$
	\Res_{s=1} \tilde E_\cusp(z, s)
	=
	\mathfrak{e}(N)
	\frac{3}{\pi} \prod_{p\mid N} \left(p+1\right)^{-1}
	=
	\frac{3}{\pi} \frac{1}{2^{r(N)}}
	.
$$
	
Recalling the description of $\zeta_{\cusp, Q}(s, \tau)$ given in (\ref{e:zeta_aQ}), 
we note that 
$\zeta_{\cusp, Q}(s, \tau)$ has a meromorphic continuation to $(s, \tau)\in \C^2$. 
Moreover $\zeta_{\cusp, Q}(s, \tau)$ has simple poles at $s=1\pm \tau$. 
We define $z_{\cusp, Q}^\pm(\tau)$ by the relation 
$$
	\Res_{s=1\pm\tau} \zeta_{\cusp, Q}(s, \tau)
	=
	\zeta(1\pm 2\tau) z_{\cusp, Q}^\pm (\tau)
	.
$$
The special value of the ratio of Dirichlet polynomials, $z_{\cusp, Q}^\pm(\mp 1/2)$, is computed in the following lemma. 
\begin{lem}\label{l:dirichletpoly}
For $Q\geq 1$, for $a\mid N$, 
$$
	z_{1/a, Q}^+\left(-\frac{1}{2}\right) 
	=
	\sqrt Q \prod_{p\mid N} \frac{1}{p+1}
$$
and 
$$
	z_{1/a, Q}^-\left(\frac{1}{2}\right)
	=
	\frac{1}{\sqrt Q} \prod_{p\mid a, p^\alpha\|Q, \atop \alpha\geq 0} (p^\alpha-1) 
	\prod_{p^\alpha\|Q, p\nmid N, \atop \alpha\geq 1} p^\alpha
	.
$$
Here $p$ is a prime.
Then we have
$$
	\frac{\mathfrak{e}(N) \sum_{a\mid N} z_{1/a, Q}^+\left(-\frac{1}{2}\right) 
	+ \sum_{a\mid N} z_{1/a, Q}^-\left(\frac{1}{2}\right) }
	{\sqrt Q}
	=
	2
	.
$$
\end{lem}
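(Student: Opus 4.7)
The plan is to work directly from the explicit Euler product expression \eqref{e:zeta_aQ} for $\zeta_{\cusp,Q}(s',\tau)$ at $\cusp=1/a$ with $a\mid N$. With $\tau$ held away from $0$, the only $s'$-poles come from the two Riemann zeta factors $\zeta(s'+\tau)\zeta(s'-\tau)$; each is simple, so
$\Res_{s'=1\mp\tau}\zeta_{\cusp,Q}(s',\tau)$ is obtained by dropping the offending $\zeta$-factor and evaluating the remaining Euler product at $s'=1\mp\tau$. Dividing by $\zeta(1\pm 2\tau)$ produces $z^{\pm}_{\cusp,Q}(\tau)$. The computation of $z^{+}_{1/a,Q}(-1/2)$ is then direct: substituting $s'=1+\tau$ yields $p^{-(s'-\tau)}=p^{-1}$ and $p^{-(s'+\tau)}=p^{-1-2\tau}$, and every factor has a well-defined limit as $\tau\to-1/2$. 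Routine simplification, using $a/N=\prod_{p\mid N/a}p^{-1}$ together with the factorization $(1-p^{-2})^{-1}=p^{2}/((p-1)(p+1))$, collapses the result to $\sqrt{Q}\prod_{p\mid N}(p+1)^{-1}$, independently of $a$.

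For $z^{-}_{1/a,Q}(1/2)$, substituting $s'=1-\tau$ gives $p^{-(s'-\tau)}=p^{-1+2\tau}$ and $p^{-(s'+\tau)}=p^{-1}$. The main subtlety is that at $\tau=1/2$ and $p\mid a$ both summands in the bracketed factor of \eqref{e:zeta_aQ} vanish, while simultaneously the $\prod_{p\mid N}(1-p^{-1+2\tau})^{-1}$ factor has a pole at $p\mid a$. These are resolved together by the exact identity
\[
	\frac{p^{2\tau}-p}{1-p^{-1+2\tau}} \;=\; -p,
\]
after which the combined factor at $p\mid a$ simplifies to
\[
	\frac{p^{-1+2\tau}(p-1)\bigl(1-p^{\alpha\cdot 2\tau}\bigr)}{1-p^{2\tau}}\,,
\]
whose value at $\tau=1/2$ is $p^{\alpha}-1$. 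The $p\mid N/a$ factors cancel against the corresponding $(1-p^{-1+2\tau})^{-1}$, giving $1$; the $p\mid Q,\,p\nmid N$ brackets reduce by an analogous calculation to $p^{\alpha}$. Assembling everything, one recovers the claimed formula for $z^{-}_{1/a,Q}(1/2)$.

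For the final identity, the key observation is that $z^{+}_{1/a,Q}(-1/2)$ is independent of $a$, so
\[
	\mathfrak{e}(N)\sum_{a\mid N} z^{+}_{1/a,Q}(-1/2)
	\;=\; \prod_{p\mid N}\frac{p+1}{2}\cdot 2^{r(N)}\cdot \sqrt{Q}\prod_{p\mid N}\frac{1}{p+1}
	\;=\; \sqrt{Q}.
\]
For the second sum, since $N$ is square-free the sum over $a\mid N$ splits as a product over primes, so writing $p^{\alpha_{p}}\|Q$,
\[
	\sum_{a\mid N}\prod_{p\mid a}(p^{\alpha_{p}}-1)
	\;=\; \prod_{p\mid N}\bigl(1+(p^{\alpha_{p}}-1)\bigr)
	\;=\; \prod_{p\mid N}p^{\alpha_{p}}.
\]
Combined with the $a$-independent factor $\prod_{p\mid Q,\,p\nmid N}p^{\alpha_{p}}$ the full product becomes $\prod_{p\mid Q}p^{\alpha_{p}}=Q$, and division by $\sqrt{Q}$ yields $\sqrt{Q}$ for the second sum as well. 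The claimed identity $(\sqrt{Q}+\sqrt{Q})/\sqrt{Q}=2$ follows.

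The only genuine obstacle is the apparent $0/0$ at $p\mid a$ in the $z^{-}$ calculation; once one pairs the vanishing bracket with the pole coming from $\prod_{p\mid N}(1-p^{-1+2\tau})^{-1}$ and uses the exact relation displayed above, the entire proof reduces to careful Euler product bookkeeping together with the multiplicativity of the divisor sum over square-free $N$.
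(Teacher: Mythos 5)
Your proof is correct and follows exactly the paper's route: compute residues at $s'=1\pm\tau$ from the explicit Euler product \eqref{e:zeta_aQ}, evaluate at $\tau=\mp 1/2$, and then sum over $a\mid N$ using multiplicativity over square-free $N$. The paper compresses both residue evaluations to ``by simple computation,'' whereas you correctly identify the one genuine subtlety (the apparent $0/0$ at primes $p\mid a$ in the $z^{-}$ case) and resolve it with the exact algebraic identity $(p^{2\tau}-p)/(1-p^{-1+2\tau})=-p$; your subsequent Euler-product bookkeeping and the divisor-sum identity $\sum_{a\mid N}\prod_{p\mid a}(p^{\alpha_p}-1)=\prod_{p\mid N}p^{\alpha_p}$ are both correct.
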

\begin{proof}
At $s=1+\tau$, we have
\begin{multline*}
	\Res_{s=1+\tau}\zeta_{\cusp, Q}(s, \tau)
	=
	\zeta(1+2\tau)z_{\cusp, Q}^+(\tau)
	\\
	=
	\zeta(1+2\tau) \left(\frac{N}{a}\right)^{\tau-\frac{1}{2}} 
	Q^{-\tau}
	\prod_{p\mid N} \left(1-p^{-1+2\tau}\right)^{-1}
	\prod_{p\mid\frac{N}{a}}\left(1-p^{-1}\right)
	\\
	\times
	\prod_{p\mid a, p^\alpha\|Q, \atop \alpha\geq 0} 
	\frac{p^{-1+2\tau}}{1-p^{2\tau}}
	\left( p^{-1} (p-1)^2 + p^{\alpha 2\tau}(p^{2\tau}-p) (1-p^{-(1+2\tau)})\right)
	\\
	\times
	\prod_{p^\alpha\| Q, p\nmid N, \atop \alpha\geq 1}
	(1-p^{2\tau})^{-1} \left( (1-p^{-1}) - p^{(\alpha+1)2\tau}(1-p^{-(1+2\tau)})\right)
	.
\end{multline*}
By simple computation, we get $z_{1/a, Q}^+\left(-1/2\right)$,  
and
$$
	\sum_{a\mid N} 
	z_{1/a, Q}^+\left(-\frac{1}{2}\right)
	=
	\sqrt{Q} \prod_{p\mid N}\frac{2}{p+1}
	.
$$
	
At $s=1-\tau$, we have
\begin{multline*}
	\Res_{s=1-\tau} \zeta_{\cusp, Q}(s, \tau)
	=
	\zeta(1-2\tau) z_{\cusp, Q}^-(\tau)
	\\
	=
	\zeta(1-2\tau) 
	\left(\frac{a}{N}\right)^{\frac{1}{2}-\tau}
	Q^{-\tau}
	\prod_{p\mid a, p^\alpha\|Q, \atop \alpha\geq 0}
	\frac{p^{-1+2\tau}}{1-p^{2\tau}}
	(p-1)
	\left( 1-p^{\alpha 2\tau}\right)
	\\
	\times
	\prod_{p^\alpha\| Q, p\nmid N, \atop \alpha\geq 1}
	(1-p^{2\tau})^{-1} \left( ( 1-p^{-(1-2\tau)}) - p^{(\alpha+1)2\tau} (1-p^{-1})\right)
	.
	\end{multline*}
Again, by simple computation, we get $z_{1/a, Q}^+\left(-\frac{1}{2}\right)$, 
and
$$
	\sum_{a\mid N}
	z_{1/a, Q}^-\left(\frac{1}{2}\right)
	=
	\sqrt Q
	.
$$
\end{proof}

\subsubsection{Summary of the contribution of $S_Q^{({\rm cont})}(X; \delta)$}

In the next few sections, we will compute the contribution of the continuous part of the spectrum $S_Q^{({\rm cont})}(X; \delta)$, given in \eqref{e:SQcontXdelta}. 
In this section, we collect all the information and summarize it in Proposition \ref{prop:cont}. 


\begin{prop}\label{prop:cont}
Let $\ell_1$ and $\ell_2$ be positive square-free integers, with $(N_0, \ell_1\ell_2)=1$.
Let $N=N_0 \ell_1\ell_2/ (\ell_1, \ell_2)$. Assume that $Q\geq 1$, with $(N_0, Q)=1$.  

When $f\neq g$, we have
\begin{multline}\label{e:prop:cont_fneqg}
	\lim_{\delta\to 0}
	(\ell_1\ell_2)^{\frac{k-1}{2}}
	\left(S_Q^{(\rm cont)}(X; \delta)- \frac{1}{\sqrt Q}S_{\rm cont, int}(1-k/2, 1/2; \delta)\right)
	\\
	=
	\frac{1}{4}
	\frac{(\ell_1, \ell_2)}{\ell_1\ell_2}
	L(1, f\otimes g)
	\sum_{a\mid N} 
	\left\{
	\left(\frac{1}{2^{r(N)}} + \frac{1}{Q} \prod_{p\mid a, p^\alpha\|Q , \atop \alpha\geq 0} (p^\alpha-1) \prod_{p^\alpha\|Q, p\nmid N, \atop \alpha\geq 1} p^\alpha\right)
	\right.
	\\
	\times
	\left(\frac{N_0}{(a, N_0)}\right) A\left(\frac{N_0}{(a, N_0)}\right) 
	\overline{B\left(\frac{N_0}{(a, N_0)}\right)}
	\\
	\left.
	\times
	\left(\prod_{p\mid \frac{\ell_1 (a, \ell_2)}{(\ell_1, \ell_2) (a, \ell_1)}}
	\frac{ A(p)-B(p)p^{-1}}{1-p^{-2}}\right)
	\left( \prod_{p\mid \frac{\ell_2 (a, \ell_1)}{(\ell_1, \ell_2) (a, \ell_2)}}
	\frac{ B(p)-A(p)p^{-1}}{1-p^{-2}} \right)
	\right\}
	\\
	+ \mathcal{O}\left((\ell_1\ell_2)^\epsilon Q^{\theta +\epsilon-\frac{1}{2}}\right) 
	+ \mathcal{O}\left(X^{-\epsilon}\right)
	.
\end{multline}

When $f=g$, there exists a constant $C_2(k)$, given in \eqref{C1def},
	independent of $f$ (but depending on $k$), such that 
\begin{multline}\label{e:prop:cont_f=g}
	\lim_{\delta\to 0}
	(\ell_1\ell_2)^{\frac{k-1}{2}}
	\left( S_Q^{(\rm cont)}(X; \delta) - \frac{1}{\sqrt Q} S_{\rm cont, int}(1-k/2, 1/2; \delta)\right)
	\\
	=
	- 
	\frac{1}{2}
	\Res_{s=1} L(s,f \otimes f; \ell_1, \ell_2) 
	\log X
	+
	\Res_{s=1} L(s, f\otimes f; \ell_1, \ell_2) 
	\log Q
	\\
	+
	\left\{ C_2(k) 
	- \frac{1}{4}\sum_{p\mid N} \log p \frac{1}{2(p-1)}
	+ \frac{1}{2}\frac{\Gamma'(k)}{\Gamma(k)} - \frac{1}{2} \log (4\pi)
	\right\} 
	\Res_{s=1} L(s, f \otimes f;\ell_1, \ell_2)
	\\
	+
	\left\{
	\frac{1}{2}
	\log (N, (\ell_1, \ell_2))
	+\frac{1}{4}\sum_{p\mid(\ell_1, \ell_2), p^\alpha\|Q, \atop \alpha\geq 0} 
	\log p \left(p^{-\alpha} -\frac{3}{2}\right)
	\right\}
	\Res_{s=1} L(s, f \otimes f;\ell_1, \ell_2)
	\\
	+
	\left\{ 
	- \sum_{p^\alpha\|Q, \alpha\geq 1} \log p \frac{1-p^{-\alpha}}{p-1}
	+
	\frac{1}{4}\sum_{p\mid N, p^\alpha\|Q, \atop \alpha\geq 0} 
	\log p \left(p^{-\alpha} \frac{3p-1}{p-1}-1\right)
	\right\}
	\\
	\times
	\Res_{s=1} L(s, f \otimes f;\ell_1, \ell_2)
	\\
	+ 
	\frac{1}{2}
	\left.\frac{d}{ds} \left( (s-1) L(s, f\otimes f; \ell_1, \ell_2)\right)\right|_{s=1}	
	+ 
	\mathcal{O}\left((\ell_1\ell_2)^\epsilon Q^{\theta +\epsilon-\frac{1}{2}}\right) 
	+ \mathcal{O}\left(X^{-\epsilon}\right)
	.
\end{multline}	
Here $p$ is a prime. 
Moreover, $S_{{\rm cont, int}}(1-k/2, 1/2; \delta)$ is the special value of the function defined in \eqref{ScontIntdeltadef}.
\end{prop}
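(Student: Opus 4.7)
The plan is to mirror the contour-shifting strategy of Propositions \ref{prop:cusp1} and \ref{prop:cusp2}, exploiting the decomposition $S_{\rm cont}(s,u;\delta)=S_{\rm cont, int}(s,u;\delta)+\Psi(s,u;\delta)$ from \eqref{Scontdeltadef}--\eqref{Psi}. I would first split
\[
	S_Q^{({\rm cont})}(X;\delta)=S_Q^{({\rm cont,int})}(X;\delta)+S_Q^{(\Psi)}(X;\delta),
\]
and treat each piece separately. For $S_Q^{({\rm cont,int})}$ I would perform the same three shifts used in the cuspidal analysis: move the $s$-line past the pole of $\tG(s-w-1+u+(k-1)/2)$ at $s=w-u+3/2-k/2$, then move the $w$-line past the pole of $\Gamma(w-u+1/2)$ at $w=u-1/2$, and finally move the $u$-line past the pole of $\tG(u-1/2)$ at $u=1/2$. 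Cascading these residues reproduces a multiple of $S_{\rm cont,int}(1-k/2,1/2;\delta)$, precisely the term subtracted on the left-hand side of the proposition. The auxiliary residues and shifted integrals (from $\tG(w)$ at $w=0$, from the polar part of $S_{\rm cont,int}$ in $s$, and from analogs of the $r$-shifts appearing in Proposition \ref{prop:cusp2}) are controlled by \eqref{Z2} and the mean-square bound for Eisenstein Fourier coefficients stated after \eqref{e:rhocusp}, contributing to the $\cO((\ell_1\ell_2)^\epsilon Q^{\theta+\epsilon-1/2})$ and $\cO(X^{-\epsilon})$ error terms.

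The bulk of the explicit main term comes from $S_Q^{(\Psi)}(X;\delta)$. Inserting formula \eqref{e:Psi0} for $\Psi$ reduces the problem to triple Mellin--Barnes integrals whose integrand contains the residual factors $K_{\cusp,Q}^{\pm}(s')$ paired with $\overline{\langle V,E_\cusp^*(*,\tfrac{3}{2}-s')\rangle}$ and $\overline{\langle V,E_\cusp^*(*,s'-\tfrac{1}{2})\rangle}$. Shifting contours down to $s=1-k/2$, $w=0$, $u=1/2$ (so that $s'=1/2$) isolates the residues at $\tau=\pm\tfrac{1}{2}$, and at that point Lemma \ref{l:dirichletpoly} provides the closed-form values of $z_{\cusp,Q}^{\pm}(\mp 1/2)$. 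Summing over cusps $\cusp=1/a$ with $a\mid N$, and unfolding the Rankin--Selberg pairing $\langle V_{\ell_1,\ell_2},\,\Res_{s=1}\tilde{E}_\cusp(\cdot,s)\rangle$ against the Eisenstein residues, produces the Rankin--Selberg value $L(1,f\otimes g)$ (or $\Res_{s=1}L(s,f\otimes f;\ell_1,\ell_2)$ when $f=g$). A local Euler-factor computation at the primes dividing $\ell_1\ell_2$, using the Hecke eigenvalue relations for $f$ and $g$, then yields the products $\prod_p(A(p)-B(p)p^{-1})/(1-p^{-2})$ displayed in \eqref{e:prop:cont_fneqg}.

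When $f=g$, the Rankin--Selberg $L$-function $L(s,f\otimes f;\ell_1,\ell_2)$ has a simple pole at $s=1$ which coincides with the poles of $\zeta^*(2s+2\ell)$ and $\zeta^*(2-2s-2\ell)$ appearing in \eqref{e:Psi0}; the coincidence creates double poles whose Laurent expansions in $(s-1)$ generate the $\log X$ and $\log Q$ main terms, together with a constant part that packages the constant $C_2(k)$ defined in \eqref{C1def} out of all constant-order contributions from the $\Gamma$, $\zeta$, and Hecke-factor Laurent coefficients, in the same spirit as the constants $c_1(\ell),c_2(\ell)$ in \eqref{laurent}. The explicit sums over primes dividing $N$, $(\ell_1,\ell_2)$, or $Q$ are recovered by Laurent-expanding the local factors of $\zeta_{\cusp,Q}(s',\tau)$ from \eqref{e:zeta_aQ} around $\tau=\mp\tfrac12$ and multiplying against the local Euler factors of $L(s,f\otimes f;\ell_1,\ell_2)$. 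I expect the main obstacle to be precisely this final combinatorial bookkeeping step: organizing the Laurent coefficients prime-by-prime (handling separately whether $p$ divides $a$, $N$, or $Q$) and verifying that everything collapses into the closed forms \eqref{e:prop:cont_fneqg} and \eqref{e:prop:cont_f=g}, especially in the $f=g$ case where the two sources of double poles must be tracked simultaneously through nested contour shifts.
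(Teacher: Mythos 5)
Your plan misattributes where the main term comes from, and this is a genuine gap rather than a bookkeeping difference. You claim that the piece built from $S_{\rm cont,int}$, treated exactly like the cuspidal case, yields only the subtracted special value $S_{\rm cont,int}(1-k/2,1/2;\delta)$ plus error terms, with "the bulk of the explicit main term" coming from the $\Psi$-piece via \eqref{e:Psi0}. Unlike the cuspidal situation of Proposition \ref{prop:cusp1}, where $L_Q(s',\overline{u_j})$ is entire so the only residues met are those of the $\tG$'s, the $\Gamma$'s and $M$, the integrand of \eqref{ScontIntdeltadef} contains $\zeta_{\cusp,Q}(s',\tau)$, which by \eqref{e:zeta_aQ} has poles at $s'=1\pm\tau$ (equivalently, along the chain that produces the subtracted term \eqref{e:other2}, at $w=1/2\pm\tau$). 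Carrying out your cascade on the integral piece forces the $s'$- and $w$-lines across these poles, and their residues, evaluated at $\tau=\mp\tfrac12$ via Lemma \ref{l:dirichletpoly}, are of main-term size ($z^{\pm}_{\cusp,Q}(\mp\tfrac12)/\sqrt Q\asymp 1$): they are exactly the contributions \eqref{e:fn=g6} and, when $f=g$, \eqref{e:f=g6}; likewise the shifted $s$-integral of the integral piece produces the main terms \eqref{e:fn=g8}, \eqref{e:f=g8}, \eqref{e:f=g9} through the $\tau$-residues of $M$ in \eqref{ires2}. None of these come from the residues of $M$ in $s$ that constitute $\Omega$, so they are not contained in $\Psi$; discarding them as $\cO((\ell_1\ell_2)^{\epsilon}Q^{\theta+\epsilon-1/2})$, as you propose, gives the wrong constants. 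The clean coefficient $\tfrac14\tfrac{(\ell_1,\ell_2)}{\ell_1\ell_2}L(1,f\otimes g)\sum_{a\mid N}(\cdots)$ for $f\neq g$, and $C_2(k)$ with the correct $\log X$, $\log Q$ coefficients for $f=g$, only emerge after combining all the residue families \eqref{e:fn=g1}--\eqref{e:fn=g8}, respectively \eqref{e:f=g0}--\eqref{e:f=g9}, using Lemma \ref{l:dirichletpoly} (so that $\mathfrak{e}(N)\sum_a z^+_{1/a,Q}(-\tfrac12)+\sum_a z^-_{1/a,Q}(\tfrac12)=2\sqrt Q$) and Lemma \ref{lem:innerprod_cusp}, the Asai/Atkin--Lehner evaluation of $\vol\,\overline{\langle V_{\ell_1,\ell_2},E_{1/a}(*,s)\rangle}$ at every cusp, which your "Hecke eigenvalue relations" gloss does not supply at primes dividing $N_0$.

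Two further points. At the initial contours ($\Re(s)=2$) the correction $\Psi$ vanishes, since the $\ell$-sum in \eqref{Omega} is empty; your decomposition only becomes meaningful after the $s$-line passes $\Re(s)=1/2$, which is precisely why the paper tracks one cascade of residues of the full continuous term rather than two pre-separated pieces. And your mechanism for the $f=g$ double poles is not correct as stated: in \eqref{e:Psi0} the factor $\zeta^*(2-2s-2\ell)$ sits in the denominator, so its pole at $s+\ell=1$ would cancel the Rankin--Selberg pole of $\overline{\langle V,E^*_\cusp(*,s+\ell)\rangle}$ rather than double it. The actual double poles occur in the $\tau$-variable at $\tau=\pm\tfrac12$, where the residue factors $\zeta(1\pm2\tau)z^{\pm}_{\cusp,Q}(\tau)$ collide with the pole of $\overline{\langle V,E^*_\cusp(*,\tfrac12+\tau)\rangle}$ at $\tfrac12+\tau=1$ (directly, or through the functional equation via $\tilde E_\cusp$); differentiating the weight $(X/Q)^{s'-1/2}=(X/Q)^{1/2\pm\tau}$ at these double poles is what generates the logarithms and the $z(N,Q)$, $z'(N,Q)$ terms.
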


\subsubsection{Computation of the main term}

In the next section, we will move the line of integration in $S_Q^{({\rm cont})}(X; \delta)$ and pick up a number of residues, 
	which contribute to the main term.

When $f\neq g$, this contribution consists of 
\begin{multline*}
	\frac{(4\pi)^k}{4\Gamma(k)}
	\left(
	\sum_{a\mid N} 
	\frac{ z_{1/a, Q}^+\left(-\frac{1}{2}\right) }{\sqrt Q} 
	\vol \overline{\left<V, \tilde E_{1/a}(*, s)\right>}\mid_{s=1}
	+
	\sum_{a\mid N} \frac{z_{1/a, Q}^- \left(\frac{1}{2}\right)} {\sqrt Q}
	\vol \overline{\left<V, E_{1/a}(*, s)\right>}\mid_{s=1}
	\right)
	\\
	+
	\text{(\ref{e:other2})}
	\,.
\end{multline*}
By Lemma \ref{l:dirichletpoly}, 
$z_{1/a, Q}^+\left(-1/2\right)=\sqrt Q\prod_{p\mid N}\frac{1}{p+1}$ 
is fixed for any $a\mid N$. 
So we have
$$
	\sum_{a\mid N} z_{1/a, Q}^+\left(-\frac{1}{2}\right)
	\overline{\langle V, \tilde E_{1/a} \left(*, s\right) \rangle }
	=
	\sqrt Q \left(\prod_{p\mid N, \text{ prime}}\frac{1}{p+1}\right) 
	\overline{\langle V, \sum_{a\mid N} \tilde E_{1/a} \left(*, s\right)\rangle}
	.
$$
By taking the summation over $a\mid N$, we get
\begin{multline*}
	\sum_{a\mid N} \tilde E_{1/a} \left(z, s\right)
	=
	\sum_\cusp \sum_\cuspb \rho_{\cusp\cuspb, {\rm finite}} (1-s)
	E_\cuspb (z, s)
	\\
	=
	\sum_\cuspb \left(\sum_\cusp \rho_{\cusp\cuspb, {\rm finite}} (1-s) \right)
	E_\cuspb(z, s)
	=
	\left(\prod_{p\mid N, \text{prime}} \frac{p^s+1}{p^{1-s}+1} \right)
	\sum_{a\mid N} E_{1/a} (z, s)
	.
\end{multline*}
So, 
\begin{multline*}
	\sum_{a\mid N} z_{1/a, Q}^+\left(-\frac{1}{2}\right)
	\overline{\langle V, \tilde E_{1/a} \left(*, s\right) \rangle }
	\\
	=
	\sqrt Q\left(\prod_{p\mid N, \text{ prime} }\frac{1}{p+1}\right)
	\left(\prod_{p\mid N, \text{ prime}} \frac{p^s+1}{p^{1-s}+1}\right)
	\sum_{a\mid N}
	\overline{\langle V, E_{1/a} \left(*, s\right)\rangle}
	.
\end{multline*}
Then by Lemma \ref{l:dirichletpoly}, 
\begin{multline}\label{e:tildeEtoE}
	\frac{1}{\sqrt Q} \sum_{a\mid N} z_{1/a, Q}^+\left(-\frac{1}{2}\right)
	\vol \overline{\left<V, \tilde E_{1/a}(*, s)\right>}\mid_{s=1}
	+ 
	\frac{1}{\sqrt Q}\sum_{a\mid N} z_{1/a, Q}^-\left(\frac{1}{2}\right)
	\vol \overline{\left<V,  E_{1/a}(*, s)\right>}\mid_{s=1}
	\\
	= 
	\sum_{a\mid N} 
	\left( \frac{1}{2^{r(N)}} 
	+ \frac{\left(\prod_{p^\alpha\|Q, p\nmid N, \atop \alpha\geq 1} p^\alpha\right)
	\left(\prod_{p\mid a, p^\alpha\|Q, \atop \alpha\geq 0} (p^\alpha-1)\right)}{Q}
	\right)
	\vol \overline{\left<V,  E_{1/a}(*, s)\right>}\mid_{s=1}
	.
\end{multline}
So we have
\begin{multline}\label{e:S_cont_fneqg}
	\lim_{\delta\to 0}
	(\ell_1\ell_2)^{\frac{k-1}{2}}
	\left( S_Q^{(\rm cont)}(X; \delta) - \frac{1}{\sqrt Q} S_{\rm cont, int}(1-k/2, 1/2; \delta)\right)
	\\
	=
	\frac{(4\pi)^k}{4\Gamma(k)}
	(\ell_1\ell_2)^{\frac{k-1}{2}}
	\sum_{a\mid N} 
	\left( \frac{1}{2^{r(N)}} 
	+ \frac{\left(\prod_{p^\alpha\|Q, p\nmid N, \atop \alpha\geq 1} p^\alpha\right)
	\left(\prod_{p\mid a, p^\alpha\|Q, \atop \alpha\geq 0} (p^\alpha-1)\right)}{Q}
	\right)
	\\
	\times
	\vol \overline{\left<V,  E_{1/a}(*, s)\right>}\mid_{s=1}
	+ \mathcal{O}\left((\ell_1\ell_2)^\epsilon Q^{\theta +\epsilon-\frac{1}{2}}\right) 
	+ \mathcal{O}\left(X^{-\epsilon}\right).
\end{multline}

We now consider the case when $f=g$. 
Define 
\begin{multline}\label{C1def}
	C_2(k)
	:=
	-
	\lim_{\delta\to 0}
	\left\{
	\frac{(4\pi)^k }
	{8\Gamma(k) }
	\frac{1}{2\pi i}
	\int_{\left(\frac{1}{2}+2\epsilon\right)}
	\tG (-s) \tG (s) \;ds
	\right.
	\\
	+
	\frac{(4\pi)^k
	}
	{
	2\sqrt\pi 
	\Gamma\left(\frac{k}{2}\right)
	}
	\frac{1}{2\pi i}
	\int_{\left(\frac{1}{2}-\frac{k}{2}-\epsilon\right)}
	\frac{2^{s-\frac{1}{2}}
	M\left(s, -\frac{1}{2i}, \delta\right)
	\Gamma\left(s+\frac{k}{2}-1\right)
	\Gamma\left(1-s\right)
	}
	{\Gamma(s+k-1) 
	}
	\; ds
	\\
	\left.
	+
	\left.\frac{d}{ds} \left( sK(s; \delta) \right) \right|_{s=0}	
	\right\}
	,
\end{multline}
where 
\begin{multline*}
	K(s; \delta)
	:=
	-2 K_1(s; \delta)  
	+
	K_2^-\left(s-\frac{1}{2}; \frac{k}{2}, \delta\right) 
	-
	K_2^-\left( s-\frac{1}{2} ; \frac{k}{2}-1, \delta\right)
	\\
	+
	K_2^+ \left(s-\frac{1}{2}; \frac{k}{2}, \delta\right)  
	- 
	K_2^+\left(s-\frac{1}{2}; 0, \delta\right) 
	- 
	K_3\left(s-\frac{1}{2}; \delta\right)
	\\ 
	- 
	2 K_4(s; \delta) 
	+ 
	K_5\left(s-\frac{k}{2}; \delta\right)
\end{multline*}
and 
$K_1$, $K_2^\pm$, $K_3$, $K_4$ and $K_5$ are defined in 
(\ref{e:K1-}), (\ref{e:K2-+}), (\ref{e:K2--}), (\ref{e:K_3^-}), (\ref{e:K4-}) 
and (\ref{e:K5-}), respectively. 

Let
$$
	L(s, f\otimes f; \ell_1, \ell_2)
	:=
	\sum_{m, n\geq 1, \atop m\ell_1 = n\ell_2} \frac{A(m)\overline{A(n)}}{(\ell_1m)^s}
	.
$$
For any cusp $1/a$ with $a\mid N$, we have $\Res_{s=1} E_{1/a} (*, s) = \Res_{s=1} E_\infty (*, s)$. 
So we have
\begin{multline*}
	\vol \Res_{s=1} \overline{\left<V_{\ell_1, \ell_2}, E_{1/a} (*, s)\right>}
	=
	\vol \Res_{s=1} \overline{\left<V_{\ell_1, \ell_2}, E_\infty (*, s)\right>}
	\\
	=
	(\ell_1\ell_2)^{-\frac{k-1}{2}} \frac{\Gamma(k)}{(4\pi)^k} 
	\Res_{s=1} L(s, f\otimes f; \ell_1, \ell_2)
	. 
\end{multline*}

We combine (\ref{e:f=g0}), (\ref{e:f=g1}), (\ref{e:f=g2}), (\ref{e:f=g3}), (\ref{e:f=g4}), (\ref{e:f=g5}), (\ref{e:f=g6}), (\ref{e:f=g7}), (\ref{e:f=g8}) and (\ref{e:f=g9}) together, 
obtaining, the contribution to the main term, as $\delta\to 0$, 
\begin{multline*}
	- 
	\frac{(\ell_1\ell_2)^{-\frac{k-1}{2}}}{4}
	\frac{ z(N, Q) }
	{\sqrt Q}
	\Res_{s=1} L(s,f \otimes f; \ell_1, \ell_2) 
	\log \left(\frac{X}{Q}\right)
	\\
	-
	\frac{(\ell_1\ell_2)^{-\frac{k-1}{2}}}{4}
	\frac{ z'(N, Q) }
	{\sqrt Q}
	\Res_{s=1} L(s, f \otimes f;\ell_1, \ell_2)
	\\
	+
	(\ell_1\ell_2)^{-\frac{k-1}{2}} C_2(k) \cdot \Res_{s=1} L(s, f \otimes f;\ell_1, \ell_2)
	\\
	+
	\frac{(4\pi)^k}{4\Gamma(k)}
	\frac{\vol }{\sqrt Q}
	\left\{ 
	\left( \sum_\cusp z_{\cusp, Q}^+ \left(-\frac{1}{2}\right) 
	\left.\frac{d}{ds}
	\left( (s-1) \langle V, \tilde E_\cusp(*, s)\rangle \right)
	\right|_{s=1} \right) 
	\right.
	\\
	\left.
	+
	\left( \sum_\cusp z_{\cusp, Q}^- \left(+\frac{1}{2}\right) 
	\left.\frac{d}{ds}
	\left( (s-1) \langle V, E_\cusp(*, s)\rangle \right)
	\right|_{s=1} \right) 
	\right\}
	\\
	+
	\text{ (\ref{e:other2}) }
\end{multline*}
where
$$
	z(N, Q) 
	= 
	\mathfrak{e}(N)\left(\sum_\cusp z_{\cusp, Q}^+\left(-\frac{1}{2}\right)
	\right)
	+ \left(\sum_\cusp z_{\cusp, Q}^-\left(\frac{1}{2}\right)\right)
$$
	and 
$$
	z'(N, Q) 
	= 
	\mathfrak{e}(N)
	\left(\sum_\cusp z_{\cusp, Q}^{+'}\left(-\frac{1}{2}\right)\right)
	-
	\left(\sum_\cusp z_{\cusp, Q}^{-'}\left(\frac{1}{2}\right)\right)
	.
$$
Similarly as we did in \eqref{e:tildeEtoE}, 
\begin{multline*}
	\vol \sum_{a\mid N} z_{1/a, Q}^+ \left(-\frac{1}{2}\right) 
	\left.\frac{d}{ds}
	\left( (s-1) \overline{\langle V, \tilde E_{1/a} (*, s)\rangle} \right)
	\right|_{s=1}
	\\
	=
	\sum_{a\mid N} \mathfrak e(N) z_{1/a, Q}^+\left(-\frac{1}{2}\right)
	\left.\frac{d}{ds} \left((s-1) \vol \overline{\left<V, E_{1/a}(*, s)\right>}\right)\right|_{s=1}
	\\
	+
	\sqrt Q
	(\ell_1\ell_2)^{-\frac{k-1}{2}} \frac{\Gamma(k)}{(4\pi)^k} 
	\Res_{s=1} L(s, f\otimes f; \ell_1, \ell_2) 
	\sum_{p\mid N} \frac{\log p (2p+1)}{2(p+1)}
	.
\end{multline*}
Then, by Lemma \ref{l:dirichletpoly}, 
\begin{multline}\label{e:S_cont_f=g}
	\lim_{\delta\to 0}
	(\ell_1\ell_2)^{\frac{k-1}{2}} 
	\left( S_Q^{(\rm cont)}(X; \delta) - S_{\rm cont, int}(1-k/2, 1/2; \delta)\right)
	\\
	=
	- 
	\frac{1}{2}
	\Res_{s=1} L(s,f \otimes f; \ell_1, \ell_2) 
	\log \left(\frac{X}{Q}\right)
	\\
	+
	\left\{ C_2(k) 
	+ \frac{1}{4}
	\left(\sum_{p\mid N} \frac{\log p (2p+1)}{2(p+1)}\right)\right\}
	\Res_{s=1} L(s, f \otimes f;\ell_1, \ell_2)
	\\
	-
	\frac{1}{4}
	\frac{ z'(N, Q) } {\sqrt Q}
	\Res_{s=1} L(s, f \otimes f;\ell_1, \ell_2)
	\\
	+
	\sum_{a\mid N} 
	\frac{\mathfrak e(N) z_{1/a, Q}^+\left(-\frac{1}{2}\right)+z_{1/a, Q}^-\left(\frac{1}{2}\right)}{\sqrt Q}
	\frac{(4\pi)^k}{4\Gamma(k)}
	(\ell_1\ell_2)^{\frac{k-1}{2}}
	\left.\frac{d}{ds} \left((s-1) \vol \overline{\left<V, E_{1/a}(*, s)\right>}\right)\right|_{s=1}
	\\	
	+ 
	\mathcal{O}\left((\ell_1\ell_2)^\epsilon Q^{\theta +\epsilon-\frac{1}{2}}\right) 
	+ \mathcal{O}\left(X^{-\epsilon}\right)
	.
\end{multline}	

In the following lemma, we get an explicit formula for $\vol \overline{\left<V_{\ell_1, \ell_2}, E_{1/a}(*, s)\right>}$. 
\begin{lem}\label{lem:innerprod_cusp}
Recall that $V_{\ell_1, \ell_2}(z):= \overline{f(\ell_1 z)} g(\ell_2 z) y^k$. 
For each $a\mid N=\frac{N_0 \ell_1\ell_2}{(\ell_1, \ell_2)}$, we have
\begin{multline}\label{e:inner_ell1ell2_1/a}
	\vol \overline{\left<V_{\ell_1, \ell_2}, E_{1/a}(*,  s)\right>}
	\\
	=
	\left(\frac{N_0}{(a, N_0)}\right)
	A\left(\frac{N_0}{(a, N_0)}\right) 
	\overline{B\left(\frac{N_0}{(a, N_0)}\right)}
	(\ell_1\ell_2)^{-s-\frac{k-1}{2}} (\ell_1, \ell_2)^{2s-1} (a, (\ell_1, \ell_2))^{1-s}
	\\
	\times
	\left(\prod_{p\mid \frac{\ell_1 (a, \ell_2)}{(\ell_1, \ell_2) (a, \ell_1)}}
	\frac{ A(p)-B(p)p^{-s}}{1-p^{-2s}}\right)
	\left( \prod_{p\mid \frac{\ell_2 (a, \ell_1)}{(\ell_1, \ell_2) (a, \ell_2)}}
	\frac{ B(p)-A(p)p^{-s}}{1-p^{-2s}} \right)
	\\
	\times
	\frac{\Gamma(s+k-1)}{(4\pi)^{s+k-1}}
	L(s, f\otimes g).
\end{multline}
Here $L(s, f\otimes g) = \sum_{m\geq 1} \frac{A(m)\overline{B(m)}}{m^s}$. 
\end{lem}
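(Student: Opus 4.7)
The plan is to apply the Rankin--Selberg unfolding identity to the Eisenstein series $E_{1/a}(z,s)$ against the $\Gamma_0(N)$--invariant weight--zero function $\overline{V_{\ell_1,\ell_2}(z)}$. Unfolding gives
\begin{equation*}
\vol \,\overline{\left\langle V_{\ell_1,\ell_2},E_{1/a}(*,s)\right\rangle}
\;=\;\int_0^\infty\!\!\int_0^1 \overline{V_{\ell_1,\ell_2}(\sigma_{1/a}z)}\,y^{\bar s-2}\,dx\,dy,
\end{equation*}
after which the $x$--integration extracts the zeroth Fourier coefficient (in $x$) of $\overline{f(\ell_1\sigma_{1/a}z)}\,g(\ell_2\sigma_{1/a}z)\cdot\Im(\sigma_{1/a}z)^k$. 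The remaining $y$--integral is then a Mellin transform against $y^{\bar s+k-2}e^{-cy}$, producing the gamma factor $\Gamma(s+k-1)/(4\pi)^{s+k-1}$ in the final answer.

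The heart of the calculation is the explicit determination of this zeroth coefficient. Using the definition \eqref{e:sigma1/a} of $\sigma_{1/a}$ and Hecke multiplicativity, I would compose $\begin{pmatrix}\ell_i & 0\\ 0 & 1\end{pmatrix}\sigma_{1/a}$ and perform a coset decomposition so that $f(\ell_i\sigma_{1/a}z)$ can be written as a finite linear combination of terms $f(d_i z+\text{shift})$ involving divisors $d_i$ built from $\ell_i$, $a$, and $(a,\ell_i)$. Since $N_0$ is square--free and $f,g$ are newforms with $(N_0,\ell_1\ell_2)=1$, the Atkin--Lehner relations at primes $p\mid N_0$, together with ordinary Hecke multiplicativity at primes $p\nmid N_0$, allow one to evaluate the zeroth Fourier coefficient explicitly. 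Matching the resulting diagonal condition $\ell_1 m_1=\ell_2 m_2$ one recovers, after the Mellin integration, the Dirichlet series $L(s,f\otimes g)$ times explicit local Euler factors.

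The main obstacle is the prime--by--prime bookkeeping needed to identify these local Euler factors with the product in \eqref{e:inner_ell1ell2_1/a}. For each prime $p\mid N$, one must separate the cases according to whether $p\mid N_0$, $p\mid(\ell_1,\ell_2)$, $p\mid\ell_1$ but $p\nmid\ell_2$ (or vice versa), and in each case whether $p\mid a$ or $p\nmid a$. The factors $\frac{A(p)-B(p)p^{-s}}{1-p^{-2s}}$ and $\frac{B(p)-A(p)p^{-s}}{1-p^{-2s}}$ arise from inverting the local Rankin--Selberg Euler factor against the numerator produced by the shift by $\ell_1$ or $\ell_2$, while the prefactor $\frac{N_0}{(a,N_0)}A(\tfrac{N_0}{(a,N_0)})\overline{B(\tfrac{N_0}{(a,N_0)})}$ reflects the Atkin--Lehner action of $\sigma_{1/a}$ on the new--vectors at primes $p\mid N_0$, using $A(p)^2=p^{-1}$ at such primes.

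Finally, the factor $(\ell_1\ell_2)^{-s-(k-1)/2}(\ell_1,\ell_2)^{2s-1}(a,(\ell_1,\ell_2))^{1-s}$ comes from tracking the scaling $\Im(\sigma_{1/a}z)^k\cdot y^{\bar s-2}$ and the normalization $(\ell_i m_i)^{-s}$ of $L(s,f\otimes g;\ell_1,\ell_2)$, together with the contribution of the scaling matrix in \eqref{e:sigma1/a}. Assembling all local contributions with the gamma factor from the $y$--integration yields the stated identity.
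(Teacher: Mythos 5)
Your proposal follows essentially the same route as the paper: unfold the inner product against $E_{1/a}(\cdot,s)$, reduce to the zeroth Fourier coefficient of the product, and identify the resulting Dirichlet series with $L(s,f\otimes g)$ times local Euler factors coming from the shift by $\ell_1$, $\ell_2$, with the Atkin--Lehner action at primes $p\mid N_0$ producing the $\tfrac{N_0}{(a,N_0)}A(\tfrac{N_0}{(a,N_0)})\overline{B(\tfrac{N_0}{(a,N_0)})}$ prefactor. The one place where the paper is cleaner than your outline: rather than decomposing $f(\ell_i\sigma_{1/a}z)$ into a \emph{linear combination} of translates via a coset decomposition, the paper factors $\bigl(\begin{smallmatrix}\ell_i & 0\\ 0 & 1\end{smallmatrix}\bigr)\sigma_{1/a}=W_{\cusp,\ell_i}\cdot(\text{upper triangular})$ for an Atkin--Lehner matrix $W_{\cusp,\ell_i}$ and then invokes Theorem~2 of Asai to conclude $f\mid_k W_{\cusp,\ell_i}=\mu(M_\cusp)\sqrt{M_\cusp}A(M_\cusp)f$ --- a single scalar multiple, not a combination. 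In practice your "linear combination'' would collapse to this single term after using the Hecke eigenform property, so the outcome is the same, but naming Asai's identity makes the bookkeeping far more transparent and is worth incorporating.
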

\begin{proof}
Let $\fund= \Gamma\bsl \bH$ be the fundamental domain for $\Gamma$. 
For $\cusp=1/a$ with $a\mid \frac{N_0\ell_1\ell_2}{(\ell_1, \ell_2)}$,  
compute the inner product
\begin{multline*}
	\overline{\left<V_{\ell_1, \ell_2} , E_\cusp(*, s)\right>}
	=
	\frac{1}{{\rm Vol}(\fund)}
	\int_{\fund} \overline{V_{\ell_1, \ell_2} (z) }
	\sum_{\gamma\in \Gamma_\cusp \bsl \Gamma} 
	\left(\Im(\sigma_\cusp^{-1}\gamma z)\right)^s
	\; \frac{dx \; dy}{y^2}
	\\
	=
	\frac{1}{{\rm Vol}(\fund)}
	\sum_{\gamma\in \Gamma_\cusp \bsl \Gamma}
	\int_{\sigma_\cusp^{-1}\gamma \fund}
	\overline{V_{\ell_1, \ell_2} (\sigma_\cusp z) } y^s \; \frac{dx\; dy}{y^2}
	.
\end{multline*}
As in \cite{Iwa02}, as $\gamma$ runs over $\Gamma_\cusp \bsl \Gamma$, 
the sets $\sigma_\cusp^{-1} \gamma \fund$ cover the strip $\left\{z\in\bH\;|\; 0< x< 1\right\}$ 
once (for an appropriate choice of representatives), giving
\be\label{e:inner_ell1ell2}
	\frac{1}{{\rm vol}(\fund)}
	\int_0^\infty \int_0^1 V_{\ell_1, \ell_2}(\sigma_\cusp z) y^s\; \frac{dx\; dy}{y^2}
	.
\ee
Here
$$
	V_{\ell_1, \ell_2}(\sigma_\cusp z)
	=
	f(\ell_1 \sigma_\cusp z) \overline{g(\ell_2 \sigma_\cusp z)}
	\Im(\sigma_\cusp z)^k
	.
$$

Now, we will follow Asai's method \cite{Asa76}. 
For a matrix $\gamma=\sm a & b\\ c& d\esm\in GL_2(\R)$ of positive determinant, write 
$$
	\left(f\mid_k \gamma\right)(z)
	=
	\det(\gamma)^{\frac{k}{2}} 
	(cz+d)^{-k}
	f(\gamma z)
	.
$$	
Then, for each $\cusp = 1/a$, we have
\be\label{e:Vell1ell2_exp}
	V_{\ell_1, \ell_2}(\sigma_{\cusp} z) 
	=
	(\ell_1\ell_2)^{-\frac{k}{2}} 
	\left(f\left|_k \bpm \ell_1 & \\ & 1\ebpm \sigma_\cusp \right.\right)(z)
	\overline{
	\left(g\left|_k \bpm \ell_1 & \\ & 1\ebpm \sigma_\cusp \right.\right)(z)}
	y^k
	.
\ee

Let 
$$
	M_\cusp := \frac{N_0}{(a, N_0)}
	.
$$
Then $M_\cusp \mid N_0$, $\left(\frac{a}{(a, \ell_i)}, M_\cusp\right)=1$, for $i=1, 2$, and 
$\frac{a}{(a, \ell_i)} M_\cusp \equiv 0 \pmod{N_0}$.
Since $\left(\frac{\ell_i}{(a, \ell_i)} M_\cusp, \frac{a}{(a, \ell_i)}\right)=1$, 
we can take $c_i, d_i\in \Z$, such that 
$$
	M_\cusp \frac{\ell_i}{(a, \ell_i)} d_i - \frac{a}{(a, \ell_i)} c_i =1
	.
$$
Define
$$
	W_{\cusp, \ell_i}
	:=
	\bpm \frac{\ell_i}{(a, \ell_i)} & c_i \\ \frac{a}{(a, \ell_i)} & d_i M_{\cusp}\ebpm 
	\bpm M_{\cusp} & \\ & 1\ebpm
	=
	\bpm M_{\cusp} \frac{\ell_i}{(a, \ell_i)}  & c_i \\ \frac{a}{(a, \ell_i)} M_{\cusp} & d_i M_{\cusp} \ebpm 
$$
and
this matrix $W_{\cusp, \ell_i}$ satisfies conditions described in (2) in \cite{Asa76}.
Since $f$ and $g$ are newforms of level $N_0$, $f\mid_k W_{\cusp, \ell_1}$ and $g\mid_k W_{\cusp, \ell_2}$ are again newforms of level $N_0$.

By Theorem 2 in \cite{Asa76}, we have
$$
	f\mid_k W_{\cusp, \ell_1}
	=
	\mu\left(M_{\cusp}\right)
	\sqrt{M_{\cusp}}
	A\left(M_{\cusp}\right)
	\cdot f
$$
and
$$
	g\mid_k W_{\cusp, \ell_2}
	=
	\mu\left(M_{\cusp}\right)
	\sqrt{M_{\cusp}}
	B\left(M_{\cusp}\right)
	\cdot g
	.
$$

Since
$$
	W_{\cusp, \ell_i}
	=
	\bpm \frac{\ell_i}{(a, \ell_i)} & c_i \\ \frac{a}{(a, \ell_i)}  & d_i M_{\cusp}\ebpm
	\bpm M_{\cusp} & \\ & 1\ebpm
	=
	\bpm \ell_i & \\ & 1\ebpm
	\sigma_\cusp 
	\bpm \frac{M_{\cusp}}{\sqrt{\m_\cusp}(\ell_i, a)} & \frac{c_i}{\ell_i\sqrt{\m_\cusp}}
	\\
	0 & \frac{\sqrt{\m_\cusp} (a, \ell_i)}{\ell_i} \ebpm
	,
$$
where $\sigma_{\cusp}$ is defined in \eqref{e:sigma1/a} and $\m_\cusp = \frac{N_0\ell_1\ell_2}{(\ell_1, \ell_2) a}$, 
we have
$$
	\bpm \ell_i & \\ & 1\ebpm \sigma_\cusp 
	=
	W_{\cusp, \ell_i} 
	\bpm \frac{\sqrt{\m_\cusp} (\ell_i, a)}{M_{\cusp}}  & -\frac{c_i}{\sqrt{\m_\cusp}M_{\cusp}}\\
	0 & \frac{\ell_i}{(\ell_i, a)\sqrt{\m_\cusp}}
	\ebpm
	.
$$
Thus,
$$
	\left( f\left|_k \bpm \ell_1 & \\ & 1\ebpm \sigma_\cusp \right.\right)(z)
	=
	\mu\left(M_{\cusp}\right)
	\sqrt{M_{\cusp}}
	A\left(M_{\cusp}\right) 
	\left(\frac{\m_\cusp (\ell_1, a)^2}{M_\cusp \ell_1}\right)^{\frac{k}{2}}
	\cdot
	f \left(\frac{\m_\cusp(\ell_1, a)^2}{\ell_1 M_{\cusp}} z -\frac{c_1 (\ell_1, a)}{\ell_1 M_{\cusp}}\right)
$$
and
$$
	\left(g\left|_k \bpm \ell_2 & \\ & 1\ebpm \sigma_\cusp \right.\right)(z)
	=
	\mu\left(M_{\cusp}\right)
	\sqrt{M_{\cusp}}
	B\left(M_{\cusp}\right)
	\left(\frac{\m_\cusp (\ell_2, a)^2}{M_\cusp \ell_2}\right)^{\frac{k}{2}}
	\cdot 
	g \left(\frac{\m_\cusp(\ell_2, a)^2}{\ell_2 M_{\cusp}} z -\frac{c_2 (\ell_2, a)}{\ell_2 M_{\cusp}}\right)
	.
$$
Substituting into \eqref{e:Vell1ell2_exp}, and then \eqref{e:inner_ell1ell2} gives us \eqref{e:inner_ell1ell2_1/a}.

\end{proof}

When $f\neq g$, by Lemma \ref{lem:innerprod_cusp}, we have
\begin{multline*}
	(\ell_1\ell_2)^{\frac{k-1}{2}} \vol \overline{\left<V_{\ell_1, \ell_2}, E_{1/a}(*, s)\right>}\mid_{s=1}
	\\
	=
	\left(\frac{N_0}{(a, N_0)}\right) A\left(\frac{N_0}{(a, N_0)}\right) 
	\overline{B\left(\frac{N_0}{(a, N_0)}\right)}
	\frac{(\ell_1, \ell_2)}{\ell_1\ell_2}
	\\
	\times
	\left(\prod_{p\mid \frac{\ell_1 (a, \ell_2)}{(\ell_1, \ell_2) (a, \ell_1)}}
	\frac{ A(p)-B(p)p^{-1}}{1-p^{-2}}\right)
	\left( \prod_{p\mid \frac{\ell_2 (a, \ell_1)}{(\ell_1, \ell_2) (a, \ell_2)}}
	\frac{ B(p)-A(p)p^{-1}}{1-p^{-2}} \right)
	\frac{\Gamma(k)}{(4\pi)^{k}}
	L(1, f\otimes g)
	.
\end{multline*}
Substituting into \eqref{e:S_cont_fneqg}, we get \eqref{e:prop:cont_fneqg}.


When $f=g$, by Lemma \ref{lem:innerprod_cusp}, we have
\begin{multline*}
	(\ell_1\ell_2)^{\frac{k-1}{2}} \vol \left.\frac{d}{ds}\left((s-1)\overline{\left<V_{\ell_1, \ell_2}, E_{1/a} (*, s)\right>} \right)\right|_{s=1}
	\\
	=
	\left\{
	-
	\log \left(\frac{(a, (\ell_1, \ell_2))}{(N, (\ell_1, \ell_2))} \right) 
	+
	\frac{\Gamma'(k)}{\Gamma(k)} - \log(4\pi)\right\}
	\frac{\Gamma(k)}{(4\pi)^k}
	\Res_{s=1} L(s, f\otimes f; \ell_1, \ell_2)
	\\
	+ 
	\frac{\Gamma(k)}{(4\pi)^k}
	\left.\frac{d}{ds} \left( (s-1) L(s, f\otimes f; \ell_1, \ell_2)\right)\right|_{s=1}
	.
\end{multline*}
Substituting into \eqref{e:S_cont_f=g}, 
\begin{multline}\label{e:S_cont_f=g2}
	\lim_{\delta\to 0}
	(\ell_1\ell_2)^{\frac{k-1}{2}} 
	\left( S_Q^{(\rm cont)}(X; \delta) - S_{\rm cont, int}(1-k/2, 1/2; \delta)\right)
	\\
	\\
	=
	- 
	\frac{1}{2}
	\Res_{s=1} L(s,f \otimes f; \ell_1, \ell_2) 
	\log \left(\frac{X}{Q}\right)
	\\
	+
	\left\{ C_2(k) 
	+ \frac{1}{4}
	\left(\sum_{p\mid N} \frac{\log p (2p+1)}{2(p+1)}\right)
	+
	\frac{1}{2}
	\log (N, (\ell_1, \ell_2))
	+
	\frac{1}{2}
	\frac{\Gamma'(k)}{\Gamma(k)} - 
	\frac{1}{2} \log(4\pi)
	\right\}
	\\
	\times
	\Res_{s=1} L(s, f \otimes f;\ell_1, \ell_2)
	\\
	-
	\frac{1}{4}
	\frac{ z'(N, Q) } {\sqrt Q}
	\Res_{s=1} L(s, f \otimes f;\ell_1, \ell_2)
	\\
	-
	\frac{1}{4}
	\left(\sum_{a\mid N} 
	\frac{\mathfrak e(N) z_{1/a, Q}^+\left(-\frac{1}{2}\right)+z_{1/a, Q}^-\left(\frac{1}{2}\right)}{\sqrt Q} \log (a, (\ell_1, \ell_2)) \right)
	\Res_{s=1} L(s, f\otimes f; \ell_1, \ell_2)
	\\
	+ 
	\frac{1}{2}
	\left.\frac{d}{ds} \left( (s-1) L(s, f\otimes f; \ell_1, \ell_2)\right)\right|_{s=1}
	\\	
	+ 
	\mathcal{O}\left((\ell_1\ell_2)^\epsilon Q^{\theta +\epsilon-\frac{1}{2}}\right) 
	+ \mathcal{O}\left(X^{-\epsilon}\right)
	.
\end{multline}
Applying Lemma \ref{l:dirichletpoly}, a simple computation gives us
\begin{multline*}
	\sum_{a\mid N} 
	\frac{ \mathfrak{e}(N) z_{1/a, Q}^+\left(-\frac{1}{2}\right) +z_{1/a, Q}^-\left(\frac{1}{2}\right) }{\sqrt Q} 
	\log (a, (\ell_1, \ell_2)) 
	\\
	=
	\log N - 2\log Q 
	+ 
	\sum_{p\mid N} 2\log p \frac{1}{p^2-1}
	+
	\sum_{p^\alpha\|Q, \alpha\geq 1} 
	4\log p \frac{1-p^{-\alpha}}{p-1}
	\\
	-
	\sum_{p\mid N, p^\alpha\|Q, \atop \alpha \geq 0} 
	\log p \left(p^{-\alpha} \frac{3p-1}{p-1} -1 \right) 
	-
	\sum_{p\mid (\ell_1, \ell_2), p^\alpha\|Q, \atop \alpha\geq 0}
	\log p \left(p^{-\alpha}-\frac{3}{2}\right)
	- 
	\frac{z'(N, Q)}{\sqrt Q}
	.
\end{multline*}
Substituting into \eqref{e:S_cont_f=g2}, we get \eqref{e:prop:cont_f=g}.
\subsubsection{Computation of the continuous part}

At the starting point, in $S_Q^{({\rm cont})}(X; \delta)$ given in \eqref{e:SQcontXdelta}, 
we have $\Re(s') = 2+1+\epsilon +k/2-1 = 2+k/2+\epsilon$
and $u=s'-s -k/2+1$.
Here $s' = s+u+k/2-1$. 
Change $u$ to $s'$ and interchange the order of integration, 
bringing the $\tau$ integral to the outside, and the $s'$ integral to the inside. 
Then
\begin{multline*}
	S_Q^{\rm (cont)} (X; \delta)
	=
	\sum_\cusp 
	\vol 
	\left(\frac{1}{2\pi i}\right)^4
	\int_{\left(\frac{1}{2}+2\epsilon\right)}
	\int_{(0)} 
	\int_{(2)}
	\int_{\left(2+\frac{k}{2}+\epsilon\right)}
	\frac{(4\pi)^k 2^{s-\frac{1}{2}}}
	{2\sqrt\pi \Gamma(s+k-1)}
	\\
	\times
	\frac{\zeta_{\cusp, Q}(s',\tau) M(s, \tau/i, \delta) 
	\overline{\left<V, E_\cusp^*\left(*, \frac{1}{2}+\tau\right)\right>}}
	{Q^{s'}\zeta^*(1+2\tau)\zeta^*(1-2\tau)}
	\\
	\times 
	\frac{\tG \left(s'-w-\frac{1}{2}\right) \tG (w) 
	\Gamma\left(w-s'+s+\frac{k}{2}-\frac{1}{2}\right) 
	\Gamma\left(s'-s+\frac{1}{2}\right)}
	{\Gamma\left(w+\frac{k}{2}\right)}
	\\
	\times
	X^{s'-\frac{1}{2}} 
	\; ds'\; ds \;d\tau \; dw \,.
\end{multline*}

We now move the $s'$ line of integration 
from $\Re(s') = 2+k/2+\epsilon$ to $\Re(s') = 3/2+\epsilon/2$. 
As the $s'$ line moves, no poles are passed over, so we have
\begin{multline}\label{e:original_cont}
	S_Q^{\rm (cont)} (X; \delta)
	=
	\frac{\vol }{\sqrt Q}
	\sum_\cusp \left(\frac{1}{2\pi i}\right)^4
	\int_{\left(\frac{1}{2}+2\epsilon\right)}
	\int_{(0)} 
	\int_{(2)}
	\int_{\left(\frac{3}{2}+\frac{\epsilon}{2}\right)}
	\frac{(4\pi)^k 2^{s-\frac{1}{2}}}
	{2\sqrt\pi \Gamma(s+k-1)}
	\\
	\times
	\frac{\zeta_{\cusp, Q}(s', \tau)
	M(s, \tau/i, \delta) 
	\overline{\left<V, E_\cusp^*\left(*, \frac{1}{2}+\tau\right)\right>}}
	{\zeta^*(1+2\tau)\zeta^*(1-2\tau)}
	\\
	\times 
	\frac{\tG \left(s'-w-\frac{1}{2}\right) \tG (w)
	\Gamma\left(w-s'+s+\frac{k}{2}-\frac{1}{2}\right) 
	\Gamma\left(s'-s+\frac{1}{2}\right) }
	{\Gamma\left(w+\frac{k}{2}\right)}
	\\
	\times
	\left(\frac{X}{Q}\right)^{s'-\frac{1}{2}} 
	\; ds'\; ds \;d\tau \; dw
	.
\end{multline}
	
We now move the $s$ line of integration 
from $\Re(s)=2$ to $\Re(s) = 1/2-k/2-\epsilon$. 
As the $s$ line moves, poles are passed over at poles of
\begin{enumerate}
	\item $M(s, \tau/i, \delta)$, 
	at $s=1/2 -r\pm \tau$ for $0\leq r\leq k/2$, 
	with a residue $c_r(\pm \tau, \delta)$
	
	\item $\Gamma\left(w-s'+s+k/2-1/2\right)$, 
	at $w-s'+s+k/2-1/2=0$, i.e., 
	$s = -w+s'-k/2+1/2$, 
	with a residue $1$
\end{enumerate}

\subsubsection{The contribution from $s=1/2-r\pm \tau$} 
	
The residues of the $M$ function at $s=1/2-r \pm \tau$ have the form 
\begin{multline}\label{e:define_c_r}
	{\rm Res}_{s=\frac{1}{2}-r\pm \tau} M(s, \tau/i, \delta)
	=
	\frac{\sqrt{\pi} 2^{r\mp \tau} 
	(-1)^r 
	\Gamma\left(\frac{1}{2}+r\mp \tau\right)
	\Gamma\left(-r\pm 2\tau\right)}
	{r! \Gamma\left(\frac{1}{2}+\tau\right)
	\Gamma\left(\frac{1}{2}-\tau\right)}
	\\
	+ 
	\cO_r \left((1+|\Re(\tau)|)^r e^{-\frac{\pi}{2}|\Re(\tau)|} \delta^{\frac{1}{4}-\epsilon}\right)
	=: 
	c_r(\pm \tau, \delta)
	.
\end{multline}
A simple computation verifies that 
$$
	\lim_{\delta\to 0} c_0\left(\pm \frac{1}{2}, \delta\right)
	=
	\pm \sqrt{\frac{\pi}{2}}
	.
$$
For $1\leq r\leq \frac{k}{2}$, we have
$$
	\lim_{\delta\to 0} c_r\left(\frac{1}{2}, \delta\right)
	=
	\frac{2^{r-\frac{1}{2}}\sqrt\pi}{2\cdot r!}
$$
and 
$$
	\lim_{\delta\to 0} c_r\left(-\frac{1}{2}, \delta\right)
	=
	-\frac{2^{r+\frac{1}{2}}\sqrt\pi }{2\cdot (r+1)!}
	.
$$

At $s=1/2-r\pm \tau$, for each $0\leq r\leq k/2$, we have
\begin{multline*}
	\frac{\vol }{\sqrt Q}
	\sum_\cusp 
	\left(\frac{1}{2\pi i}\right)^3 
	\int_{\left(\frac{1}{2}+2\epsilon\right)}
	\int_{(0)}
	\int_{\left(\frac{3}{2}+\frac{\epsilon}{2}\right)}
	\frac
	{(4\pi)^k 2^{-r\pm \tau} }
	{2\sqrt\pi 
	\Gamma\left(k-r-\frac{1}{2} \pm \tau\right) }
	\\
	\times
	\frac{ \zeta_{\cusp, Q}(s', \tau) c_r(\pm \tau, \delta) 
	\overline{\left<V, E^*_\cusp\left(*, \frac{1}{2}+\tau\right)\right>}}
	{\zeta^*(1-2\tau) \zeta^*(1+2\tau)}
	\tG \left(s'-w-\frac{1}{2}\right)
	\tG (w)
	\\
	\times
	\frac{
	\Gamma\left(w-s' \pm \tau-r+\frac{k}{2}\right)
	\Gamma\left(s'\mp \tau +r\right)}
	{\Gamma\left(w+\frac{k}{2}\right)}
	\left(\frac{X}{Q}\right)^{s'-\frac{1}{2}}
	\; ds'\; d\tau\; dw
	.
\end{multline*}
Move the $s'$ line of integration 
from $\Re(s') =3/2 + \epsilon/2$ to $\Re(s') = 1/2 -\epsilon$. 
As the $s'$ line moves, poles are passed over at poles of
\begin{itemize}
	\item $\zeta_{\cusp, Q}(s', \tau)$ 
	at $s'=1\pm \tau$
	with a residue 
	$\zeta(1\pm 2\tau) z_{\cusp, Q}^{\pm}(\tau)$
	
	\item $\tG \left(s'-w-1/2 \right)$ 
	at $s'-w-1/2=0$, i.e., 
	$s'=w+1/2$, 
	with a residue $1$
	
	\item $\Gamma\left(w-s'\pm \tau-r+k/2 \right)$
	at $w-s'\pm \tau-r+k/2=0$ 
	when $r=k/2$, i.e., 
	$s' = w\pm \tau$ 
	when $r=k/2$, 
	with a residue $-1$
\end{itemize}

\subsubsection{Contribution from the pole at $s'=1\pm \tau$}

We have the following contributions to the main term.
	
When $f\neq g$, 
\be\label{e:fn=g1}
	\frac{(4\pi)^k 2^{-\frac{k}{2}+\frac{1}{2}}
	c_{\frac{k}{2}}\left(\frac{1}{2}, \delta\right) \vol }
	{4\sqrt\pi \Gamma\left(\frac{k}{2}\right)}
	\sum_{\cusp}
	\frac{ z_{\cusp, Q}^+\left(-\frac{1}{2}\right) \overline{\left<V, \tilde E_{\cusp}(*, s)\right>}\mid_{s=1}
	+
	z_{\cusp, Q}^-\left(\frac{1}{2}\right) \overline{\left<V, E_{\cusp}(*, s)\right>}\mid_{s=1} }
	{\sqrt Q}
	.
\ee

Assume that $f=g$. 
Define
\begin{equation}\label{e:K1-}
	K_1(s; \delta)
	:=
	-
	\frac{(4\pi)^k 2^{-\frac{k}{2}+ \frac{-s+1}{2}} 
	c_{\frac{k}{2}}\left(\frac{-s+1}{2}, \delta\right)
	\tG \left(-\frac{s}{2}\right) 
	\tG (s)}
	{4\sqrt\pi \Gamma\left(\frac{k}{2}-\frac{s}{2}\right)
	\pi^{-\frac{s}{2}}
	\Gamma\left(\frac{s}{2}\right)}
	.
\end{equation}
The contributions to the main term in this case are 
\be\label{e:f=g0}
	-
	\frac{(4\pi)^k 2^{-\frac{1}{2}} 
	c_0\left(\frac{1}{2}, \delta\right)
	}
	{4\sqrt\pi \Gamma(k) }
	\frac{1}{2\pi i}
	\int_{\left(\frac{1}{2}+2\epsilon\right)}
	\tG (-w) \tG (w) \;dw
	\frac{ z(N, Q) \vol } {\sqrt Q}
	\Res_{s=1}\overline{\langle V, E_\infty (*, s)\rangle }
\ee
and 
\begin{multline}\label{e:f=g1}
	\frac{(4\pi)^k2^{\frac{-k+1}{2}}
	c_{\frac{k}{2}}\left(\frac{1}{2}, \delta\right)}
	{4\sqrt\pi \Gamma\left(\frac{k}{2}\right)}
	\frac{ z(N, Q) \vol }
	{\sqrt Q}
	\Res_{s=1}\overline{ \langle V, E_\infty (*, s)\rangle }
	\log \left(\frac{X}{Q}\right)
	\\
	+
	\frac{(4\pi)^k2^{\frac{-k+1}{2}}
	c_{\frac{k}{2}}\left(\frac{1}{2}, \delta\right)}
	{4\sqrt\pi \Gamma\left(\frac{k}{2}\right)}
	\frac{ z'(N,Q)\vol }
	{\sqrt Q}
	\Res_{s=1} \overline{\langle V, E_\infty(*, s)\rangle}
	\\
	-2
	\frac{ \left.\frac{d}{dw}
	\left( s K_{1}(s; \delta) \right) \right|_{s=0} z(N, Q) \vol}
	{\sqrt Q}
	\Res_{s=1}\overline{ \langle V, E_\infty (*, s)\rangle }
	\\
	-
	\frac{(4\pi)^k 2^{-\frac{k}{2}+\frac{1}{2}}
	c_{\frac{k}{2}}\left(\frac{1}{2},\delta\right)}
	{ 4\sqrt\pi \Gamma\left(\frac{k}{2}\right) }
	\frac{\vol}{\sqrt Q}
	\left\{ 
	\left( \sum_\cusp z_{\cusp, Q}^+ \left(-\frac{1}{2}\right) 
	\left.\frac{d}{ds}
	\left( (s-1) \overline{\langle V, \tilde E_\cusp(*, s)\rangle} \right)
	\right|_{s=1} \right) 
	\right.
	\\
	\left.
	+
	\left( \sum_\cusp z_{\cusp, Q}^- \left(+\frac{1}{2}\right) 
	\left.\frac{d}{ds}
	\left( (s-1) \overline{\langle V, E_\cusp(*, s)\rangle} \right)
	\right|_{s=1} \right) 
	\right\}
	.
\end{multline}

\subsubsection{Contribution from the pole at $s'=w+1/2$}

We have the following contributions to the main term.
	
When $f\neq g$, we have 
\begin{multline}\label{e:fn=g2}
	-\frac{k(4\pi)^k 2^{-\frac{k}{2}-\frac{1}{2}}
	c_{\frac{k}{2}}\left(-\frac{1}{2}, \delta\right)
	\vol }
	{4\sqrt\pi \Gamma\left(\frac{k}{2}-1\right)}
	\\
	\times
	\sum_\cusp
	\frac{
	\left(
	z_{\cusp, Q}^+\left(-\frac{1}{2}\right)
	\overline{\left<V, \tilde E_\cusp (*, s)\right>}\mid_{s=1}
	+
	z_{\cusp, Q}^-\left(\frac{1}{2}\right)
	\overline{\left<V, E_\cusp (*, s)\right>}\mid_{s=1}	\right)}
	{\sqrt Q}
	,
\end{multline}
\begin{multline}\label{e:fn=g3}
	\frac{(4\pi)^k 2^{-\frac{k}{2}+\frac{1}{2}}
	c_{\frac{k}{2}-1}\left(-\frac{1}{2}, \delta\right) \vol  
	}
	{2\sqrt\pi \Gamma\left(\frac{k}{2}\right)
	}
	\\
	\times
	\sum_\cusp
	\frac{
	\left(
	z_{\cusp, Q}^+\left(-\frac{1}{2}\right)
	\overline{\left<V, \tilde E_\cusp (*, s)\right>}\mid_{s=1}
	+
	z_{\cusp, Q}^-\left(\frac{1}{2}\right)
	\overline{\left<V,  E_\cusp (*, s)\right>}\mid_{s=1}
	\right)}
	{\sqrt Q}
	,
\end{multline}
\begin{multline}\label{e:fn=g4}
	-\frac{(4\pi)^k 2^{-\frac{k}{2}+\frac{1}{2}} 
	c_{\frac{k}{2}}\left(\frac{1}{2}, \delta\right) \vol
	}
	{
	2\sqrt\pi \Gamma\left(\frac{k}{2}\right)
	}
	\\
	\times
	\sum_{\cusp}
	\frac{
	\left(
	z_{\cusp, Q}^+\left(-\frac{1}{2}\right)
	\overline{\left<V, \tilde E_\cusp (*, s)\right>}\mid_{s=1}
	+
	z_{\cusp, Q}^-\left(\frac{1}{2}\right)
	\overline{\left<V, E_\cusp (*, s)\right>}\mid_{s=1}
	\right)
	}
	{\sqrt Q}
\end{multline}
and
\begin{multline}\label{e:fn=g5}
	\frac{(4\pi)^k 2^{\frac{1}{2}} 
	c_0\left(\frac{1}{2}, \delta\right) \vol
	}
	{
	4\sqrt\pi \Gamma\left(k\right)
	}
	\\
	\times
	\sum_\cusp
	\frac{
	\left(
	z_{\cusp, Q}^+\left(-\frac{1}{2}\right)
	\overline{\left<V, \tilde E_\cusp (*, s)\right>}\mid_{s=1}
	+
	z_{\cusp, Q}^-\left(\frac{1}{2}\right)
	\overline{\left<V, E_\cusp (*, s)\right>}\mid_{s=1}
	\right)}
	{\sqrt Q}
	.
\end{multline}

Assume that $f=g$.
For $r=0$ and $r=k/2$, define
\be\label{e:K2-+}
	K_2^+(s; r, \delta)
	:=
	(4\pi)^k  \pi^s 2^{-r-s-1} c_r(- s, \delta)
	\frac
	{\Gamma\left(\frac{k}{2}-r-\frac{1}{2}-s\right)
	\Gamma\left(2 s+r+1\right) 
	\tG \left(\frac{1}{2}+s \right) }
	{ \Gamma\left(k-r-\frac{1}{2} - s \right)
	\Gamma\left( s+ \frac{1}{2}+\frac{k}{2}\right)
	\Gamma\left(\frac{1}{2}+ s\right)}
\ee
and, for $r=k/2-1$ and $r=k/2$, 
\be\label{e:K2--}
	K_2^-(s; r, \delta)
	:=
	(4\pi)^k \pi^s 2^{-r+s -1}  c_r(s, \delta)
	\\
	\times
	\frac
	{\Gamma\left(s +\frac{k}{2}-r -\frac{1}{2}\right)
	\Gamma\left(r+1\right) \tG \left(\frac{1}{2}+s \right)}
	{ \Gamma\left(s+ k-r-\frac{1}{2}\right)
	\Gamma\left(s +\frac{k}{2} +\frac{1}{2} \right)
	\Gamma\left(\frac{1}{2}+ s\right)}
	.
\ee
The contributions are 
\begin{multline}\label{e:f=g2}
	\frac{k(4\pi)^k 2^{-\frac{k}{2}-\frac{1}{2}}
	c_{\frac{k}{2}}\left(-\frac{1}{2}, \delta\right)
	}
	{
	4\sqrt\pi \Gamma\left(\frac{k}{2}-1\right)
	}
	\frac{ z(N, Q) \vol }
	{\sqrt Q}
	\Res_{s=1}\overline{\langle V, E_\infty(*, s)\rangle }
	\log \left(\frac{X}{Q}\right)
	\\
	+
	\frac{k(4\pi)^k 2^{-\frac{k}{2}-\frac{1}{2}}
	c_{\frac{k}{2}}\left(-\frac{1}{2}, \delta\right)
	}
	{
	4\sqrt\pi \Gamma\left(\frac{k}{2}-1\right)
	}
	\frac{z'(N, Q)\vol }
	{\sqrt Q}
	\Res_{s=1}\overline{\langle V, E_\infty(*, s)\rangle }
	\\
	+
	\frac{
	\left.\frac{d}{ds}
	\left(
	\left(s+\frac{1}{2}\right)
	K_{2}^-\left(s; \frac{k}{2}, \delta\right)
	\right) \right|_{s=-\frac{1}{2}}
	z(N, Q)\vol }
	{\sqrt Q}
	\Res_{s=1}\overline{ \langle V, E_\infty \left(*, s \right)\rangle }
	\\
	-
	\frac{k(4\pi)^k 
	2^{-\frac{k}{2}-\frac{1}{2}}
	c_{\frac{k}{2}}\left(-\frac{1}{2}, \delta\right)
	}
	{
	4\sqrt\pi \Gamma\left(\frac{k}{2}-1\right)
	}
	\frac{\vol }{\sqrt Q}
	\left\{ 
	\left( \sum_\cusp z_{\cusp, Q}^+ \left(-\frac{1}{2}\right) 
	\left.\frac{d}{ds}
	\left( (s-1) \overline{\langle V, \tilde E_\cusp(*, s)\rangle} \right)
	\right|_{s=1} \right) 
	\right.
	\\
	\left.
	+
	\left( \sum_\cusp z_{\cusp, Q}^- \left(+\frac{1}{2}\right) 
	\left.\frac{d}{ds}
	\left( (s-1) \overline{\langle V, E_\cusp(*, s)\rangle} \right)
	\right|_{s=1} \right) 
	\right\}
	,
\end{multline}
\begin{multline}\label{e:f=g3}
	-
	\frac{(4\pi)^k 2^{-\frac{k}{2}+\frac{1}{2}} 
	c_{\frac{k}{2}-1}\left(-\frac{1}{2}, \delta\right)
	}
	{
	2\sqrt\pi \Gamma\left(\frac{k}{2}\right)
	}
	\frac{z(N, Q) \vol } {\sqrt Q} 
	\Res_{s=1} \overline{\langle V, E_\infty (*, s)\rangle }
	\log\left(\frac{X}{Q}\right)
	\\
	-
	\frac{(4\pi)^k 2^{-\frac{k}{2}+\frac{1}{2}} 
	c_{\frac{k}{2}-1}\left(-\frac{1}{2}, \delta\right) }
	{ 2\sqrt\pi \Gamma\left(\frac{k}{2}\right) }
	\frac{ z'(N, Q) \vol } {\sqrt Q}
	\Res_{s=1}\overline{ \langle V, E_\infty (*, s)\rangle }
	\\
	-
	\frac{
	\left.\frac{d}{ds} 
	\left( \left(s+\frac{1}{2}\right) K_{2}^-\left(s; \frac{k}{2}-1, \delta\right)
	\right)\right|_{s=-\frac{1}{2}}
	z(N, Q) \vol }
	{\sqrt Q}
	\Res_{s=1}\overline{ \langle V, E_\infty (*, s)\rangle }
	\\
	+
	\frac{(4\pi)^k 2^{-\frac{k}{2}+\frac{1}{2}} 
	c_{\frac{k}{2}-1}\left(-\frac{1}{2}, \delta\right) }
	{ 2\sqrt\pi \Gamma\left(\frac{k}{2}\right) }
	\frac{\vol}{\sqrt Q}
	\left\{ 
	\left( \sum_\cusp z_{\cusp, Q}^+ \left(-\frac{1}{2}\right) 
	\left.\frac{d}{ds}
	\left( (s-1) \overline{\langle V, \tilde E_\cusp(*, s)\rangle} \right)
	\right|_{s=1} \right) 
	\right.
	\\
	\left.
	+
	\left( \sum_\cusp z_{\cusp, Q}^- \left(+\frac{1}{2}\right) 
	\left.\frac{d}{ds}
	\left( (s-1) \overline{\langle V, E_\cusp(*, s)\rangle} \right)
	\right|_{s=1} \right) 
	\right\}
	,
\end{multline}
\begin{multline}\label{e:f=g4}
	\frac{(4\pi)^k 2^{-\frac{k}{2}+\frac{1}{2}}
	c_{\frac{k}{2}}\left(\frac{1}{2}, \delta\right) }
	{ 2\sqrt\pi \Gamma\left(\frac{k}{2}\right) }
	\frac{ z(N, Q) \vol}
	{\sqrt Q}
	\Res_{s=1}\overline{ \langle V, E_\infty (*, s)\rangle }
	\log\left(\frac{X}{Q}\right)
	\\
	+
	\frac{(4\pi)^k 2^{-\frac{k}{2}+\frac{1}{2}}
	c_{\frac{k}{2}}\left(\frac{1}{2}, \delta\right) }
	{ 2\sqrt\pi \Gamma\left(\frac{k}{2}\right) }
	\frac{z'(N, Q) \vol } {\sqrt Q}
	\Res_{s=1}\overline{\langle V, E_\infty (*, s)\rangle }
	\\
	+
	\frac{
	\left.\frac{d}{ds}
	\left(\left(s+\frac{1}{2}\right) K_{2}^+\left(s; \frac{k}{2}, \delta\right)
	\right)\right|_{s=-\frac{1}{2}}
	z(N, Q) \vol} {\sqrt Q}
	\Res_{s=1}\overline{\langle V, E_\infty (*, s)\rangle}
	\\
	-
	\frac{(4\pi)^k 2^{-\frac{k}{2}+\frac{1}{2}}
	c_{\frac{k}{2}}\left(\frac{1}{2}, \delta\right)
	}
	{
	2\sqrt\pi \Gamma\left(\frac{k}{2}\right)
	}
	\frac{\vol}{\sqrt Q}
	\left\{ 
	\left( \sum_\cusp z_{\cusp, Q}^+ \left(-\frac{1}{2}\right) 
	\left.\frac{d}{ds}
	\left( (s-1) \overline{\langle V, \tilde E_\cusp(*, s)\rangle} \right)
	\right|_{s=1} \right) 
	\right.
	\\
	\left.
	+
	\left( \sum_\cusp z_{\cusp, Q}^- \left(+\frac{1}{2}\right) 
	\left.\frac{d}{ds}
	\left( (s-1) \overline{\langle V, E_\cusp(*, s)\rangle} \right)
	\right|_{s=1} \right) 
	\right\}
\end{multline}
and
\begin{multline}\label{e:f=g5}
	-
	\frac{(4\pi)^k 2^{\frac{1}{2}} c_0\left(\frac{1}{2}, \delta\right) }
	{4\sqrt\pi \Gamma\left(k\right) }
	\frac{ z(N, Q) \vol} {\sqrt Q}
	\Res_{s=1}\overline{ \langle V, E_\infty (*, s)\rangle }
	\log \left(\frac{X}{Q}\right)
	\\
	-
	\frac{(4\pi)^k 2^{\frac{1}{2}} c_0\left(\frac{1}{2}, \delta\right) }
	{4\sqrt\pi \Gamma\left(k\right) }
	\frac{ z'(N, Q) \vol} {\sqrt Q}
	\Res_{s=1}\overline{\langle V, E_\infty (*, s)\rangle }
	\\
	-
	\frac{
	\left.\frac{d}{ds} \left(\left(s+\frac{1}{2}\right)
	K_{2}^+(s; 0, \delta) \right)\right|_{s=-\frac{1}{2}}
	z(N, Q) \vol } {\sqrt Q}
	\Res_{s=1}\overline{ \langle V, E_\infty (*, s)\rangle }
	\\
	+
	\frac{(4\pi)^k 2^{\frac{1}{2}} c_0\left(\frac{1}{2}, \delta\right) }
	{4\sqrt\pi \Gamma\left(k\right) }
	\frac{\vol}{\sqrt Q}
	\left\{ 
	\left( \sum_\cusp z_{\cusp, Q}^+ \left(-\frac{1}{2}\right) 
	\left.\frac{d}{ds}
	\left( (s-1) \overline{\langle V, \tilde E_\cusp(*, s)\rangle} \right)
	\right|_{s=1} \right) 
	\right.
	\\
	\left.
	+
	\left( \sum_\cusp z_{\cusp, Q}^- \left(+\frac{1}{2}\right) 
	\left.\frac{d}{ds}
	\left( (s-1) \overline{\langle V, E_\cusp(*, s)\rangle} \right)
	\right|_{s=1} \right) 
	\right\}
	.
\end{multline}

\subsubsection{Contribution of the pole at $s'=w\pm \tau$, when $r=k/2$}

There are no main term contributions.

\subsubsection{The contribution from $w-s'+s+k/2-1/2=0$}

For \eqref{e:original_cont}, at $s = -w+s'-k/2+1/2$, we have
\begin{multline*}
	\sum_\cusp 
	\left(\frac{1}{2\pi i}\right)^3 
	\int_{\left(\frac{1}{2}+2\epsilon\right)} \int_{(0)} 
	\int_{\left(\frac{3}{2}+\frac{\epsilon}{2}\right)}
	\frac{
	(4\pi)^k 
	2^{-w+s'-\frac{k}{2}}
	}
	{
	2\sqrt\pi 
	\Gamma\left(-w+s'+\frac{k}{2}-\frac{1}{2}\right)
	}
	\\
	\times
	\frac{
	\zeta_{\cusp, Q}(s', \tau)
	M\left(-w+s'-\frac{k}{2}+\frac{1}{2}, \tau/i, \delta\right) 
	\overline{\left<V, E_\cusp\left(*, \frac{1}{2}+\tau\right)\right>}
	}
	{
	\sqrt Q
	\zeta^*(1-2\tau)}
	\\
	\times 
	\tG \left(s'-w-\frac{1}{2}\right)
	\tG (w)
	\left(\frac{X}{Q}\right)^{s'-\frac{1}{2}} 
	\; ds' \; d\tau \; dw
	.
\end{multline*}

Move the $s'$ line of the integration 
from $\Re(s') = 3/2+\epsilon$ to $\Re(s') = 1/2 -\epsilon$.
As the $s'$ line moves, poles are passed over at poles of
\begin{itemize}
	\item $\tG \left(s'-w-1/2 \right)$ 
	at $s'=w+1/2$, 
	with a residue $1$
	
	\item $\zeta_{\cusp, Q}(s', \tau)$
	at $s'=1\pm \tau$, 
	with a residue 
	$\zeta(1\pm 2\tau)z_{\cusp, Q}^\pm\left(\tau\right)$
	
	\item $M\left(-w+s'-k/2+1/2, \tau/i, \delta\right)$ 
	at $s'=w\pm \tau$, 
	with a residue $c_{k/2}(\pm \tau, \delta)$
\end{itemize}

\subsubsection{Contribution from the poles at $s'=w+1/2$}

At $s'=w+1/2$, we have
\begin{multline*}
	\sum_\cusp 
	\left(\frac{1}{2\pi i}\right)^2 
	\int_{\left(\frac{1}{2}+2\epsilon\right)} 
	\int_{(0)} 
	\frac{
	(4\pi)^k 
	2^{\frac{1}{2}-\frac{k}{2}}
	}
	{
	2\sqrt\pi 
	\Gamma\left(\frac{k}{2}\right)
	}
	\frac{
	\zeta_{\cusp, Q}\left(w+\frac{1}{2}, \tau\right)
	M\left(1-\frac{k}{2}, \tau/i, \delta\right) 
	\overline{\left<V, E^*_\cusp\left(*, \frac{1}{2}+\tau\right)\right>}
	}
	{\sqrt Q \zeta^*(1-2\tau) \zeta^*(1+2\tau) }
	\\
	\times
	\tG (w)
	\left(\frac{X}{Q}\right)^{w} 
	\; d\tau \; dw
\end{multline*}
	where
$$
	M\left(1-\frac{k}{2}, \tau/i, \delta\right)
	=
	\frac{\sqrt\pi 2^{\frac{k-1}{2}} 
	\Gamma\left(\frac{1-k}{2}-\tau\right) \Gamma\left(\frac{1-k}{2}+\tau\right)
	\Gamma\left(\frac{k}{2}\right) }
	{\Gamma\left(\frac{1}{2}-\tau\right) 
	\Gamma\left(\frac{1}{2}+\tau\right)}
	+
	\cO_{A, \epsilon} 
	\left(\left(1+|t|\right)^{-k+2\epsilon} \delta^\epsilon\right)
	.
$$
We move $w$ to $\Re(w) = -\epsilon$. 
As the $w$ line moves, poles are passed over at poles of
\begin{itemize}
	\item $\zeta_{\cusp, Q}\left(w+1/2, \tau\right)$ 
	at $w=1/2\pm \tau$, with a residue
	$\zeta(1\pm 2\tau) z_{\cusp, Q}^\pm (\tau)$
	
	\item $\tG (w)$ at $w=0$, with a residue $1$
\end{itemize}

When $f\neq g$, the pole at $w=1/2\pm \tau$  contributes 
\be\label{e:fn=g6}
	\frac{(4\pi)^k 2^{\frac{1}{2}-\frac{k}{2}}
	c^{(1)}\left(\frac{k}{2}, \delta\right)
	\vol }
	{2\sqrt\pi \Gamma\left(\frac{k}{2}\right)}
	\sum_\cusp
	\frac{ z_{\cusp, Q}^+\left(-\frac{1}{2}\right) 
	\overline{\left<V, \tilde E_\cusp (*, s)\right>}\mid_{s=1} 
	+
	z_{\cusp, Q}^-\left(\frac{1}{2}\right)
	\overline{\left<V, E_\cusp (*, s)\right>}\mid_{s=1}}
	{\sqrt Q}
	.
\ee
	
	
Assume that $f=g$. 
Define	
\be\label{e:K_3^-}
	K_{3}(s;\delta)
	:=
	\frac{ (4\pi)^k 2^{-\frac{1}{2}-\frac{k}{2} \pi^s}
	M\left(1-\frac{k}{2}, s/i, \delta\right) 
	\tG \left(\frac{1}{2}+s\right) }
	{ \Gamma\left(\frac{k}{2}\right)
	\Gamma\left(\frac{1}{2}+s\right)}
	.
\ee
In this case, the contribution from the pole at $w=1/2\pm \tau$ is 
\begin{multline}\label{e:f=g6}
	-
	\frac{(4\pi)^k 2^{\frac{1}{2}-\frac{k}{2}}
	c^{(1)}\left(\frac{k}{2}, \delta\right) }
	{ 2\sqrt\pi \Gamma\left(\frac{k}{2}\right) }
	\frac{ z(N, Q)\vol } {\sqrt Q}
	\Res_{s=1}\overline{ \langle V, E_\infty (*, s)\rangle }
	\log \left(\frac{X}{Q}\right)
	\\
	\\
	-
	\frac{(4\pi)^k 2^{\frac{1}{2}-\frac{k}{2}}
	c^{(1)}\left(\frac{k}{2}, \delta\right) }
	{ 2\sqrt\pi \Gamma\left(\frac{k}{2}\right) }
	\frac{ z'(N, Q) \vol } {\sqrt Q}
	\Res_{s=1}\overline{ \langle V, E_\infty (*, s)\rangle }
	\\
	-
	\frac{
	\left.\frac{d}{ds}\left( \left(s+\frac{1}{2}\right)
	K_{3}(s; \delta) \right) \right|_{s=-\frac{1}{2}}
	z(N, Q) \vol } {\sqrt Q}
	\Res_{s=1}\overline{ V, E_\infty (*, s)\rangle }
	\\
	+
	\frac{(4\pi)^k 2^{\frac{1}{2}-\frac{k}{2}}
	c^{(1)}\left(\frac{k}{2}, \delta\right)
	}
	{
	2\sqrt\pi \Gamma\left(\frac{k}{2}\right)
	}
	\frac{\vol}{\sqrt Q}
	\left\{ 
	\left( \sum_\cusp z_{\cusp, Q}^+ \left(-\frac{1}{2}\right) 
	\left.\frac{d}{ds}
	\left( (s-1) \overline{\langle V, \tilde E_\cusp(*, s)\rangle} \right)
	\right|_{s=1} \right) 
	\right.
	\\
	\left.
	+
	\left( \sum_\cusp z_{\cusp, Q}^- \left(+\frac{1}{2}\right) 
	\left.\frac{d}{ds}
	\left( (s-1) \overline{\langle V, E_\cusp(*, s)\rangle} \right)
	\right|_{s=1} \right) 
	\right\}
	.
\end{multline}

At $w=0$, we have
\be\label{e:other2}
	\sum_\cusp 
	\frac{1}{2\pi i} 
	\int_{(0)} 
	\frac{(4\pi)^k 2^{\frac{1}{2}-\frac{k}{2}} 
	\zeta_{\cusp, Q}\left(\frac{1}{2}, \tau\right)
	M\left(1-\frac{k}{2}, \tau/i, \delta\right)
	\overline{\left<V, E_\cusp\left(*, \frac{1}{2}+\tau\right)\right>}
	}
	{2\sqrt\pi \sqrt Q
	\Gamma\left(\frac{k}{2}\right)
	\zeta^*(1-2\tau)}
	\; d\tau
	\,.
\ee
Note that the integral over $\tau$ converges absolutely.

\subsubsection{Contribution from the poles at $s'=1\pm \tau$}

We have the following contributions to the main term. 

When $f\neq g$: 
\be\label{e:fn=g7}
	-
	\frac{(4\pi)^k 2^{\frac{1}{2}-\frac{k}{2}}
	c_{\frac{k}{2}}\left(\frac{1}{2};\delta\right)
	\vol }
	{ 4\sqrt\pi \Gamma\left(\frac{k}{2}\right) }
	\sum_\cusp
	\frac{ z_{\cusp, Q}^+\left(-\frac{1}{2}\right)
	\overline{\left<V, \tilde E_\cusp(*, s)\right>}\mid_{s=1}
	+
	z_{\cusp, Q}^-\left(\frac{1}{2}\right)
	\overline{\left<V, E_\cusp(*, s)\right>}\mid_{s=1}}
	{\sqrt Q}
	.
\ee

Assume that $f=g$.
Define
\be\label{e:K4-}
	K_{4}(s; \delta)
	:=
	\frac{
	(4\pi)^k 
	2^{\frac{-s-3}{2}-\frac{k}{2}}
	\pi^{\frac{s-1}{2}}
	c_{\frac{k}{2}}\left(\frac{-s+1}{2};\delta\right)
	\tG \left(-\frac{s}{2}\right)
	\tG (s)
	}
	{
	\Gamma\left(-\frac{s}{2}+\frac{k}{2}\right)
	\Gamma\left(\frac{s}{2}\right)
	} 
	.
\ee
Then we have
\begin{multline}\label{e:f=g7}
	-
	\frac{(4\pi)^k 2^{\frac{1}{2}-\frac{k}{2}}
	c_{\frac{k}{2}}\left(\frac{1}{2}; \delta\right)
	}
	{
	4\sqrt\pi\Gamma\left(\frac{k}{2}\right)
	}
	\frac{ z(N, Q)\vol } {\sqrt Q}
	\Res_{s=1}\overline{ \langle V, E_\infty (*, s)\rangle }
	\log\left(\frac{X}{Q}\right)
	\\
	-2
	\frac{(4\pi)^k 2^{\frac{1}{2}-\frac{k}{2}}
	c_{\frac{k}{2}}\left(\frac{1}{2}; \delta\right) }
	{ 4\sqrt\pi\Gamma\left(\frac{k}{2}\right) }
	\frac{ z'(N, Q)\vol } {\sqrt Q}
	\Res_{s=1}\overline{ \langle V, E_\infty (*, s)\rangle }
	\\
	-
	2
	\frac{ \left.\frac{d}{ds}\left(s K_4 (s; \delta) \right)\right|_{s=0}
	z(N, Q) \vol} {\sqrt Q}
	\Res_{s=1}\overline{ \langle V, E_\infty (*, s)\rangle }
	\\
	+
	\frac{(4\pi)^k 2^{\frac{1}{2}-\frac{k}{2}}
	c_{\frac{k}{2}}\left(\frac{1}{2}; \delta\right)
	}
	{
	4\sqrt\pi\Gamma\left(\frac{k}{2}\right)
	}
	\frac{\vol}{\sqrt Q}
	\left\{ 
	\left( \sum_\cusp z_{\cusp, Q}^+ \left(-\frac{1}{2}\right) 
	\left.\frac{d}{ds}
	\left( (s-1) \overline{\langle V, \tilde E_\cusp(*, s)\rangle} \right)
	\right|_{s=1} \right) 
	\right.
	\\
	\left.
	+
	\left( \sum_\cusp z_{\cusp, Q}^- \left(+\frac{1}{2}\right) 
	\left.\frac{d}{ds}
	\left( (s-1) \overline{\langle V, E_\cusp(*, s)\rangle} \right)
	\right|_{s=1} \right) 
	\right\}
	.
\end{multline}	
	
\subsubsection{Contribution from the poles at $s'=w\pm \tau$}

There are no main term contributions.

\subsubsection{The contribution from the shifted integral $\Re(s) = 1/2-k/2-\epsilon$}

After moving $\Re(s)$ to $1/2-k/2-\epsilon$, we have
\begin{multline}\label{e:shifted_cont}
	\frac{\vol }{\sqrt Q}
	\sum_\cusp \left(\frac{1}{2\pi i}\right)^4
	\int_{\left(\frac{1}{2}+2\epsilon\right)}
	\int_{(0)} 
	\int_{\left(\frac{1}{2}-\frac{k}{2}-\epsilon\right)}
	\int_{\left(\frac{3}{2}+\frac{\epsilon}{2}\right)}
	\frac{(4\pi)^k 2^{s-\frac{1}{2}} }
	{2\sqrt\pi \Gamma(s+k-1)}
	\\
	\times
	\frac{ \zeta_{\cusp, Q}(s', \tau) 
	M(s, \tau/i, \delta) 
	\overline{\left<V, E_\cusp^*\left(*, \frac{1}{2}+\tau\right)\right>}
	}
	{\zeta^*(1+2\tau)\zeta^*(1-2\tau)}
	\\
	\times 
	\frac{\Gamma\left(w-s'+s+\frac{k}{2}-\frac{1}{2}\right) 
	\Gamma\left(s'-s+\frac{1}{2}\right)
	\tG \left(s'-w-\frac{1}{2}\right) \tG(w)}
	{\Gamma\left(w+\frac{k}{2}\right)}
	\left(\frac{X}{Q}\right)^{s'-\frac{1}{2}} 
	\; ds'\; ds \;d\tau \; dw
	.
\end{multline}
We move the $s'$ line of integration 
from $3/2+\epsilon/2$ to $1/2-\epsilon$. 
As the $s'$ line moves, poles are passed over at poles of
\begin{itemize}
	\item $\zeta_{\cusp, Q}(s', \tau)$ 
	at $s'=1\pm \tau$, with a residue 
	$\zeta(1\pm 2\tau) z_{\cusp, Q}^\pm (\tau)$
	
	\item $\tG \left(s'-w-1/2 \right)$ 
	at $s' = w+ 1/2 $, with a residue $1$
		
	\item $\Gamma\left(w-s'+s+k/2- 1/2 \right)$
	at $w-s'+s+k/2- 1/2 =0$, i.e., 
	$s' = w+s+k/2 - 1/2$, 
	with a residue $-1$
\end{itemize}

\subsubsection{Contribution from the poles at $s'=1\pm \tau$}

There are no main term contributions.
	
\subsubsection{Contribution from the pole of $\tG \left(s'-w-1/2\right)$ at $s'=w+1/2$}

Let $c^{(3)}(s, \delta)$ be a residue of $M(s, \tau/i, \delta)$ at $\tau = s+k/2- 1/2$, as \eqref{ires2}. 
Then 
$$
	\lim_{\delta\to 0} c^{(3)}(s, \delta)
	= 
	\frac{(-1)^{\frac{k}{2}+1} \sqrt \pi 2^{\frac{1}{2}-s} 
	\Gamma(1-s) \Gamma\left(2s+\frac{k}{2}-1\right)}
	{\Gamma\left(1-s-\frac{k}{2}\right) \Gamma\left(s+\frac{k}{2}\right)
	\left(\frac{k}{2}\right)!}
	.
$$
	
We have the following contributions to the main term. 

When $f\neq g$: 
\begin{multline}\label{e:fn=g8}
	-
	\frac{k(4\pi)^k 2^{-\frac{k}{2}-\frac{1}{2}}
	c^{(3)}\left(-\frac{k}{2};\delta\right) \vol}
	{ 4\sqrt\pi
	\Gamma\left(\frac{k}{2}-1\right) }
	\\
	\times
	\sum_\cusp
	\frac{ z_{\cusp, Q}^+\left(-\frac{1}{2}\right) 
	\overline{ \left< V, \tilde E_\cusp (*, s)\right> } \mid_{s=1}
	+
	z_{\cusp, Q}^-\left(\frac{1}{2}\right)
	\overline{ \left< V, E_\cusp (*, s)\right> } \mid_{s=1}}	
	{\sqrt Q}
	.
\end{multline}

Assume that $f=g$. 
Define
\be\label{e:K5-}
	K_{5}(s;\delta)
	:=
	\frac{(4\pi)^k 2^{s-\frac{3}{2}} \pi^{-s-\frac{k}{2}+\frac{1}{2}}
	c^{(3)}\left(s;\delta\right)	
	\Gamma\left(s+\frac{k}{2}-1\right) 
	\Gamma\left(\frac{k}{2}+1\right)
	\tG \left(s+\frac{k}{2}\right) }
	{\Gamma(s+k-1)
	\Gamma\left(s+\frac{k}{2}\right)
	\Gamma\left(s+k\right)}
	.
\ee
Then we have
\begin{multline}\label{e:f=g8}
	\frac{k(4\pi)^k 2^{-\frac{k}{2}-\frac{1}{2}}
	c^{(3)}\left(-\frac{k}{2};\delta\right)
	}
	{
	4\sqrt\pi \Gamma\left(\frac{k}{2}-1\right)
	}
	\frac{ z(N, Q) \vol }
	{\sqrt Q}
	\Res_{s=1}\overline{ \langle V, E_\infty (*, s)\rangle }
	\log 
	\left(\frac{X}{Q}\right)
	\\
	+
	\frac{k(4\pi)^k 2^{-\frac{k}{2}-\frac{1}{2}}
	c^{(3)}\left(-\frac{k}{2};\delta\right)
	}
	{
	4\sqrt\pi \Gamma\left(\frac{k}{2}-1\right)
	}
	\frac{ z'(N, Q) \vol } {\sqrt Q}
	\Res_{s=1}\overline{ \langle V, E_\infty (*, s)\rangle }
	\\
	+
	\frac{ \left.\frac{d}{ds}
	\left( \left(s+\frac{k}{2}\right) K_{5}(s;\delta) \right) \right|_{s=-\frac{k}{2}}
	z(N, Q) \vol }
	{\sqrt Q}
	\Res_{s=1}\overline{ \langle V, E_\infty (*, s)\rangle }
	\\
	-
	\frac{k(4\pi)^k 2^{-\frac{k}{2}-\frac{1}{2}}
	c^{(3)}\left(-\frac{k}{2};\delta\right)
	}
	{
	4\sqrt\pi \Gamma\left(\frac{k}{2}-1\right)
	}
	\frac{\vol}{\sqrt Q}
	\left\{ 
	\left( \sum_\cusp z_{\cusp, Q}^+ \left(-\frac{1}{2}\right) 
	\left.\frac{d}{ds}
	\left( (s-1) \overline{\langle V, \tilde E_\cusp(*, s)\rangle} \right)
	\right|_{s=1} \right) 
	\right.
	\\
	\left.
	+
	\left( \sum_\cusp z_{\cusp, Q}^- \left(+\frac{1}{2}\right) 
	\left.\frac{d}{ds}
	\left( (s-1) \overline{\langle V, E_\cusp(*, s)\rangle} \right)
	\right|_{s=1} \right) 
	\right\}
\end{multline}
and
\begin{multline}\label{e:f=g9}
	-
	\frac{(4\pi)^k
	}
	{
	2\sqrt\pi 
	\Gamma\left(\frac{k}{2}\right)
	}
	\frac{1}{2\pi i}
	\int_{\left(\frac{1}{2}-\frac{k}{2}-\epsilon\right)}
	\frac{2^{s-\frac{1}{2}}
	M\left(s, -\frac{1}{2i}, \delta\right)
	\Gamma\left(s+\frac{k}{2}-1\right)
	\Gamma\left(1-s\right)
	}
	{\Gamma(s+k-1) 
	}
	\; ds
	\\
	\times
	\frac{ z(N, Q)\vol } {\sqrt Q}
	\Res_{s=1} \overline{ \langle V, E_\infty(*, s)\rangle }
	.
\end{multline}

\subsubsection{Contribution from the pole of $\Gamma\left(w-s'+s+k/2-1/2\right)$}

There are no main term contributions.
	

\section{Triple Dirichlet Series}\label{s:triple}

Let $N_0$ and $Q\geq 1$ and assume that $N_0$ is square-free 
and $(Q, N_0)=1$. 
Let $f$ and $g$ be holomorphic cusp forms of even weight $k$, 
for $\Gamma_0(N_0)$, with normalized Fourier coefficients $A(m)$, $B(m)$, as in \eqref{def1}, so that $A(1) = B(1)=1$.
Assume that $f$ and $g$ are eigenfunctions for Hecke operators $T_p$ 
with primes $p\nmid N_0$.

Recall that 
	$$D(s; h) 
	= 
	\sum_{m_1\ell_1 = m_2\ell_2 +h} 
	\frac{a(m_1) \overline{b(m_2)}} {(m_2\ell_2)^{s+k-1}}
	$$
	is absolutely convergent when $\Re(s)>1$. 
Here $a(m) = A(m)m^{(k-1)/2}$. 
Fix positive integers $\ell_1,\ell_2$, with $(\ell_1\ell_2, N_0)=1$. 
For $q \geq 1$, for $s, w$ with real parts greater than $1$, the sum
	$$
	Z_q(s, w)
	=
	Z_q(s, w; \ell_1,\ell_2)
	=
	(\ell_1\ell_2)^{\frac{k-1}{2}} 
	\sum_{h\geq 1} \frac{D(s; h q)}{(h q)^{w+\frac{k-1}{2}}}
	$$
	is absolutely convergent, and can be written in the form 
	$$
	Z_q(s, w)
	=
	(\ell_1\ell_2)^{\frac{k-1}{2}} 
	\sum_{h\geq 1, \atop \ell_1 m_1 = \ell_2m_2 +hq}
	\frac{a(m_1) \overline{b(m_2)}}{(m_2\ell_2)^{s+k-1} (hq)^{w+\frac{k-1}{2}}}
	\,.
	$$

We define
\begin{equation}\label{e:triple}
	\cM(s, w, v)
	=
	\cM(s, w, v; \ell_1, \ell_2)
	:= 
	(\ell_1\ell_2)^{\frac{k-1}{2}} 
	\sum_{q\geq 1, \atop (q, N_0)=1} \frac{Z_{q}(s, w)}{q^v}
	\,.
\end{equation}
By the polynomial bounds in $q$ for $Z_q(s, w)$, this series will converges for any fixed value of $s, w$ when the real part of $v$ sufficiently large. 

For $n\neq 0$, define
\be\label{e:adiv}
	\tilde\sigma_{-v}(n)
	:=
	\sum_{d\mid n, (d, N_0)=1} d^{-v}
	.
\ee
In addition, for each integer $m_2\geq 1$ and $\Re(w)$ and $\Re(v)>1$, let
\begin{equation}\label{e:D'}
	D'(w, v; m_2\ell_2)
	:=
	\sum_{n\geq 1, \atop \ell_1m_1 = n+\ell_2m_2} 
	\frac{a(m_1) \tilde\sigma_{-v}(n)}{n^{w+\frac{k-1}{2}}}
	.
\end{equation}

For $\Re(s), \Re(w)$ and $\Re(v)>1$, this gives us
\begin{multline}\label{e:Mswv}
	(\ell_1\ell_2)^{-\frac{k-1}{2}} \cM(s, w, v)
	=
	\sum_{q, h\geq 1, (q, N_0)=1, \atop \ell_1m_1 = \ell_2m_2+hq}
	\frac{a(m_1)\overline{b(m_2)}}{(m_2\ell_2)^{s+k-1} (h q)^{w+\frac{k-1}{2}} q^v}
	\\
	=
	\sum_{n, m_2\geq 1, \atop \ell_1m_1= \ell_2m_2+n}
	\frac{a(m_1)\overline{b(m_2)} \sum_{q\mid n, (q, N_0)=1}q^{-v}}
	{(m_2\ell_2)^{s+k-1} n^{w+\frac{k-1}{2}}}
	=
	\sum_{n\geq 1} \frac{D(s; n) \tilde\sigma_{-v}(n)}{n^{w+\frac{k-1}{2}}}
	\\
	=
	\sum_{n, m_2\geq 1, \atop \ell_1m_1= \ell_2m_2+n}
	\frac{a(m_1)\overline{b(m_2)} \tilde\sigma_{-v}(n)}
	{(m_2\ell_2)^{s+k-1} n^{w+\frac{k-1}{2}}}
	=
	\sum_{m_2\geq 1} \frac{D'(w, v; m_2\ell_2) \overline{b(m_2)}}{(m_2\ell_2)^{s+k-1}}
	.
\end{multline}

With this notation, our objective is to prove the following:
\begin{thm}\label{t:triple}
The multiple Dirichlet series $\cM(s, w, v)$ 
converges absolutely when $\Re (s)$, $\Re(w) > 1$, $\Re (v) > 0$ and has a meromorphic continuation to the region $\left\{ (s,w,v) | \Re \, v> 0\right\}$.  
	It is analytic in this region, except (possibly) for poles of the same order of the zeros of the zeta functions appearing in the denominators of either $\Phi^{(1)}$ or $\Phi^{(2)}$, 
	and the following simple polar planes:
\begin{itemize}
\item $s-3/2+s'=-r$, for $r\ge 0$, 
\item $s-3/2+s'+v=-r$ for $r\ge 0$, 
\item $2s=1-r$ for $r\geq 0$
\item $s = 1/2 + it_j -r$ for $r \ge 0$,  
\item $w+v/2 = 1/2 + it_j -r$ for $r \ge 0$,
\item $w+v/2 = -r$, for $r \ge 0$,  
\item
$2s'+v=2$
.
\end{itemize}
Here $\Phi^{(1)}(s, w,v)$ and $\Phi^{(2)}(s, w, v)$ are given in Proposition \ref{prop:M1} and Proposition \ref{prop:M2}, respectively. 

For $\Re(s)< 1/2-k/2$, $\Re(w)>1$ and $\Re(v)>0$, we have the spectral expansion for $\cM(s, w, v)$ given in Proposition \ref{prop:M1}.
And, for $\Re(v)>0$, $\Re(w+v/2) < 1/2-k/2$ and $2\Re(s)+\Re(w)+\Re(v)/4 > -3k/4 + 5/2$, 
we have the spectral expansion for $\cM(s, w, v)$ given in Proposition \ref{prop:M2}.

\end{thm}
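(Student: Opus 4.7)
The plan is to exploit the two representations of $\mathcal M(s,w,v)$ recorded in \eqref{e:Mswv}. Writing
\[
(\ell_1\ell_2)^{-(k-1)/2}\mathcal M(s,w,v)
=\sum_{n\geq 1}\frac{D(s;n)\,\tilde\sigma_{-v}(n)}{n^{w+(k-1)/2}}
=\sum_{m_2\geq 1}\frac{D'(w,v;m_2\ell_2)\,\overline{b(m_2)}}{(m_2\ell_2)^{s+k-1}},
\]
gives us two natural avenues of spectral expansion, one in each of the two variables. Each is valid in a different region of $\mathbb C^3$, and the two together, combined with the functional equation of the Eisenstein series and the meromorphic continuation of the spectral pieces already established for $D(s;h)$ in Section~\ref{s:review}, yield the claimed continuation to $\Re(v)>0$.

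\textbf{First expansion (Proposition \ref{prop:M1}).} In the region $\Re(s)<1/2-k/2$, $\Re(w)>1$, $\Re(v)>0$, I would substitute the spectral expansion $D(s;n)=D_{\rm cusp}(s;n)+D_{\rm cont}(s;n)$ (see \eqref{cuspdelta}, \eqref{contdelta}) and interchange the summation over $n$ with the spectral sum/integral. The resulting $n$-sum is
\[
\sum_{n\geq 1}\frac{\overline{\rho_j(-n)}\,\tilde\sigma_{-v}(n)}{n^{w-1/2+(k-1)/2}}
\qquad\text{and}\qquad
\sum_{n\geq 1}\frac{\tau_\cusp(\tfrac12-\tau,-n)\,\tilde\sigma_{-v}(n)}{n^{w-1/2+(k-1)/2}},
\]
which are Rankin--Selberg type sums. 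Unfolding the convolution produces $L(w+(k-1)/2,\overline{u_j})\,L(w+(k-1)/2+v,\overline{u_j})$ times finite Euler factors at primes dividing $N_0$, and similarly on the Eisenstein side the sum collapses to $\zeta_{\cusp,1}$-type objects of the shape displayed in \eqref{e:zeta_aQ}. Justifying the interchange uses the bound \eqref{Z2} together with the rapid decay of $M(s,\tau/i,\delta)$ from Proposition~\ref{prop:Mbounds}, after which absolute convergence of the ensuing spectral sums follows from the average bounds \eqref{crj2} and \eqref{targetend}.

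\textbf{Second expansion (Proposition \ref{prop:M2}).} For the alternative expansion, I would view $D'(w,v;m_2\ell_2)$ as a shifted convolution of the Fourier coefficients of $f$ against the divisor-type coefficients $\tilde\sigma_{-v}$. Since $\tilde\sigma_{-v}(n)$ (up to factors at primes dividing $N_0$) is the $n$-th Fourier coefficient of an Eisenstein series with spectral parameter related to $v$, this puts $D'$ into precisely the same framework as $D(s;h)$ but with the cusp form $g$ replaced by an Eisenstein series. Applying the spectral expansion in the variable $s$, and then summing over $m_2$ with weight $\overline{b(m_2)}(m_2\ell_2)^{-s-k+1}$, gives a decomposition in the region $\Re(v)>0$, $\Re(w+v/2)<1/2-k/2$, with the constraint $2\Re(s)+\Re(w)+\Re(v)/4>-3k/4+5/2$ arising from convergence of the resulting Petersson inner products against the Eisenstein series attached to $v$. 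The polar planes $2s'+v=2$, $w+v/2=\tfrac12+it_j-r$, $w+v/2=-r$ and so on are then read off directly from the gamma-factor poles and the poles of the zeta factors attached to the divisor sum.

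\textbf{Matching and continuation.} In the intersection of the two regions (which is non-empty once one uses a common strip in the variables), both expressions represent the same analytic function, and together they cover all of $\{\Re(v)>0\}$. The poles in the statement are inherited: those of the form $s=\tfrac12+it_j-r$, $s-\tfrac32+s'=-r$, $2s=1-r$ from the first expansion (matching \eqref{Zpoles} and the gamma factors in \eqref{cuspdelta}); those of the form $w+v/2=\tfrac12+it_j-r$, $w+v/2=-r$, $2s'+v=2$ from the second expansion; and $s-\tfrac32+s'+v=-r$ from the extra divisor factor. Possible poles coming from zeros of $\zeta$ or of the $\Phi^{(i)}$ denominators are noted but not ruled out. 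The main obstacle I anticipate is the uniform justification of the interchange of the sum over $n$ (or $m_2$) with the spectral integral on each line $\Re(\tau)=0$: one must control the sum against the polynomial growth of $\tau_\cusp(\tfrac12-\tau,-n)$ in $\tau$ while maintaining absolute convergence in $n$, which requires combining the sharp bounds in Proposition~\ref{prop:Mbounds} with convexity/sub-convexity input for the relevant $L$-series, much in the spirit of the bound \eqref{Z2}.
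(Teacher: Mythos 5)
Your overall strategy is the paper's: expand $\cM$ once via $D(s;n)$ (giving Proposition \ref{prop:M1}) and once via $D'(w,v;\ell_2 m_2)$ realized as an inner product against $U(z)=y^{k/2}\overline{f(\ell_1 z)}E_1^{(k)}(z,(1+\bar v)/2)$ (giving Propositions \ref{prop:Icontinuation} and \ref{prop:M2}). But there is a genuine gap at the final step. Your claim that the two regions ``together cover all of $\{\Re(v)>0\}$'' is false: for example a point with $\Re(w)=0$, $\Re(v)=1$ and $\Re(s)$ sufficiently negative lies in neither $R_1$ of \eqref{R1def} (which there requires $\Re(w)>-\Re(s)/2-k/4+5/4$) nor $R_2$ of \eqref{R2def} (which requires either $\Re(w+v/2)<1/2-k/2$ or $\Re(s)>3/2+\theta$), no matter how large $A$ and $B$ are. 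Gluing the two spectral expansions through the common region of absolute convergence only continues $\cM$ to $R_1\cup R_2$, not to the whole tube over $\Re(v)>0$. The missing idea is the convexity argument: one first subtracts the absolutely convergent residue series $P^{(2)}(s,w,v)$ and $P^{(4)}(s,w,v)$ of \eqref{P2def} and \eqref{P4def} (with Gaussian damping factors $e^{(1/2-r+it_j)^2}$, $e^{\kappa^2}$ to force convergence), multiplies by the pole-clearing factors $P^{(3)}$, $P^{(5)}$, obtains a function holomorphic on the tube domain $R_1\cup R_2$, and then invokes Bochner's tube theorem to extend holomorphy to the convex hull, which as $A,B\to\infty$ exhausts $\{\Re(v)>0\}$. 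Since the tube theorem applies only to holomorphic functions, the pole-removal device is not optional; without it, and without the convexity step, the meromorphic continuation asserted in the theorem is not established.

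A secondary gap concerns the second expansion itself. The spectral representation \eqref{e:D'exp} of $D'(w,v;\ell_2 m_2)$ is taken with respect to the Poincar\'e series in the variable $w+v/2$ (not $s$, as you write), and it comes with the subtracted finite part $D'_{\rm finite}$ of \eqref{e:D'finite}; summing that part over $m_2$ produces the extra series $\cM_3(s,w,v)$ of \eqref{e:M3}, which requires its own meromorphic continuation (Proposition \ref{prop:dsums}, via standard Poincar\'e series and a further spectral expansion) and which supplies part of the polar structure, including the cancellation of the two polar contributions on $2s'+v=2$ and poles at $w+v/2=-r$. Your sketch omits this term entirely, and also records the unfolded cuspidal Dirichlet series with argument $w+(k-1)/2$ where it should be $s'=s+w+k/2-1$ (the factor $n^{1/2-s}$ from $D_{\rm cusp}(s;n)$ must be included); the factorization into two $GL(2)$ $L$-values additionally requires passing to the associated newforms $\tilde u_j$ with correction Euler factors, as the paper does in Section \ref{s:svest}.
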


The remainder of this section is devoted to the proof of Theorem \ref{t:triple}.

\subsection{The analytic continuation of $D'(w,v; m_2 \ell_2) $}

Let $\Gamma:= \Gamma_0(N_0)$ and
$\Gamma_1 = \left\{\gamma\in \Gamma\;|\; \gamma 1 = 1\right\}$.
Then we have an Eisenstein series of $\Gamma$ extended at the cusp $1$: 
$$
	E_1(z, (1+v)/2)
	=
	\sum_{\gamma\in \Gamma_1 \bsl \Gamma} \Im(\sigma_1^{-1} \gamma z)^{\frac{v+1}{2}}
	.
$$
Here $\sigma_1$ is the matrix defined in \eqref{e:sigma1/a}.
The Fourier expansion of $E_1(z, (1+v)/2)$ is 
\begin{multline*}
	E_1(z, (1+v)/2)
	=
	\frac{\sqrt\pi\Gamma\left(\frac{v}{2}\right)}{\Gamma\left(\frac{1+v}{2}\right)}
	\rho_1\left(\frac{1+v}{2}, 0\right)
	\\
	+
	\frac{ 2\pi^{\frac{1+v}{2}} }{ \Gamma\left(\frac{1+v}{2}\right) }
	\sum_{n\neq 0} \rho_1\left(\frac{1+v}{2}, n\right) |n|^{\frac{v}{2}} 
	\sqrt y K_{\frac{v}{2}} (2\pi |n|y) e^{2\pi inx}
\end{multline*}
where
$$
	\rho_1\left(\frac{1+v}{2}, 0\right)
	=
	\frac{ N_0^{-\frac{1+v}{2}} \prod_{p\mid N_0} \left(1-p^{-1-v}\right)^{-1} \zeta(v)} 
	{ \zeta(1+v) }
$$
and
$$
	\rho_1\left(\frac{1+v}{2}, n\right)
	=
	\frac{ N_0^{-\frac{1+v}{2}}\prod_{p\mid N_0} (1-p^{-1-v})^{-1} }{ \zeta(1+v) }  
	\tilde\sigma_{-v}(n) 
$$
for $n\neq 0$.
For $\ell\in \Z$, the Maass raising operator $R_\ell$ is defined to be the differential operator:
$$
	R_\ell := iy \frac{\partial}{\partial x} + y\frac{\partial}{\partial y} + \frac{\ell}{2}
	. 
$$
Define
\begin{multline*}
	E_1^{(k)}(z, (1+v)/2)
	:=
	\frac{(-1)^{\frac{k}{2}} \Gamma\left(\frac{1+v}{2}\right)} {\Gamma\left(\frac{1+v}{2}+\frac{k}{2}\right) N_0^{-\frac{1+v}{2}} \prod_{p\mid N_0} \left(1-p^{-1-v}\right)^{-1}}
	\\
	\times
	\left(R_{k-2} \circ \cdots \circ R_0 E_1\right) (z, (1+v)/2) 
	. 
\end{multline*}
By applying a Maass raising operator, we get an Eisenstein series of even weight $k$ on $\Gamma$ at cusp $1$ as follows:
$$
	E_1^{(k)}(z, (1+v)/2)
	=
	\sum_{m\in \Z}
	c^{(k)}(m, y, (1+v)/2) e^{2\pi imx}
	,
$$
with 
\be\label{c0def}
	c^{(k)}(0, y, (1+v)/2)
	\\
	=
	\frac{ (-1)^{\frac{k}{2}}   
	2^{1-v} \pi
	\Gamma(v) \zeta(v) \Gamma\left(\frac{1-v}{2}+\frac{k}{2}\right)}
	{\Gamma\left(\frac{1+v}{2}\right) \zeta(1+v) \Gamma\left(\frac{1-v}{2}\right) 
	\Gamma\left(\frac{1+v}{2}+\frac{k}{2}\right)}
	y^{\frac{1-v}{2}}
\ee
and for $m\neq 0$, 
\be\label{cdef}
	c^{(k)}(m, y, (1+v)/2)
	=
	\frac{
	\pi^{\frac{1+v}{2}} \tilde\sigma_{-v}(n) |n|^{\frac{-1+v}{2}}}
	{\zeta(1+v) \Gamma\left(\frac{1+v}{2}+({\rm sgn})\frac{k}{2}\right)} 
	W_{{\rm sgn}(m)\frac{k}{2}, \frac{v}{2}} (4\pi|n|y) e^{2\pi inx}
	.
\ee

Our objective is to prove:
\begin{prop}\label{prop:Icontinuation}
Fix $\epsilon >0$, $v$ with  $\Re \, v>0$ and $B \gg 1$.  
	For  $\Re \, w  >1+\epsilon$, the Dirichlet series $D'(w,v; m_2 \ell_2)$ defined in \eqref{e:D'} converges absolutely.  
It has an analytic continuation to all $w$ with $\Re \, w >1/2-B$,  and $v$ with  $\Re \, v \ge 0$, 
except for simple poles when $w+v/2 =1/2 +it_j - r$, for $r \ge 0$,  
and possible simple poles when $w+v/2= -r$.  
It also has poles as the same order as the zeros of $\zeta(2w+v+2b)$ for $b\geq 0$. 
The residues at the points $w+v/2 =1/2 +it_j - r$ are given by
\be\label{e:R2def}
	\Res_{w+v/2 =  1/2 +it_j - r} D'(w,v; m_2\ell_2)
	= 
	d_{r,j} (\ell_2m_2)^{r-it_j}\overline{\lambda_j(\ell_2m_2)},
\ee
where
\begin{multline}\label{e:drjdef}
	d_{r,j} 
	= \frac{2^{-1+k/2-v/2}\pi^{-1+k/2-v/2}\G((k+1+v)/2)}
	{\G(k/2+it_j -r +v/2)\G(k/2+it_j -r -v/2 )}
	\\
	\times  
	\frac{\G(1/2 + it_j  -r)\G(1/2-it_j+r)\G( 2it_j-r)(-1)^r 
	\overline{\rho_j(-1)\<U,u_j\>}}
	{r ! \G(1/2 + i t_j) \G(1/2 - i t_j)}
\end{multline}
and $U(z) = y^{k/2}\overline{f(\ell_1z)}E_1^{(k)}(z, (1+\bar v)/2)$.
For $T \gg1$ the $d_{r,j}$  satisfy the average upper bounds
$$
 	\sum_{|t_j|\sim T}|d_{r,j}|^2 \ll T^{2r + 1} \ell_1^{ -k}.
$$ 

When $\Re \left( w+v/2\right) < 1/2 -k/2$ and $w+v/2$ is at least $\epsilon$ away from the poles,
$D'(w,v,m_2)$ is given by the following absolutely convergent sum and integral:
\begin{multline}\label{e:D'exp}
	D'(w,v; \ell_2 m_2)
	= 
	\pi^{(k-v-1)/2}2^k (\ell_2 m_2)^{1/2 -w -v/2} 
	\\ 
	\times
	\frac{\G(1-w-v/2)\G(w+v/2)\G((k+1+v)/2)}{\G(w +(k-1)/2)\G(w +(k-1)/2 +v)}	
	\left(S'_{\rm{cusp}}(w, v; \ell_2m_2)+
	S'_{\rm{cont}}(w, v;\ell_2 m_2\right) \\
	-
	D'_{\rm finite}(w, v; \ell_2m_2)
\end{multline}
where
\begin{multline}\label{e:D'finite}
	D'_{\rm finite}(w, v; \ell_2 m_2)
	\\
	:=
	\frac{\G(1-w-v/2)\G(w+v/2)}{\G((1-v+k)/2)\G((1+v-k)/2}
	\sum_{1 \le m' <\ell_2 m_2}
	\frac{a((\ell_2 m_2 - m')/\ell_1 )\tilde\sigma_{-v}(m')}{(m')^{w+ (k-1)/2}}
	.
\end{multline}
Here
\begin{multline}\label{e:S'cusp}
	S'_{\rm{cusp}}(w, v; \ell_2m_2) 
	\\
	= 
	\sum_j 
	\frac{\overline{\rho_j(-\ell_2 m_2)}
	\G(w+v/2-1/2+it_j) \G(w+v/2-1/2-it_j)
	\overline{\left<U,u_j \right>}}{\G(1/2+it_j)\G(1/2-it_j)}
\end{multline}
and
\begin{equation}\label{e:S'cont}
	S'_{\rm{cont}}(w, v; \ell_2m_2)
	=
	S'_{\rm{int}}(w, v; \ell_2 m_2)
	+
	\Omega'(w, v; \ell_2 m_2)
\end{equation}
where
\begin{multline}\label{e:S'int}
	S'_{\rm int}(w, v; \ell_2m_2)
	=
	\sum_\cusp 
	\frac{1}{2\pi i}
	\int_{\overline{-C_\sigma}} 
	\frac{\vol \pi^{\frac{1}{2}-z} (\ell_2m_2)^{-z} \rho_\cusp\left(\frac{1}{2}-z, -\ell_2m_2\right)}
	{\Gamma(1/2 - z)} 
	\\
	\times 
	\frac{\G(w + v/2-1/2+it)\G(w + v/2-1/2-it)}
	{\Gamma\left(\frac{1}{2}+it\right)\Gamma\left(\frac{1}{2}-it\right) \zeta^*(1+2z)}
	\overline{\<U,E^*_\cusp(*,1/2 + z)\>}dz
\end{multline}
and
\begin{multline}\label{e:Omega'}
	\Omega'(w, v; \ell_2 m_2)
	\\
	=
	\sum_\cusp 
	\sum_{b=0}^{\lfloor\frac{1}{2}-\sigma\rfloor}
	\left( \frac{\vol \pi^{1-(w+v/2)-b}(\ell_2m_2)^{\frac{1}{2}-(w+v/2)-b} 
	\rho_\cusp\left(1-(w+v/2)-b, -\ell_2m_2\right)}
	{\Gamma(1-(w+v/2)-b) \zeta^*(2w+v+2b)} 
	\right.
	\\
	\left.
	\times
	\overline{\left<U, E^*_\cusp\left(*, w+v/2+b \right)\right>} 
	\right.
	\\
	+ 
	\left. 
	(1-\pmb{\delta}_{\sigma, b}) 
	\frac{\vol \pi^{w+v/2+b} (\ell_2 m_2)^{-\frac{1}{2}+w+v/2+b} 
	\rho_\cusp \left(w+v/2+b, -\ell_2 m_2\right)}
	{\Gamma\left(w+v/2+b\right) \zeta^*(2-2w-v-2b)} 
	\right.
	\\
	\times
	\overline{\left<U, E^*_\cusp\left(*, 1-(w+v/2)-b\right)\right>} 
	\bigg)
	\\
	\times 
	\frac{(-1)^b \sqrt\pi 2^{\frac{1}{2}-w-\frac{v}{2}} \Gamma\left(1-w-\frac{v}{2}\right) 
	\Gamma\left(2w+v+b-1\right) }
	{b! \Gamma\left(w-\frac{v}{2}+b\right) \Gamma\left(1-w-\frac{v}{2}-b\right)}
	\,.
\end{multline}
Here $\sigma = \Re(w+v/2)$.	
In this region  $D'(w,v; \ell_2m_2)$ satisfies the upper bound 
\begin{multline}\label{e:I0upper}
	D'(w,v; \ell_2m_2) + D'_{\rm finite}(w, v; \ell_2 m_2)
	\ll   
	\ell_1^{(1-k)/2}
	(\ell_2 m_2)^{1/2 -w -v/2 + \theta}
	\\ 
	\times
	\frac{(1+|v|)^{k/2}}{(1+|w|)^{w + k/2 -1}(1+|w+v|)^{w+k/2 -1+v}}.
\end{multline}

For $w+v/2$ at least $\epsilon$ away from the polar points, 
	and in the vertical strip 
	$c <\Re \left(w +v/2\right) < 2$,  for any $c$ with $1/2 - B + \epsilon <c <1/2 -k/2$, 
it satisfies the upper bound
\begin{multline}\label{e:I0upper2}
	D'(w,v; \ell_2 m_2) + D'_{\rm finite}(w, v; \ell_2 m_2)
	\ll  
	\ell_1^{(1-k)/2} (\ell_2m_2)^{1/2 -c + \theta} 
	\\ 
	\times
	\frac{(1+|v|)^{k/2}}{(1+|w|)^{c -1}(1+|w+v|)^{c+k/2 -1+v/2}}.
\end{multline}

\end{prop}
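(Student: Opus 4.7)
The plan is to imitate the analysis of $D(s;h)$ reviewed in Section \ref{s:review}, but with the cusp form $g(\ell_2 z)$ replaced by the Eisenstein series $E_1^{(k)}(z,(1+\bar v)/2)$, whose non-constant Fourier coefficients are, up to an explicit Whittaker $\Gamma$-factor, precisely the arithmetic functions $\tilde\sigma_{-v}(n)$. I would first set $U(z) := y^{k/2}\overline{f(\ell_1 z)} E_1^{(k)}(z,(1+\bar v)/2)$, a weight-zero automorphic function on $\Gamma_0(N_0\ell_1)\backslash \bH$. Its $(-\ell_2 m_2)$-th Fourier coefficient is, by \eqref{cdef} together with the Fourier expansion of $\overline{f(\ell_1 z)}$, a linear combination of terms $a(m_1)\tilde\sigma_{-v}(n) W_{\pm k/2,v/2}(4\pi n y)$ summed over $\ell_1 m_1 - n = \ell_2 m_2$. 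After Mellin transforming in $y$ against $y^{w+(k-1)/2+v/2 - 1/2}$, the Whittaker Mellin transform produces, up to the explicit gamma ratio appearing in \eqref{e:D'exp}, the Dirichlet series $D'(w,v;\ell_2 m_2)$.

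Next I would spectrally expand this Mellin integral using a weight-zero Poincaré series at the cusp $\infty$ of modulus $\ell_2 m_2$, exactly as in the derivation of \eqref{cuspdelta}--\eqref{contdelta} for $D(s;h)$. Projection onto $\{u_j\}$ gives the cuspidal piece $S'_{\rm cusp}(w,v;\ell_2 m_2)$ of \eqref{e:S'cusp}, with $\overline{\rho_j(-\ell_2 m_2)}$ being the Fourier coefficient and $\overline{\langle U,u_j\rangle}$ the spectral weight. Projection onto the continuous spectrum, followed by shifting the $\tau$-contour past the simple poles at $\tau = \pm(1/2 - w - v/2 - b)$ for $0\le b\le \lfloor 1/2-\sigma\rfloor$, yields $S'_{\rm int}$ together with the finitely many residual terms assembled into $\Omega'$, in direct analogy with \eqref{Dcontextend}--\eqref{Omega}. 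The constant-term contribution $c^{(k)}(0,y,(1+v)/2)$ (a multiple of $y^{(1-v)/2}$, see \eqref{c0def}) breaks absolute convergence at $\infty$ and must be subtracted off; combined with the finite range $1\le m'<\ell_2 m_2$ where the unfolding identity is truncated, this contribution reassembles into the explicit finite Dirichlet polynomial $D'_{\rm finite}$ of \eqref{e:D'finite}, with the denominator $\Gamma((1-v+k)/2)\Gamma((1+v-k)/2)$ coming from the Whittaker $\Gamma$-factor at the constant term.

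Identifying poles and residues is then bookkeeping. The simple poles at $w+v/2 = 1/2 + it_j - r$ come from the $\Gamma(w+v/2 - 1/2 \pm it_j)$ factors in $S'_{\rm cusp}$, and inserting \eqref{cdef} into $\langle U,u_j\rangle$ produces, after cancellation of common gamma factors, the closed-form residue \eqref{e:R2def}--\eqref{e:drjdef}; the potential poles at $w+v/2 = -r$ come from $\Omega'$; and poles of the same order as the zeros of $\zeta(2w+v+2b)$ emerge from the $\zeta^*(2w+v+2b)$ in the denominator of $\Omega'$. The mean-square bound $\sum_{|t_j|\sim T}|d_{r,j}|^2 \ll T^{2r+1}\ell_1^{-k}$ reduces, via the Kuznetsov/Rankin--Selberg machinery, to the spectral large sieve applied to $\sum_{|t_j|\sim T}|\overline{\rho_j(-1)\langle U,u_j\rangle}|^2$, with the $\ell_1^{-k}$ arising from the normalization of $f(\ell_1 z)$. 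The upper bounds \eqref{e:I0upper} and \eqref{e:I0upper2} follow from \eqref{e:D'exp}: Stirling asymptotics on the explicit $\Gamma$-ratio give the decay in $|w|$ and $|w+v|$ together with the factor $(1+|v|)^{k/2}$ from $\Gamma((k+1+v)/2)$, while the exponent $(\ell_2 m_2)^\theta$ arises from the standard Hecke bound applied to $\overline{\rho_j(-\ell_2 m_2)}$ combined with Cauchy--Schwarz against the spectral large sieve.

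The main obstacle, in my view, is the careful tracking of the extra $v$-dependence. Because $E_1^{(k)}$ has weight $k$, its non-constant Fourier coefficients are Whittaker functions $W_{\pm k/2,v/2}(4\pi n y)$ rather than the $K$-Bessel functions appearing in the original $D(s;h)$ analysis, so each Mellin--Barnes manipulation acquires additional $\Gamma((1\pm k + v)/2)$ factors. One must verify that these match \emph{exactly} the $\Gamma((1-v+k)/2)\Gamma((1+v-k)/2)$ denominator in \eqref{e:D'finite} and the $\Gamma((k+1+v)/2)$ numerator in \eqref{e:D'exp}, and that all contour shifts remain valid uniformly as $\Re(v)\to 0^+$ so that the result can subsequently be used when $v$ is integrated along the critical line to build $\cM(s,w,v)$. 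Handling the two distinct sign choices $\pm k/2$ in $W_{\pm k/2,v/2}$ (which arise from the positive vs.\ negative Fourier modes) so that they combine symmetrically in $S'_{\rm cusp}$ is the delicate step.
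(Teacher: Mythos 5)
Your plan follows essentially the same route as the paper: unfold a (regularized, truncated) Poincar\'e series against $U(z)=y^{k/2}\overline{f(\ell_1 z)}E_1^{(k)}(z,(1+\bar v)/2)$, subtract the finitely many non-positive Fourier modes, spectrally expand the inner product, shift contours in the continuous part to produce $\Omega'$, read the residues off the $\Gamma(w+v/2-\tfrac12\pm it_j)$ factors, and bound everything with the spectral large sieve, exactly as in the paper's adaptation of the $D(s;h)$ analysis. One small correction of bookkeeping: in the paper the constant term $c^{(k)}(0,y,(1+v)/2)$ contributes nothing in the limit (its $\delta$-regularized contribution tends to $0$ when $\Re(w+v/2)<1/2-k/2$), and the gamma factors in $D'_{\rm finite}$ arise from the Mellin transform of the negative-mode Whittaker functions $W_{-k/2,-v/2}$ (Hulse's $M_2$ function) together with the Eisenstein coefficient normalization at negative modes, not from the constant term as you suggest.
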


\begin{proof}


For reference, we record here the behavior of $W_{\lambda, \mu}( y)$ at large and small values of $y >0$.    
When $\Re(\mu) \neq 0$, 
$$
	W_{\lambda, \mu}( y) \ll y^{1/2 - | \Re \, \mu|}, 
$$
as $y\to 0$. 
When $\Re \mu =0$, 
$$
	W_{\lambda, \mu}( y) \ll y^{1/2} | \log y|, 
$$
as $y\to 0$.
For $y \gg 1$, 
$$
	W_{\lambda, \mu}( y) \ll e^{-y/2}y^\lambda.
$$

Recall that, for any fixed $Y\gg 1$ and $\delta>0$, we defined the modified Poincare series 
	$P_{h, Y}(z, s; \delta)$ in \cite{HH}:
$$
	P_{h, Y}(z, s; \delta)
	=
	\sum_{\gamma\in \Gamma_\infty \bsl \Gamma} \psi_Y(\Im(\gamma z)) (\Im(\gamma z))^s 
	e^{-2\pi i h \Re(\gamma z) + (2\pi h \Im(\gamma z))(1-\delta)}
$$
where $\psi_Y$ is the characteristic function of the interval $[Y^{-1}, Y]$. 
As in \cite{HH}, we get
\begin{multline*}
	\frac{1}{\vol} \iint_{\Gamma\bsl \bH} 
	P_{\ell_2 m_2, Y}(z, w+v/2; \delta) y^{k/2} f(\ell_1 z) \overline{E_1^{(k)}(z, (1+\bar v)/2)}
	\; \frac{dx \; dy}{y^2}
	\\
	=
	\frac{1}{\vol} \int_0^\infty
	\sum_{m\geq 1, \atop \ell_1m_1 = \ell_2m_2+m} a(m_1) \overline{c^{(k)}(m, y, (1+\bar v)/2) }
	e^{2\pi(\ell_2m_2(1-\delta) - \ell_1 m_1) y}
	\psi_Y (y) y^{w+v/2+k/2-1} \; \frac{dy}{y}
	\\
	+
	\frac{1}{\vol} \int_0^\infty
	\sum_{-\ell_2m_2 < m\leq 0} a(m_1) \overline{c^{(k)}(m, y, (1+\bar v)/2) }
	\\
	\times
	e^{2\pi(\ell_2m_2(1-\delta) - \ell_1 m_1) y}
	\psi_Y (y) y^{w+v/2+k/2-1} \; \frac{dy}{y}
	.
\end{multline*}

In analogy to $\cI_{\ell_1,\ell_2,Y,\gd}(s;{\sh})$, 
	we now define, for $\Re(v) > 0$, and for $w$ with $\Re \left(w +v/2\right) >1$, 
\begin{multline}\label{Itwodef}
\cI (w,v; m_2,\ell_1,\ell_2, \delta,Y)
	\\
	:= 
	\frac{1}{\vol} \iint_{\G\bk\boldH} 
	P_{\ell_2 m_2,Y} (z,w+v/2;\delta)y^{k/2}f(\ell_1 z) 
	\overline{E_1^{(k)}(z,(1+\bar v)/2)}{dx\, dy\over y^2}
	\\
	 -  
	\frac{1}{\vol} \int_0^\infty
	\sum_{-\ell_2m_2 < m\leq 0} a(m_1) \overline{c^{(k)}(m, y, (1+\bar v)/2) }
	\\
	\times
	e^{2\pi(\ell_2m_2(1-\delta) - \ell_1 m_1) y}
	\psi_Y (y) y^{w+v/2+k/2-1} \; \frac{dy}{y}
	.
\end{multline}
Although originally defined for $\Re \left(w +v/2\right) >1$, the meromorphic continuation of the inner product is obtainable.  The subtracted term is defined and convergent for $\Re \left(w +v/2 \right) +k/2>1$, and consequently has no poles in this region.  

Write
\be\label{ItoD}
	\cI (w,v; m_2,\ell_1,\ell_2,\delta,Y) 
	=
	\cI_0 (w,v; m_2,\ell_1,\ell_2,\delta,Y)
	-
	\sum_{0\le m'  <\ell_2m_2} d_{\delta,Y}(w,v; m',m_2),
\ee
where
\be\label{cI0def}
	\cI_0 (w,v; m_2,\ell_1,\ell_2, \delta,Y)
	=
	\\ 
	\<  P_{\ell_2m_2,Y}(*,w+v/2;\delta),y^{k/2}\overline{f(\ell_1 *)}
	E_1^{(k)}(*,(1+v)/2) \>
\ee
and
\begin{multline}\label{Dm2def}
	d_{\delta,Y}(w,v; m', \ell_2 m_2)
	=  
	\frac{1}{\vol} \int_0^\infty
	a(m_1) \overline{c^{(k)}(-m', y, (1+\bar v)/2) }
	\\
	\times
	e^{2\pi(m'-\ell_2m_2\delta)) y}
	\psi_Y (y) y^{w+v/2+k/2-1} \; \frac{dy}{y}
	.
\end{multline}

First we will interpret the integral $\cI_0$ as a Dirichlet series, 
	and take the difference in \eqref{ItoD}, removing the coefficients in $\cI_0$ with 
	negative indices. 
	We will then take the limit as first $Y\rightarrow \infty$, 
	and then $\delta \rightarrow 0$.

Taking the difference yields, 	
\begin{multline}\label{Iopen}
	\cI (w,v,m_2,\ell_1,\ell_2, \delta,Y) 
	\\
	=
	\frac{1}{\vol}  \int_{Y^{-1}}^Y y^{w+k/2+v/2 -1}  
	\sum_{ m  \ge 1}
	a((\ell_2m_2+ m)/\ell_1) e^{-2 \pi y (m+ \delta \ell_2 m_2)}
	\overline{c^{(k)}(m,y,(1+\bar v)/2)} \; \frac{dy}{y}.
\end{multline}

For $m \ge 1$, \eqref{cdef} becomes
$$
	\overline{c^{(k)}(m,y,(1+\bar v)/2)}
	= 
 	\frac{\sigma_{-v}(|m|)\pi^{(v+1)/2} |m|^{(v-1)/2}}{\zeta(v+1)\G( k/2 +(1+v)/2)}	
	W_{ k/2,-v/2}(4 \pi |m| y).
$$
Also, by \cite{GR}, 7.621.3, 
\begin{multline*}
	\int_0^\infty y^{w+k/2+v/2 -1} 
	e^{-y(1+\delta)} W_{ k/2,-v/2}(2  y)\; \frac{dy}{y}
	\\
	=
	\frac{\G(w+k/2 - 1/2)\G(w+k/2 - 1/2 +v)2^{-w-k/2 +1 - v/2}}
	{\G(w+v/2)}\left(1 + \mathcal{O}(\delta)\right).
\end{multline*}

Substituting these two facts into \eqref{Iopen} 
	we first take the limit as $Y \rightarrow \infty$, obtaining
\begin{multline*}
	\lim_{Y \rightarrow \infty}
	\cI (w,v; m_2,\ell_1,\ell_2,\delta,Y)
	\\ 
	= 
	\frac{\pi^{-w-k/2+3/2}2^{-2w-v-k+2} \G(w +(k-1)/2)\G(w+(k-1)/2 +v)}
	{\vol \zeta(1+v)\G((k+1 +v)/2)\G(w + v/2)}
	\\
	\times 
	\sum_{m \ge 1} \frac{a((\ell_2m_2+ m)/\ell_1) \sigma_{-v}(m)m^{(v-1)/2}}
	{(m+ \delta \ell_2 m_2)^{w+v/2 +k/2 -1}}
	\left(1 + \mathcal{O}(\delta)\right).
\end{multline*}
Then, taking the limit as $\delta \rightarrow 0$, we define
\be\label{Iseries}
	\cI (w,v; m_2,\ell_1,\ell_2) 
	= 
	\lim_{\delta \rightarrow 0} \lim_{Y \rightarrow \infty}
	\cI (w,v; m_2,\ell_1,\ell_2,\delta,Y)
	\\ 
	= G(w,v) D'(w,v; \ell_2 m_2), 
\ee
where
\be\label{Gdef}
	G(w,v) 
	\\
	=
	\frac{\pi^{-w-k/2+3/2}2^{-2w-v-k+2}\G(w +(k-1)/2)\G(w+(k-1)/2 +v)}
	{\vol \zeta(1+v)\G((k+1 +v)/2)\G(w + v/2)}
	.
\ee

This representation is valid for $\Re \left(w+v/2\right) >1$.
	Our objective is to now use the representation of 
	$\cI (w,v; m_2,\ell_1,\ell_2,\delta,Y)$ given in \eqref{ItoD} 
	to find a spectral decomposition of $\cI_0(w,v; m_2,\ell_1,\ell_2,\delta,Y)$.
	We will then take a limit again as $Y \rightarrow \infty$ 
	and $\delta \rightarrow 0$ in the region $\Re \left(w+ v/2\right) <1/2-k/2$, 
	connecting this region, in the process, to the region $\Re \left(w+v/2\right) >1$.  
	We will thus obtain a meromorphic continuation of  
	$\cI (w,v; m_2,\ell_1,\ell_2)$, and hence $D'(w,v; \ell_2 m_2)$, to all $w,v$ 
	with $\Re \left(w+ v/2\right) > 1/2 -B$, for a sufficiently large $B \gg 1$, 
	as was done in the case of $D(s;h)$.

For $m' \ge 1$, 
\begin{multline}\label{P1}
	d_{\delta,Y}(w,v; m',\ell_2 m_2) 
	\\
	=
	\frac{2^{-w-v/2-k/2+1} \pi^{3/2-w-k/2}}
	{\vol \zeta(1+v)\G( (1+v)/2 - k/2)}
 	\frac{ a((\ell_2 m_2 - m')/\ell_1) \sigma_{-v}(m')}
	{(m')^{w +(k-1)/2}}\\
	\times  
	\int_{Y^{-1}2 \pi m'}^{Y2 \pi m'} 
	y^{w+k/2+v/2 -1} W_{ -k/2,-v/2}(2 y) e^{ y (1- \delta \ell_2 m_2/m')}
	\; \frac{dy}{y}.
\end{multline}
The integral here is very similar to that studied in Section 3, \cite{HH}.  
	It has been analyzed and its analytic continuation obtained, 
	by Tom Hulse in \cite{Hulse}.  
	Using his notation, upon taking the limit as $Y \rightarrow \infty$, we have
\begin{multline*}
	\lim_{Y \rightarrow \infty} 
	\int_{Y^{-1}2 \pi m'}^{Y2 \pi m'} 
	y^{w+k/2+v/2 -1} W_{ -k/2,-v/2}(2 y) e^{ y (1- \delta \ell_2 m_2/m')}
	\; \frac{dy}{y}
	\\
	=M_2(w+k/2 + v/2, iv/2, \delta  \ell_2 m_2 /m'),
\end{multline*}
In \cite{Hulse}, it is shown in Proposition 3.1 that
	for each $\delta >0$ the function $M_2(w+k/2 + v/2, iv/2, \delta  \ell_2m_2 /m')$ 
	has analytic continuation to $\C$, 
	with at most polynomial growth in vertical strips in the variables $w,v,\delta^{-1}$, 
	except for simple poles at $w+k/2 + v/2 = 1/2 \pm v/2  -r$, for $r\ge 0$.  
	Note that as in Proposition 3.1 in \cite{HH}, there is no pole at $w+ v/2=1$.
	It is also shown that for $ \Re \left( w+k/2 + v/2\right) \le 1/2$,
	the limit as $\delta \rightarrow 0$  
	of $M_2(w+k/2 + v/2, iv/2, \delta  \ell_2m_2/m')$, 
	exists and that we have, for  $ \Re \left( w+ v/2\right)+k/2 \le 1/2$,
\begin{multline*}
	M_2(w+k/2 + v/2, iv/2)
	= 
	\lim_{\delta \rightarrow 0} 
	M_2(w+k/2 + v/2, iv/2, \delta \ell_2m_2 /|m|)
	\\
	=
	\frac{2^{1-w-k/2 - v/2}\G(w+k/2 -1/2  ) \G(w+k/2 + v-1/2  ) \G(1-w - v/2)}
	{\G((1+k+v)/2)\G((1+k-v)/2)}.
\end{multline*}
With this notation, after taking the limit as $Y \rightarrow \infty$ and  
$\delta\rightarrow 0$ we set
\begin{multline}\label{P1exp}
	d(w,v; m',\ell_2 m_2)  
	= 
	\lim_{\delta \rightarrow 0 \atop Y \rightarrow \infty}d_{\delta,Y}(w,v; m', \ell_2 m_2)  
	\\ 
	=
	\frac{2^{-w-v/2-k/2+1}\pi^{3/2-w-k/2}}
	{\vol \zeta(1+v)\G( (1+v)/2 - k/2)} 
	M_2(w+k/2 + v/2, iv/2)
	\frac{ a((\ell_2m_2 -m')/\ell_1) \tilde\sigma_{-v}(m')} 
	{(m')^{w +(k-1)/2}}
	\\
	= 
	R(w,v) F(w,v; m',\ell_2m_2),
\end{multline}
where
\begin{multline}\label{Rexp}
	R(w,v) 
	= 
	\frac{4^{-w-v/2-k/2+1}\pi^{3/2-w-k/2}\G(w + (k-1)/2)}
	{\vol \zeta(1+v)\G( (1+v-k)/2 ) \G( (1+v+k)/2) }
	\\ 
	\times 
	\frac{\G(w + (k-1)/2 +v)\G(1-w-v/2)}{\G( (1-v+k)/2 )}
\end{multline}
and
\be\label{Fexp}
	F(w,v; m',m_2)
	=
	\frac{ a((\ell_2m_2 -m')/\ell_1)\tilde\sigma_{-v}(m')}{(m')^{w +(k-1)/2}}.
\ee

Turning to the case $m'=0$, 
	we have
\be\label{ceekay}
	\overline{c^{(k)}(0,y,(1+\bar v)/2) }
	= 
	c(v) y^{(1-v)/2}, 
\ee
where $c(v)$ is given as in \eqref{c0def}. 
Substituting the above into  \eqref{Dm2def} gives
$$
	d_{\delta,Y}(w,v,0,m_2)
	=
	\frac{1}{\vol}  
	\int_{Y^{-1}}^Y y^{w+k/2+v/2 -1}  
	a((\ell_2m_2)/\ell_1) e^{-2 \pi y ( \delta \ell_2m_2)}
	\overline{c^{(k)}(0, y,(1+v)/2)} \; \frac{dy}{y},
$$
and then expanding $c^{(k)}(0,y,(1+v)/2)$ and letting $y \rightarrow y/(2\pi \delta \ell_2 m_2)$ yields
$$
	d_{\delta,Y}(w,v; 0,\ell_2 m_2)
	= 
	\frac{a((\ell_2 m_2)/  \ell_1)}{\vol} 
	\\ 
	\times 
	\frac{c(v)}{(2\pi \delta \ell_2 m_2)^{w+k/2-1/2}}
	\int_{2\pi \delta \ell_2 m_2 Y^{-1}}^{2\pi \delta \ell_2 m_2  Y} 
	y^{w+k/2-1/2} e^{-y} \; \frac{dy}{y} 
	.
$$
For $\Re \left(w +(v\pm v)/2\right)+k/2 > 1/2$, 
	this is absolutely convergent as $Y \rightarrow \infty$, and converges to
\be\label{twogammas}
	d_{\delta}(w,v;0,\ell_2 m_2)
	:= 
	\lim_{Y \rightarrow \infty} d_{\delta,Y} (w,v,0,m_2) 
	=  
	\\ 
	\frac{a((\ell_2 m_2)/  \ell_1)} {\vol}
	\frac{c(v)\Gamma(w+k/2-1/2)}{(2\pi \delta \ell_2 m_2)^{w+k/2-1/2}} 
\ee
which has a continuation to all $w, v \in \C$ with poles 
	at $w+k/2+(v \pm v)/2-1/2=-r$ for $r \geq 0$. 
	Furthermore, we note that for $\Re \left(w +(v\pm v)/2\right) + k/2 < 1/2$, 
	the limit of $d_{\delta}(w,v; 0,m_2)$ as $\delta \rightarrow 0 $ is $0$.

Recalling \eqref{ItoD},
	the next step in the meromorphic continuation of 
	$\cI (w,v; m_2,\ell_1,\ell_2)$, and hence also of
	$D(w,v; m_2)$ will be achieved once the analytic continuation of
\be\label{I0limitdef}
	\cI_0 (w,v; m_2,\ell_1,\ell_2) 
	= 
	\lim_{\delta \rightarrow 0 \atop Y \rightarrow \infty}
	\cI_0 (w,v; m_2,\ell_1,\ell_2,\delta,Y) 
\ee
is obtained.
	To accomplish this, the same analysis 
	as in Section 4 in \cite{HH}, applies, as we are using 
	the same Poincare series (with $1$ in place of $\ell_2$) 
	and only changing the automorphic function $V$ to
$$
	U(z; v) 
	= U_{\ell_1}(z; v) 
	:= 
	y^{k/2}\overline{f(\ell_1 z)}E_1^{(k)}(z,(1+\bar v)/2).
$$
The function $U$ still vanishes at the cusps and is automorphic with respect to 
	$\G_0(N_0\ell_1)$.   
	The resulting spectral expansion of the triple product is
\begin{multline*}
	\cI_0 (w,v; m_2,\ell_1,\ell_2, \delta,Y) 
	=
	\< P_{\ell_2m_2,Y}(*,w+v/2;\delta),U \>
	\\
	=
	\sum_{j\ge1}\overline{\<U,u_j\>}  
	\<P_{\ell_2m_2,Y}(    *    ,w+v/2;\delta),u_j\>
	\\
	+
	\frac{1}{4 \pi} \sum_\cusp \int_{-\infty}^\infty 
	\vol \overline{\left<U,E_\cusp\left(*,\frac{1}{2} + it\right)\right>}
	\left<P_{\ell_2m_2,Y}\left(*, w+\frac{v}{2};\delta\right), E_\cusp \left(*,\frac{1}{2}+ it\right)\right>dt 
 \,.
 \end{multline*}
 So
 \begin{multline}\label{prespec}
 	\cI_0(w, v; m_2, \ell_1, \ell_2,  \delta)
	:=
 	\lim_{Y\to \infty} \cI_0(w, v; m_2, \ell_1, \ell_2, \delta, Y)
	\\
	=
	\cI_{0, {\rm cusp}}(w, v; m_2, \ell_1, \ell_2, \delta)
	+
	\cI_{0, {\rm cont}}(w, v; m_2, \ell_1, \ell_2, \delta)
\end{multline}
where
\be\label{e:I0cusp}
	\cI_{0, {\rm cusp}}(w, v; m_2, \ell_1, \ell_2, \delta)
	=  
	\frac{(2 \pi\ell_2 m_2)^{\frac{1}{2} -w - v/2}}{\vol}
	\sum_j \overline{\rho_j(-\ell_2m_2)}  M(w+v/2,t_j,\delta) \overline{\<U,u_j\>}
\ee
and
\begin{multline}\label{e:I0cont}
	\cI_{0, {\rm cont}}(w, v, m_2, \ell_1, \ell_2, \delta)
	\\
	=
	\frac{(2 \pi\ell_2 m_2)^{\frac{1}{2} -w - v/2}}{\vol}
	\frac{1}{4\pi}
	\sum_\cusp 
	\int_{-\infty}^\infty
	\frac{\vol 2\pi^{\frac{1}{2}-it} (\ell_2m_2)^{-it} \rho_\cusp\left(\frac{1}{2}-it, -\ell_2m_2\right)}
	{\Gamma(1/2 - it) \zeta^*(1+2it)} 
	\\
	\times 
	M(w+v/2,t,\delta) \overline{\<U,E^*_\cusp (*,1/2 + it)\>}dt 
	.
\end{multline}

As was the case with $D(s;h)$, this spectral expansion has poles coming from the cuspidal contribution at $w+v/2 = 1/2+it_j -r$, with $r \ge 0$, and no poles originating in the continuous contribution when $\Re(w+v/2) > 1/2$. 
By the upper bound for $M(w+v/2,t,\delta)$ given in \cite{HH}, 
	this converges absolutely for any fixed $\delta>0$.  
	To determine the region of absolute convergence as $\delta \rightarrow 0$, 
	we note first that in \cite{HH}, 
	(and Stirling's formula), when $\delta$ is small relative to $t_j$
\be\label{Mwv}
	M(w+v/2,t,\delta) \ll_{w,v} (1 + |t_j|)^{2 \Re \, (w + v/2) -2},
\ee
where the implied constant depends on $w,v$.  

By applying Proposition 4.1 in \cite{HH}, for $T\gg 1$, 
\be\label{e:spec_bound}
	\sum_{j, |t_j|\sim T} e^{\frac{\pi|t_j|}{2}} \left<U, u_j\right> 
	\ll
	\ell_1^{\frac{1-k}{2}} T^{1+k+\frac{\Re(v)}{2}}
	.
\ee

Substituting \eqref{Mwv} and \eqref{e:spec_bound} into \eqref{prespec}, after applying Cauchy-Schwarz  and making a dyadic subdivision of the interval $[0,T]$ we see that the exponent of $T$ will be bounded above by $  2\Re (w+v/2) +k-1$ and thus will be negative when 
$\Re \, w+v/2  < 1/2-k/2$.  It follows that for such $w,v$ the spectral expansion \eqref{prespec} will converge absolutely as $\delta \rightarrow 0$.  The upper bound for $D'$ claimed in the proposition follows from an examination of of the different components of $\cI_0$ and the above discussion.

Let $\sigma=\Re(w+v/2)$. 
As was the case with $D(s; h)$, 
we get 
\begin{multline*}
	\cI_{0, {\rm cont}}(w, v;  m_2, \ell_1, \ell_2, \delta)
	\\
	=
	\frac{(2\pi \ell_2 m_2)^{\frac{1}{2} -w - v/2}}{\vol}
	\sum_\cusp 
	\left[
	\frac{1}{2\pi}
	\int_{\overline{-C_\sigma}} 
	\frac{\vol \pi^{\frac{1}{2}-z} (\ell_2 m_2)^{-z} 
	\rho_\cusp\left(\frac{1}{2}-z, -\ell_2m_2\right)}
	{\Gamma(1/2 - z) \zeta^*(1+2z)} 
	\right. 
	\\
	\times 
	M(w+v/2,z/i,\delta) \overline{\<U,E^*_\cusp(*,1/2 + z)\>}dz
	\\
	+
	\sum_{b=0}^{\lfloor\frac{1}{2}-\sigma\rfloor}
	\left( \frac{\vol \pi^{1-(w+v/2)-b}(\ell_2 m_2)^{\frac{1}{2}-(w+v/2)-b} 
	\rho_\cusp\left(1-(w+v/2)-b, -\ell_2 m_2\right)}
	{\Gamma(1-(w+v/2)-b) \zeta^*(2w+v+2b)}
	\right.
	\\
	\times
	\overline{\left<U, E^*_\cusp\left(*, (w+v/2)+b \right)\right>} 
	\\
	+ 
	(1-\delta_{\sigma, \ell}) 
	\frac{\vol \pi^{(w+v/2)+b} (\ell_2 m_2)^{-\frac{1}{2}+(w+v/2)+b} 
	\rho_\cusp \left((w+v/2)+b, -\ell_2 m_2\right)}
	{\Gamma\left((w+v/2)+b\right) \zeta^*(2-2w-v-2b)}
	\\
	\times
	\overline{\left<U, E^*_\cusp\left(*, 1-(w+v/2)-b\right)\right>} 
	\bigg)
	\\
	\times 
	\frac{(-1)^b \sqrt\pi 2^{\frac{1}{2}-w-\frac{v}{2}} 
	\Gamma\left(1-w-\frac{v}{2}\right) 
	\Gamma\left(2w+v+b-1\right) }
	{b! \Gamma\left(w-\frac{v}{2}+b\right) \Gamma\left(1-w-\frac{v}{2}-b\right)}
	\\
	\times
	\left.
	\left(1+ \cO_A\left(\left(1+|w+v/2|\right)^{\frac{9}{4}-2\sigma} \delta^{\frac{1}{4}-\epsilon}\right)\right)\right]
\end{multline*}

The full meromorphic continuation of  $\cI (w,v; m_2,\ell_1,\ell_2)$ 
	is then achieved just as in the case of $D(s; h)$, 
	by using the analysis of  $M_2(w+v/2,t,\delta)$ in \cite{Hulse}, 
	together with Cauchy's theorem, 
	to fill in the gap between the regions  $\Re \left(w+v\right) < 1/2 - k/2 $ where the spectral side  converges, 
	and the region  $\Re \left(w+v/2 \right) >1$, where the Dirichlet series converges.  
\end{proof}

\subsection{The analytic continuation of $\cM(s, w, v)$ I}
As defined in \eqref{e:Mswv}, we have
\be\label{e:M1exp}
	\cM(s, w, v)
	=
	(\ell_1\ell_2)^{\frac{k-1}{2}} 
	\sum_{n\geq 1} \frac{D(s; n) \tilde\sigma_{-v}(n)} {n^{w+\frac{k-1}{2}}}
\ee
and this series converges absolutely for $\Re(s)>1$, $\Re(w)>1$ and $\Re(v)>0$. 

Let
\begin{multline}\label{R1def}
	R_1 
	=  
	\left\{ (s,w,v)| \Re \, v > 0 \right\} 
	\cap 
	\left[
 	\left\{ (s,w,v) | \Re \, s \ge 1/2 -k/2,  \; \Re \, w >1 \right\}  
	\cup \right. 
	\\
	\left. 
	\left\{ (s,w,v) | 1/2 -A < \Re \, s < 1/2 -k/2, \;  \Re \, w > -\Re \,s/2 -k/4 + 5/4 \right\}
	\right].
\end{multline}

\begin{prop}\label{prop:M1}
The triple Dirichlet series $\cM(s, w, v)$ has a meromorphic continuation in $R_1$. 

For $1/2-A < \Re(s) < 1/2-k/2$, $\Re(w) > 1$ and $\Re(v)>0$, 
	we have the following spectral expansion for $\cM(s, w, v)$:
$$
	\cM(s, w, v) 
	=
	(\ell_1\ell_2)^{\frac{k-1}{2}} 
	\left(
	\cM^{(1)}_{\rm cusp}(s, w, v)
	+
	\cM^{(1)}_{\rm cont}(s, w, v)
	\right)
$$
and
$$
	\cM^{(1)}_{\rm cont}(s, w, v)
	=
	\cM^{(1)}_{\rm int}(s, w, v)
	+
	\Phi^{(1)}(s, w, v)
	,
$$
where
\begin{multline}\label{e:M1cusp}
	\cM^{(1)}_{\rm cusp}(s, w, v)
	=
	\frac{(4\pi)^k \Gamma(1-s)}{2\Gamma(s+k-1)}
	\sum_j 
	L\left(s', \overline{u_j} \otimes E_1\left(*, (v+1)/2\right)\right)
	\\
	\times
	\frac{\Gamma\left(s-\frac{1}{2}-it_j\right) \Gamma\left(s-\frac{1}{2}+it_j\right)}
	{\Gamma\left(\frac{1}{2}-it_j\right)\Gamma\left(\frac{1}{2}+it_j\right) }
	\overline{\left<V, u_j\right>} 
	,
\end{multline}
\begin{multline}\label{e:M1int}
	\cM^{(1)}_{\rm int}(s, w, v)
	=
	\frac{(4\pi)^k  \Gamma(1-s) }{2\Gamma(s+k-1)}
	\sum_{\cusp=\frac{1}{a}, a\mid N} 
	\frac{1}{2\pi i} 
	\int_{\overline{-C_{\Re(s), \Re(s'), \Re(v)}}} 
	\frac{\vol 2\pi^{\frac{1}{2}-z} } {\Gamma\left(\frac{1}{2}-z\right) \zeta^*(1+2z)}
	\\
	\times
	\frac{\Gamma\left(s-\frac{1}{2}-z\right) \Gamma\left(s-\frac{1}{2}+z\right)}
	{\Gamma\left(\frac{1}{2}-z\right) \Gamma\left(\frac{1}{2}+z\right)}
	\\
	\times
	\frac{\zeta(s'+z)\zeta(s'-z) \zeta(s'+v+z)\zeta(s'+v-z) \tilde\cP_\cusp(s'; z, v)}
	{\zeta(1-2z) \zeta(2s'+v)}
	\overline{\left<V, E_\cusp^*\left(*, \frac{1}{2}+z\right)\right>} 
	\; dz
\end{multline}
and
\begin{multline}\label{e:Phi1}
	\Phi^{(1)} (s, w, v)
	=
	\frac{(4\pi)^k  \Gamma(1-s) }{2\Gamma(s+k-1)}
	\\
	\times
	\sum_{\cusp=\frac{1}{a}, a\mid N} 
	\left[
	\pmb{\delta}'_{\Re(s')} 
	\frac{\vol 2 \Gamma\left(s+s'-\frac{3}{2}\right) \Gamma\left(s-s'+\frac{1}{2}\right) 
	\zeta(2s'-1)\zeta(v+1) \zeta(2s'+v-1) }
	{\Gamma\left(s'-\frac{1}{2}\right) \Gamma\left(\frac{3}{2}-s'\right) \zeta(2s'+v)}
	\right.
	\\
	\times
	\left(
	\frac{\tilde\cP_\cusp(s'; 1-s', v) \overline{\left<V, E_\cusp\left(*, \frac{3}{2}-s'\right)\right>}} {\zeta^*(2s'-1)} 
	\right.
	\\
	\left.
	+
	(1-\pmb{\delta}_{\Re(s'), -\frac{1}{2}}) 
	\frac{\tilde\cP_\cusp(s'; s'-1, v) \overline{\left<V, E_\cusp\left(*, s'-\frac{1}{2}\right)\right>}} {\zeta^*(3-2s')}
	\right)
	\\
	+
	\pmb{\delta}'_{\Re(s'+v)} 
	\frac{\vol 2 \Gamma\left(s+\frac{1}{2}-s'-v\right)\Gamma\left(s-\frac{3}{2}+s'+v\right) 
	\zeta(1-v) \zeta(-1+2s'+2v) \zeta(-1+2s'+v) }
	{\Gamma\left(-\frac{1}{2}+s'+v\right) \Gamma\left(\frac{3}{2}-s'-v\right) \zeta(2s'+v) }
	\\
	\times
	\left(
	\frac{\tilde\cP_\cusp(s'; 1-s'-v, v) \overline{\left<V, E_\cusp\left(*, \frac{3}{2}-s'-v\right)\right>}}
	{\zeta^*(-1+2s'+2v)} 
	\right.
	\\
	+
	\left.
	\left.
	(1-\pmb{\delta}_{\Re(s'+v), -\frac{1}{2}}) 
	\frac{\tilde\cP_\cusp(s'; -1+s'+v, v) \overline{\left<V, E_\cusp\left(*, -\frac{1}{2}+s'+v\right)\right>}}
	{\zeta^*(3-2s'-2v)}
	\right)
	\right]
	\\
	+
	\frac{(4\pi)^k}{2\Gamma(s+k-1) \zeta(1-2s)\zeta(2s'+v) }
	\sum_{\cusp=\frac{1}{a}, a\mid N}
	\sum_{\ell=0}^{\lfloor \frac{1}{2}-\Re(s)\rfloor}
	\frac{(-1)^\ell \Gamma(1-s) \Gamma(2s+\ell-1)}
	{\ell! \Gamma(s+\ell) \Gamma(1-s-\ell)}
	\\
	\times
	\zeta\left(s'-\frac{1}{2}+s+\ell\right) \zeta\left(s'+\frac{1}{2}-s-\ell\right)
	\zeta\left(s'+v-\frac{1}{2}+s+\ell\right) \zeta\left(s'+v+\frac{1}{2}-s-\ell\right) 
	\\
	\times
	\left( 
	\frac{\pi^{1-s-\ell}\vol 
	\tilde\cP_\cusp\left(s'; -\frac{1}{2}+s+\ell, v\right) 
	}
	{\Gamma(1-s-\ell) \zeta^*(2s+2\ell) }
	\overline{\left<V, E_\cusp^*\left(*, s+\ell\right)\right>}
	\right.
	\\
	+
	\left.
	(1-\pmb{\delta}_{\sigma, \ell}) 
	\frac{\pi^{s+\ell} \vol
	\tilde\cP_\cusp\left(s'; \frac{1}{2}-s-\ell, v\right) }
	{\Gamma(s+\ell) \zeta^*(2-2s-2\ell) } 
	\overline{\left<V, E_\cusp^*(*, 1-s-\ell)\right>}
	\right)
	\,.
\end{multline}
Here, for each $j$, the Rankin-Selberg $L$-function is defined as
\be\label{e:RSuj}
	L\left(s', \overline{u_j}\otimes E_1(*, (v+1)/2)\right)
	:=
	\sum_{n\geq 1} \frac{\overline{\rho_j(-n)} \tilde\sigma_{-v}(n)}{n^{s'}}
\ee
and for each cusp $\cusp=a/N$, for $a\mid N$, $\tilde\cP_\cusp$ is defined by
\be\label{e:tildePcusp}
	\frac{\zeta(s'+z) \zeta(s'-z) \zeta(s'+v+z) \zeta(s'+v-z)} {\zeta(1-2z) \zeta(2s'+v)} 
	\tilde\cP_\cusp(s'; z, v)
	=
	\sum_{n\geq 1} \frac{\rho_\cusp \left(\frac{1}{2}-z, -n\right) \tilde\sigma_{-v}(n)}{n^{s'+z}} 
	.
\ee

Then $\cM(s, w, v)- \Phi^{(1)}(s, w, v)$ has poles only at $s=1/2-r+it_j$ for $r\geq 0$, for $(s, w, v)\in R^1$.  
The function $\Phi^{(1)}(s, w, v)$ has at most polar planes at
 \begin{itemize} \label{itemize:phi1}
	\item $2s'+v=2$, 
	\item $s+1/2 -s' =-r$, 
	\item $s+1/2 -s' -v=-r$, 
	\item $s-3/2+s' = -r$, 
	\item $s-3/2+s' +v= -r$, 
	\item $2s+\ell-1 = -r$, for $0\leq \ell \leq \lfloor 1/2-\Re(s)\rfloor$,
\end{itemize}
where $r\ge 0$. 
In addition to the above polar planes, $\Phi^{(1)}(s, w, v)$ may have poles of the same order as the zeros of zeta functions in the denominator of $\Phi^{(1)}(s, w, v)$.  

In particular, in the part of $R_1$ corresponding to $\Re \, s <1/2 -k/2$,
  $\Re \, s'  \ge1/2$,  and $\Re(s'+v) \geq1$ 
and at points at least $\epsilon$ away from the poles, 
	it follows that  $\cM(s,w,v)$ obeys the upper bound 
\be\label{supper}
	\cM(s,w,v)- \Phi^{(1)}(s, w, v) \ll (\ell_1\ell_2 )^{\epsilon} (1+|s|+|w|)^{c}
\ee
for some constant $c>0$.
\end{prop}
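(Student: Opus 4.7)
The plan is to start from the absolutely convergent representation \eqref{e:M1exp} valid for $\Re(s),\Re(w)>1$ and $\Re(v)>0$, substitute the spectral expansions \eqref{cuspdelta}--\eqref{Omega} of $D(s;n;\delta)$, and then interchange the resulting sum over $n$ with the sum/integral over the spectrum. By absolute convergence in the initial region, the interchange is immediate. Setting $s'=s+w+k/2-1$, the sum over $n$ against the cuspidal Fourier coefficients becomes
$$\sum_{n\geq 1}\frac{\overline{\rho_j(-n)}\,\tilde\sigma_{-v}(n)}{n^{s'}}=L\bigl(s',\overline{u_j}\otimes E_1(*,(v+1)/2)\bigr),$$
a Rankin--Selberg type $L$-function, which by multiplicativity of $\tilde\sigma_{-v}$ factors as $L(s',\overline{u_j})L(s'+v,\overline{u_j})/\zeta(2s'+v)$ modulo local factors at primes dividing $N_0$. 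Combining this with the $M$-factor from \eqref{cuspdelta} and the known residue \eqref{Mres} produces $\cM^{(1)}_{\rm cusp}$ in \eqref{e:M1cusp}.

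For the continuous contribution, the sum over $n$ against the Eisenstein Fourier coefficients becomes
$$\sum_{n\geq 1}\frac{\rho_{\mathfrak{a}}(\tfrac12-\tau,-n)\,\tilde\sigma_{-v}(n)}{n^{s'+\tau}},$$
which, by the explicit formula \eqref{e:rho_an} together with the Dirichlet convolution $\tilde\sigma_{-v}\ast\tilde\sigma_{-v}$-style Euler factorization, equals a product of four zeta values $\zeta(s'\pm\tau)\zeta(s'+v\pm\tau)$ divided by $\zeta(1-2\tau)\zeta(2s'+v)$, times the finite correction polynomial $\tilde\cP_{\mathfrak{a}}(s';\tau,v)$ defined in \eqref{e:tildePcusp}. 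Inserted back into the contour integral over $\tau$ and combined with the $M$-factor, this gives $\cM^{(1)}_{\rm int}$ along $\Re(\tau)=0$. The $\Omega(s;n,\delta)$-piece in \eqref{Omega} contributes, after the same summation over $n$, the sum over $\ell\leq\lfloor\tfrac12-\Re(s)\rfloor$ in \eqref{e:Phi1}.

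To extend this into the region $1/2-A<\Re(s)<1/2-k/2$ with $\Re(w)>-\Re(s)/2-k/4+5/4$, I then shift the $\tau$-contour across the newly-introduced zeta poles at $\tau=\pm(1-s')$ and $\tau=\pm(1-s'-v)$; each crossing contributes a residue proportional to $\zeta(2s'-1)\zeta(v+1)\zeta(2s'+v-1)$ or its $v$-shifted partner, producing exactly the terms headed by $\pmb{\delta}'_{\Re(s')}$ and $\pmb{\delta}'_{\Re(s'+v)}$ in the first block of \eqref{e:Phi1}. The listed polar planes then read off directly from the Gamma arguments vanishing in $\cM^{(1)}_{\rm cusp}$ (giving $s-3/2+s'=-r$, etc.), from the zeta poles in $\Phi^{(1)}$ (giving $2s'+v=2$), and from the cuspidal resonances $s=1/2+it_j-r$ carried over from the poles of $M(s,t_j,\delta)$. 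The upper bound \eqref{supper} follows by applying Phragm\'en--Lindel\"of to each $L(s',\overline{u_j})L(s'+v,\overline{u_j})$, combining with the spectral large sieve bound \eqref{targetend} on $\overline{\langle V,u_j\rangle}$, and summing dyadically in $|t_j|$; the analogous bound for the continuous contribution uses the corresponding bound for $\overline{\langle V,E_{\mathfrak{a}}^*(*,\tfrac12+z)\rangle}$.

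The main obstacle is bookkeeping: tracking \emph{simultaneously} the shifts of the $\tau$-contour (which cross zeta poles) and the implicit shifts of the $s$-variable in the $M$-function (which cross cuspidal/Eisenstein residues), while verifying that every singularity of the raw spectral expansion is either listed in the polar planes of Theorem \ref{t:triple} or absorbed into $\Phi^{(1)}$ so that $\cM(s,w,v)-\Phi^{(1)}(s,w,v)$ has poles only at $s=1/2+it_j-r$. A secondary delicate point is the commutation of the $\delta\to 0$ limit with the $\tau$-integral after the contour has been shifted into the range where $M(s,t,\delta)$ is no longer uniformly bounded by its $\delta=0$ limit; this is handled by appealing to the uniform bounds in Proposition \ref{prop:Mbounds} exactly as in \cite{HH}.
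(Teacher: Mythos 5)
Your proposal follows essentially the same route as the paper: expand $D(s;n)$ spectrally, sum over $n$ to produce the Rankin--Selberg factor $L(s',\overline{u_j}\otimes E_1(*,(v+1)/2))$ and the four-zeta product times $\tilde\cP_\cusp$, and collect the $\Omega$-contribution together with the residues at $z=\pm(1-s')$, $\pm(1-s'-v)$ into $\Phi^{(1)}$, with the upper bound obtained from (7.9) of \cite{HH} and the spectral average bounds, exactly as the authors do. The only slips are cosmetic: the cuspidal main term \eqref{e:M1cusp} comes from the $\delta\to0$ limiting value of $M(s,t_j,\delta)$ as in \eqref{deltasmall}, not from the residue formula \eqref{Mres}, and the polar planes $s-3/2+s'=-r$, $s-3/2+s'+v=-r$ arise from the Gamma factors in the $\pmb{\delta}'$-terms of $\Phi^{(1)}$ rather than from the Gamma arguments of $\cM^{(1)}_{\rm cusp}$, whose poles are precisely the cuspidal resonances $s=1/2+it_j-r$.
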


\begin{proof}

By Proposition 5.1 in \cite{HH}, $D(s; n)$ has a meromorphic continuation to all $s\in \C$, with $\Re(s)> 1/2-A$. 

For $\Re(s) > 1/2-k/2$, $D(s; n) \ll n^{k/2}$, so the series $\cM(s, w, v)$ given in \eqref{e:M1exp}, 
converges absolutely for $\Re(v)>0$, $\Re(s)> 1/2-k/2$ and $\Re(w)>3/2$. 
	
For $s$ in the strip $1/2-A < \Re(s) < 1/2-k/2$, $D(s; n)$ can be expressed by the following 
absolutely convergent expression:
	\begin{multline*}
	D(s; n)
	=
	\frac{(4\pi)^k}{2\Gamma(s+k-1) n^{s-1/2}}
	\\
	\times
	\left\{ 
	\sum_j \overline{\rho_j(-n)}
	\frac{\Gamma\left(s-\frac{1}{2}-it_j\right) \Gamma\left(s-\frac{1}{2}+it_j\right) \Gamma(1-s)}
	{\Gamma\left(\frac{1}{2}-it_j\right)\Gamma\left(\frac{1}{2}+it_j\right) }
	\overline{\left<V, u_j\right>} 
	\right.
	\\
	+
	\sum_\cusp 
	\frac{1}{2\pi i} \int_{\overline{-C_\sigma} } 
	\frac{\vol 2\pi^{\frac{1}{2}-z} n^{-z} \rho_\cusp\left(\frac{1}{2}-z, -n\right)}
	{\Gamma\left(\frac{1}{2}-z\right) \zeta^*(1+2z)}
	\\
	\left.
	\times
	\frac{\Gamma\left(s-\frac{1}{2}-z\right) \Gamma\left(s-\frac{1}{2}+z\right)\Gamma(1-s)}
	{\Gamma\left(\frac{1}{2}-z\right) \Gamma\left(\frac{1}{2}+z\right)}
	\overline{\left<V,E_\cusp^*\left(*, \frac{1}{2}+z\right)\right>} \; dz
	\right\}
	\\
	+
	\Omega(s, n)
	\end{multline*}
	where
\begin{multline*}
	\Omega(s; n)
	=
	\frac{(4\pi)^k}{2\Gamma(s+k-1)}
	\sum_\cusp
	\sum_{\ell=0}^{\lfloor\frac{1}{2}-\Re(s)\rfloor}
	\frac{(-1)^\ell \Gamma(1-s) \Gamma(2s+\ell-1)}{\ell! \Gamma(s+\ell) \Gamma(1-s-\ell)}
	\\
	\times
	\left( 
	\frac{\pi^{1-s-\ell} n^{-2s-\ell+1} \vol \rho_\cusp(1-s-\ell, -n)}
	{\Gamma(1-s-\ell)\zeta^*(2s+2\ell)}
	\overline{\left<V, E_\cusp^*(*, s+\ell)\right>}
	\right.
	\\
	+ 
	\left. (1-\pmb{\delta}_{\sigma, \ell})
	\frac{\pi^{s+\ell} n^\ell \vol \rho_\cusp(s+\ell, -n)}
	{\Gamma(s+\ell) \zeta^*(2-2s-2\ell)}
	\overline{\left<V, E_\cusp^*(*, 1-s-\ell)\right>}
	\right)
	.
\end{multline*}
	Here, for $\cusp = 1/a$, $a\mid N$, 
	\begin{multline*}
	\rho_\cusp(s, n)
	=
	\left(\frac{a}{N}\right)^s 
	\frac{\sum_{d\mid n, \atop (d, N)=1} d^{1-2s}}
	{\zeta(2s)}
	\\
	\times
	\prod_{p\mid N} \left(1-p^{-2s}\right)^{-1} 
	\prod_{p\mid a, \atop p^k\|n, k\geq 0} 
	\frac{p^{-2s}}{1-p^{-2s+1}} 
	\left(p - p^{k(-2s+1)+1} -1 +p^{(k+1) (-2s+1)}\right)
	\end{multline*}
	for $n\neq 0$. 
	
Let
\begin{multline*}
	D_{\rm cusp}(s; n)
	:=
	\frac{(4\pi)^k}{2\Gamma(s+k-1) n^{s-1/2}}
	\\
	\times
	\sum_j \overline{\rho_j(-n)}
	\frac{\Gamma\left(s-\frac{1}{2}-it_j\right) \Gamma\left(s-\frac{1}{2}+it_j\right) \Gamma(1-s)}
	{\Gamma\left(\frac{1}{2}-it_j\right)\Gamma\left(\frac{1}{2}+it_j\right) }
	\overline{\left<V, u_j\right>} 
	.
\end{multline*}
Then we have
\begin{multline*}
	\cM^{(1)}_{\rm cusp}(s, w, v)
	:=
	\sum_{n\geq 1} \frac{D_{\rm cusp}(s; n) \tilde\sigma_{-v}(n) } {n^{w+\frac{k-1}{2}}}
	\\
	=
	\frac{(4\pi)^k}{2\Gamma(s+k-1)}
	\sum_j 
	L\left(s', \overline{u_j} \otimes E_1\left(*, (v+1)/2\right)\right)
	\\
	\times
	\frac{\Gamma\left(s-\frac{1}{2}-it_j\right) \Gamma\left(s-\frac{1}{2}+it_j\right) \Gamma(1-s)}
	{\Gamma\left(\frac{1}{2}-it_j\right)\Gamma\left(\frac{1}{2}+it_j\right) }
	\overline{\left<V, u_j\right>} 
	,
\end{multline*}
where $L(s', \overline{u_j}\otimes E_1(*, (v+1)/2))$ is defined in \eqref{e:RSuj}, 
and $s' = s+w+k/2-1$.
This is absolutely convergent when $\Re(v)>0$, $\Re(w) > -s/2 +5/4-k/4$.

Let
\begin{multline*}
	D_{\rm int}(s; n)
	:=
	\frac{(4\pi)^k}{2\Gamma(s+k-1) n^{s-1/2}}
	\sum_\cusp 
	\frac{1}{2\pi i} \int_{\overline{-C_\sigma} } 
	\frac{\vol 2\pi^{\frac{1}{2}-z} n^{-z} \rho_\cusp\left(\frac{1}{2}-z, -n\right)}
	{\Gamma\left(\frac{1}{2}-z\right) \zeta^*(1+2z)}
	\\
	\times
	\frac{\Gamma\left(s-\frac{1}{2}-z\right) \Gamma\left(s-\frac{1}{2}+z\right)\Gamma(1-s)}
	{\Gamma\left(\frac{1}{2}-z\right) \Gamma\left(\frac{1}{2}+z\right)}
	\overline{\left<V,E_\cusp^*\left(*, \frac{1}{2}+z\right)\right>} \; dz
	.
\end{multline*}
Our intent now is to understand the sum 
$$
	\sum_{n=1}^\infty \frac{D_{\rm int}(s; n) \tilde{\sigma}_{-v}(n)}{n^{w+\frac{k-1}{2}}}
	.
$$ 
To this end we consider first a sub-sum:
$$
	\sum_{n\geq 1} \frac{\rho_\cusp\left(\frac{1}{2}-z, -n\right) \tilde\sigma_{-v}(n)} {n^{s'+z}}
	,
$$
for each cusp $\cusp = 1/a$.
A computation gives us
$$
	\sum_{n\geq 1} 
	\frac{\rho_\cusp\left(\frac{1}{2}-z, -n\right) \tilde\sigma_{-v}(n)} {n^{s'+z}}
	=
	\frac{\zeta(s'+z) \zeta(s'-z) \zeta(s'+v+z) \zeta(s'+v-z)} {\zeta\left(1-2z\right) \zeta(2s'+v)}
	\tilde\cP_\cusp(s'; z, v)
$$
where 
	\begin{multline*}
	\tilde{\cP}_\cusp(s'; z, v)
	=
	\left(\frac{a}{N}\right)^{\frac{1}{2}-z} 
	\prod_{p\mid N}
	\left( (1-p^{-(1-2z)} )^{-1} (1-p^{-(2s'+v)})^{-1} \right)
	\\
	\times
	\left( \prod_{p\mid a} 
	p^{-(1-2z)}
	\left( (p-1) p^{-(s'+z)} -p^{-(3s'+v+z)+1} -p^{-(s'-z)} +p^{-2s'}(1+p^{-v})\right)\right)
	\\
	\times
	\left( \prod_{p\mid N, p\nmid a} (1-p^{-(s'-z)}) (1-p^{-(s'+v-z)})
	\right)
	\,.
\end{multline*}
As in \cite{HH}, we have
$$
	\cM^{(1)}_{\rm cont}(s, w, v)
	:=
	\sum_{n=1}^\infty \frac{D_{\rm int}(s; n) \tilde\sigma_{-v}(n)} {n^{w+\frac{k-1}{2}}}
	+
	\sum_{n=1}^\infty \frac{\Omega(s; n) \tilde\sigma_{-v}(n)} {n^{w+\frac{k-1}{2}}}
	=
	\cM^{(1)}_{\rm int}(s, w, v)
	+
	\Phi^{(1)}(s, w, v)
$$
where
$\cM_{\rm int}^{(1)}(s, w, v)$ is defined in \eqref{e:M1int}
and 
$\Phi^{(1)}(s, w, v)$ is defined in \eqref{e:Phi1}.
The integral in $\cM_{\rm int}^{(1)}(s, w, v)$ is absolutely convergent for $\Re(v)>0$ and $\Re(w) > s/2 +5/4 -k/4$. 

The poles of $\cM(s, w, v)$ in $R_1$ can occur at $s=1/2-r+it_j$ or at poles of $\Phi^{(1)}(s, w, v)$ in $R_1$. 
The function $\Phi^{(1)}(s, w, v)$ has potential polar lines at 
\begin{itemize} \label{itemize:phi1}
	\item $2s'+v=2$, 
	\item $s+1/2 -s' =-r$, 
	\item $s+1/2 -s' -v=-r$, 
	\item $s-3/2+s' = -r$, 
	\item $s-3/2+s' +v= -r$, 
	\item $2s+\ell-1 = -r$, for $0\leq \ell \leq \lfloor 1/2-\Re(s)\rfloor$,
\end{itemize}
where $r\ge 0$. 
In addition to the above polar lines, $\Phi^{(1)}(s, w, v)$ may have poles of the same order as the zeros of zeta functions in the denominator of $\Phi^{(1)}(s, w, v)$.  

The lines $s+1/2 -s' =-r$ and $s+1/2-s'-v=-r$ are in fact canceled. 
	The polar lines can exist at $s-3/2 +s'=-r$ and $s-3/2+s'+v=-r$. 

In particular, in the part of $R_1$ corresponding to $\Re \, s <1/2 -k/2$,
  $\Re \, s'  \ge1/2$,  and $\Re(s'+v) \geq1$ 
and at points at least $\epsilon$ away from the poles, 
	after applying (7.9) in \cite{HH}, 
	together with a trivial bound for the $L$-series $L(s'+v, u_j)$, 
	it follows that  $\cM(s,w,v)$ obeys the upper bound in \eqref{supper}.

\end{proof}

\subsection{The analytic continuation of $\cM(s, w, v)$ II}
We will now reverse the roles of $s$ and $w$ in  $\cM(s,w,v)$ and perform a very similar analysis in the region $R_2$, defined by
\begin{multline}\label{R2def}
	R_2
	:=
	\left\{(s, w, v)\;|\; \Re(v)>0, \; \Re(w) >1, \; \Re(s)> 1\right\}
	\\
	\cup
	\left\{(s, w, v) \; |\; \Re(v) >0,\; 1/2-B< \Re(w+v/2) < 1/2-k/2,\right.
	\\
	\left. 2\Re(s) + \Re(w) +\Re(v)/4 > -3k/4 + 5/2\right\}
	\\
	\cup
	\left\{(s, w, v)\;|\; \Re(v)>0,\; 1/2-k/2 \leq \Re(w+v/2)  < 2, \; \Re(s) > 3/2+\theta\right\}
	.
\end{multline}  

\begin{prop}\label{prop:M2}

The triple Dirichlet series $\cM(s, w, v)$ has a meromorphic continuation to $R_2$. 
	For $\Re(v) >0$, $\Re(w+v/2) < 1/2-k/2$ and $2\Re(s) + \Re(w) + \Re(v)/4 > -3k/4 +5/2$, 
	it has the following spectral expansion:
\begin{multline}\label{e:M2}
	\cM(s,w,v)+\cM_3(s, w, v)
	=
	\ell_1^{(k-1)/2}\ell_2^{1-s- w -k/2-v/2} 2^{k}\pi^{k/2-1/2-v/2} 
	\\
	\times
	\frac{\G((k+1+v)/2)\G(w+v/2) \G(1-w-v/2)} {\G(w+(k-1)/2)\G(w+(k-1)/2+v)}
	\\ 
	\times
	\left(\cM^{(2)}_{\rm cusp} (s,w,v) +\cM^{(2)}_{\rm cont} (s,w,v) \right) 
	,
\end{multline}
where
\begin{multline}\label{e:M2cusp}
	\cM^{(2)}_{\rm cusp} (s,w,v)
	=
	\sum_j
	L_{\ell_2}(s'+v/2, \bar g \otimes \bar u_j)\overline{\<U,u_j\>} 
	\\ 
	\times
	\frac{\G(w+v/2-1/2 +it_j)\G(w+v/2-1/2 - it_j)}{\G(1/2+it_j)\G(1/2-it_j)}
	,
\end{multline}
\be\label{e:M2cont}
	\cM^{(2)}_{\rm cont}(s, w, v)
	=
	\cM^{(2)}_{\rm int}(s, w, v)
	+
	\Phi^{(2)}(s, w, v)
	.
\ee
and
\begin{multline}\label{e:M3}
	\cM_3(s,w,v)
	=
	\ell_1^{(k-1)/2}\ell_2^{1/2-s-k/2} 
	\frac{\G(1-w-v/2)\G(w+v/2)}{\G((1-v+k)/2)\G((1+v-k)/2} 
	\\ 
	\times 
	\sum_{{m_2\ge 1} \atop {1 \le m'  <\ell_2m_2}}
	\frac{ \overline{b(m_2)} a((\ell_2 m_2 - m')/\ell_1) \tilde\sigma_{-v}(m') }
	{( m')^{w+ (k-1)/2}( m_2)^{s+k-1}}
	.
\end{multline}
Here
\begin{multline}\label{e:M2int}
	\cM^{(2)}_{\rm int}(s, w, v)
	=
	\sum_\cusp \frac{1}{2\pi i} \int_{\overline{-C_\sigma}} 
	\frac{\vol \pi^{\frac{1}{2}-z} 
	L_{\ell_2} \left(s'+\frac{v}{2}, \bar g \otimes E_\cusp\left(*, 1/2-z\right)\right)}
	{\Gamma(1/2 - z)} 
	\\
	\times 
	\frac{\G(w + v/2-1/2+z)\G(w + v/2-1/2-z)}{\Gamma\left(\frac{1}{2}+z\right)\Gamma\left(\frac{1}{2}-z\right) \zeta^*(1+2z)}
	\overline{\<U,E^*_\cusp (*,1/2 + z)\>}dz
\end{multline}
and
\begin{multline}\label{e:Phi2}
	\Phi^{(2)}(s, w, v)
	=
	\sum_\cusp 
	\sum_{b=0}^{\lfloor\frac{1}{2}-\sigma\rfloor}
	\vol 
	\frac{(-1)^b \sqrt\pi 2^{\frac{1}{2}-w-\frac{v}{2}} \Gamma\left(1-w-\frac{v}{2}\right) 
	\Gamma\left(2w+v+b-1\right) }
	{b! \Gamma\left(w-\frac{v}{2}+b\right) \Gamma\left(1-w-\frac{v}{2}-b\right)}
	\\
	\times
	\left( \frac{\pi^{1-(w+v/2)-b} 
	L_{\ell_2} \left(s'+\frac{v}{2}, \bar g \otimes E_\cusp\left(*, w+v/2+b\right)\right)
	\overline{\left<U, E^*_\cusp \left(*, w+v/2+b \right)\right>} }
	{\Gamma(1-(w+v/2)-b) \zeta^*(2w+v+2b)} 
	\right.
	\\
	\left.
	+ 
	(1-\pmb{\delta}_{\sigma, b}) 
	\frac{\pi^{(w+v/2)+b} 
	L_{\ell_2} \left(s'+\frac{v}{2}, \bar g \otimes E_\cusp\left(*, 1-w-v/2-b\right)\right)
	\overline{\left<U, E^*_\cusp\left(*, 1-w-v/2-b\right)\right>} }
	{\Gamma\left(w+\frac{v}{2}+b\right) \zeta^*(2-2w-v-2b)} 
	\right)
	.
\end{multline}	
Then $\cM(s, w, v)- \Phi^{(2)}(s, w, v)$ has poles in $R_2$ only at 
\begin{itemize}
	\item $w+v/2=-r$, for $r\geq 0$ 
	\item $w+v/2=1/2+it_j-r$, for $r\geq 0$, $j\geq 1$
	.
\end{itemize}

\end{prop}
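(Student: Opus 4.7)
The plan is to start from the alternate representation \eqref{e:Mswv}, which expresses $\cM(s,w,v)$ as the convolution
\begin{equation*}
(\ell_1\ell_2)^{(k-1)/2}\sum_{m_2\geq 1}\frac{D'(w,v;\ell_2m_2)\,\overline{b(m_2)}}{(\ell_2 m_2)^{s+k-1}},
\end{equation*}
and to insert into this the spectral expansion \eqref{e:D'exp} for $D'(w,v;\ell_2 m_2)$ supplied by Proposition \ref{prop:Icontinuation}. In the region where $\Re(w+v/2)<1/2-k/2$ that expansion is available, and the extra factor $(\ell_2m_2)^{1/2-w-v/2}$ combines with $(\ell_1\ell_2)^{(k-1)/2}(\ell_2 m_2)^{-(s+k-1)}$ to produce exactly the prefactor $\ell_1^{(k-1)/2}\ell_2^{1-s-w-k/2-v/2}\,2^k\pi^{(k-v-1)/2}$ times the gamma ratio that appears in \eqref{e:M2}.

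With that rewriting in hand, the cuspidal, continuous and residual pieces of $D'$ each carry a Fourier coefficient $\overline{\rho_j(-\ell_2m_2)}$ or $\rho_\cusp(\cdot,-\ell_2 m_2)$. Paired with $\overline{b(m_2)}/m_2^{s+k-3/2+w+v/2}$, and after absorbing the normalization $b(m_2)=B(m_2)m_2^{(k-1)/2}$, these sums collapse into Rankin--Selberg convolutions of $\bar g$ against $\bar u_j$, respectively against the Eisenstein series $E_\cusp(\ast,\cdot)$; the exponent in $m_2$ works out to $s'+v/2$ with $s'=s+w+k/2-1$, producing the $L_{\ell_2}$ factors displayed in \eqref{e:M2cusp}, \eqref{e:M2int} and \eqref{e:Phi2}. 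The subtracted finite part $D'_{\rm finite}$ from \eqref{e:D'exp} yields, after summation against $\overline{b(m_2)}/(\ell_2m_2)^{s+k-1}$, a convergent double sum which direct comparison with \eqref{e:D'finite} and \eqref{e:M3} identifies with $\cM_3(s,w,v)$; transposing it to the left gives the claimed identity \eqref{e:M2}.

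To justify the interchange of the $m_2$-summation with the spectral decomposition, I will combine the bound \eqref{e:spec_bound} for $\overline{\<U,u_j\>}$ with convexity and Rankin--Selberg bounds for $L_{\ell_2}(s'+v/2,\bar g\otimes\bar u_j)$, together with Stirling estimates for the gamma ratios $\G(w+v/2-\tfrac12\pm it_j)/\G(\tfrac12\pm it_j)$. A dyadic decomposition of the spectral parameter $t_j$ followed by Cauchy--Schwarz shows that the resulting series in $|t_j|$ converges precisely when $2\Re(s)+\Re(w)+\Re(v)/4>-3k/4+5/2$, which pins down the region in which \eqref{e:M2} holds as an identity of absolutely convergent quantities. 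The continuous piece is handled identically, using the Eisenstein coefficient bound recalled in Section \ref{s:review}. In the remaining sub-region of $R_2$ where $\Re(s)>3/2+\theta$ and $1/2-k/2\leq\Re(w+v/2)<2$, I will use the original Dirichlet series \eqref{e:Mswv} directly: the upper bound \eqref{e:I0upper2} on $D'(w,v;\ell_2m_2)+D'_{\rm finite}(w,v;\ell_2 m_2)$ makes the sum over $m_2$ absolutely convergent, and a similar bound controls $\cM_3$. The meromorphic continuations on the two overlapping sub-regions are then glued by Cauchy's theorem.

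The polar structure is finally read off \eqref{e:M2}: the prefactor $\G(1-w-v/2)$ contributes simple poles along $w+v/2=-r$, while the spectral factors $\G(w+v/2-\tfrac12\pm it_j)$ yield the planes $w+v/2=\tfrac12+it_j-r$, all other potential singularities having been packaged into $\Phi^{(2)}$ by the very definition of $\Omega'$ in \eqref{e:Omega'}. I expect the chief obstacle to lie in this last step: tracking the shifts of contours as $\Re(w+v/2)$ crosses the half-integer values $\tfrac12-b$ forces the residual terms of \eqref{e:Omega'} to reorganize into $\Phi^{(2)}$, and verifying that no extra poles escape this accounting — and that the apparent polar planes $s+\tfrac12-s'=-r$ cancel against their companions $s+\tfrac12-s'-v=-r$ coming from $\Phi^{(1)}$ on the other side — is where the bookkeeping becomes delicate.
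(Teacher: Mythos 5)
Your overall route is the same as the paper's: substitute the spectral expansion \eqref{e:D'exp} of $D'(w,v;\ell_2 m_2)$ into the representation \eqref{e:Mswv2} of $\cM(s,w,v)$ as a sum over $m_2$, identify the resulting $m_2$-sums as the Rankin--Selberg factors $L_{\ell_2}(s'+v/2,\cdot)$, control the spectral sum/integral to get the convergence region $2\Re(s)+\Re(w)+\Re(v)/4>-3k/4+5/2$, use \eqref{e:I0upper2} in the band $1/2-k/2\le\Re(w+v/2)<2$ with $\Re(s)>3/2+\theta$, and read off the poles. However, there is a genuine gap in your treatment of $\cM_3$. You assert that the contribution of $D'_{\rm finite}$ is ``a convergent double sum'' identified with \eqref{e:M3}; but that double sum converges absolutely only for $\Re(s)$ (equivalently $\Re(s')$) sufficiently large, and in much of the region where \eqref{e:M2} is claimed --- e.g.\ $\Re(w+v/2)$ just below $1/2-k/2$ with $\Re(s)$ near the lower edge allowed by $2\Re(s)+\Re(w)+\Re(v)/4>-3k/4+5/2$ --- it does not converge. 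The paper handles this with a separate result (Proposition \ref{prop:dsums}): the inner $m_2$-sum in $\cM_3$ is recognized as an inner product of $V$ against a standard Poincar\'e series $P(*,s;m')$, which is spectrally expanded to give the meromorphic continuation \eqref{e:M3spec} of $\cM_3$ to all of $R_2$, together with its polar structure (poles at $w+v/2=-r$, plus possible poles at zeros of $\zeta(2s'+v)$, the potential line $2s'+v=2$ cancelling). Without this, \eqref{e:M2} does not by itself yield the meromorphic continuation of $\cM$ to $R_2$, and you cannot conclude that $\cM-\Phi^{(2)}$ has poles only on the two listed families, since you would not know where $\cM_3$ itself is singular.

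Two smaller points. The poles along $w+v/2=-r$ come from the factor $\Gamma(w+v/2)$ in the prefactor of \eqref{e:M2} (and correspondingly in $\cM_3$), not from $\Gamma(1-w-v/2)$, whose poles sit at $w+v/2=1+r$. And the anticipated cancellation you mention between the planes $s+1/2-s'=-r$ and $s+1/2-s'-v=-r$ belongs to the $R_1$ analysis of $\Phi^{(1)}$ in Proposition \ref{prop:M1}; for the present proposition the relevant cancellation is the one on the line $2s'+v=2$ inside the continuous part of $\cM_3$, which is precisely what Proposition \ref{prop:dsums} makes explicit.
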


Before proving Proposition \ref{prop:M2}, we will find it convenient to state and prove the following proposition, which describes the meromorphic continuation of the Dirichlet series $\cM_3$.
\begin{prop}\label{prop:dsums}
The Dirichlet series $\cM_3(s,w,v)$ is absolutely convergent for $\Re \, s >1$ 
	and has a meromorphic continuation to $R_2$. 
	For $\Re(s')>1$, this continuation is given by 
\begin{multline}\label{e:M3spec}     
	\cM_3(s,w,v)
	=
	\ell_1^{(k-1)/2}\ell_2^{1/2-s-k/2} 
	\frac{\G(1-w-v/2)\G(w+v/2)}{\G((1-v+k)/2)\G((1+v-k)/2)} 
	\\
	\times
	\frac{2^{2k-1} \pi^k }{\G(s+k-1)}
	\left\{
	\sum_j \frac{\G(s-1/2 + it_j)\G(s-1/2 - it_j)\< u_j,V\>} 
	{ \G(s)}
	\right.
	\\
	\times
	L(s', \overline{u_j}\otimes E_1(*(1+v)/2))
	\\
	+
	\frac{1}{4\pi} \sum_\cusp \int_{-\infty}^\infty 
	\frac{2\vol \G(s-1/2 + it)\G(s-1/2 - it) \left<E_\cusp^*(*, 1/2+it), V\right>}
	{ \G(s)}
	\\
	\left.
	\times
	\frac{\zeta(s'+it) \zeta(s'-it) \zeta(s'+v+it) \zeta(s'+v-it) 
	\tilde\cP_\cusp(s'; it, v)}
	{\zeta^*(1-2it)  \zeta^*(1+2it) \zeta(2s'+v)}
	\; dt
	\right\}
%
\end{multline}
where
$V(z) = \overline{f(\ell_1 z) } g(\ell_2z ) y^k$.
Here the Rankin-Selberg $L$-function $L(s', \overline{u_j} \otimes E_1(*, (1+v)/2))$ is defined in \eqref{e:RSuj}.  
The poles in the region $R_2$, defined in \eqref{R2def}, are at $w+v/2 =-r$, $r \ge 0$. 
Moreover, $\cM_3(s, w, v)$ may have poles of the same order as the zeros of zeta function $\zeta(2s'+v)$. 
\end{prop}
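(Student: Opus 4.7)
The plan is to identify $\cM_3$ with a weighted Dirichlet series in $D'_{\rm finite}(w,v;\ell_2 m_2)$ and then deduce the spectral expansion from Proposition \ref{prop:Icontinuation}. Matching the external gamma factors in \eqref{e:D'finite} with those in the definition \eqref{e:M3} of $\cM_3$ gives the identity
$$
\cM_3(s,w,v)=\ell_1^{(k-1)/2}\ell_2^{1/2-s-k/2}\sum_{m_2\ge 1}\frac{\overline{b(m_2)}\,D'_{\rm finite}(w,v;\ell_2 m_2)}{m_2^{s+k-1}}.
$$
Absolute convergence of this double series for $\Re(s)>1$ follows from the Rankin--Selberg bound $\sum_{m\le X}|A(m)|^2\ll X^{1+\epsilon}$, the divisor bound $\tilde\sigma_{-v}(m')\ll (m')^\epsilon$, and the range restriction $m'<\ell_2 m_2$ which bounds the length of the inner sum.

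To continue meromorphically to $R_2$ and obtain the expansion \eqref{e:M3spec} for $\Re(s')>1$, I would substitute formula \eqref{e:D'exp}, which expresses $D'_{\rm finite}(w,v;\ell_2 m_2)$ as a gamma factor times $(\ell_2 m_2)^{1/2-w-v/2}(S'_{\rm cusp}+S'_{\rm cont})$ minus $D'(w,v;\ell_2 m_2)$, and then interchange the $m_2$-summation with the spectral sum over $u_j$ and the $z$-contour integral defining $S'_{\rm int}$. The interchange is justified by the uniform bounds \eqref{e:I0upper}--\eqref{e:I0upper2} on $D'$ in $m_2$ together with the spectral average \eqref{e:spec_bound}. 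After interchange, the cuspidal piece collapses $\sum_{m_2}\overline{b(m_2)\rho_j(-\ell_2 m_2)}/m_2^{s+k-1}$ into the shifted Rankin--Selberg convolution $L_{\ell_2}(s'+v/2,\bar g\otimes\bar u_j)$, while the continuous piece collapses the analogous sum against $\rho_\cusp(\tfrac12-z,-\ell_2 m_2)$ into the Rankin--Selberg convolution of $\bar g$ with the Eisenstein series $E_\cusp(\cdot,\tfrac12-z)$, which by the standard unfolding reduces to the explicit four-$\zeta$ product times $\tilde\cP_\cusp(s';z,v)$ appearing in \eqref{e:M3spec}. The $-D'$ correction summed against $\overline{b(m_2)}/m_2^{s+k-1}$ produces a rescaled copy of $\cM(s,w,v)$, whose Proposition \ref{prop:M1} decomposition cancels the $\Omega'$-type residual contributions of $S'_{\rm cont}$ term by term, leaving exactly the formula \eqref{e:M3spec}. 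For the pole analysis, the external factor $\G(1-w-v/2)\G(w+v/2)$ supplies the simple poles at $w+v/2=-r$, $r\ge 0$; the denominator $\zeta(2s'+v)$ contributes poles of the same order as its zeros; the potential cuspidal poles at $w+v/2=\tfrac12+it_j-r$ lie outside $R_2$ under the hypothesis $\Re(s')>1$ and hence do not appear.

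The principal obstacle is the local Euler-factor matching: verifying that $\sum_{m_2}\overline{B(m_2)}\rho_\cusp(\tfrac12-z,-\ell_2 m_2)/m_2^{s'+z-1/2}$ reproduces the precise rational expression $\tilde\cP_\cusp(s';z,v)$ given in \eqref{e:tildePcusp}. This is a case-by-case computation at each prime $p\mid N$, split by whether $p\mid a$ or $p\mid N/a$ and by whether $p\mid \ell_1\ell_2$, invoking the explicit $\rho_\cusp$ formula \eqref{e:rho_an} together with Hecke multiplicativity of $B(m)$. A secondary technical point is justifying the term-by-term cancellation between the $\Omega'$ residues and the $\Phi^{(1)}$ part of $\cM$, which follows from the fact that both originate from the same Eisenstein-spectrum residues and share matching polar planes in the $(s,w,v)$-space.
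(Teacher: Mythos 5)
Your opening identification of $\cM_3$ as the $\overline{b(m_2)}$-weighted Dirichlet series in $D'_{\rm finite}(w,v;\ell_2 m_2)$, and the convergence claim for $\Re(s)>1$, are fine, but the main step does not deliver the stated expansion. Substituting \eqref{e:D'exp} expresses $D'_{\rm finite}$ through the spectral data of $D'$ in the $w+v/2$ aspect, so after interchanging sums you obtain (a rearrangement of) the identity of Proposition \ref{prop:M2}: terms with $\G(w+v/2-\tfrac12\pm it_j)$, inner products $\overline{\<U,u_j\>}$, and $L_{\ell_2}(s'+v/2,\bar g\otimes\bar u_j)$, minus a copy of $\cM(s,w,v)$. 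That is structurally different from \eqref{e:M3spec}, whose terms carry $\G(s-\tfrac12\pm it_j)/\G(s)$, inner products $\<u_j,V\>$ and $\<E^*_\cusp(*,\tfrac12+it),V\>$, and the Rankin--Selberg series $L(s',\overline{u_j}\otimes E_1(*,(1+v)/2))$ of \eqref{e:RSuj}, i.e.\ a spectral expansion attached to the $m'$-variable paired with $s$, not to $w+v/2$. Your claimed ``term-by-term'' cancellation of the $\Omega'$ residual terms against $\Phi^{(1)}$ cannot rescue this: $\Phi^{(1)}$ belongs to the decomposition of $\cM$ valid only in $R_1$ ($\Re(s)<1/2-k/2$), incompatible with the regime you need for the $m_2$-sum, and the $\Omega'$ and $\Phi^{(1)}$ terms are residues of two different spectral decompositions (different inner products, different $L$-series), so no such cancellation produces \eqref{e:M3spec}. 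There is also a logical ordering problem: subtracting ``a rescaled copy of $\cM(s,w,v)$'' in $R_2$ presupposes the continuation of $\cM$ to $R_2$, which in the paper is established in Proposition \ref{prop:M2} \emph{using} the present proposition; your route is therefore circular unless you supply that continuation independently. Finally, the Euler-factor verification you flag as the main obstacle (summing $\overline{B(m_2)}\rho_\cusp(\tfrac12-z,-\ell_2m_2)$ over $m_2$) is the computation relevant to $L_{\ell_2}(s'+v/2,\bar g\otimes E_\cusp)$ in Proposition \ref{prop:M2}; for \eqref{e:M3spec} the relevant sum is over $m'$ against $\tilde\sigma_{-v}(m')$, which is literally the definition \eqref{e:tildePcusp} and needs no new matching.

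The missing idea is the paper's actual mechanism: for each fixed $m'\ge 1$, the inner $m_2$-sum $\sum_{m_2}a((\ell_2m_2-m')/\ell_1)\overline{b(m_2)}(4\pi\ell_2 m_2)^{-(s+k-1)}$ is recognized as $\<P(*,s;m'),V\>$ for the standard Poincar\'e series $P(z,s;m')=\sum_{\gamma\in\Gi\bk\G}(\Im\gamma z)^s e^{2\pi i m'\gamma z}$; one then inserts the spectral expansion of this inner product, evaluates $\<P(*,s;m'),u_j\>$ and $\<P(*,s;m'),E_\cusp(*,\tfrac12+it)\>$ explicitly as ratios of Gamma functions times $\overline{\rho_j(m')}$ (resp.\ Eisenstein Fourier coefficients), and only afterwards sums over $m'$ against $\tilde\sigma_{-v}(m')(m')^{-w-(k-1)/2}$, which assembles $L(s',\overline{u_j}\otimes E_1(*,(1+v)/2))$ in the cuspidal part and the four-zeta factor times $\tilde\cP_\cusp(s';it,v)$ in the continuous part. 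Without this unfolding step your argument never produces the objects appearing in \eqref{e:M3spec}, so as written the proof has a genuine gap.
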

\begin{proof}
The inner sum over $m_2$ is essentially an inner product of $f$ and $g$ with a standard Poincare series.   
	In particular, for $m'\geq 1$, if
$$
	P(z, s; m') 
	= 
	\sum_{\g\in\Gi\bk\G}(\Im \, \g z)^s e^{2 \pi i m' \g z},
$$
then 
$$
	\frac{\G(s+k-1) }{\vol}
	\sum_{m_2 \ge 1}\frac{a((\ell_2m_2 - m')/\ell_1)
	\overline{b (m_2)}}
	{(4 \pi\ell_2 m_2)^{s+k-1}} 
	= 
	\<P(*, s; m'),V \>.
$$
The inner product can also be expressed in terms of a spectral expansion
\begin{multline*}
	\<P(*, s; m'), V\>
	= 
	\sum_j \<P(*, s;m'),u_j \>\< u_j,V \>
	\\
 	+
	\frac{1}{4\pi} \sum_\cusp
	\int_{-\infty}^\infty  \<P(*, s; m'),E_\cusp (*; 1/2 +it) \>
	\<E_\cusp (*; 1/2 +it),V \>
	\; dt.
 \end{multline*}
A simple computation shows that
\begin{multline*}
 	\<P(*, s; m'),u_j \> 
	= 
	\frac{1}{\vol}
	\int_0^\infty 
	\frac{y^{s-1} \overline{\rho_j(m')}e^{-y}K_{it_j}(y)}
	{(2 \pi m')^{s -1/2}} \; \frac{dy}{y}
	\\
 	=
	\frac{ \overline{\rho_j(m')} \sqrt{\pi} 2^{1/2-s}\G(s-1/2 + it_j)\G(s-1/2 - it_j)}
	{\vol (2 \pi  m')^{s -1/2}\G(s)}
\end{multline*}
and that
\begin{multline*}
	\left<P(*, s; m'), E_\cusp(*; 1/2+it)\right>
	\\
	=
	\frac{\vol 2\pi^{\frac{1}{2}-it} \overline{\rho_\cusp (1/2+it, m')} |m'|^{-it}}
	{\vol \Gamma(1/2-it)}
	\frac{\sqrt{\pi} 2^{1/2-s}\G(s-1/2 + it)\G(s-1/2 - it)}
	{(2 \pi  m')^{s -1/2}\G(s)}
	.
\end{multline*}

Substituting these evaluations into \eqref{e:M3}, we have
\begin{multline*}
	\ell_1^{(k-1)/2}\ell_2^{1/2-s-k/2} 
	\frac{\G(1-w-v/2)\G(w+v/2)}{\G((1-v+k)/2)\G((1+v-k)/2} 
	\\
	\times
	\sum_{m' \ge 1}
	\frac{\tilde\sigma_{-v}(m')}{(m')^{w +(k-1)/2}}
	\sum_{m_2 \ge 1} \frac{a((\ell_2m_2 -\ell m')/\ell_1)\bar b (m_2)}{(\ell_2 m_2)^{s+k-1}}
	\\
	= 
	\ell_1^{(k-1)/2}\ell_2^{1/2-s-k/2} 
	\frac{\G(1-w-v/2)\G(w+v/2)}{\G((1-v+k)/2)\G((1+v-k)/2} 
	\frac{2^{2k-1} \pi^k }{\G(s+k-1)}
	\\
	\times
	\left\{
	\sum_j \frac{\G(s-1/2 + it_j)\G(s-1/2 - it_j)\< u_j,\bar fg y^k \>} 
	{ \G(s)}
	\right.
	L(s', \overline{u_j} \otimes E_1(*, (1+v)/2))
	\\
	+
	\frac{1}{4\pi} \sum_\cusp \int_{-\infty}^\infty 
	\frac{2\vol \G(s-1/2 + it)\G(s-1/2 - it) \left<E_\cusp^*(*, 1/2+it), \bar fgy^k\right>}
	{ \G(s)}
	\\
	\left.
	\times
	\frac{\zeta(s'+it) \zeta(s'-it) \zeta(s'+v+it) \zeta(s'+v-it) 
	\tilde\cP_\cusp(s'; it, v)}
	{\zeta^*(1-2it)  \zeta^*(1+2it) \zeta(2s'+v)}
	\; dt
	\right\}
\end{multline*}
where $\tilde{\cP}_\cusp(s'; it, v)$ is defined in \eqref{e:tildePcusp}.

The cuspidal part may contribute simple poles at $w+v/2 = -r$, $r\ge0$.  
The poles of the continuous contribution to the $m' \ge 1$ piece are analyzed in an identical way to those of the continuous contribution in the region $\Re \, s <1$, yielding the following:
\begin{multline*}
	\frac{1}{4\pi} \sum_\cusp \int_{-\infty}^\infty 
	\frac{2\G(s-1/2 + it)\G(s-1/2 - it) \left<E_\cusp^*(*, 1/2+it), \bar fgy^k\right>}
	{ \G(s)}
	\\
	\times
	\frac{\zeta(s'+it) \zeta(s'-it) \zeta(s'+v+it) \zeta(s'+v-it) 
	\tilde\cP_\cusp(s'; it, v)}
	{\zeta^*(1-2it)  \zeta^*(1+2it) \zeta(2s'+v)}
	\; dt
	\\
	=
	\sum_\cusp \frac{1}{2\pi i }  \int_{(0)}
	\frac{\G(s-1/2 + z)\G(s-1/2 - z) \left<E_\cusp^*(*, 1/2+z), \bar fgy^k\right>}
	{ \G(s)}
	\\
	\times
	\frac{\zeta(s'+z) \zeta(s'-z) \zeta(s'+v+z) \zeta(s'+v-z) 
	\tilde\cP_\cusp(s'; z, v)}
	{\zeta^*(1-2z)  \zeta^*(1+2z) \zeta(2s'+v)}
	\; dz
	\\
	+
	\sum_\cusp 
	\frac{1}{\zeta(2s'+v)}
	\left\{
	\frac{\G(s-s'+1/2)\G(s+s'-3/2) \left<E_\cusp^*(*, 3/2-s'), \bar fgy^k\right>}
	{ \G(s)}
	\right.
	\\
	\times
	\frac{\zeta(2s'-1) \zeta(v+1) \zeta(2s'+v-1) }
	{\zeta^*(-1+2s')  \zeta^*(3-2s')}
	\left(
	\tilde\cP_\cusp(s'; 1-s', v)- \tilde\cP_\cusp(s'; -1+s', v)
	\right)
	\\
	+
	\frac{\G(s-s'-v+1/2)\G(s+s'+v-3/2) \left<E_\cusp^*(*, 3/2-s'-v), \bar fgy^k\right>}
	{ \G(s)}
	\\
	\times
	\frac{\zeta(1-v)) \zeta(2s'+v-1))  \zeta(2s'+2v-1) }
	{\zeta^*(-1+2s'+2v))  \zeta^*(3-2s'-2v))}
	\left(
	\tilde\cP_\cusp(s'; 1-s'-v, v)
	-
	\tilde\cP_\cusp(s'; -1+s'+v, v)
	\right)
	\bigg\}
	.
\end{multline*}
The only potential polar lines in $R_2$ are at $2s'+v=2$, but the two polar contributions on this line cancel each other. 
The poles of the cuspidal part at $s = 1/2 + it_j -r$  lie outside of $R_2$.
\end{proof}

\begin{proof}[Proof of Proposition \ref{prop:M2}]
Recall that in the region $\Re(v)\ge 0$ and $\Re(w)>1$, the series 
	$D'(w, v; \ell_2m_2) \ll m_2^{(k-1)/2}$, and so  the expression for 
$\cM(s,w,v)$ given in \eqref{e:Mswv}, 
\be\label{e:Mswv2}
	\cM(s,w,v)
	=
	(\ell_1\ell_2)^{(k-1)/2}\sum_{{m_2\ge 1}}
	\frac{ \overline{b(m_2)} D'(w,v; \ell_2 m_2) }{(\ell_2 m_2)^{s+k-1}}
	,
\ee
converges absolutely for $\Re(s) > 1$. 

For $c< \Re\left(w+v/2\right) <2$, with $c< 1/2-k/2$, 
	$D'(w,v; \ell_2 m_2)+D_{\rm finite}'(w, v; \ell_2m_2)$ satisfies the upper bound of \eqref{e:I0upper2}.
	It follows that $\cM(s,w,v)$ converges for $\Re(s) > 3/2+\theta$ 
	and $1/2-k/2\leq \Re \left( w+v/2 \right) <2$.

Also, applying the upper bound \eqref{e:I0upper}, it follows that for 
$ \Re \left( w+v/2 \right) <1/2-k/2$, $\Re(s') \ge 1/2$, $\Re (v) >0$,
\be\label{wupper}
	\cM(s,w,v) + \cM_3(s, w, v)
	\ll 
	\frac{ \ell_2^{-s'-v/2 + \theta}
	(1+|v|)^{k/2}}{(1+|w|)^{w + k/2 -1}(1+|w+v|)^{w+k/2 -1+v}}.
\ee
Here $s'=s+w + k/2-1$.  
Substituting \eqref{e:D'exp} into \eqref{e:Mswv2} gives us
\begin{multline*}
	\cM(s,w,v) + \cM_3(s, w, v)
	= 
	(\ell_1\ell_2)^{(k-1)/2}\sum_{{m_2\ge 1}}
	\frac{ \overline{b(m_2)} }{(\ell_2 m_2)^{s+k-1}} 
	\\ 
	\times
	 \pi^{(k-v-1)/2}2^k (\ell_2 m_2)^{1/2 -w -v/2} 
	\frac{\G(1-w-v/2)\G(w+v/2)\G((k+1+v)/2)}{\G(w +(k-1)/2)\G(w +(k-1)/2 +v)}
	\\
	\times
	\left(S'_{\rm{cusp}}(w, v;\ell_2 m_2)+S'_{\rm{cont}}(w, v;\ell_2 m_2) \right) 
	,
\end{multline*}
with $S'_{\rm{cusp}}(w, v; \ell_2m_2)$, $S'_{\rm{cont}}(w, v; \ell_2m_2)$ given by \eqref{e:S'cusp} and \eqref{e:S'cont}.

The sum over $m_2$ can be brought inside,   
leading to
\begin{multline*}
	\cM(s,w,v)+\cM_3(s, w, v)
	=
	\ell_1^{(k-1)/2}\ell_2^{1-s- w -k/2-v/2} 2^{k}\pi^{k/2-1/2-v/2} 
	\\
	\times
	\frac{\G((k+1+v)/2)\G(w+v/2) \G(1-w-v/2)} {\G(w+(k-1)/2)\G(w+(k-1)/2+v)}
	\left(\cM^{(2)}_{\rm cusp} (s,w,v) +\cM^{(2)}_{\rm cont} (s,w,v) \right) 
	,
\end{multline*}
where
$$
	\cM^{(2)}_{\rm cusp} (s,w,v)
	=
	\sum_j
	L_{\ell_2}(s'+v/2, \bar g \otimes \bar u_j)\overline{\<U,u_j\>} 
	\frac{\G(w+v/2-1/2 +it_j)\G(w+v/2-1/2 - it_j)}{\G(1/2+it_j)\G(1/2-it_j)}
$$
and
$$
	\cM^{(2)}_{\rm cont}(s, w, v)
	=
	\cM^{(2)}_{\rm int}(s, w, v)
	+
	\Phi^{(2)}(s, w, v)
	.
$$
Here $\cM^{(2)}_{\rm int}(s, w, v)$ is defined in \eqref{e:M2int} 
and $\Phi^{(2)}(s, w, v)$ is defined in \eqref{e:Phi2}.

In the case of $\cM^{(2)}_{\rm cusp}(s, w, v)$, 
	the series $L_{\ell_2}(s'+v/2, \bar g \otimes \bar u_j)$ is defined by
$$
	L_{\ell_2}(s'+v/2, \bar g \otimes \bar u_j)
	= 
	\sum_{m_2 \ge 1}\frac{\overline{B(m_2)} \overline{\rho_j(-\ell_2 m_2)}}{m_2^{s'+v/2}}
$$
and equals the Rankin-Selberg convolution of $\bar g$ with $\bar u_j$, 
	up to the Euler factors corresponding to primes dividing $\ell_2$.  
The series   $L_{\ell_2}$ appearing in $\cM^{(2)}_{\rm cont}$ 
	is the Rankin-Selberg convolution of $\bar g$ with the Eisenstein series at cusps:
$$
	L_{\ell_2} \left(s'+\frac{v}{2}, \bar g \otimes E_\cusp\left(*, 1/2-z\right)\right)
	=
	\sum_{m_2\geq 1} \frac{\overline{B(m_2)} (\ell_2m_2)^{-z} \rho_\cusp\left(\frac{1}{2}-z, -\ell_2m_2\right)}
	{m_2^{s'+\frac{v}{2}}}
	.
$$

For $\Re(s'+v/2)<1$, we have
$$
	\left| L_{\ell_2} (s'+v/2, \bar g\otimes\overline{u_j}) \left<U, u_j\right>\right| 
	\ll
	|t_j|^{2-4\Re(s'+v/2) + \frac{k+\Re(v)}{2}-1}
	.
$$
For $\Re\, v \ge 0$, $\Re(w+v/2)< 1/2-k/2$, $\Re(w) > -2\Re(s) -\Re(v)/4 -3k/4 +5/2$, 
	the series and integral $\cM^{(2)}_{\rm cusp}$ and $\cM^{(2)}_{\rm cont}$ converge 
	absolutely. 

The only possible poles of  $\cM^{(2)}_{\rm cont}(s,w,v)$ are  
	the poles of $\Phi^{(2)}(s, w, v)$. 
We have now obtained the meromorphic continuation of $\cM(s, w, v)$ to $R_2$ 
	and will proceed to identify the poles of $\cM(s,w,v)$ in this region.  
	By the previous discussion, the expression for $\cM$ given in \eqref{e:M2} 
	converges absolutely in the part of $R_2$ with $\Re \left(w+v/2\right) <1/2-k/2$, 
	when a distance of $\epsilon$ from the polar points.   
	There are no other poles in $R_2$ with $\Re(s), \Re(w)>1$, and $\Re(v)\geq 0$, 
	as the series expansion in \eqref{e:Mswv2} converges absolutely there.  
Referring to \eqref{e:M2} - \eqref{e:M2cont} and Proposition~\ref{prop:dsums}, we see that these poles are at the following points:
\begin{itemize}
\item
$w+v/2 = 1/2 +it_j -r $ (from $\cM^{(2)}_{({\rm cusp})}$)
 \item
 The points $w + v/2 = -r$, with $B >r \ge 0$ (from $\Gamma(w+v/2)$).
\end{itemize}

Writing 
$$
	e_{j,r} := \Res_{w+v/2= 1/2 +it_j -r } \cM(s,w,v),
$$
we find that
\begin{multline}\label{ejrdef}
	e_{j,r}
	=
	\frac{\G((k+1+v)/2)\G(1/2 +it_j -r )\G(1/2 -it_j +r )}{\G(1/2 +it_j-r-v/2)\G(1/2 +it_j-r+v/2)}
	\\ 
	\times 
	\frac{(-1)^r\G(2it_j-r)\overline{\<U,u_j\> }
	L_{\ell_2}(s+k/2-1/2 -r  +it_j,\bar g \otimes \bar u_j)}{r!\G(1/2+it_j)\G(1/2-it_j)}.
 \end{multline}
\end{proof}

\subsection{Proof of Theorem \ref{t:triple}}

Our goal is to now relate  $\cM(s,w,v)$ to an entire function on $R_1$ by  subtracting off the cuspidal polar contribution and then multiplying to clear the polar lines.   
Specifically, as mentioned in Proposition \ref{prop:M1}, 
	the poles of $\cM(s,w,v)$ arising from the cuspidal contribution occur in the same places as the poles of $D(s;n)$, i.e at the points $s = 1/2 + it_j -r$, for $r \ge 0$.
Setting
$$
	R^{(2)}(1/2 -r + it_j, w ,v) = \Res_{s= 1/2 -r + it_j} \cM(s,w,v)
,	
$$
we compute
\begin{multline}\label{Pres}
	R^{(2)}(1/2 -r + it_j, w ,v)
	\\ 
	=
	(\ell_1\ell_2)^{\frac{k-1}{2}}
	\frac{2^{2k-1}\pi^{k}L\left(s', \overline{u_j} \otimes E_1\left(*, (v+1)/2\right)\right)}
	{\Gamma(k-1/2 -r + it_j)}
	\overline{\<V,u_j\>} 
 	\frac{(-1)^r\Gamma(1/2 - it_j +r) \Gamma ( 2 it_j - r) }
	{r ! \Gamma (1/2 + it_j)	\Gamma(1/2 - it_j)}
\end{multline}
and let
\be\label{P2def}
	P^{(2)}(s,w,v) 
	= 
	\sum_{r <A}\sum_j 
	\frac{R^{(2)}(1/2 -r + it_j, w ,v) e^{(1/2 -r + it_j)^2}}{s-(1/2 -r + it_j)}
	.
\ee
The absolutely convergent series $P^{(2)}(s,w,v)$ now has poles and residues at those poles in the region $R_1$ that are identical to the poles of $\cM(s,w,v) e^{(1/2 -r + it_j)^2}$.  
Thus the function
$$
	\cM(s,w,v) e^{s^{2}} - P^{(2)}(s,w,v)
$$ 
has only potential polar lines arising from $\Phi^{(1)}(s, w, v)$. 
Define $P^{(3)}(s, w, v)$ to be the product canceling polar lines and zeta functions of $\Phi^{(1)}(s, w, v)$, discussed above, along with the lines canceling the poles of the zeta function. 
	Then
$$
	P^{(3)}(s, w, v) \left(\cM(s, w, v) e^{s^2} - P^{(2)}(s, w, v)\right)
$$
is holomorphic in $R_1$.

By the discussion in the proof of Proposition \ref{prop:M2}, 
	the expression for $\cM(s, w, v)$ given in \eqref{e:M2} 
	converges absolutely in the part of $R_2$ with $\Re \left(w+v/2\right) <1/2-k/2$, 
	when a distance of $\epsilon$ from the polar points.   
	There are no other poles in $R_2$ with $\Re(s), \Re(w)>1$, and $\Re(v)\geq 0$, 
	as the series expansion in \eqref{e:Mswv2} converges absolutely there.  
	Let $\kappa$ run over the poles of $\cM(s,w,v)$ in $R_2$ and set
\be\label{Pres2}
	R^{(3)}(s, \kappa ,v) 
	:= 
	\Res_{w= \kappa} \cM(s,w,v)
	.
\ee
Referring to \eqref{e:M2} - \eqref{e:M2cont} and Proposition~\ref{prop:dsums}, we see that these poles are at the following points:
\begin{itemize}
\item
$w+v/2 = 1/2 +it_j -r $ (from $\cM^{(2)}_{({\rm cusp})}$)
 \item
 The points $w + v/2 = -r$, with $B >r \ge 0$ (from $\Gamma(w+v/2)$).
\end{itemize}

Let
\be\label{P4def}
	P^{(4)}(s,w,v) 
	= 
	\sum_{r<B}\sum_{\kappa=1/2 +it_j -r -v/2} \frac{R^{(3)}(s,\kappa ,v) e^{\kappa^2}}{w-\kappa},
\ee
	(see \eqref{ejrdef} for a description of $R^{(3)}(s, 1/2 +it_j -r -v/2 ,v)$).
	This  series  has poles and residues at  poles in the region $R_2$ 
	that are identical to those of $\cM(s,w,v)$ and converges absolutely.  
	Thus the function
$$
	\cM(s,w,v)e^{w^2} - P^{(4)}(s,w,v)
$$ 
has only potential polar lines arising from $\Phi^{(2)}(s, w, v)$. 
	Define $P^{(5)}(s, w, v)$ to be the product canceling polar lines and zeta functions of $\Phi^{(2)}(s, w, v)$, along with the lines canceling the poles of the zeta function, multiplied by $\prod_{0\le r <B}(w+v/2+r)$.
	Then 
$$
	P^{(5)}(s, w, v) \left(\cM (s, w, v)e^{w^2} - P^{(4)}(s, w, v)\right)
$$
is holomorphic in $R_2$. 

We have shown that the function
$$
	P^{(3)}(s, w, v) \left( \cM(s,w,v)e^{s^2} - P^{(2)}(s,w,v))\right)
 $$
	is holomorphic in the region $R_1$, where $P^{(2)}(s, w, v)$ is defined in \eqref{P2def}. 
	
The function $P^{(4)}(s,w,v)$, defined in \eqref{P4def}, has no poles in $R_1$ and the function $P^{(2)}(s,w,v)$ has no poles in $R_2$.   
	Thus 
\be\label{R4}
	P^{(3)}(s, w, v) P^{(5)}(s, w, v)
	\left( \cM(s,w,v)e^{s^2+w^2}-P^{(2)}(s,w,v)e^{w^2} - P^{(4)}(s,w,v)e^{s^2}\right)
\ee
is holomorphic in $R_1 \cup R_2$.     
	As  $A,B \rightarrow \infty$ the convex hull of  
$R_1 \cup R_2$ includes all of $\C^3$ such that $\Re \, v > 0$.     
Applying the theorem of B\"ochner, we have completed the proof of the meromorphic continuation of 
$\cM(s,w,v) $.  
Summarizing the above, we have proved Theorem \ref{t:triple}.

\section{An average special value estimate}\label{s:svest}

Define
$$
	\tilde\cM(s,w,v)
	:=
	\cM(s, w, v)- \Phi^{(1)}(s, w, v)
	,
$$ 
where $\Phi^{(1)}(s, w, v)$ is given in \eqref{e:Phi1}.
In this section we will estimate an average of the special value $\tilde\cM(1-k/2,1/2,v)$ 
by writing it as an integral around a boundary where the spectral expansions for $\cM(s,w,v)$ converge.  
The integral transform of $\tilde M(1-k/2, 1/2, v)$ is of particular interest 
because it represents a smoothed average of special values $\tilde Z_q(1-k/2, 1/2)$ over an interval of length $Y/y$, centered at $Y$:
$$
	\sum_{q\geq 1, \atop (q, N_0)=1} \tilde Z_q(1-k/2, 1/2; f, g; \ell_1, \ell_2) e^{-\frac{y^2 \left(\log \frac{Y}{q}\right)^2}{4\pi}} 
	=
	\frac{1}{2\pi i} \int_{(2)} 
	\tilde\cM(1-k/2, 1/2,v; \ell_1, \ell_2)
	\frac{e^{\pi v^2/y^2}}{y} Y^v\; dv
	.
$$

We will prove the following proposition via an estimate for the average of the special value $\tilde\cM(1-k/2, 1/2, v)$. 
\begin{prop}\label{prop:Mest}
	For $Y, y \gg 1$ and square free $\ell_1, \ell_2 \geq 1$, with $(\ell_1\ell_2, N_0)=1$, we have
\be\label{e:Mest_1}
	\sum_{q\geq 1, \atop (q, N_0)=1} \tilde Z_q(1-k/2, 1/2; f, g; \ell_1, \ell_2) e^{-\frac{y^2 \left(\log \frac{Y}{q}\right)^2}{4\pi}} 
	\ll 
	Y^{1/2} (\ell_1\ell_2)^\epsilon
	,
\ee
for any $\epsilon>0$.

Moreover, we have
\be\label{e:Mest_2}
	\sum_{q\geq 1, \atop (q, N_0)=1} \tilde Z_q(1-k/2, 1/2; f, g; \ell_1, \ell_2) e^{-\frac{y^2 \left(\log \frac{Y}{q}\right)^2}{4\pi}} 
	=
	\cO( y^{\epsilon} (\ell_1\ell_2)^{1/4+\epsilon})
	, 
\ee
for any $\epsilon>0$. 
\end{prop}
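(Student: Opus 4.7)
The plan is to realize the smoothed sum on the left-hand side of \eqref{e:Mest_1} and \eqref{e:Mest_2} as a contour integral against $\tilde\cM(1-k/2,1/2,v)$. The Gaussian Mellin identity
$$\frac{1}{2\pi i}\int_{(c)}\frac{e^{\pi v^2/y^2}}{y}\left(\frac{Y}{q}\right)^v dv = e^{-y^2(\log(Y/q))^2/(4\pi)}$$
coupled with the Dirichlet expansion $\sum_q \tilde Z_q(1-k/2,1/2)q^{-v} = \tilde\cM(1-k/2,1/2,v)$ (valid where the latter is absolutely convergent) gives, after interchange,
$$\sum_{q,\,(q,N_0)=1}\tilde Z_q(1-k/2,1/2;f,g;\ell_1,\ell_2)\,e^{-y^2(\log(Y/q))^2/(4\pi)} = \frac{1}{2\pi i}\int_{(c)}\tilde\cM(1-k/2,1/2,v;\ell_1,\ell_2)\frac{e^{\pi v^2/y^2}}{y}Y^v\,dv$$
for $c$ sufficiently large. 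The Gaussian factor provides rapid decay for $|\Im v|\gg y$, so the essential support of the integral is a vertical interval of height $O(y)$. Both \eqref{e:Mest_1} and \eqref{e:Mest_2} will be obtained by shifting the $v$-contour to the left, collecting residues, and bounding the remaining integral on the new line.

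The first step is to locate the poles of $\tilde\cM(1-k/2,1/2,v)$ in $0<\Re v\leq 2$. At $(s,w)=(1-k/2,1/2)$ we have $s'=s+w+k/2-1=1/2$, and inspection of Proposition~\ref{prop:M1} shows that $\Phi^{(1)}$ contributes a simple pole in $v$ at $v=1$ coming from the $2s'+v=2$ polar plane, with at most finitely many further candidate poles at integer points in $0<\Re v\leq k/2$ arising from the Gamma-function products; zeta-function denominators cancel most of these, leaving $v=1$ as the dominant crossed pole. Since $(1-k/2,1/2)$ lies outside both $R_1$ and $R_2$, access to $\tilde\cM$ there is only via the Bochner tube-domain continuation \eqref{R4} at the end of Section~\ref{s:triple}, using the polynomial clearing $P^{(3)}P^{(5)}$. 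The residue at each pole is computed from \eqref{e:Phi1}: the inner product $\overline{\langle V,E_\cusp^*\rangle}$ evaluated via Lemma~\ref{lem:innerprod_cusp} yields a factor $\ell_1^{(k-1)/2}\ell_2^{-1/2+\epsilon}$, and summing over cusps with the weight $Y$ from $Y^v|_{v=1}$ produces a residue of size $\ll Y^{1/2}(\ell_1\ell_2)^\epsilon$, matching \eqref{e:Mest_1}.

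For the shifted integral, I shift to $\Re v = 1/2+\epsilon$ in the proof of \eqref{e:Mest_1}. On this line I bound $\tilde\cM(1-k/2,1/2,v)$ by a Phragm\'en--Lindel\"of argument in $(s,w,v)$, interpolating between the upper bound \eqref{supper} from Proposition~\ref{prop:M1} (available on the boundary of $R_1$) and the analogous estimate derivable from Proposition~\ref{prop:M2} (on the boundary of $R_2$). Since the Gaussian $e^{\pi v^2/y^2}$ absorbs polynomial growth in $|\Im v|$, the resulting bound on the shifted integral is subsumed by the residue bound, giving \eqref{e:Mest_1}. For the sharper \eqref{e:Mest_2}, I invoke the $\theta$-free refinement of \eqref{Z2} valid when $\ell_1,\ell_2\gg Q^{1/4}\log Q$, which permits a further leftward shift to $\Re v=\epsilon$; the Rankin--Selberg inner product factors at the cusps then deliver an $(\ell_1\ell_2)^{1/4+\epsilon}$ factor instead of a $Y$-growth factor, yielding \eqref{e:Mest_2}.

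The principal obstacle is that the point $(s,w)=(1-k/2,1/2)$ lies in the ``gap'' between $R_1$ and $R_2$, so $\tilde\cM$ admits no direct spectral expansion there. All required upper bounds must be obtained indirectly through several-variable Phragm\'en--Lindel\"of applied between the two bounding regions, while carefully tracking the auxiliary polynomial factors $P^{(3)}P^{(5)}$ introduced in the Bochner continuation to ensure they do not produce parasitic growth in $|\Im v|$, $\ell_1$, or $\ell_2$. Maintaining sharp uniformity in $(Y,y,\ell_1,\ell_2)$ through this interpolation, and verifying that the candidate poles at $v=2,3,\ldots,k/2$ either cancel or contribute subdominant residues, are the essential technical challenges.
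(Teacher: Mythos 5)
Your opening step (the Gaussian--Mellin identity turning the sum over $q$ into $\frac{1}{2\pi i}\int \tilde\cM(1-k/2,1/2,v)\,e^{\pi v^2/y^2}Y^v\,dv/y$) is exactly how the paper begins, but your pole analysis after that is incorrect. By definition $\tilde\cM := \cM-\Phi^{(1)}$, so the polar planes of $\Phi^{(1)}$ -- in particular $2s'+v=2$, i.e.\ $v=1$ at $s'=1/2$ -- are precisely the ones that have been subtracted away; by Proposition~\ref{prop:M1} and Theorem~\ref{t:triple} the surviving poles relevant to the $v$-integral are at $w+v/2=1/2+it_j-r$ and $w+v/2=-r$, i.e.\ at $v=2it_j-2r$ and $v=-1-2r$, none of which lie in $\Re(v)\geq 1/2$. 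Moreover your claim that a residue at $v=1$ ``produces a residue of size $\ll Y^{1/2}$'' is internally inconsistent: a residue is independent of $Y$ and $Y^v|_{v=1}=Y$, so a genuine simple pole at $v=1$ would force a term of exact order $Y$, contradicting \eqref{e:Mest_2}, whose right-hand side has no $Y$ at all. In the paper no residue in $v$ is collected anywhere: \eqref{e:Mest_1} comes from simply placing the $v$-line at $\Re(v)=1/2$, where $|Y^v|=Y^{1/2}$ and the relevant cuspidal $L$-factors sit on the $1$-line (Lemma~\ref{lindav}, first bound).

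The deeper gap is how you access $\tilde\cM$ at $(s,w)=(1-k/2,1/2)$ and how you reach \eqref{e:Mest_2}. The paper does not interpolate between $R_1$ and $R_2$ by Phragm\'en--Lindel\"of (the introduction stresses that inside this strip one cannot do better than convexity); instead it restricts to the line $w=3/2-k/2-s$ (so $s'=1/2$), forms $\hat\cM(s,v)$, and recovers the special value from the identity $I_1(v)=\hat\cM(1-k/2,v)e^{(1-k/2)^2}+T_1(v)+T_2(v)+I_2(v)$, where $I_1$ is an $s$-integral on a line inside $R_2$, $I_2$ on a line inside $R_1$, and $T_1,T_2$ are spectral residue sums; each piece is then integrated in $v$ using the expansions of Propositions~\ref{prop:M1}, \ref{prop:M2} and \ref{prop:dsums}. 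The decisive ingredient for \eqref{e:Mest_2} is the second bound of Lemma~\ref{lindav}: integrate by parts in $v$ against the Gaussian and estimate $\int_0^T|L(1/2+i\gamma+it,\overline{\tilde u_j})|^2\,dt$ by Montgomery's mean value theorem; this is where $y^{\epsilon}$ and $(\ell_1\ell_2)^{1/4+\epsilon}$ (from the Dirichlet-polynomial length $\sqrt{\ell_1\ell_2(\cdots)}$) come from, and it is what removes the $Y$-growth. Your substitute -- shifting to $\Re(v)=\epsilon$ and invoking the $\theta$-free refinement of \eqref{Z2} valid for $\ell_1,\ell_2\gg Q^{1/4}\log Q$ -- does not work: that hypothesis is never satisfied here ($\ell_1,\ell_2$ are fixed divisors of the sieving modulus while $q\sim Y\to\infty$), the line $\Re(v)=\epsilon$ passes arbitrarily close to the poles $v=2it_j$ of $\tilde\cM(1-k/2,1/2,\cdot)$, and no pointwise bound of convexity strength on that line yields $(\ell_1\ell_2)^{1/4+\epsilon}y^{\epsilon}$; an averaged ($L^2$ in $t$) estimate for the $GL(2)$ $L$-series on the critical line is the missing idea.
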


\begin{proof}
The meromorphic continuation of $\cM(s, w, v)$ has established that $\cM(s, w, v)$ has at most polynomial growth in vertical strips in the variables $s$, $w$ and $v$. Consequently, the integral converges absolutely. 

For $\nu >0$, let $v = \nu + i\gamma$.
Recall that by the description of polar planes in Proposition~\ref{prop:M1} and Theorem \ref{t:triple}, $\tilde\cM(s, w, v)$ 
has potential poles only at $s=1/2-r+it_j$, and at $w+v/2 = 1/2+it_j -r$, and $w+v/2= -r$, for $r\geq 0$. 
It follows that for $1/2-k/2-\epsilon \le \Re \, s \le 1+3\epsilon/2$, 
$$
	\cM_P(s,v):=\prod_{0 \le r <k/2}(3/2-k/2-s+v/2+r)\tilde\cM(s, 3/2-k/2-s,v)
$$
has its poles at $s=1/2-r+it_j$ and at $s = 1+r-k/2+v/2-it_j$.  The residues at these poles are described in \eqref{Pres} and \eqref{ejrdef}.
Note that 
$$
\hat\cM(s,v)=\frac{\cM_P(s,v)}{\prod_{0 \le r <k/2}(1/2+v/2+r)}
$$
has poles in the same locations, and satisfies 
$$
\hat\cM(1-k/2,v) = \tilde\cM(1-k/2, 1/2,v).
$$

Set
$$
	I_1(v) 
	:= 
	\frac{1}{2 \pi i} \int_{(1+\nu/2+ \epsilon)} \frac{\hat\cM(s, v)e^{s^2}}{s-1+k/2} \; ds
	.
$$
As $\hat\cM(s,v)$ has at most polynomial growth in $s$ we may move the line of integration, obtaining 
$$
	I_1(v) 
	=
	\hat \cM(1-k/2,v)e^{(1-k/2)^2}+ T_1(v)+T_2(v) + I_2(v),
$$
where
$$
	I _2(v)
	= 
	\frac{1}{2 \pi i} \int_{(1/2-k/2-\epsilon)} \frac{\hat\cM(s,v)e^{s^2}}{s-1+k/2}
	\; ds
	,
$$
$$
	T_1(v)
	=
	\sum_{r, t_j}\frac{\Res_{s=1/2-r+it_j}\hat\cM(s,v)e^{(1/2-r+it_j)^2}}{-1/2-r+k/2+it_j}
	,
$$
and
$$
	T_2(v)
	=
	\sum_{r, t_j}\frac{\Res_{s= 1+r-k/2+v/2-it_j}\hat\cM(s,v)e^{( 1+r-k/2+v/2-it_j)^2}}{ r+v/2-it_j}
	.
$$
To estimate
$$
	\frac{1}{2\pi i} \int_{(2)} 
	\tilde\cM(1-k/2, 1/2,v; \ell_1, \ell_2)
	\frac{e^{\pi v^2/y^2}}{y} \left(\frac{Q}{\ell}\right)^v\; dv,
$$
it will suffice to estimate
$$
\frac{1}{2\pi i} \int_{(2)} I_1(v)
	\frac{e^{\pi v^2/y^2}}{y} \left(\frac{Q}{\ell}\right)^v\; dv,
	\quad
	\frac{1}{2\pi i} \int_{(2)} I_2(v)
	\frac{e^{\pi v^2/y^2}}{y} \left(\frac{Q}{\ell}\right)^v\; dv,
$$
\be\label{e:T12}
\frac{1}{2\pi i} \int_{(2)} T_1(v)
	\frac{e^{\pi v^2/y^2}}{y} \left(\frac{Q}{\ell}\right)^v\; dv, \quad \text{and} \quad
\frac{1}{2\pi i} \int_{(2)} T_2(v)
	\frac{e^{\pi v^2/y^2}}{y} \left(\frac{Q}{\ell}\right)^v\; dv.
\ee
To accomplish this, the following will be useful:
\begin{lemma}\label{lindav}
Let $F$ be a function of $v$, analytic  for $0\le\Re \, v <3$, such that $F(it) \ll  (1+|t|)^\epsilon$ for $t$ real.  Then
\be\label{e:Lest_1}
\int_{(2)} \frac{L(1/2 +i\gamma+v,\overline{\tilde u_j}) e^{\pi v^2/y^2} F(v)X^vdv}{y}
	\ll 
	(\ell_1\ell_2)^{\epsilon'}X^{1/2}
\ee
for any $\epsilon'>0$, 
where $\tilde u_j$ is a new form, normalized so the first Fourier coefficient equals 1, such that all but finitely many Hecke eigenvalues agree with those of $u_j$. 

Moreover, we have
\be\label{e:Lest_2}
	\int_{(2)} \frac{L(1/2 +i\gamma+v,\overline{\tilde u_j}) e^{\pi v^2/y^2} F(v)X^vdv}{y}
	\ll 
	(\ell_1\ell_2)^{1/4+\epsilon'} y^\epsilon(1 + |\gamma|)^{1/2+ \epsilon}(1+|t_j|)^{1/2+ \epsilon}
\ee
for any $\epsilon'>0$.
\end{lemma}
\begin{proof}
The first part of the lemma, \eqref{e:Lest_1}, follows immediately upon moving the $v$ line of integration to $\Re(v)=1/2$. 

To prove the second part of the lemma,  we will integrate by parts.  
Set 
$$
	F_1 (v)= e^{\pi v^2/y^2}
$$
and
$$
	F_2(v) = \int_0^v \frac{L(1/2 +i\gamma+v' ,\overline{\tilde u_j}) F(v') X^{v'}dv'}{y}.
$$
Then 
$$
	I(y)
	= 
	\int_{2-i\infty}^{2+i\infty} F_1(v)dF_2(v)dv = F_1(v)F_2(v)|_{2-i\infty}^{2+i\infty}-
 \int_{2-i\infty}^{2+i\infty} F_2(v) F_1'(v)dv.
$$
The boundary term vanishes as  $e^{\pi v^2/y^2}$ decays exponentially in the imaginary part of $v$.    
	Moving the line of integration to $\Re \, v = 0$ and differentiating $F_1$ we now have
$$
I(y) = -\int_{-i\infty}^{i\infty}\frac{2vF_2(v)e^{\pi v^2/y^2}dv}{y^2}.
$$
As $v = it$ and  $f(it) \ll |t|^\epsilon $,
it follows that 
$$
	F_2(it) X^{it} \ll |t|^\epsilon.
$$
Then, by Cauchy-Schwarz,
\begin{multline*}
	|F_2(iT)| 
	\ll 
	\frac{(1+|t|)^\epsilon}y \int_0^T|L(1/2 +i\gamma+it,\overline{\tilde u_j})|dt 
	\\ 
	 \ll 
	 \frac{T^{1/2+\epsilon}}{y}\left(\int_0^T |L(1/2 +i\gamma+it,\overline{\tilde u_j})|^2dt\right)^{1/2}.
\end{multline*}

The weak Lindel\"of on average result
$$
	\int_0^T |L(1/2 +i\gamma+it,\overline{\tilde u_j})|^2dt 
	\ll 
	\left(T (\ell_1\ell_2)^{1/2} (1 + |\gamma|)(1+|t_j|)\right)^{1+ \epsilon}
$$
is easily obtainable by an application of Montgomery's mean value theorem for Dirichlet polynomials (after replacing the $L$-series by a Dirichlet polynomial of length approximately 
$\sqrt{\ell_1\ell_2 ( \gamma + T + t_j)( \gamma + T - t_j)}$).

It follows from this  that 
\begin{multline*}
	I(y)  
	\ll 
	(\ell_1\ell_2)^{\frac{1}{4}+\epsilon} (1 + |\gamma|)^{1/2+ \epsilon}(1+|t_j|)^{1/2+ \epsilon} \int_{-\infty}^\infty \frac{t^{2+\epsilon}e^{-\pi t^2/y^2}}{y^3}dt
	\\
	 \ll 
	 (\ell_1\ell_2)^{1/4+\epsilon} y^\epsilon(1 + |\gamma|)^{1/2+ \epsilon}(1+|t_j|)^{1/2+ \epsilon}.
\end{multline*}
\end{proof}

To estimate 
$$
	\frac{1}{2\pi i} \int_{(2)} I_1(v)
	\frac{e^{\pi v^2/y^2}}{y} \left(\frac{Q}{\ell}\right)^v\; dv
	,
$$
we will use Propositions~\ref{prop:M2} and ~\ref{prop:dsums} as the integration in $I_1(v)$ takes place in $R_2$.  
The contribution to $\tilde\cM(s,w,v)$ in this range comes from $\cM^{(2)}_{\rm cusp} (s,w,v)$, $\cM^{(2)}_{\rm cont} (s,w,v)$ and $\cM_3(s, w, v)$, described in \eqref{e:M2}, \eqref{e:M2cusp}, \eqref{e:M2cont} and Proposition \ref{prop:dsums}. 
The integrand of $I_1(v)$ decays exponentially outside of the range $|\Im(s)|<1$. 
	The sum over $it_j$'s in $\cM^{(2)}_{\rm cusp}$ converges absolutely in this range, with exponential decay in the imaginary part of $v$, 
	We find that for $\Re(s) = 1+\nu/2+\epsilon$, 
$$
	\cM^{(2)}_{\rm cusp}(s, -s-k/2+3/2, v) 
	\ll
	e^{-\frac{\pi|\Im(v)|} {2}} (\ell_1\ell_2)^{1/4+\epsilon}
	.
$$
The same applies to $\cM^{(2)}_{\rm cont}(s, -s-k/2+3/2, v)$. 

To estimate the piece of double integral involving with $\cM_3$, 
	we note that the sum and the integral in \eqref{e:M3spec} converge absolutely. 
	Therefore, we may exchange the order of the $v$ and $s$ integrations, bringing the $v$ integral 
	inside. 

The Rankin-Selberg $L$-series,  
$L(s', \overline{u_j} \otimes E_1(*, (1+v)/2))$, 
defined in \eqref{e:RSuj}, appears as a factor of the term of $\cM_3$ corresponding to $t_j$. 
This Dirichlet series can be factored as follows:
\begin{multline*}
	\sum_{n\geq 1} \frac{\overline{\rho_j(-n)} \tilde\sigma_{-v}(n)}{n^{s'}}
	=
	\left( \sum_{n\mid (\ell_1 N_0)^\infty} \frac{\overline{\rho_j(-n)} \tilde\sigma_{-v}(n)}{n^{s'}} \right)
	L^{(\ell_1 N_0)} (s', \overline{\tilde u_j}) L^{(\ell_1 N_0)} (s'+v, \overline{\tilde u_j})
	\\
	=
	\cP_j(s', v; \ell_1 N_0)
	L (s', \overline{\tilde u_j}) L(s'+v, \overline{\tilde u_j})
	,
\end{multline*}
where $\tilde u_j$ is a new form, normalized so the first Fourier coefficient equals 1, such that all but finitely many Hecke eigenvalues agree with those of $u_j$. 

We now apply \eqref{e:Lest_2}, with the $L$-series equal to $L(1/2+v, \overline{\tilde u_j})$ 
and 
$$
	F(v) 
	= 
	\frac{\Gamma(1-w-v/2) \Gamma(w+v/2) } {\Gamma((1-v+k)/2) \Gamma((1+v-k)/2)}
	\cP_j(1/2, v; \ell_1 N_0)
	.
$$
Then $F(v)$ satisfies the condition of the Lemma, giving us
$$
	\left(\frac{1}{2\pi i} \right) 
	\int_{(2)} \int_{(2+\epsilon)} \frac{\cM_3(s, -s-k/2+3/2, v) e^{s^2}}{s-1+k/2} \; ds 
	\frac{e^{\pi v^2/y^2}}{y} \left(\frac{Q}{\ell}\right)^v \; dv
	\ll 
	y^\epsilon (\ell_1\ell_2)^{1/4+\epsilon}
	. 
$$
Combining the proceeding upper bounds gives us
$$
	\frac{1}{2\pi i} \int_{(2)} I_1(v)
	\frac{e^{\pi v^2/y^2}}{y} \left(\frac{Q}{\ell}\right)^v\; dv
	\ll 
	y^\epsilon (\ell_1\ell_2)^{1/4+\epsilon}
	.
$$

To demonstrate
$$
	\frac{1}{2\pi i} \int_{(2)} I_2(v)
	\frac{e^{\pi v^2/y^2}}{y} \left(\frac{Q}{\ell}\right)^v\; dv
	\ll 
	y^\epsilon (\ell_1\ell_2)^{1/4+\epsilon}
	,
$$
we will use Proposition \ref{prop:M1}, as the integration in $I_2(v)$ takes place in $R_1$. 
	The contribution to $\tilde\cM(s, w, v)$ in this range comes from 
	$\cM_{\rm cusp}^{(1)}(s, w, v)$ and $\cM_{\rm int}^{(1)}(s, w, v)$, 
	described in \eqref{e:M1cusp} and \eqref{e:M1int}. 
	The corresponding upper bound is obtained in a way precisely analogous to the 
	computation involving $\cM_3$. 

The sum $T_2(v)$ behaves similarly to $\cM^{(2)}_{\rm cusp} (s, -s-k/2+3/2, v) +\cM^{(2)}_{\rm cont}(s, -s-k/2+3/2, v)$ 
and $T_1(v)$ behaves similarly to $\cM^{(1)}_{\rm cusp}(s, -s-k/2+3/2, v) +\cM^{(1)}_{\rm int}(s, -s-k/2+3/2, v)$, 
leading to corresponding upper bounds for the two integrals in \eqref{e:T12}.
This completes the proof of the second part of the proposition, \eqref{e:Mest_2}. 
Applying \eqref{e:Lest_1}, instead of \eqref{e:Lest_2}, gives us the first part of the proposition, \eqref{e:Mest_1}.

\end{proof}

\subsection{Averaging over $q$}

Let $N_0$ be a square-free positive integer.
Let $f$ and $g$ be holomorphic cusp forms of even weight $k$, 
which are newforms for $\Gamma_0(N_0)$, 
with normalized Fourier coefficients $A(m)$, $B(m)$, as in \eqref{def1}, so that $A(1) = B(1)=1$.
Assume that $f$ and $g$ are eigenfunctions for all Hecke operators.

Fix square free integers $\ell_1, \ell_2\geq 1$ such that $(N_0, \ell_1\ell_2)=1$. 
For $q\geq 1$ with $(q, N_0)=1$, define
\be\label{e:tildeZ}
	\tilde Z_q(s, w; \ell_1, \ell_2)
	:=
	Z_q(s, w; \ell_1, \ell_2) - \Psi_q(s, w; \ell_1, \ell_2)
\ee
where $\Psi_q$ is given in \eqref{e:Psi0}. 

As given in Theorem \ref{theorem:SQ}, define
\begin{multline}\label{e:H1qfg}
	H^{(1)}_{f, g}(Q; \ell_1, \ell_2)
\\	:=
	\frac{1}{4}
	\frac{(\ell_1, \ell_2)}{\ell_1\ell_2}
	L(1, f\otimes g)
	\sum_{a\mid N} 
	\left\{
	\left(\frac{1}{2^{r(N)}} + \frac{1}{Q} \prod_{p\mid a, p^\alpha\|Q , \atop \alpha\geq 0} (p^\alpha-1) \prod_{p^\alpha\|Q, p\nmid N, \atop \alpha\geq 1} p^\alpha\right)
	\right.
	\\
	\times
	\left(\frac{N_0}{(a, N_0)}\right) A\left(\frac{N_0}{(a, N_0)}\right) 
	\overline{B\left(\frac{N_0}{(a, N_0)}\right)}
	\\
	\left.
	\times
	\left(\prod_{p\mid \frac{\ell_1 (a, \ell_2)}{(\ell_1, \ell_2) (a, \ell_1)}}
	\frac{ A(p)-B(p)p^{-1}}{1-p^{-2}}\right)
	\left( \prod_{p\mid \frac{\ell_2 (a, \ell_1)}{(\ell_1, \ell_2) (a, \ell_2)}}
	\frac{ B(p)-A(p)p^{-1}}{1-p^{-2}} \right)
	\right\}
	,
\end{multline}
if $f\neq g$.  
Here $N= N_0 \frac{\ell_1\ell_2}{(\ell_1,\ell_2)}$.  
Then, for $Q\geq 1$ with $(Q, N_0)=1$, define
\be\label{e:H1fg}
	H^{(1)}_{f, g}(Q)
	:=
	\sum_{d\mid Q} \frac{\mu(d)}{d} 
	\sum_{d_1, d_2\mid d} \mu(d_1) \mu(d_2) A(d/d_1) \overline{B(d/d_2)}
	H^{(1)}_{f, g}(Q/d; d_1, d_2)
	.
\ee

\begin{thm}\label{thm:SfgQ}

Assume $N_0$, $f$ and $g$ as above. 
For $Q\geq 1$, with $(Q, N_0)=1$, 
when $f=g$, there exists a constant $C_2(k)$, given in \eqref{C1def}, 
independent of $f$ (but depending on $k$), such that
\begin{multline}\label{e:f=g}
	S_{f, f}(Q)
	=
	H_{f, f} (Q)
	+
	2\sum_{d\mid Q} \frac{\mu(d)}{d} 
	\sum_{d_1, d_2\mid d} \mu(d_1) \mu(d_2) A(d/d_1) \overline{B(d/d_2)}
	\tilde Z_{\frac{Q}{d}} \left(1-k/2, 1/2; d_1, d_2\right)
	\\
	+
	\cO\left(Q^{\theta+\epsilon -1/2}\right)
	,
\end{multline}
where $H_{f, f}(Q)$ is defined in \eqref{e:Hqff}. 
When $f\neq g$, 
\begin{multline}\label{e:fneg}
	S_{f, g}(Q)
	=
	L^{(Q)}(1, f\otimes g)
	+
	H_{f, g}^{(1)}(Q) + H_{g, f}^{(1)}(Q)
	\\
	+
	\sum_{d\mid Q} \frac{\mu(d)}{d} 
	\sum_{d_1, d_2\mid d} \mu(d_1) \mu(d_2) A(d/d_1) \overline{B(d/d_2)}
	\tilde Z_{\frac{Q}{d}} \left(1-k/2, 1/2; d_1, d_2\right)
	\\
	+
	\sum_{d\mid Q} \frac{\mu(d)}{d} 
	\sum_{d_1, d_2\mid d} \mu(d_1) \mu(d_2) B(d/d_1) \overline{A(d/d_2)}
	\tilde Z_{\frac{Q}{d}} \left(1-k/2, 1/2; d_1, d_2\right)
	\\
	+
	\cO\left(Q^{\theta+\epsilon -1/2}\right)
\end{multline}
for any $\epsilon>0$. 
  
The implied constant in the error term depends upon  $f$, $g$ and $\epsilon$, but is independent of $Q$.
\end{thm}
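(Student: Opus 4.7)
The plan is to combine the identity \eqref{e:Stotal} with the block decomposition of Proposition \ref{prop:Sblockdef} and the explicit evaluation in Theorem \ref{theorem:SQ}. First I write
$$S_{f,g}(Q) = S_1(X) + S_2(X) + S_3(X) + S_4(X)$$
and choose $X = Q^{C}$ with $C$ large enough (any $C > 2+\theta$ suffices) so that $S_4(X) \ll X^{-1+\epsilon} Q^{3/2-\epsilon}$ is absorbed into the error $O(Q^{\theta+\epsilon-1/2})$. The diagonal piece $S_1(X)$ in \eqref{S1} supplies $L^{(Q)}(1, f\otimes g)$ in the case $f \neq g$, and $\log(X/2)$ times the stated Dirichlet series coefficient plus $c_f(Q)$ in the case $f = g$. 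By the symmetry in the roles of the two Fourier coefficients, $S_3(X)$ is obtained from $S_2(X)$ by swapping $f \leftrightarrow g$ and complex-conjugating, so it suffices to analyze $S_2(X)$.

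Applying Proposition \ref{prop:Sblockdef} to $S_2(X)$ I get
$$S_2(X) = \sum_{d \mid Q} \frac{\mu(d)}{d} \sum_{d_1, d_2 \mid d} \mu(d_1)\mu(d_2)\, A(d/d_1)\, \overline{B(d/d_2)}\, S\!\left(X/d,\, Q/d,\, d_1, d_2\right).$$
Each building block $S(X/d, Q/d, d_1, d_2)$ is evaluated via Theorem \ref{theorem:SQ} as a main term (depending on whether $f = g$) plus the distinguished special value $(\ell_1\ell_2)^{(k-1)/2}\lim_{\delta \to 0}\bigl(S_{\rm cusp}(1-k/2,1/2;\delta) + S_{\rm cont,int}(1-k/2,1/2;\delta)\bigr)$ with parameters $\ell_i = d_i$ and $q = Q/d$, plus an error of size $O((d_1 d_2)^\epsilon (Q/d)^{\theta+\epsilon-1/2}) + O((X/d)^{-\epsilon})$. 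By the decomposition $Z_q(s,w) = (\ell_1\ell_2)^{(k-1)/2}(S_{\rm cusp} + S_{\rm cont,int}) + \Psi_q$ recorded in Section \ref{s:review} and the definition \eqref{e:tildeZ}, this special value is precisely $\tilde Z_{Q/d}(1-k/2, 1/2; d_1, d_2)$. Summing the error terms over $d \mid Q$ preserves the bound $O(Q^{\theta+\epsilon-1/2})$ because the divisor function contributes only $Q^\epsilon$; the $X$-dependent errors disappear in the limit $X \to \infty$ (using that $S_{f,g}(Q)$ is $X$-independent).

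It then remains to identify the remaining main terms. When $f \neq g$, summing the main term from Theorem \ref{theorem:SQ} against the Mobius-weighted divisor structure of Proposition \ref{prop:Sblockdef}, and unwinding via Lemma \ref{lem:innerprod_cusp} to isolate the dependence on $A(N_0/(a,N_0))$, $\overline{B(N_0/(a,N_0))}$, and the local factors $(A(p) - B(p)p^{-1})/(1-p^{-2})$, should reproduce exactly $H^{(1)}_{f,g}(Q)$ as defined in \eqref{e:H1qfg}--\eqref{e:H1fg}; the analogous contribution from $S_3(X)$ yields $H^{(1)}_{g,f}(Q)$. When $f = g$, the same identification is needed, with the additional subtlety that both $S_1(X)$ and the main term of Theorem \ref{theorem:SQ} contain logarithmic dependence on $X$: the coefficient of $-\frac{1}{2}\log(X/d)$ from Theorem \ref{theorem:SQ} matched against the coefficient of $\log(X/2)$ from $S_1(X)$ must cancel completely, by the $X$-independence of $S_{f,g}(Q)$. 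What remains on the main-term side, combined with $c_f(Q)$, should collapse to $H_{f,f}(Q)$ of \eqref{e:Hqff}.

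The hard part will be the combinatorial identification of the main terms, in particular the explicit sum-over-divisors verification that the local Euler factors coming from Theorem \ref{theorem:SQ} (expressed in terms of $L(s, f\otimes f; d_1, d_2)$ and its derivative, the residues of $\vee^2$ pieces, and the arithmetic sums $\sum_{p \mid (d_1,d_2), p^\alpha \| Q}$) reassemble into the precise factors $\frac{A(p)^2 + (1-A(p)^2)p^{-1} + p^{-2}}{1+p^{-1}}$ and $\frac{1}{p+1}$ appearing in \eqref{e:H2Qff}--\eqref{e:Hqff}, as well as the correct coefficients of $\log p$ in both $H^{(2)}_{f,f}$ and $H_{f,f}$. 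The complete cancellation of $\log X$ in the $f=g$ case serves as a useful consistency check on this local bookkeeping. Once this matching is carried out in full, recombining the contributions of $S_1, S_2, S_3, S_4$ yields both \eqref{e:f=g} and \eqref{e:fneg} with the claimed error term.
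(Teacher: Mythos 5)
Your outline matches the paper's own proof of Theorem \ref{thm:SfgQ} essentially exactly: start from the decomposition $S_{f,g}(Q) = S_1(X)+S_2(X)+S_3(X)+S_4(X)$, apply Proposition \ref{prop:Sblockdef} to expand $S_2(X)$ over divisors of $Q$, evaluate each building block via Theorem \ref{theorem:SQ}, identify the distinguished special value with $\tilde Z_{Q/d}(1-k/2,1/2;d_1,d_2)$ via \eqref{Zinit}, \eqref{Scontdeltadef}, \eqref{e:tildeZ}, and finally let the $X$-dependence cancel. The piece you defer as ``the hard part'' is indeed where most of the paper's labor lies; the $\log X$ cancellation you invoke as a consistency check does work and is worth spelling out once: $S_1(X)$ contributes $\log(X/2)$ with coefficient $\Res_{s=1}L^{(Q)}(s,f\otimes f)$ (using $L^{(Q)}(s,f\otimes f)=\tfrac{\zeta^{(Q)}(s)}{\zeta^{(Q)}(2s)}L^{(Q)}(s,f,\vee^2)$), while each of $S_2,S_3$ contributes $-\tfrac12\log X$ with the same coefficient, so the $\log X$ pieces vanish and leave the $\log(Q/d)$ and $\log 2$ pieces that assemble into the $\log q$ and constant parts of $H_{f,f}$. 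The remaining assembly (collecting the local Euler factors from Theorem \ref{theorem:SQ} into $H^{(2)}_{f,f}(q)$ and $H_{f,f}(q)$, and, for $f\ne g$, into $H^{(1)}_{f,g}(Q)+H^{(1)}_{g,f}(Q)$) is a direct but lengthy Euler-product computation that the paper carries out with the aid of Lemma \ref{lem:innerprod_cusp} and the identity $\Res_{s=1}L(s,f\otimes f;\ell_1,\ell_2)=\tfrac{A(\ell_1\ell_2/(\ell_1,\ell_2)^2)}{(\ell_1,\ell_2)}\bigl(\prod_{p\mid \ell_1\ell_2/(\ell_1,\ell_2)^2}\tfrac{1}{p+1}\bigr)\Res_{s=1}L(s,f\otimes f)$; you would need to complete this to have a full proof, but nothing in your plan fails.

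One small clarification: in the $f\neq g$ case, your statement that $S_3$ is ``$S_2$ with $f\leftrightarrow g$ swapped and complex-conjugated'' is literally true at the level of the shifted sum, but because the building blocks and the $\tilde Z$-functions for the swapped pair are exactly what appears on the right-hand side of \eqref{e:fneg} (with coefficient $B(d/d_1)\overline{A(d/d_2)}$ and the shifted Dirichlet series built from $g,f$), the complex conjugation is already absorbed into the change of roles and should not be applied a second time to the resulting $\tilde Z$-sum.
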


\begin{proof}

By Proposition \ref{prop:Sblockdef}, for any $Q\geq 1$ with $(Q, N_0)=1$, 
we have
$$
	S_2(X)
	=
	\sum_{d\mid Q} \frac{\mu(d)}{d} 
	\sum_{d_1, d_2\mid d} \mu(d_1) \mu(d_2) A(d/d_1) \overline{B(d/d_2)} 
	S\left(\frac{X}{d}, \frac{Q}{d}, d_1, d_2\right)
$$
where $S_2(X)$ is defined in (\ref{S2}).


When $f\neq g$, by Theorem \ref{theorem:SQ}, we have
\begin{multline*}
	S_2(X, Q)
	=
	H_{f, g}^{(1)}(Q) 
	+
	\sum_{d\mid Q} \frac{\mu(d)}{d} 
	\sum_{d_1, d_2\mid d} \mu(d_1) \mu(d_2) A(d/d_1) \overline{B(d/d_2)}
	\tilde Z_{\frac{Q}{d}} \left(1-k/2, 1/2; d_1, d_2\right)
	\\
	+
	\cO\left(Q^{\theta+\epsilon -1/2}\right) 
	+
	\cO(X^{-\epsilon})
	.
\end{multline*}
Combining with \eqref{e:Stotal}, \eqref{S1}, \eqref{S2}, \eqref{S3} and \eqref{S4}, 
we get \eqref{e:fneg}. 
When $f=g$, by Lemma \ref{lem:innerprod_cusp}, we have
$$
	\Res_{s=1} L(s, f\otimes f; \ell_1, \ell_2)
	= 
	\frac{A\left(\frac{\ell_1\ell_2}{(\ell_1, \ell_2)^2}\right)}{(\ell_1, \ell_2)}
	\left(\prod_{p\mid \frac{\ell_1\ell_2}{(\ell_1,\ell_2)^2}}\frac{1}{p+1}\right)
	\Res_{s=1} L(s, f\otimes f)
	.
$$
Combining with Theorem \ref{theorem:SQ}, define
\begin{multline*}
	H^{(2)}_{f, f}(Q; \ell_1, \ell_2)
	:=
	\frac{1}{4}
	\left\{
	\sum_{p\mid(\ell_1, \ell_2), p^\alpha\|Q, \atop \alpha\geq 0} 
	\log p \left(p^{-\alpha} +\frac{1}{2} \right)
	- \sum_{p\mid \frac{\ell_1\ell_2}{(\ell_1, \ell_2)}} \log p \frac{2p-1}{2(p-1)}
	\right.
	\\
	\left.
	+
	\sum_{p\mid \frac{\ell_1\ell_2}{(\ell_1, \ell_2)}, p^\alpha\|Q, \atop \alpha\geq 0} 
	\log p \left(p^{-\alpha} \frac{3p-1}{p-1}\right)
	\right\}
	\frac{A\left(\frac{\ell_1\ell_2}{(\ell_1, \ell_2)^2}\right)}{(\ell_1, \ell_2)}
	\left(\prod_{p\mid \frac{\ell_1\ell_2}{(\ell_1,\ell_2)^2}}\frac{1}{p+1}\right)
\end{multline*}
and
$$
	\tilde H_{f, f}^{(2)}(Q)
	:=
	\sum_{d\mid Q} \frac{\mu(d)}{d} 
	\sum_{d_1, d_2\mid d} \mu(d_1) \mu(d_2) A(d/d_1) \overline{B(d/d_2)}
	H_{f, f}^{(2)}(Q/d; d_1, d_2)
	,
$$
then
\begin{multline*}
	S_2(X, Q)
	=
	- 
	\frac{1}{2}
	\Res_{s=1} L^{(Q)} (s,f \otimes f) 
	\log X
	+
	\Res_{s=1} L^{(Q)} (s,f \otimes f) 
	\log Q
	\\ 
	- 
	\sum_{p^\alpha\|Q, \alpha\geq 1} \log p \frac{1-p^{-\alpha}}{p-1}
	\Res_{s=1} L^{(Q)}(s, f \otimes f)
	\\
	+
	\left\{ C_2(k) 
	+ \frac{1}{4}\sum_{p\mid N_0} \log p \left(\frac{4p-1}{2(p-1)}\right)
	+ \frac{1}{2}\frac{\Gamma'(k)}{\Gamma(k)} - \frac{1}{2} \log (4\pi)
	\right\} 
	\Res_{s=1} 	L^{(Q)} (s,f \otimes f)
	\\
	- 
	\frac{1}{2} \sum_{d\mid Q} \frac{\mu(d)}{d} 
	\left(\log d -2\sum_{p\mid d} \frac{\log p }{p}\right)
	\Res_{s=1} L(s, f_d\otimes f_d)
	\\
	+ 
	\tilde H_{f, f}^{(2)}(Q) \Res_{s=1} L(s, f\otimes f)
	+ 
	\frac{1}{2}
	\left.\frac{d}{ds} \left( (s-1) L^{(Q)} (s, f\otimes f)\right)\right|_{s=1}	
	\\
	+
	\sum_{d\mid Q} \frac{\mu(d)}{d} 
	\sum_{d_1, d_2\mid d} \mu(d_1) \mu(d_2) A(d/d_1) \overline{B(d/d_2)}
	\tilde Z_{\frac{Q}{d}} \left(1-k/2, 1/2; d_1, d_2\right)
	\\
	+
	\cO\left(Q^{\theta+\epsilon -1/2}\right) 
	+
	\cO(X^{-\epsilon})	
	.
\end{multline*}	
When $f=g$, 
for each $d\mid Q$, we have
$$
	\Res_{s=1} L(s, f_d\otimes f_d) 
	=
	\left(\prod_{p\mid d} \frac{A(p)^2 + (1-A(p)^2) p^{-1}+p^{-2}}{1+p^{-1}} \right)
	\Res_{s=1} L(s, f\otimes f)
	.
$$
Again, from \eqref{e:Stotal}, \eqref{S1}, \eqref{S2}, \eqref{S3} and \eqref{S4}, 
we get \eqref{e:f=g}. 

\end{proof}

We are now ready to prove Theorem \ref{thm:main}. 
\subsection{Proof of Theorem \ref{thm:main} and Corollary \ref{cor:main}}

From Theorem \ref{thm:SfgQ}, recall $H_{f, f}(q)$ as in \eqref{e:Hqff}, 
when $f=g$.
Define 
$$
	H_{f, g}(q)
	:=
	L^{(q)}(1, f\otimes g)
	+
	H_{f, g}^{(1)}(q) + H_{g, f}^{(1)}(q)
	,
$$
when $f\neq g$. Here $H_{f, g}^{(1)}(q)$ is defined in \eqref{e:H1fg}. 

For either $f\neq g$ or $f=g$, we have
\begin{multline*}
	\sum_{q\geq 1, \atop (q, N_0)=1} S_{f, g}(q) e^{-\frac{y^2\left(\log \frac{Q}{q}\right)^2}{4\pi}} 
	=
	\sum_{q\geq1, \atop (q, N_0)=1} H_{f, g}(q) e^{-\frac{y^2\left(\log \frac{Q}{q}\right)^2}{4\pi}} 
	+
	\cO\left(\sum_{q\geq1, \atop (q, N_0)=1} q^{\theta+\epsilon-1/2} e^{-\frac{y^2\left(\log \frac{Q}{q}\right)^2}{4\pi}} \right)
	\\
	+
	\sum_{q\geq 1, \atop (q, N_0)=1} 
	\left(
	\sum_{d\mid q} \frac{\mu(d)}{d} 
	\sum_{d_1, d_2\mid d} \mu(d_1) \mu(d_2) A(d/d_1) \overline{B(d/d_2)}
	\tilde Z_{\frac{q}{d}} \left(1-k/2, 1/2; d_1, d_2\right)
	\right)
	e^{-\frac{y^2\left(\log \frac{Q}{q}\right)^2}{4\pi}}
	\\
	+
	\sum_{q\geq 1, \atop (q, N_0)=1} 
	\left(
	\sum_{d\mid q} \frac{\mu(d)}{d} 
	\sum_{d_1, d_2\mid d} \mu(d_1) \mu(d_2) B(d/d_1) \overline{A(d/d_2)}
	\tilde Z_{\frac{q}{d}} \left(1-k/2, 1/2; d_1, d_2\right)
	\right)
	e^{-\frac{y^2\left(\log \frac{Q}{q}\right)^2}{4\pi}}
	.
\end{multline*}
By applying the identity 
\be\label{identity}
	\frac{1}{2\pi i} \int_{(2)} \frac{e^{\frac{\pi v^2}{y^2}}}{y} X^v \; dv
	=
	e^{-\frac{y^2 (\log X)^2}{4\pi}}
	,
\ee
we have
\begin{multline*}
	\sum_{q\geq 1, \atop (q, N_0)=1} 
	\left(
	\sum_{d\mid q} \frac{\mu(d)}{d} 
	\sum_{d_1, d_2\mid d} \mu(d_1) \mu(d_2) A(d/d_1) \overline{B(d/d_2)}
	\tilde Z_{\frac{q}{d}} \left(1-k/2, 1/2; d_1, d_2\right)
	\right)
	e^{-\frac{y^2\left(\log \frac{Q}{q}\right)^2}{4\pi}}
	\\
	=
	\frac{1}{y}
	\frac{1}{2\pi i} \int_{(2)}
	\sum_{q\geq 1, \atop (q, N_0)=1} 
	\sum_{d\mid q} \frac{\mu(d)}{d} 
	\sum_{d_1, d_2\mid d} \mu(d_1)\mu(d_2) A(d/d_1) \overline{B(d/d_2)} 
	\\
	\times 
	\frac{\tilde Z_{\frac{q}{d}}\left(1-k/2, 1/2; d_1, d_2\right)}{q^v}
	e^{\frac{\pi v^2}{y^2}}Q^v
	\; dv
	.
\end{multline*}
Then, 
\begin{multline*}
	\sum_{q\geq 1, \atop (q, N_0)=1} 
	\sum_{d\mid q} \frac{\mu(d)}{d} 
	\sum_{d_1, d_2\mid d} \mu(d_1)\mu(d_2) A(d/d_1) \overline{B(d/d_2)} 
	\frac{\tilde Z_{\frac{q}{d}}\left(1-k/2, 1/2; d_1, d_2\right)}{q^v}
	\\
	=
	\sum_{\ell\geq 1, \atop (\ell, N_0)=1} 
	\frac{\mu(\ell)}{\ell^{1+v}}
	\sum_{\ell_1, \ell_2\mid \ell} 
	\mu(\ell_1)\mu(\ell_2) A(\ell/\ell_1) \overline{B(\ell/\ell_2)} 
	\tilde\cM\left(1-k/2, 1/2, v; \ell_1, \ell_2\right)
	.
\end{multline*}

By \eqref{e:Mest_1} in Proposition~\ref{prop:Mest}, we have
$$
	\frac{1}{2\pi i} \int_{(2)} 
	\tilde\cM(1-k/2, 1/2,v; \ell_1, \ell_2)
	\frac{e^{\pi v^2/y^2}}{y} \left(\frac{Q}{\ell}\right)^v\; dv
	\ll 
	\left(\frac{Q}{\ell}\right)^{1/2} (\ell_2\ell_2)^\epsilon
	, 
$$
for any $\epsilon>0$. 
Consequently
\be\label{e:withy}
	\frac{y}{Q}
	\sum_{q\geq 1, \atop (q, N_0)=1} S_{f, g}(q) e^{-\frac{y^2\left(\log \frac{Q}{q}\right)^2}{4\pi}}
	=
	\frac{y}{Q}
	\sum_{q\geq 1, \atop (q, N_0)=1} H_{f, g}(q) e^{-\frac{y^2\left(\log \frac{Q}{q}\right)^2}{4\pi}}
	+
	\cO(Q^{\theta+\epsilon-1/2})
	+
	\cO(y Q^{-1/2})
	.
\ee

When $f=g$, we get \eqref{e:mainf=g}.

Now assume that $f\neq g$. Consider 
$$
	\frac{y}{Q} 
	\sum_{q\geq 1, \atop (q, N_0)=1} H_{f, g}^{(1)}(q) e^{-\frac{y^2 \left(\log \frac{Q}{q}\right)^2}{4\pi}} 
	=
	\frac{1}{Q} \frac{1}{2\pi i} \int_{(2)} 
	\left( \sum_{q\geq 1, \atop (q, N_0)=1} 
	\frac{H_{f, g}^{(1)}(q) }{q^v} \right)
	Q^v e^{\frac{\pi v^2}{y^2}}\; dv
	.
$$
We have
\begin{multline*}
	\sum_{q\geq 1, \atop (q, N_0)=1} 
	\frac{H_{f, g}^{(1)}(q) }{q^v}
	\\	
	=
	\sum_{\ell\geq 1, \atop (\ell, N_0)=1}
	\frac{\mu(\ell)}{\ell^{1+v} } 
	A(\ell) \overline{B(\ell)}
	\sum_{\ell_1, \ell_2\mid \ell}
	\mu(\ell_1)\mu(\ell_2) A(\ell_1)^{-1} \overline{B(\ell_2)}^{-1} 
	\sum_{q\geq 1, \atop (q, N_0)=1}
	\frac{H_{f, g}^{(1)}(q; \ell_1, \ell_2)}{q^v}
	.
\end{multline*}

For each $q$, $\ell_1$ and $\ell_2$, we have defined $H_{f, g}^{(1)}(q; \ell_1, \ell_2)$ in \eqref{e:H1qfg}. 
For $a\mid N_0 \frac{\ell_1\ell_2}{(\ell_1, \ell_2)}$, define
\begin{multline*}
	H_{f, g} (a; \ell_1, \ell_2)
	:=
	\left(\frac{N_0}{(a, N_0)}\right) 
	A\left(\frac{N_0}{(a, N_0)}\right) 
	\overline{B\left(\frac{N_0}{(a, N_0)}\right)}
	\\
	\left.
	\times
	\left(\prod_{p\mid \frac{\ell_1 (a, \ell_2)}{(\ell_1, \ell_2) (a, \ell_1)}}
	\frac{ A(p)-B(p)p^{-1}}{1-p^{-2}}\right)
	\left( \prod_{p\mid \frac{\ell_2 (a, \ell_1)}{(\ell_1, \ell_2) (a, \ell_2)}}
	\frac{ B(p)-A(p)p^{-1}}{1-p^{-2}} \right)
	\right\}
	.
\end{multline*}
Then
\begin{multline*}
	H_{f, g}^{(1)}(q;\ell_1, \ell_2)
	=
	\frac{1}{4} L(1, f\otimes g) 
	\sum_{a\mid N_0\frac{\ell_1\ell_2}{(\ell_1, \ell_2)}}
	\frac{1}{2^{r\left(N_0\frac{\ell_1\ell_2}{(\ell_1, \ell_2)}\right)}}
	H_{f, g} (a; \ell_1, \ell_2)
	\\
	+
	\frac{1}{4} L(1, f\otimes g) 
	\sum_{a\mid N_0\frac{\ell_1\ell_2}{(\ell_1, \ell_2)}}
	\left(
	\prod_{p^\alpha\|Q, p\mid a} (p^\alpha-1) 
	\prod_{p^\alpha\|Q, p\mid\frac{\ell_1\ell_2}{(\ell_1, \ell_2)}, \atop \alpha\geq 1} p^{-\alpha }
	\right)
	H_{f, g} (a; \ell_1, \ell_2)
	.
\end{multline*}
So, 
\begin{multline*}
	\sum_{q\geq 1, \atop (q, N_0)=1}
	\frac{H_{f, g}^{(1)}(q; \ell_1, \ell_2)}{q^v}
	=
	\zeta^{(N_0)}(v) 
	\frac{1}{4} L(1, f\otimes g) 
	\frac{1}{2^{r\left(N_0\frac{\ell_1\ell_2}{(\ell_1, \ell_2)}\right)}}
	\sum_{a\mid N_0\frac{\ell_1\ell_2}{(\ell_1, \ell_2)}}
	H_{f, g} (a; \ell_1, \ell_2)
	\\
	+
	\zeta^{(N_0)}(v) 
	\frac{1}{4} L(1, f\otimes g)
	\prod_{p\mid\frac{\ell_1\ell_2}{(\ell_1, \ell_2)} } 
	\frac{1-p^{-v}}{p^{v+1}-1}
	\sum_{a\mid N_0 \frac{\ell_1\ell_2}{(\ell_1, \ell_2)}}
	\prod_{p\mid \left(a, \frac{\ell_1\ell_2}{(\ell_1, \ell_2)}\right)} \frac{p-1}{1-p^{-v}}
	H_{f, g} (a; \ell_1, \ell_2)
	.
\end{multline*}
So we have, 
\begin{multline*}
	\frac{1}{2^{r\left(N_0\frac{\ell_1\ell_2}{(\ell_1, \ell_2)}\right)}} 
	\sum_{a\mid N_0\frac{\ell_1\ell_2}{(\ell_1, \ell_2)}} 
	H_{f, g}(a; \ell_1, \ell_2)
	\\
	=
	\frac{1}{2^{r\left(N_0\frac{\ell_1}{(\ell_1, \ell_2)} \frac{\ell_2}{(\ell_1, \ell_2)}\right)}}
	N_0 A(N_0) \overline{B(N_0)}
	\prod_{p\mid N_0} 
	\left(1+ p^{-1} A(p)^{-1}\overline{B(p)}^{-1}\right) 
	\prod_{p\mid\frac{\ell_1\ell_2}{(\ell_1, \ell_2)^2}}
	\left( \frac{A(p)+B(p)}{1+p^{-1}}\right)
\end{multline*}
and
\begin{multline*}
	\prod_{p\mid\frac{\ell_1\ell_2}{(\ell_1, \ell_2)} } 
	\frac{1-p^{-v}}{p^{v+1}-1}
	\sum_{a\mid N_0 \frac{\ell_1\ell_2}{(\ell_1, \ell_2)}}
	\prod_{p\mid \left(a, \frac{\ell_1\ell_2}{(\ell_1, \ell_2)}\right)} 
	\frac{p-1}{1-p^{-v}}
	H_{f, g} (a; \ell_1, \ell_2)
	\\
	=
	\left(\prod_{p\mid\frac{\ell_1\ell_2}{(\ell_1, \ell_2)^2} } 
	\frac{1-p^{-v}}{p^{v+1}-1}\right)
	(\ell_1, \ell_2)^{-v} 
	N_0 A(N_0) \overline{B(N_0)} 
	\prod_{p\mid N_0} \left(1+p^{-1}A(p)^{-1}\overline{B(p)}^{-1}\right)
	\\
	\times
	\left(
	\prod_{p\mid \frac{\ell_1}{(\ell_1, \ell_2)} } 
	\frac{A(p)-B(p)p^{-1}}{1-p^{-2}} 
	+ 
	\frac{(p-1) (B(p)-A(p)p^{-1})}{(1-p^{-2})(1-p^{-v})}
	\right)
	\\
	\times
	\left(
	\prod_{p\mid \frac{\ell_2}{(\ell_1, \ell_2)}} 
	\frac{B(p)-A(p)p^{-1}}{1-p^{-2}}
	+\frac{(p-1)(A(p)-B(p)p^{-1})}{(1-p^{-2})(1-p^{-v})}
	\right)
	.
\end{multline*}
For $f\neq g$, gives us \eqref{e:mainfneqg}.

For $p\mid N_0$, $A(p)=\pm \sqrt p^{-1}$ and $B(p)=\pm \sqrt{p}^{-1}$, 
so the second piece of the main term is non-negative.  It could, however, be equal to zero if $pA(p)\overline{B(p)}+1=0$ for some  $p\mid N_0$.
The first piece of the main term, however, is positive and dominates the error term.   Specifically, we will now show that 
\begin{multline}\label{lastbit}
	\frac{y}{Q}
	\sum_{q\geq 1, \atop (q, N_0)=1} L^{(q)}(1, f\otimes g)
	e^{-\frac{y^2\left(\log \frac{Q}{q}\right)^2}{4\pi}}\\
	= 
	\frac{L(1, f\otimes g)}{\zeta^{(N_0)}(2)}e^{\frac{\pi}{y^2}} \Res_{v=1}\zeta^{(N_0)}(v)E_{f, g}^{(N_0)}(1) 
	+ \cO(Q^{\theta+\epsilon-1}).
\end{multline}
Here $\zeta^{(N_0)}(v)$ refers to the zeta function with Euler factors corresponding to primes dividing $N_0$ removed, and 
\be\label{Edef}
	E^{(N_0)}_{f, g}(v) \\= \prod_{p \nmid N_0}E_p(v; f, g),
\ee
with
\begin{multline*}
	E_p(v; f, g)
	=
	1-p^{-2} +p^{-2-v}-p^{-1-v}A(p)\overline{B(p)}+p^{-2-v}(A(p^2)+\overline{B(p^2)}) \\-p^{-3-v}A(p)\overline{B(p)} +p^{-4}.
\end{multline*}
As the main term is positive, and independent of $Q$, this dominates the error term, giving us the result.
To verify this, we first note that 
\begin{multline*}
L^{(q)}(1, f\otimes g) = L(1, f\otimes g)\\ \times \prod_{p \mid q}\frac{(1-\alpha_p \beta_p p^{-1})(1-\alpha_p^{-1} \beta_p p^{-1})(1-\alpha_p \beta_p^{-1}  p^{-1})(1-\alpha_p^{-1}  \beta_p^{-1}  p^{-1})}{(1-p^{-2})},
\end{multline*}
where $A(p) = \alpha_p + \alpha_p^{-1} $ and $\overline{B(p) }=  \beta_p + \beta_p^{-1} $.
It follows, after a calculation, that 
$$
\sum_{(q,N_0)=1}\frac{L^{(q)}(1, f\otimes g)}{q^v} =\frac{L(1, f\otimes g)\zeta^{(N_0)}(v)}{\zeta^{(N_0)}(2)}\prod_{p \nmid N_0}E_p(v;f, g),
$$
with $E_p(v)$ as given above.   Applying the identity \eqref{identity}, with $X=Q/q$, we obtain
\begin{multline*}
	\frac{y}{Q}
	\sum_{q\geq 1, \atop (q, N_0)=1} L^{(q)}(1, f\otimes g)
	e^{-\frac{y^2\left(\log \frac{Q}{q}\right)^2}{4\pi}}
	=\frac{1}{Q}
	\sum_{q\geq 1, \atop (q, N_0)=1} L^{(q)}(1, f\otimes g)
	\frac{1}{2 \pi i}\int_{(2)}e^{\frac{\pi v^2}{y^2}}(Q/q)^v dv
	\\
	=\frac{L(1, f\otimes g)}{Q\zeta^{(N_0)}(2)}\frac{1}{2 \pi i}\int_{(2)}e^{\frac{\pi v^2}{y^2}}Q^v \zeta^{(N_0)}(v)E_{f, g}^{(N_0)}(v)dv.
\end{multline*}
The infinite product $E_{f, g}^{(N_0)}(v)$ is analytic for $\Re \, v >-1$, and so moving the line of integration to $\Re \, v =-1+\epsilon$ gives us
\eqref{lastbit}, with the residue at $v=1$ being the main term.


\thispagestyle{empty}
{\footnotesize
\nocite{*}
\bibliographystyle{amsalpha}
\bibliography{reference}
}

\end{document}